\numberwithin{equation}{section}
\setlist[enumerate]{leftmargin=1cm,topsep=1mm}
\setlist[itemize]{leftmargin=1cm,topsep=1mm}
\newcommand{\bbP}{\mathbb{P}}
\newcommand{\bbT}{\mathbb{T}}
\newcommand{\R}{\mathbb{R}}
\newcommand{\C}{\mathbb{C}}
\newcommand{\N}{\mathbb{N}}
\newcommand{\cB}{\mathcal{B}}
\newcommand{\cD}{\mathcal{D}}
\newcommand{\cF}{\mathcal{F}}
\newcommand{\cI}{\mathcal{I}}
\newcommand{\cL}{\mathcal{L}}
\newcommand{\cM}{\mathcal{M}}
\newcommand{\fI}{\mathfrak{I}}
\newcommand{\norm}[2]{     \| #1       \|_{ #2 }}
\DeclarePairedDelimiter{\Norm}{\lVert}{\rVert}
\newcommand{\scalar}[2]{     \langle #1       \rangle_{ #2 }}
\DeclarePairedDelimiter{\abs}{\lvert}{\rvert}
\newcommand{\sgn}{\operatorname{sgn}}
\newcommand{\rd}{\mathop{}\!\mathrm{d}}
\newcommand{\from}{\colon}
\newcommand{\clos}[1]{\overline{ #1 }} 
\newcommand{\id}{\operatorname{Id}}
\newcommand{\dom}[1]{\mathsf D(#1)}
\newcommand{\deq}{\coloneqq}
\newcommand{\indic}[1]{\mathbf{1}_{#1}}
\newcommand{\emb}{\hookrightarrow}
\newcommand{\LO}{\mathscr L}
\newtheorem{lemma}{Lemma}[section]
\newtheorem{proposition}[lemma]{Proposition}
\newtheorem{theorem}[lemma]{Theorem}
\newtheorem{corollary}[lemma]{Corollary}
\theoremstyle{remark}
\theoremstyle{definition}
\newtheorem{definition}[lemma]{Definition}
\newtheorem{assumption}[lemma]{Assumption}
\newtheorem{example}[lemma]{Example}
\newtheorem{setting}[lemma]{Setting}
\renewcommand{\Re}{\operatorname{Re}}
\begin{document}
	\author{Yves van Gennip \and Jonas Latz \and Joshua Willems}
	
	\address[Yves van Gennip]{Delft Institute of Applied Mathematics\\
		Delft University of Technology\\
		The Netherlands.}
	
	\email{y.vangennip@tudelft.nl}
	
	\address[Jonas Latz]{Department of Mathematics\\
		University of Manchester\\
		United Kingdom.}
	
	\email{jonas.latz@manchester.ac.uk}

	\address[Joshua Willems]{Delft Institute of Applied Mathematics\\
		Delft University of Technology\\
		The Netherlands.}
	
	\email{j.willems@tudelft.nl}
	
	\title[Discrete-to-continuum limits of SPDEs]
	{Discrete-to-continuum limits of semilinear stochastic evolution equations in Banach spaces}
	
	\keywords{%
		Discrete-to-continuum convergence,
		stochastic evolution equations,
		generalized Whittle--Mat\'ern operators,
		geometric graphs%
	}
	
	\subjclass[2020]{Primary: 
		60H15, 
		35K58, 
		65M12; 
		secondary: 
		35R11, 
		35R01, 
		35R02. 
	}
	
	\begin{abstract}
		We study the convergence of semilinear parabolic stochastic evolution equations, posed on a sequence of Banach spaces approximating a limiting space and driven by additive white noise projected onto the former spaces.
		Under appropriate uniformity and convergence conditions on the linear operators, nonlinear drifts and initial data, we establish convergence of the associated mild solution processes when lifted to a common state space.
		Our framework is applied to the case where the limiting problem is a stochastic partial differential equation whose linear part is a generalized Whittle–Matérn operator on a manifold $\mathcal{M}$, discretized by a sequence of graphs constructed from a (random) point cloud. 
		In this setting we obtain various discrete-to-continuum convergence results for solutions lifted to $L^q(\mathcal{M})$ for $q \in [2,\infty]$, one of which recovers the $L^\infty$-convergence of a finite-difference discretization of certain (fractional) stochastic Allen--Cahn equations.
	\end{abstract}
	
	\maketitle
	
	\section{Introduction}
	\label{sec:intro}
	
	\subsection{Background and motivation} 
	\label{sec:intro:background-motivation}
	
	We establish discrete-to-continuum limits of stochastic evolution equations of the form \eqref{eq:intro-SPDE}, i.e., semilinear parabolic stochastic partial diffential equations (SPDEs) driven by Gaussian white noise. 
	Such SPDEs of evolution play an important role in the modeling of physical and other systems, such as fluid dynamics \cite{BENSOUSSAN1973195,VandenEijn,Frisch_Lesieur_Brissaud_1974,Mattingly}, quantum optics \cite{carmichael2009open}, phase separation \cite{DAPRATO1996241},  diffusion in random media \cite{Havlin01011987,konig2016parabolic}, and population dynamics \cite{Xiong}. Given their significance, there has been a considerable interest in the analysis and numerical analysis of SPDEs; see the introductory textbooks \cite{LiuRockner2015} and \cite{lord2014introduction}, respectively.
	
	We consider the convergence of a sequence of abstract continuous-time equations, each posed on a different Banach space in order to model the approximation of an evolution SPDE by equations that are continuous in time and discrete in space. 
	This framework covers a typical setting where the spatial domains are finite graphs and the limiting differential operator in space is approximated by the corresponding graphical variants. 
	If the finite graphs approximate an underlying manifold, then the graphical differential operators are related to a finite-difference approximation of the differential operator. 
    As an example, we study a class of semilinear SPDEs whose linear part is given by a fractional-order elliptic differential operator on a manifold (known as a \emph{Whittle--Mat\'ern operator}), discretized by a sequence of graphs constructed from a (possibly randomly sampled) cloud of points.
    \emph{Linear} equations of this type have previously been studied in the context of statistics and machine learning~\cite{pmlr-v151-nikitin22a, SanzAlonsoYang2022, SanzAlonsoYang2022JMLR}.
    By verifying in detail that the hypotheses of our abstract framework are satisfied in this situation, we establish the discrete-to-continuum convergence of this scheme.
    Although the main advantage of our general results is their applicability to highly unstructured discretizations, we additionally show that they recover the $L^\infty$-convergence of finite-difference discretizations of the fractional stochastic Allen--Cahn equation on the one-dimensional flat torus.
	To further illustrate the significance of results of this form, we now discuss a few examples of semidiscrete SPDEs as well as their continuum limits.
	
	Semidiscrete models appear frequently in the numerical analysis of (S)PDEs of evolution; since the challenges of spatial, temporal and spatiotemporal discretization are different, these settings are often analyzed separately. This leads to thorough studies of (S)PDEs that are discretized in space but not in time. In the context of SPDEs of evolution, we refer to \cite{Brehier,Gyongy2014,Kruse2014} for examples. 
	
	Stochastic PDEs on graphs also appear naturally as models in the physical sciences, e.g., for interacting particle systems \cite{DaCosta2019} or the representation of disordered media \cite{Havlin01011987}. In the former case, the continuum limit represents the large particle limit in the interacting particle system. 
	
	In the data science literature, (S)PDEs on graphs have recently gained popularity as semi-supervised learning techniques. In a semi-supervised learning problem we are given a set of labeled features as well as a set of unlabeled features, and the goal is to use the former features to recover the labels of the latter. Features are, for example, images, text, or voice recordings; corresponding labels may be descriptors of the content of the images, the author of the text, or a transcript of the voice recording, respectively. Given an appropriate similarity measure on the space of features, an edge-weighted graph can be constructed in which nodes representing similar features are connected by highly weighted edges. The unknown labels can then be estimated by space-discretized PDEs on this graph, as in \cite{bertozzi,budd,vanGennipBudd25a,LapRef}. The PDEs often describe gradient flows that minimize a variational functional. 
	Stochastic PDEs appear in this setting if, in addition to finding an estimate for the labels, the uncertainty in the labels is to be quantified as well~\cite{BertozziLuoStuart,SanzAlonsoYang2022}.
	The SPDEs of evolution here either form the basis of Markov chain Monte Carlo sampling algorithms \cite{Cotter,HSV2007, HSVW2005, Roberts} or of a randomized global optimization scheme \cite{Chak,Chiang} for the solution of the variational problem in the deterministic setting.
	In this semi-supervised learning setting, discrete-to-continuum limits are of interest because they establish the consistency of the models in the large-data limit. For deterministic PDEs, the literature has grown to encompass pointwise limits of operators, as in \cite{HeinAudibertvonLuxburg07}, $\Gamma$-limits of the functionals that underlie the dynamics~\cite{GTS2016,LauxLelmi21,ThorpeTheil19,vanGennipBertozzi12,WeihsThorpe23}, and more recently, discrete-to-continuum limits for the dynamics themselves \cite{ElBouchairiFadiliHafieneElmoataz23, FadiliForcadelNguyenZantout23, GigavanGennipOkamoto22,HraivoronskaTse23,LauxLelmi23,MercervanGennip25,WeihsFadiliThorpe23}. For a more in-depth overview of the literature of discrete-to-continuum limits, we refer to \cite{vanGennipBudd25b}.
	\subsection{Main results}
	\label{sec:intro:main-results}
	
	We will now summarize the abstract setting and main discrete-to-continuum convergence results from Sections~\ref{sec:OU-H}--\ref{sec:reaction-diffusion},
    which will already be applied in Section~\ref{sec:graph-WM} to the Whittle--Mat\'ern and stochastic fractional Allen--Cahn equations described in Section~\ref{sec:intro:background-motivation}.
	
	Given a probability space $(\Omega, \cF, \bbP)$ and some $T \in (0,\infty)$ (called the \emph{time horizon}), we consider a sequence of semilinear parabolic stochastic evolution equations
	\begin{equation}\label{eq:intro-SPDE}
		\left\lbrace
		\begin{aligned}
			\rd X_n(t) &= -A_n X_n(t)\rd t + F_n(t, X_n(t)) \rd t + \rd W_n(t),
			\quad 
			t 
			\in 
			(0,T], 
			\\
			X_n(0) &= \xi_n.
		\end{aligned}
		\right.
	\end{equation}
	indexed by $n \in \clos\N \deq \{1,2,\dots\} \cup \{\infty\}$. 
	This problem will be rigorously formulated as a stochastic differential equation taking values in a real Banach space $E_n$ (or a smaller embedded space $B_n \hookrightarrow E_n$) called the \emph{state space}.
	In general, we assume that the terms appearing in~\eqref{eq:intro-SPDE} are as follows:
	\begin{itemize}
		\item $-A_n$ is the generator of a bounded analytic semigroup of bounded linear operators on $E_n$ or $B_n$;
		\item $u_n \mapsto F_n(\omega, t, u_n)$ is a possibly random and nonlinear drift operator on $E_n$ or $B_n$ for all $(\omega, t) \in \Omega \times [0,T]$;
		\item $(W_n(t))_{t\ge0}$ is the projection, onto an appropriate subspace, of an $H$-valued cylindrical Wiener process $(W(t))_{t\ge0}$ for a real and separable Hilbert space $H$ (more details are specified in Theorem~\ref{thm:d2c-conv-informal} below);
		\item $\xi_n$ is a possibly random initial datum with values in $E_n$ or $B_n$.
	\end{itemize}
	The precise assumptions (in particular, whether the operators and initial data are $E_n$-valued or $B_n$-valued) vary throughout Sections~\ref{sec:graph-WM}--\ref{sec:reaction-diffusion};
    an overview is provided in Table~\ref{tab:SPDEs-per-section}.
	Depending on the setting, the \emph{mild solutions} to~\eqref{eq:intro-SPDE} are either well defined on the whole of $[0,T]$ almost surely, or cease to exist at a time $t < T$ with nonzero probability; such solutions are said to be \emph{global} or \emph{local}, respectively.
	
	The aim of this work is to establish conditions on the data of~\eqref{eq:intro-SPDE} under which the corresponding solutions $(X_n)_{n\in\N}$ converge to $X_\infty$ as $n \to \infty$.
	In order to compare processes which take their values in different Banach spaces, we need to assume that each of the families $(E_n)_{n\in\clos\N}$, $(H_n)_{n\in\clos\N}$ and $(B_n)_{n\in\clos\N}$ embeds uniformly into a common space---namely into $E_\infty$, $H_\infty$ and a closed subspace ${\widetilde B \subseteq B_\infty}$, respectively---which they approximate in some appropriate sense as $n \to \infty$.
	In particular, we shall assume that they share a common sequence of (linear) \emph{lifting operators} $(\Lambda_n)_{n\in\clos\N}$ such that each $\Lambda_n$ maps $E_n$ (resp.\ $H_n$, $B_n$) boundedly into $E_\infty$ (resp.\ $H_\infty$, $\widetilde B$), as well as a sequence of \emph{projection operators} $(\Pi_n)_{n\in\clos\N}$ which are left-inverses to the respective lifting operators.
	That is, each sequence satisfies Assumption~\ref{ass:disc3} below with the same lifting and projection operators.
	As an example (see Section~\ref{sec:graph-WM}), one can think of $E_\infty \deq L^q(\cD)$ for $q \in [2,\infty)$, $H_\infty \deq L^2(\cD)$, $\widetilde B \deq L^\infty(\cD)$ (Lebesgue spaces) and $B_\infty \deq C(\cD)$ (continuous functions) for some spatial domain $\cD$, along with $E_n \deq L^q(\cD_n)$, $H_n \deq L^2(\cD_n)$ and $B_n \deq L^\infty(\cD_n)$ for some approximations $(\cD_n)_{n\in\N}$ of the domain $\cD$. 
	
	The projection and lifting operators allow us to compare the ($E_n$- or $B_n$-valued) solution processes $X_n$ by instead considering convergence of the lifted processes $\widetilde X_n \deq \Lambda_n X_n$ to $X_\infty$ as $n\to \infty$, which we call~\emph{discrete-to-continuum convergence}.
	Moreover, they allow us to formulate assumptions under which this occurs in terms of conditions imposed on the lifted resolvents $\widetilde R_n \deq \Lambda_n (A_n + \id_n)^{-1} \Pi_n$ of the linear operators $A_n$, the lifted drift operators $\widetilde F_n(\omega, t, u) \deq \Lambda_n F_n(t,\omega, \Pi_n u)$, and the lifted initial data $\widetilde \xi_n \deq \Lambda_n \xi_n$.
	Roughly speaking, we assume that 
	\begin{itemize}
		\item $\widetilde F_n \to F_\infty$ `pointwise' (see~\ref{item:F-approx} in Section~\ref{sec:globalLip-linGrowth} or~\ref{item:F-approx-B} in Section~\ref{sec:d2c-Nemytskii});
		\item $\widetilde R_n \to R_\infty$ `pointwise' and there exists a small enough $\beta \in [0,\frac{1}{2})$ such that the fractional powers $\widetilde R_n^\beta$ converge to $ R_\infty^\beta$ in an appropriate operator norm (see~\ref{ass:A-conv} in Section~\ref{sec:OU-H} or~\ref{ass:A-conv-B} in Section~\ref{sec:d2c-Nemytskii})
		\item $\widetilde \xi_n \to \xi_\infty$ in $L^p(\Omega; E_\infty)$ or $L^p(\Omega; \widetilde B)$ for some $p \in [1,\infty)$ (see~\ref{ass:IC} in Section~\ref{sec:globalLip-linGrowth} or~\ref{ass:IC-B} in Section~\ref{sec:d2c-Nemytskii}).
	\end{itemize}
	Again we refer to Table~\ref{tab:SPDEs-per-section} for an overview of the different settings and types of solutions, with references to the precise formulations of the corresponding assumptions;
    the setting in the first row (i.e., Section~\ref{sec:conv-graphdiscr-SPDEs}) covers the (fractional) stochastic Allen--Cahn equations announced in Section~\ref{sec:intro:background-motivation}, see Example~\ref{ex:Allen-Cahn} below.
	The following theorem is a summary of the discrete-to-continuum approximation results for solutions to the abstract equations~\eqref{eq:intro-SPDE} in these respective settings.
	
	\begin{theorem}[Discrete-to-continuum convergence---summarized]\label{thm:d2c-conv-informal}
		Let $(\Omega, \cF, \bbP)$ be a probability space and $T \in (0,\infty)$ be a terminal time.
		Consider equations~\eqref{eq:intro-SPDE}, where the state spaces, linear operators, drift operators and initial data are as in one of the rows of Table~\ref{tab:SPDEs-per-section}.
		Let $p \in [1,\infty)$ be the stochastic integrability of the initial data and let $W_n \deq \Pi_n W$, where $(W(t))_{t\ge0}$ is an $H$-valued cylindrical Wiener process.
		For all $n \in \clos\N$, there exists a unique (local or global, see Table~\ref{tab:SPDEs-per-section}) mild solution $X_n$ to~\eqref{eq:intro-SPDE}, and the lifted solution processes $\widetilde X_n \deq \Lambda_n X_n$ satisfy the following:
		\begin{enumerate}
			\item If the solutions are global and $p > 1$, then for all $p^- \in [1, p)$ we have
			\begin{equation}
				\widetilde X_n \to X_\infty
				\quad 
				\text{as } n \to \infty
			\end{equation}
			in $L^{p^-}\!(\Omega; C([0,T]; E_\infty))$ (resp.\ in $L^{p^-}\!(\Omega; C([0,T]; \widetilde B))$).
			In the (semi)linear settings with globally Lipschitz drifts of linear growth, the same in fact holds with $p^- \deq p$ for any $p \in [1,\infty)$.
			\item If the solutions are local, with associated explosion times $\sigma_n \from \Omega \to (0,T]$ (precisely defined in~\eqref{eq:explosion-time} below), then we have
			\begin{equation}
				\widetilde X_n \indic{[0, \sigma_{\infty} \wedge \sigma_n)} \to X_\infty \indic{[0, \sigma_{\infty})}
				\quad 
				\text{as } n \to \infty
			\end{equation}
			in $L^0(\Omega \times [0,T]; E_\infty)$ (resp.\ in $L^0(\Omega \times [0,T]; \widetilde B)$), where $L^0$ indicates convergence in measure.
		\end{enumerate}
	\end{theorem}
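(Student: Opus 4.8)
The plan is to prove the statement one row of Table~\ref{tab:SPDEs-per-section} at a time, since both the existence theory and the mode of convergence differ between the globally Lipschitz, the locally Lipschitz (Nemytskii), and the reaction--diffusion settings. In each case the starting point is the variation-of-constants representation: writing $\widetilde S_n$ for the lifted analytic semigroup generated by $-A_n$ (transported to the common space via $\Lambda_n$ and $\Pi_n$), every lifted solution satisfies, on its interval of existence,
\begin{equation}
  \widetilde X_n(t) = \widetilde S_n(t)\widetilde\xi_n + \int_0^t \widetilde S_n(t-s)\,\widetilde F_n\bigl(s, \widetilde X_n(s)\bigr)\rd s + \widetilde Z_n(t),
\end{equation}
where $\widetilde Z_n(t) \deq \int_0^t \widetilde S_n(t-s)\rd \widetilde W_n(s)$ is the lifted stochastic convolution driven by $\widetilde W_n = \Lambda_n \Pi_n W$. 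For fixed $n$, existence and uniqueness follow from a Banach fixed-point argument (global, under the globally Lipschitz/linear-growth hypotheses) or from a localized fixed point together with a maximality argument producing the explosion time $\sigma_n$ of~\eqref{eq:explosion-time} (local, under merely locally Lipschitz drifts). Convergence of $\widetilde X_n$ to $X_\infty$ is thereby reduced to the convergence of the three terms on the right-hand side.

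The first building block is convergence of the linear flow. From the pointwise resolvent convergence $\widetilde R_n \to R_\infty$, together with the uniform sectoriality encoded in Assumption~\ref{ass:disc3} and the analyticity hypotheses, a Trotter--Kato-type theorem yields strong convergence $\widetilde S_n(t) \to S_\infty(t)$, locally uniformly in $t \in [0,T]$. Combined with $\widetilde\xi_n \to \xi_\infty$ in $L^p(\Omega; E_\infty)$ (resp.\ in $L^p(\Omega;\widetilde B)$), this controls the initial-data term.

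The main obstacle is the stochastic convolution $\widetilde Z_n$, for which I would use the factorization method. Here the assumption that the fractional powers $\widetilde R_n^\beta$ converge to $R_\infty^\beta$ in operator norm for some $\beta \in [0,\tfrac12)$ plays a double role. First, the strict bound $\beta < \tfrac12$ makes the singular integral $\int_0^t (t-s)^{-2\beta}\rd s$ finite, which is exactly what guarantees that $\widetilde Z_n$ admits a continuous modification with values in $E_\infty$ (resp.\ $\widetilde B$) whose $p$-th moments are bounded uniformly in $n$. Second, operator-norm convergence of the fractional powers, together with the strong semigroup convergence and the convergence $\Lambda_n\Pi_n \to \id$ of the noise coupling, upgrades these ingredients to $\widetilde Z_n \to Z_\infty$ in $L^p(\Omega; C([0,T]; E_\infty))$, by a dominated-convergence argument against the integrable kernel $(t-s)^{-2\beta}$. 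Establishing the uniform moment bounds and justifying the passage to the limit inside the factorized integral—uniformly in $n$—is the technically delicate part, since it is where the $\gamma$-radonifying (two-sided) estimates for the projected noise must be handled with care.

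With these two building blocks in hand the cases close as follows. In the globally Lipschitz, linear-growth setting, subtracting the mild formulas for $\widetilde X_n$ and $X_\infty$, estimating the drift difference through $\widetilde F_n \to F_\infty$ and the uniform Lipschitz constant, and applying a (possibly singular) Grönwall inequality yields $\bbE\,\sup_{t\le T}\norm{\widetilde X_n(t) - X_\infty(t)}{E_\infty}^{p} \to 0$, i.e., the sharp statement with $p^- = p$. For merely locally Lipschitz but globally solvable drifts I would localize on the events where $\widetilde X_n$ and $X_\infty$ both remain in a fixed ball, run the same Grönwall estimate there, and use the uniform moment bounds to trade one power of integrability when removing the localization, which accounts for the restriction $p^- < p$. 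Finally, in the local case one runs the comparison only up to $\sigma_\infty \wedge \sigma_n$; since no uniform moment control is available near explosion, convergence is concluded in the weakest sense—convergence in measure on $\Omega \times [0,T]$—by combining path-continuity of the stopped processes, their convergence below the explosion times, and a standard argument showing $\sigma_\infty \wedge \sigma_n \to \sigma_\infty$ in probability.
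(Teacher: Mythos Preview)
Your outline is correct and matches the paper's strategy closely: the paper likewise decomposes the mild formula into the initial-data term, the drift convolution, and the stochastic convolution; uses the discrete-to-continuum Trotter--Kato theorem for the semigroup; and proves convergence of the stochastic convolution via the Da~Prato--Kwapie\'n--Zabczyk factorization combined with the $\gamma$-norm convergence $\widetilde R_n^\beta \to R_\infty^\beta$ (this is Propositions~\ref{prop:d2c-meansquare-conv-Wdelta} and~\ref{prop:d2c-pathwise-conv-Wdelta}). For the locally Lipschitz and local-solution rows, the paper does essentially what you describe, but organized as a truncation of the nonlinearity (retracting $F_n$ onto a ball of radius $r$) rather than a localization of the probability space; the passage from truncated global solutions to the local statement, including $\sigma_\infty\wedge\sigma_n \to \sigma_\infty$, is handled by invoking the abstract approximation result~\cite[Theorem~2.1]{KvN2012}, reproduced here as Theorem~\ref{thm:approx-local-processes}.

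The one genuine methodological difference is in the globally Lipschitz row. You propose a Gr\"onwall argument on $\bbE\sup_{t\le T}\norm{\widetilde X_n(t)-X_\infty(t)}{}^p$. The paper instead uses the contraction property of the fixed-point map $\widetilde\Phi_{n,T_0}$ on a short interval: from $\widetilde X_n = \widetilde\Phi_{n,T_0}(\widetilde X_n)$ and the uniform Lipschitz constant $c<1$ one gets directly $\norm{\widetilde X_n - X_\infty}{} \le (1-c)^{-1}\norm{\widetilde\Phi_{n,T_0}(X_\infty) - \Phi_{\infty,T_0}(X_\infty)}{}$, and then patches intervals of length $T_0/2$. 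Both arguments work; the contraction route avoids having to take suprema inside the Gr\"onwall integral and makes the dependence on the building-block lemmas (Lemmas~\ref{lem:ICterm-convs}--\ref{lem:deterministic-convs}) slightly cleaner. Two minor remarks: the uniform sectoriality is encoded in~\ref{ass:operators}, not Assumption~\ref{ass:disc3}; and the convergence hypothesis on $\widetilde R_n^\beta$ is in the $\gamma$-radonifying norm $\gamma(H_\infty;E_\infty)$ rather than the operator norm, which is what the stochastic-integral estimates actually require.
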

	The full convergence statement for each setting is given in the corresponding part of Sections~\ref{sec:OU-H}--\ref{sec:reaction-diffusion}.
	To be precise, Theorem~\ref{thm:d2c-conv-informal} is comprised of the following results, in order of appearance:
	Theorem~\ref{thm:d2c-conv-example-SPDEs},
	Proposition~\ref{prop:d2c-pathwise-conv-Wdelta},
	Theorems~\ref{thm:d2c-semilinear-global-linear} and~\ref{thm:d2c-conv-locLip-unifBound},
	Proposition~\ref{prop:d2c-pathwise-conv-Wdelta2-B},
	Theorems~\ref{thm:d2c-semilinear-global-linear-B} and~\ref{thm:d2c-conv-locLip-unifBound-B} and~Corollary~\ref{cor:d2c-conv-dissB}.

	\begin{table}
		\centering
		\footnotesize
		\begin{tabularx}{\linewidth}{
				>{\hsize=.05\hsize}X
				>{\hsize=.27\hsize}X
				>{\hsize=.5\hsize}X
				>{\hsize=.18\hsize}c}
			\hline
			\textbf{Sec.}&\textbf{Description}&\textbf{Assumptions}& \textbf{Sol.\ type}\\
			\hline 
			\rule{0pt}{3ex}
			\S{\ref{sec:conv-graphdiscr-SPDEs}} & graph-based approximation of Whittle--Mat\'ern operators on a manifold & 
			\begin{minipage}[t]{\linewidth}
				\begin{itemize}[itemsep=3pt, leftmargin=*]
					\item $A_n \deq [\cL_n^{\tau,\kappa}]^s$ (Whittle--Mat\'ern operators)
					\item $[F_n(t, u)](x) \deq f_n(t, u(x))$ (Nemytskii drift)
					\item Assumption~\ref{ass:nonlinearity} (on the functions $(f_n)_{n\in\clos\N}$)
				\end{itemize}
			\end{minipage} 
			& global 
			\\%
			\rule{0pt}{4ex}
			\S{\ref{sec:OU-H}} 
			& 
			$E_n$-valued linear 
			& 
			\begin{minipage}[t]{\linewidth}
				\begin{itemize}[itemsep=3pt, leftmargin=*]
					\item \ref{ass:disc}--\ref{ass:A-conv} (linear operators)
					\item $F_n \deq 0$ and $\xi_n \deq 0$
				\end{itemize}
			\end{minipage} 
			& 
			global 
			\\
			\rule{0pt}{4ex}
			\S{\ref{sec:globalLip-linGrowth}}
			& 
			$E_n$-valued semilinear; \newline globally Lipschitz drifts of linear growth 
			& 
			\begin{minipage}[t]{\linewidth}
				\begin{itemize}[itemsep=3pt, leftmargin=*]
					\item \ref{ass:disc}--\ref{ass:A-conv} (linear operators)
					\item \ref{item:F-globalLip-linearGrowth}--\ref{item:F-approx} (drift operators)
					\item \ref{ass:IC} (initial data)
				\end{itemize}
			\end{minipage} 
			& 
			global 
			\\
			\rule{0pt}{4ex}
			\S{\ref{sec:locLip}} 
			& 
			$E_n$-valued semilinear; 
			\newline 
			locally Lipschitz and locally bounded drifts 
			& 
			\begin{minipage}[t]{\linewidth}
				\begin{itemize}[itemsep=3pt, leftmargin=*]
					\item \ref{ass:disc}--\ref{ass:A-conv} (linear operators)
					\item \ref{item:F-locLip-unifBdd} and~\ref{item:F-approx} (drift operators)
					\item \ref{ass:IC} (initial data)
				\end{itemize}
			\end{minipage}  & local 
			\\
			\rule{0pt}{4ex}
			\S{\ref{sec:d2c-Nemytskii}} & $B_n$-valued semilinear; \newline globally Lipschitz drifts of linear growth & \begin{minipage}[t]{\linewidth}
				\begin{itemize}[itemsep=3pt, leftmargin=*]
					\item \ref{ass:disc2}--\ref{ass:emb-Htheta-B-H2} with $\theta + 2\beta < 1$ (lin.\ ops.)
					\item \ref{item:F-globalLip-linearGrowth-B}--\ref{item:F-approx-B} (drift operators)
					\item \ref{ass:IC-B} (initial data)
				\end{itemize}
			\end{minipage} & global 
			\\
			\rule{0pt}{4ex}
			\S{\ref{sec:locLip-B}} & $B_n$-valued semilinear; \newline locally Lipschitz and locally bounded drifts & \begin{minipage}[t]{\linewidth}
				\begin{itemize}[itemsep=3pt, leftmargin=*]
					\item \ref{ass:disc2}--\ref{ass:emb-Htheta-B-H2} with $\theta + 2\beta < 1$ (lin.\ ops.)
					\item \ref{item:F-locLip-unifBdd-B} and~\ref{item:F-approx-B} (drift operators)
					\item \ref{ass:IC-B} (initial data)
				\end{itemize}
			\end{minipage} & local 
			\\
			\rule{0pt}{4ex}
			\S{\ref{sec:global-wellposed-RDeq}} & $B_n$-valued semilinear; \newline dissipative drifts 
			& 
			\begin{minipage}[t]{\linewidth}
				\begin{itemize}[itemsep=3pt, leftmargin=*]
					\item \ref{ass:disc2}--\ref{ass:emb-Htheta-B-H2} with $\theta + 2\beta < 1$ (lin.\ ops.)
					\item \ref{item:F-diss-B} and~\ref{item:F-approx-B} (drift operators)
					\item \ref{ass:IC-B} (initial data)
				\end{itemize}
			\end{minipage} 
			& global
		\end{tabularx}
		\caption{\label{tab:SPDEs-per-section} Overview of the types of SPDEs considered in the different (sub)sections comprising this work.
			The row in which an assumption appears for the first time also indicates the (sub)section where its definition can be found.}
	\end{table}
	
	\subsection{Contributions}
	\label{sec:intro:contributions}
	
	The abstract discrete-to-continuum approximation theorems for stochastic semilinear parabolic evolution equations driven by additive cylindrical Wiener noise---summarized in Theorem~\ref{thm:d2c-conv-informal} and proved in Sections~\ref{sec:OU-H}--\ref{sec:reaction-diffusion}---complement the results from~\cite{KvN2011, KvN2012}, which establish the \emph{continuous dependence on the coefficients} of semilinear equations driven by multiplicative noise in a state space with unconditional martingale differences (UMD), i.e., convergence in the case where $E_n = E$ (and $B_n = B$) for all $n \in \clos\N$.
	Given the motivating applications and our aim to provide a self-contained exposition of the proofs, we make the simplifying assumptions that the UMD spaces $(E_n)_{n\in\clos\N}$ have Rademacher type 2 (which was also assumed for $E$ in~\cite{KvN2012} but not in~\cite{KvN2011}) and the noise is additive.
	
	Under these conditions, we provide a direct proof of convergence of the $E_n$-valued stochastic convolutions solving the linear parts of~\eqref{eq:intro-SPDE} 
	using a Da Prato--Kwapie\'n--Zabczyk factorization argument~\cite{DaPratoKwapienZabczyk1987} and a discrete-to-continuum analog of the Trotter--Kato approximation theorem~\cite[Theorem~2.1]{ItoKappel1998}, see Proposition~\ref{prop:d2c-pathwise-conv-Wdelta}.
	We extend it to the semilinear $E_n$-valued settings described in Table~\ref{tab:SPDEs-per-section} by adapting the arguments from~\cite[Sections~3 and~4]{KvN2011} and \cite[Subsection~3.1]{KvN2012} to incorporate the discrete-to-continuum projection and lifting operators, yielding Theorems~\ref{thm:d2c-semilinear-global-linear} and~\ref{thm:d2c-conv-locLip-unifBound}, respectively.
	In order to state and prove the analogous Theorems~\ref{thm:d2c-semilinear-global-linear-B} and~\ref{thm:d2c-conv-locLip-unifBound-B} for the $B_n$-valued settings, we impose a uniform ultracontractivity condition on the semigroups which replaces the restriction in~\cite[Section~3]{KvN2012} that the fractional domain spaces $\dot E_n^\alpha \deq \dom{(\id_n + A_n)^{\alpha/2}}$ also coincide for all $n \in \clos\N$.
	
	Theorem~\ref{thm:d2c-conv-example-SPDEs}, regarding the graph discretization of equations whose linear operators are of generalized Whittle--Mat\'ern type on a manifold $\cM$, extends analogous convergence results for linear equations on a spatial domain (cf.~\cite[Theorem~4.2]{SanzAlonsoYang2022} and~\cite[Theorem~7]{SanzAlonsoYang2022} in $L^2(\cM)$ and $L^\infty(\cM)$, respectively) to spatiotemporal and semilinear equations.
	Like the cited theorems, its proof relies on recent spectral convergence results for graph Laplacians (see~\cite{CGT2022} and~\cite{CalderGTLewicka2022} for convergence of eigenfunctions in $L^2$ and $L^\infty$, respectively), which we use to verify that these SPDEs fit into the abstract framework from Sections~\ref{sec:OU-H}--\ref{sec:reaction-diffusion}.
	
	\subsection{Outline}
	\label{sec:intro:outline}
	
	The remainder of this article is structured as follows.
	In Section~\ref{sec:prelims}, we establish some notational conventions and collect preliminaries regarding the (deterministic) discrete-to-continuum Trotter--Kato approximation theorem and stochastic integration in UMD-type-2 Banach spaces.
	We demonstrate in Section~\ref{sec:graph-WM} how the results summarized by Theorem~\ref{thm:d2c-conv-informal} can be applied to graph discretizations of stochastic parabolic evolution equations whose linear part is a generalized Whittle--Mat\'ern operator on a manifold.
	In Section~\ref{sec:OU-H}, we consider the \emph{linear} $E_n$-valued version of~\eqref{eq:intro-SPDE}, whose solutions are also known as infinite-dimensional Ornstein--Uhlenbeck processes.
	These results are extended in Section~\ref{sec:semilinear-L2} to allow for \emph{semilinear} $E_n$-valued drift operators under (local or global) Lipschitz continuity and boundedness assumptions.
	In Section~\ref{sec:reaction-diffusion} we first treat the analogous results in the semilinear \emph{$B_n$-valued} setting, and then establish global well-posedness and convergence for dissipative drifts. 
	Finally, in Section~\ref{sec:outlook} we discuss some potential directions for further research.
	This work is complemented by three appendices: Appendix~\ref{app:proofs-of-intermediate-results} consists of postponed proofs of some intermediate results from Section~\ref{sec:graph-WM}. Appendices~\ref{app:frac-par-int} and~\ref{app:uniform-sectoriality} are concerned with fractional parabolic integration and (uniformly) sectorial linear operators, respectively.
	See Figure~\ref{fig:connections-sections-mainmatter} for a schematic overview of the relations between {Sections~\ref{sec:graph-WM}--\ref{sec:reaction-diffusion}} and the appendices.

	\begin{figure}
		\centering
		\small
		\begin{tikzpicture}[node distance=1cm, every node/.style={draw, text width=2.2cm, align=center, minimum height=.8cm}]
			\node (sec4) {Section~\ref{sec:OU-H}};
			
			\node (appendix-b) [below=of sec4] {Appendix~\ref{app:frac-par-int}};
			
			\node (appendix-c) [above right=-0.2cm and 1.2cm of sec4] {Appendix~\ref{app:uniform-sectoriality}};
			
			\node (subsec51) [below left=0.5cm and 1.5cm of sec4] {Subsection~\ref{sec:globalLip-linGrowth}};
			\node (subsec52) [below=1.8cm of subsec51] {Subsection~\ref{sec:locLip}};
			
			\node (right1) [below right=0.5cm and 1.5cm of sec4] {Subsection~\ref{sec:d2c-Nemytskii}};
			\node (right2) [below=1cm of right1] {Subsection~\ref{sec:locLip-B}};
			\node (right3) [below=1cm of right2] {Subsection~\ref{sec:global-wellposed-RDeq}};
			
			\node (bottom) [below=1.2cm of {$(subsec52)!0.5!(right3)$}] {Section~\ref{sec:graph-WM}};
			
			\node (appendix-a) [above=1.5cm of bottom] {Appendix~\ref{app:proofs-of-intermediate-results}};
			
			\draw[thick, -{latex}] (sec4) -- (subsec51);
			\draw[thick, -{latex}] (sec4) -- (right1);
			
			\draw[thick, -{latex}] (appendix-a) -- (bottom);
			
			\draw[thick, -{latex}] (appendix-b) -- (sec4);
			\draw[thick, -{latex}] (appendix-b) -- (subsec51); 
			\draw[thick, -{latex}] (appendix-b) -- (right1);  
			
			\draw[thick, -{latex}] (appendix-c) -- (sec4);  
			
			\draw[thick, -{latex}] (subsec51) -- (subsec52);
			\draw[thick, -{latex}] (subsec51) -- (bottom);
			
			\draw[thick, -{latex}] (right1) -- (right2);
			\draw[thick, -{latex}] (right2) -- (right3);
			\draw[thick, -{latex}] (right3) -- (bottom);
		\end{tikzpicture}
		\caption{\label{fig:connections-sections-mainmatter} Relations between the sections comprising the main part of this article and the appendix.
			Arrows indicate when the results of one section are applied in another.}
	\end{figure}
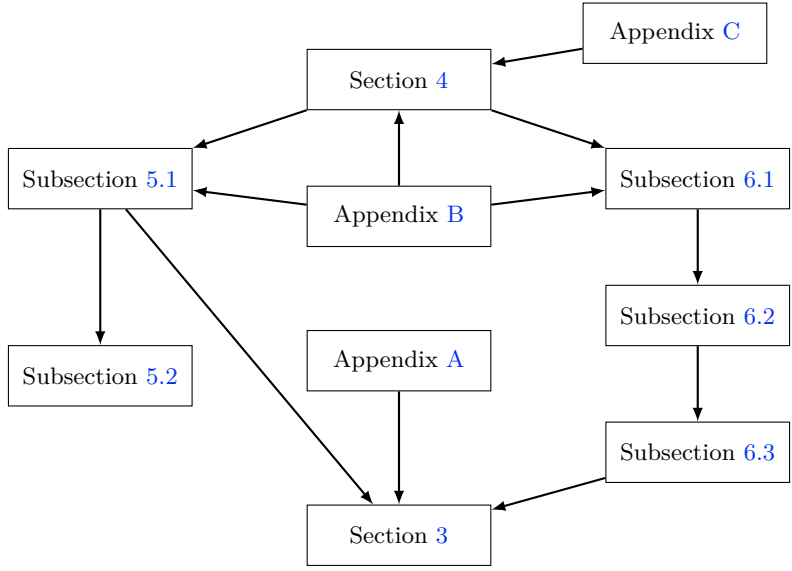
	
	\section{Preliminaries}
	\label{sec:prelims}
	
	\subsection{Notation}
	\label{sec:prelims:notation}
	\begin{table}
		\centering
		\footnotesize
		\begin{minipage}[t]{.48\textwidth}
			\vspace*{0pt}
			\begin{tabularx}{\linewidth}{>{\hsize=.25\hsize}X>{\hsize=.75\hsize}X}
				\multicolumn{2}{l}{\textbf{Elementary sets and operations}} \vspace{.1em} \\
				$\N$ & positive integers \\
				$\N_0$ & nonnegative integers \\
				$\clos\N$ & $\N \cup \{\infty\}$ \\
				$\id_D$ & identity map on a set $D$ \\
				$\mathbf 1_{D_0}$ & indicator map on $D_0 \subseteq D$ \\
				$s\wedge t$ & minimum of $s,t\in\R$ \\
			\end{tabularx}
			\\[.3\baselineskip]
			\begin{tabularx}{\linewidth}{>{\hsize=.25\hsize}X>{\hsize=.75\hsize}X}
				\multicolumn{2}{l}{\textbf{Bounded linear operators}} \vspace{.1em} \\
				$H$, $K$ & \raggedright\arraybackslash separable Hilbert spaces \\
				$E$, $F$ & \raggedright\arraybackslash arbitrary Banach spaces \\
				$\scalar{\,\cdot\,,\,\cdot\,}{H}$ & inner product of $H$ \\
				$\norm{\,\cdot\,}{E}$ & norm of $E$ \\
				$E^*$ & dual space of $E$ \\
				$\LO(E; F)$ & \raggedright\arraybackslash bounded linear operators (from $E$ to $F$) \\
				$\LO(E)$ & abbreviation of $\LO(E; E)$ \\
				$\gamma(H; E)$ & \raggedright\arraybackslash  $\gamma$-radonifying operators \\
				$\LO_2(H; K)$ & \raggedright\arraybackslash  Hilbert--Schmidt operators 
				\\
			\end{tabularx}
			\\[.3\baselineskip]
			\begin{tabularx}{\linewidth}{>{\hsize=.25\hsize}X>{\hsize=.75\hsize}X}
				\multicolumn{2}{l}{\textbf{Function spaces}} \vspace{.1em} \\
				$C(K; E)$ & \raggedright\arraybackslash continuous functions from a compact space $K$ to $E$ \\
				$C(K)$ & \raggedright\arraybackslash abbreviation of $C(K; \R)$ \\
				$L^p(S; E)$ & \raggedright\arraybackslash Bochner space of $p$-integrable functions from a measure space $(S, \mathscr A\!, \nu)$ to $E$ \\
				$L^p(S)$ & Lebesgue space $L^p(S; \R)$ \\
			\end{tabularx}
			\\[.3\baselineskip]
			\begin{tabularx}{\linewidth}{>{\hsize=.25\hsize}X>{\hsize=.75\hsize}X}
				\multicolumn{2}{l}{\textbf{Graph discretization}} \vspace{.1em} \\
				$\cM$ & manifold from Assumption~\ref{ass:manifold} \\
				$\bbT^m$ & $m$-dimensional flat torus \\
				$d_{\cM}$ & geodesic metric on $\cM$ \\
				$\mu$ & volume measure on $\cM$ \\
				$\cM_n$ & point cloud $(x_n^{(j)})_{j=1}^n \subset \cM$ \\
				$(\widetilde \Omega, \widetilde{\cF}, \widetilde{\bbP})$ & \raggedright\arraybackslash probability space of random point cloud from Example~\ref{ex:sampling-manifold} \\
				$\mu_n$ & empirical measure on $\cM_n$ \\
				$T_n$ & transport map from $\cM$ to $\cM_n$ \\
				$\varepsilon_n$ & $\sup_{x\in \cM} d_{\cM}(x, T_n(x))$ \\
				$h_n$ & \raggedright\arraybackslash graph connectivity length scale, see~\eqref{eq:graph-weights} \\
				$\cL_n^{\tau,\kappa}$ & \raggedright\arraybackslash (discretized) Whittle--Mat\'ern operator with coefficient functions $\tau,\kappa \from \cM \to [0,\infty)$ \\
				$(\lambda_n^{(j)})_{j=1}^n$ & eigenvalues of $\cL_n^{\tau,\kappa}$ \\
				$(\psi_n^{(j)})_{j=1}^n$ & \raggedright\arraybackslash $L^2(\cM_n)$-normalized eigenfunctions of $\cL_n^{\tau,\kappa}$ \\
				$M_{\psi, \infty}$ & \raggedright\arraybackslash uniform $L^\infty$-bound of the eigenfunctions, see Assumption~\ref{ass:uniform-Linfty-bdd-eigenfuncs} \\
				$M_{S, q}$ & uniform-ultracontractivity constant, see~\eqref{eq:ultracontractivity}
			\end{tabularx}
		\end{minipage}%
		\begin{minipage}[t]{.01\textwidth}\end{minipage}
		\begin{minipage}[t]{.49\textwidth}
			\vspace*{0pt}
			\begin{tabularx}{\linewidth}{>{\hsize=.3\hsize}X>{\hsize=.7\hsize}X}
				\multicolumn{2}{l}{\textbf{`Discrete-to-continuum' spaces}} \vspace{.1em} \\
				$(E_n)_{n\in\clos\N}$, $\widetilde E$ & \raggedright\arraybackslash Banach spaces from Assumption~\ref{ass:disc3} or~\ref{ass:disc} \\
				$(H_n)_{n\in\clos\N}$, $\widetilde H$ & Hilbert spaces from~\ref{ass:disc} \\
				$(B_n)_{n\in\clos\N}$, $\widetilde B$ & Banach spaces from~\ref{ass:disc2} \\
			\end{tabularx}
			\\[.3\baselineskip]
			\begin{tabularx}{\linewidth}{>{\hsize=.07\hsize}X>{\hsize=.93\hsize}X}
				\multicolumn{2}{l}{\textbf{Projection and lifting}} \vspace{.1em} \\
				$\Pi_n$ & \raggedright\arraybackslash projection operator from $E_n$ (resp.\ $H_n$ or $B_n$) to $\widetilde E$ (resp.\ $\widetilde H$ or $\widetilde B$) \\
				$\Lambda_n$ & \raggedright\arraybackslash lifting operator from $\widetilde E$ (resp.\ $\widetilde H$ or $\widetilde B$) to $E_n$ (resp.\ $H_n$ or $B_n$) \\
				$\widetilde T_n$ & lifted version $\Lambda_n T_n \Pi_n$ on $\widetilde E$ (resp.\ $\widetilde H$ or $\widetilde B$) of operator $T_n$ on $E_n$ (resp.\ $H_n$ or~$B_n$) \\
				$\widetilde Y_n$ & lifted version $\Lambda_n Y_n$ on $\widetilde E$ (resp.\ $\widetilde H$ or $\widetilde B$) of process $Y_n$ on $E_n$ (resp.\ $H_n$ or $B_n$) \\
				$M_\Pi$ & $\sup_{n\in\N} 
				\norm{\Pi_n}{\LO(\widetilde E; E_n)}$ \\
				$\widetilde M_\Pi$ & $\sup_{n\in\N} 
				\norm{\Pi_n}{\LO(\widetilde B; B_n)}$ \\
				$M_\Lambda$ & $\sup_{n\in\N} 
				\norm{\Lambda_n}{\LO(E_n; \widetilde E)}$ \\
				$\widetilde M_\Lambda$ & $\sup_{n\in\N} 
				\norm{\Lambda_n}{\LO(B_n; \widetilde B)}$ \\
			\end{tabularx}
			\\[.3\baselineskip]
			\begin{tabularx}{\linewidth}{>{\hsize=.2\hsize}X>{\hsize=.8\hsize}X}
				\multicolumn{2}{l}{\textbf{Linear operators in evolution equations}} \vspace{.1em} \\
				$A_n$ & \raggedright\arraybackslash linear operator on $E_n$ with domain $\dom{A_n}$ \\
				$S_n$ & semigroup generated by $-A_n$ \\
				\raggedright\arraybackslash $M_S$ & \raggedright\arraybackslash uniform-boundedness constant of $(S_n)_{n\in\clos\N}$ in $(E_n)_{n\in\clos\N}$, see~\eqref{eq:semigroup-estimates} \\
				\raggedright\arraybackslash $\widetilde M_S$ & \raggedright\arraybackslash uniform-boundedness constant of $(S_n)_{n\in\clos\N}$ in $(B_n)_{n\in\clos\N}$, see~\eqref{eq:unif-bdd-Sn-Bn} \\
				$\rho(A_n)$ & resolvent set of $A_n$ \\
				$R_n^\beta$ & $(A_n + \id_n)^{-\beta}$ \\
				$\mathfrak I_{A_n}^s$ & fractional parabolic integration operator, see Appendix~\ref{app:frac-par-int} \\
			\end{tabularx}
			\\[.3\baselineskip]
			\begin{tabularx}{\linewidth}{>{\hsize=.2\hsize}X>{\hsize=.8\hsize}X}
				\multicolumn{2}{l}{\textbf{Stochastic evolution equations}} \vspace{.1em} \\
				$Q_n$ & covariance $\Pi_n \Pi_n^* \in \LO(H_n)$ \\
				$\rd W_n$ & \raggedright\arraybackslash $H_n$-valued $Q_n$-cylindrical Wiener noise on $(\Omega, \cF, \bbP)$ \\
				$W_{A_n}$ & stochastic convolution, see~\eqref{eq:mildsol} \\
				$\xi_n$ & initial datum \\
				$F_n$ & \raggedright\arraybackslash drift operator \\
				$L_F$, $C_F$ & \raggedright\arraybackslash  Lipschitz and growth constants of $E_n$-valued drifts, see~\ref{item:F-globalLip-linearGrowth} \\
				$\widetilde L_F$, $\widetilde C_F$ & \raggedright\arraybackslash  Lipschitz and growth constants of $B_n$-valued drifts, see~\ref{item:F-globalLip-linearGrowth-B} \\
				$L_F^{(r)}$, $C_{F,0}$ & \raggedright\arraybackslash local Lipschitz and growth constants of $E_n$-valued drifts, see~\ref{item:F-locLip-unifBdd} \\
				$f_n$ & \raggedright\arraybackslash real-valued drift coefficient function \\
				$\widetilde L_f$, $\widetilde C_f$& \raggedright\arraybackslash Lipschitz and growth constants of $(f_n)_{n\in\clos\N}$ see Assumption~\ref{ass:nonlinearity}\ref{ass:nonlinearity:globalLip-linGrowth}
			\end{tabularx}
		\end{minipage}
		\caption{\label{tab:notation} A selection of notation which is used repeatedly throughout this work.}
	\end{table}
	Table~\ref{tab:notation} lists some of the most important notation which recurs throughout this work.
	
	The Cartesian product $\prod_{j \in \cI} B_j$ of an 
	indexed family of sets $(B_j)_{j \in \cI}$ is comprised of all 
	functions $f \from \cI \to \bigcup_{j\in\cI} B_j$ satisfying
	$f(j) \in B_j$ for all $j \in \cI$.
	Given two sets  
	$\mathscr{P},\mathscr{Q}$ 
	and maps 
	$\mathscr{F},\mathscr{G}\from \mathscr{P}\times\mathscr{Q}\to\R$, 
	we write
	$\mathscr{F} (p,q) \lesssim_{q} \mathscr{G} (p,q)$ 
	to indicate the existence of some $C \from \mathscr Q \to [0,\infty)$
	such that
	$\mathscr{F} (p,q) \leq C(q) \, \mathscr{G} (p,q)$
	for all $(p,q) \in \mathscr P \times \mathscr{Q}$.
	We write $\mathscr{F} (p,q) \eqsim_q \mathscr{G} (p,q)$ 
	if both
	$\mathscr{F} (p,q) \lesssim_q \mathscr{G}(p,q)$ 
	and $\mathscr{G} (p,q) \lesssim_q \mathscr{F}(p,q)$ hold. 
	
	All normed spaces will be considered over the real or complex scalar field.
	Results concerning spectra and complex interpolation are naturally formulated for complex spaces;
	if the space is real, then we implicitly apply them to the complexification and subsequently restrict back to the original space. 
	
	The space $\LO(E; F)$ of bounded linear operators from $E$ to $F$ shall be equipped with the norm $\norm{T}{\LO(E; F)} \deq \sup_{\norm{x}{E} = 1} \norm{Tx}{F}$.
	We call $T \in \LO(E; F)$ a contraction if $\norm{T}{\LO(E; F)} \le 1$; in particular, the inequality need not be strict.
	An operator $T \in \LO(H)$ is said to be positive definite if there exists $\theta \in (0,\infty)$ such that $\scalar{Tx, x}{H} \ge \theta \norm{x}{H}^2$ for all $x \in H$, and nonnegative definite if $\scalar{Tx, x}{H} \ge 0$.
	Given $T, S \in \LO(H; K)$, we set $\scalar{T, S}{\LO_2(H; K)} \deq \sum_{j=1}^\infty \scalar{Te_j, S e_j}{K}$, where $(e_j)_{j\in\N}$ is any orthonormal basis of~$H$; the space $\LO_2(H; K)$ of Hilbert--Schmidt operators consists of those $T$ for which $\norm{T}{\LO_2(H; K)}^2 \deq \scalar{T, T}{\LO_2(H; K)} < \infty$.
	
	Given a measure space $(S, \mathscr A\!, \nu)$, a function $f \from S \to E$ is said to be strongly measurable if it can be approximated $\nu$-a.e.\ by simple functions.
	For $p \in [1,\infty]$, let $L^p(S; E)\deq L^p(S, \mathscr A \!, \nu; E)$ denote the Bochner space of (equivalence classes of) strongly measurable and $p$-integrable functions from $S$ to $E$, with norm 
	\[
	\norm{f}{L^p(S; E)} \coloneqq
	\begin{cases}
		(\int_S \norm{f(s)}{E}^p \rd 
		\nu(s))^{\frac{1}{p}}, &\text{if } p \in [1,\infty); \\
		\operatorname{ess\,sup}_{s \in 
			S}\norm{f(s)}{E}, &\text{if } p = \infty.
	\end{cases}
	\]
	Sub-intervals $J \subseteq \R$ are equipped with the Lebesgue $\sigma$-algebra and measure. 
	The Banach space of (bounded) continuous functions $u \from J \to E$, endowed with the supremum norm, is denoted by $C(J; E)$.
	
	The meaning of the tensor symbol $\otimes$ will depend on the context: 
	Given a map $\Phi \from (a,b) \to \LO(E; F)$ and some $x \in E$, we define the function $\Phi \otimes x \from (a,b) \to F$ by $[\Phi \otimes x](t) \deq \Phi(t)x$.
	If instead an $h \in H$ is given, we define $h \otimes x \in \LO(H; E)$ to be the rank-one operator $[h \otimes x](u) \deq \scalar{h, u}{H} x$.
	The space of all (finite) linear combinations of such operators is denoted by $H \otimes E$.
	We define the convolution $\Psi * f \from [0,T] \to F$ of the functions $\Psi \from [0,T] \to \LO(E; F)$ and $f \from [0,T] \to E$ by $[\Psi * f](t) \deq \int_0^t \Psi(t-s) f(s) \rd s$.
	
	\subsection{Discrete-to-continuum Trotter--Kato approximation theorem}
	\label{sec:d2c}

	We encode the discrete-to-continuum setting in the following way:
	
	\begin{assumption}\label{ass:disc3}
		Let $(E_n, \norm{\,\cdot\,}{E_n})_{n \in \clos \N}$ and $(\widetilde E, \norm{\,\cdot\,}{\widetilde E})$ be real or complex Banach spaces and suppose that $E_\infty$ is a closed linear subspace of $\widetilde E$. 
		We assume that there exist operators $\Pi_n \in \LO(\widetilde E; E_n)$ and $\Lambda_n \in \LO(E_n; \widetilde E)$ for all $n \in \N$ which satisfy
		\begin{enumerate}
			\item \label{ass:disc:proj-emb-bounds3} 
			$M_\Pi \deq \sup_{n\in\N} 
			\norm{\Pi_n}{\LO(\widetilde E; E_n)} < \infty$
			and 
			$M_\Lambda \deq \sup_{n\in\N} 
			\norm{\Lambda_n}{\LO(E_n; \widetilde E)} < \infty$;
			\item \label{ass:disc:approx-id3} $\Lambda_n \Pi_n x \to x$ in 
			$\widetilde E$ as $n \to 
			\infty$ for all $x \in E_\infty$;
			\item \label{ass:disc:id3} $\Pi_n \Lambda_n = \id_{E_n}$ for 
			all $n \in \N$.
		\end{enumerate}
		In addition, we denote $\Pi_\infty = \Lambda_\infty \deq \id_{\widetilde E}$ for convenience.
	\end{assumption}
	Note that parts~\ref{ass:disc:proj-emb-bounds3} and~\ref{ass:disc:id3} together 
	imply that the lifting operators are continuous embeddings $\Lambda_n \from E_n \hookrightarrow \widetilde E$.
	In applications, they will typically be nested (in the sense that $E_n \hookrightarrow E_{n+1}$ for all $n \in \N$) and finite-dimensional, but neither of these assumptions is strictly necessary in the abstract theory.
	Moreover, we will often have $\widetilde E = E_\infty$, but not always; see Section~\ref{sec:reaction-diffusion}.
	
	Now we consider the following sequence of linear operators on $(E_n)_{n\in\clos\N}$:
	\begin{assumption}\label{ass:operators3}
		For all $n \in \clos \N$, let $-A_n \from \dom{A_n} \subseteq E_n \to E_n$ be a linear operator generating a strongly continuous semigroup $(S_n(t))_{t\ge0} \subseteq \LO(E_n)$, meaning that $S_n(0) = \id_{E_n}$, $S_n(t + s) = S_n(t)S_n(s)$ for all $t, s \ge 0$, and $S_n(t)x \to x$ as $t \downarrow 0$ for all $x \in E_n$.
		Suppose that there exist $M_S \in [1,\infty)$ and $w \in \R$ such that
		\begin{equation}\label{eq:semigroup-estimates}
			\norm{S_n(t)}{\LO(E_n)} 
			\le 
			M_S e^{-w t}
			\quad 
			\text{for all } 
			n \in \clos \N 
			\text{ and } 
			t \in [0,\infty).
		\end{equation}
	\end{assumption}

	Given a linear operator $A \from \dom{A} \subseteq E \to E$ on a 
	Banach space $(E, \norm{\,\cdot\,}{E})$, where $\dom{A}$ denotes its domain,
	we say that $\lambda\in \C$ belongs to the 
	\emph{resolvent set $\rho(A)$ of $A$} if the 
	corresponding \emph{resolvent operator $R(\lambda,A) := (\lambda \id_E - 
		A)^{-1}$}
	exists 
	in $\mathscr L(E)$.
	Given a sequence $(A_n)_{n\in\clos\N}$ of such operators, generating strongly continuous semigroups with uniform growth bounds, the Trotter--Kato approximation theorem (see, e.g.,~\cite[Chapter~III, 
	Theorem~4.8]{EngelNagel2000}) establishes a link between the strong convergence of resolvents and uniform convergence of the semigroups on compact subintervals of~$[0,\infty)$.
	The following discrete-to-continuum analog of this result was proved by Ito and 
	Kappel~\cite[Theorem~2.1]{ItoKappel1998}:
	\begin{theorem}[{Discrete-to-continuum 
			Trotter--Kato approximation}]\label{thm:d2c-TKapprox}
		Let Assumptions~\ref{ass:disc3} and~\ref{ass:operators3} be 
		satisfied, with $w \in \R$.
		The following statements are equivalent:
		\begin{enumerate}[(a)]
			\item\label{thm:d2c-TKapprox:a} There exists a $\lambda \in 
			\bigcap_{n\in\clos\N} \rho(A_n)$ such 
			that, for every $x \in E_\infty$,
			\begin{equation}
				\Lambda_n R(\lambda, A_n) \Pi_n x \to R(\lambda, A_\infty)x
				\quad 
				\text{in } \widetilde E \quad \text{as } n \to \infty.
			\end{equation}
			\item \label{thm:d2c-TKapprox:b}
			For all $x \in E_\infty$ and $T \in (0,\infty)$ it holds that 
			\begin{equation}
				\Lambda_n S_n \Pi_n \otimes x
				\to 
				S_\infty \otimes x
				\quad 
				\text{in } C([0,T]; \widetilde E)
				\quad 
				\text{as } n \to \infty.
			\end{equation}
		\end{enumerate}
		If~\ref{thm:d2c-TKapprox:a} holds for some $\lambda \in 
		\bigcap_{n\in\clos\N} \rho(A_n)$ (or, equivalently, if~\ref{thm:d2c-TKapprox:b} holds), then~\ref{thm:d2c-TKapprox:a} holds in fact for \emph{every} $\lambda \in \C$ such that $\Re \lambda < w$.
	\end{theorem}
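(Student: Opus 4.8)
The plan is to establish the two implications separately and to obtain the concluding half-plane statement as a by-product of the resulting cycle. Throughout I abbreviate the lifted operators by $\widetilde S_n(t)\deq\Lambda_n S_n(t)\Pi_n$ and $\widetilde R_n(\lambda)\deq\Lambda_n R(\lambda,A_n)\Pi_n$. The algebraic backbone is the relation $\Pi_n\Lambda_n=\id_{E_n}$ from Assumption~\ref{ass:disc3}\ref{ass:disc:id3}: it allows lifted products of operators on $E_n$ to factor through $\widetilde E$, so that for instance $\Lambda_n S_n(t)R(\lambda,A_n)\Pi_n=\widetilde S_n(t)\widetilde R_n(\lambda)$. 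Combined with Assumption~\ref{ass:disc3}\ref{ass:disc:proj-emb-bounds3} and the uniform bound~\eqref{eq:semigroup-estimates}, this gives $\norm{\widetilde S_n(t)}{\LO(\widetilde E)}\le M_\Lambda M_\Pi M_S e^{-wt}$ for all $n$ and $t\ge0$, which provides the uniform control used repeatedly below.

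For \ref{thm:d2c-TKapprox:b}$\Rightarrow$\ref{thm:d2c-TKapprox:a} I would use that $-A_n$ generates $S_n$ together with~\eqref{eq:semigroup-estimates}, which yields the Laplace representation $R(\lambda,A_n)=-\int_0^\infty e^{\lambda t}S_n(t)\rd t$ for every $\lambda$ with $\Re\lambda<w$. Lifting and inserting $x\in E_\infty$ gives $\widetilde R_n(\lambda)x=-\int_0^\infty e^{\lambda t}\widetilde S_n(t)x\rd t$; by~\ref{thm:d2c-TKapprox:b} the integrand converges pointwise to $e^{\lambda t}S_\infty(t)x$ and is dominated by $M_\Lambda M_\Pi M_S e^{(\Re\lambda-w)t}\norm{x}{\widetilde E}$, which is integrable. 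Dominated convergence then yields $\widetilde R_n(\lambda)x\to R(\lambda,A_\infty)x$ for \emph{every} $\lambda$ with $\Re\lambda<w$, so that this direction already delivers the final assertion of the theorem once the cycle is closed.

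The substantial implication is \ref{thm:d2c-TKapprox:a}$\Rightarrow$\ref{thm:d2c-TKapprox:b}, and I expect it to be the main obstacle. Fix the $\lambda$ supplied by~\ref{thm:d2c-TKapprox:a}. Since $\norm{\widetilde S_n(t)}{\LO(\widetilde E)}$ is uniformly bounded on $[0,T]$, an $\varepsilon/3$-argument reduces the claim to proving $\widetilde S_n\otimes x\to S_\infty\otimes x$ in $C([0,T];\widetilde E)$ for $x$ in the dense subspace $\dom{A_\infty^2}$ of $E_\infty$. Writing such an $x$ as $x=R(\lambda,A_\infty)z$ with $z\in\dom{A_\infty}$, I would first compare $\widetilde S_n(t)x$ with $\widetilde S_n(t)\widetilde R_n(\lambda)z$: the error $\widetilde S_n(t)\bigl(x-\widetilde R_n(\lambda)z\bigr)$ vanishes uniformly in $t$, because $\widetilde R_n(\lambda)z\to R(\lambda,A_\infty)z=x$ by~\ref{thm:d2c-TKapprox:a} while $\widetilde S_n(t)$ is uniformly bounded. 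It then remains to prove the intertwining convergence $\widetilde S_n(t)\widetilde R_n(\lambda)z\to S_\infty(t)R(\lambda,A_\infty)z$.

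For the intertwining step I would differentiate the $E_n$-valued curve $s\mapsto\Phi_n(s)\deq S_n(t-s)R(\lambda,A_n)\Pi_n S_\infty(s)z$ on $[0,t]$, using the identities $A_nR(\lambda,A_n)=\lambda R(\lambda,A_n)-\id_{E_n}$ and $A_\infty S_\infty(s)z=\lambda S_\infty(s)z-S_\infty(s)y$ (valid since $z=R(\lambda,A_\infty)y$). The two terms proportional to $\lambda R(\lambda,A_n)\Pi_n S_\infty(s)z$ cancel, and after lifting the fundamental theorem of calculus becomes
\[
\widetilde S_n(t)\widetilde R_n(\lambda)z
=\widetilde R_n(\lambda)\bigl(S_\infty(t)z\bigr)
-\int_0^t \widetilde S_n(t-s)\,r_n(s)\rd s,
\]
where $r_n(s)\deq\widetilde R_n(\lambda)\bigl(S_\infty(s)y\bigr)-\Lambda_n\Pi_n R(\lambda,A_\infty)\bigl(S_\infty(s)y\bigr)$. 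The decisive point is then to upgrade the pointwise (in $s$ and $t$) convergences coming from~\ref{thm:d2c-TKapprox:a} and Assumption~\ref{ass:disc3}\ref{ass:disc:approx-id3} to convergence that is uniform on $[0,T]$: since the orbits $\{S_\infty(s)y:s\in[0,T]\}$, $\{R(\lambda,A_\infty)S_\infty(s)y:s\in[0,T]\}$ and $\{S_\infty(t)z:t\in[0,T]\}$ are compact and the operators involved are uniformly bounded, strong convergence is automatically uniform on these sets, whence $\sup_{s\in[0,T]}\norm{r_n(s)}{\widetilde E}\to0$ and $\widetilde R_n(\lambda)\bigl(S_\infty(\cdot)z\bigr)\to S_\infty(\cdot)R(\lambda,A_\infty)z$ uniformly. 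Passing to the limit in the displayed identity then gives the intertwining convergence, and combining it with the reduction step and the density of $\dom{A_\infty^2}$ completes~\ref{thm:d2c-TKapprox:a}$\Rightarrow$\ref{thm:d2c-TKapprox:b}. Finally, the equivalence together with the half-plane conclusion follows from the cycle \ref{thm:d2c-TKapprox:a} (for one $\lambda$) $\Rightarrow$ \ref{thm:d2c-TKapprox:b} $\Rightarrow$ \ref{thm:d2c-TKapprox:a} (for all $\Re\lambda<w$).
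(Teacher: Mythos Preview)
The paper does not prove this theorem; it states it as a citation of Ito--Kappel~\cite[Theorem~2.1]{ItoKappel1998} and uses it as a black box. Your argument is therefore not being compared against an in-paper proof but against the classical one.

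Your proposal is essentially the standard Trotter--Kato argument, correctly threaded through the lifting/projection framework via $\Pi_n\Lambda_n=\id_{E_n}$. The Laplace-transform direction \ref{thm:d2c-TKapprox:b}$\Rightarrow$\ref{thm:d2c-TKapprox:a} is routine and gives the half-plane conclusion directly. For \ref{thm:d2c-TKapprox:a}$\Rightarrow$\ref{thm:d2c-TKapprox:b}, the reduction to $x\in\dom{A_\infty^2}$, the identity obtained by differentiating $s\mapsto S_n(t-s)R(\lambda,A_n)\Pi_n S_\infty(s)z$, and the passage to the limit via uniform-on-compacta strong convergence are all sound. One point worth making explicit: the uniform bound on $\widetilde R_n(\lambda)$ restricted to $E_\infty$ that you need for the compactness upgrade is not assumed but follows from the uniform boundedness principle, since $\widetilde R_n(\lambda)x$ converges (hence is bounded) for every $x$ in the Banach space $E_\infty$. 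With that remark added, the argument is complete and matches the classical proof structure.
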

	
	\subsection{Stochastic integration in UMD-type-2 Banach spaces}
	\label{sec:stoch-int-UMD-type2}
	
	Given a separable Hilbert space $(H, \scalar{\,\cdot\, , \,\cdot\,}{H})$ over the real scalar field, let $(W(t))_{t\ge0}$ be an $H$-valued cylindrical Wiener process with respect to some filtered probability space $(\Omega, (\cF_t)_{t\ge0}, \cF, \bbP)$.
	A rigorous definition can be found in~\cite[Section~2.5.1]{LiuRockner2015}.
	For our purpose of constructing the stochastic integral below, it suffices to define $\scalar{W(t), h}{H} \deq \sum_{j=1}^\infty \beta_j(t) \scalar{e_j, h}{H}$ for all $t \ge 0$ and $h \in H$, where $(e_j)_{j\in\N}$ is an orthonormal basis of $H$ and $(\beta_j(\,\cdot\,))_{j\in\N}$ is a sequence of independent (real-valued) Brownian motions on $(\Omega, (\cF_t)_{t\ge0}, \cF, \bbP)$. 
	Then $(\scalar{W(t), h}{H})_{t\ge0}$ is a well-defined Brownian motion for every $h \in H$, and intuitively one can think of $(W(t))_{t\ge0}$ as being given by $W(t) = \sum_{j=1}^\infty \beta_j(t) e_j $.
	
	Let $(E, \norm{\,\cdot\,}{E})$ be a real Banach space, and let $(\gamma_j)_{j\in\N}$ be a sequence of independent (real-valued) standard normal random variables on a probability space $(\Omega', \cF', \bbP')$, independent of the probability spaces $(\Omega, \cF, \bbP)$ and $(\widetilde \Omega, \widetilde{\cF}, \widetilde{\bbP})$ used in the rest of this work. 
	We define the space $\gamma(H; E)$ of \emph{$\gamma$-radonifying operators from $H$ to $E$} as the completion of the finite-rank operators $H \otimes E$ with respect to the norm
	$
	\Norm{\sum_{j=1}^n h_j \otimes x_j}_{\gamma(H; E)} 
	\deq 
	\Norm{\sum_{j=1}^n \gamma_j x_j}_{L^2(\Omega'; E)},
	$
	where we assume that the $(h_j)_{j=1}^n$ are $H$-orthonormal.
	This norm is well-defined, i.e., it can be checked that the right-hand side is independent of the choice of representation.
	An important feature of $\gamma(H; E)$ is its \emph{ideal property} (in the algebraic sense)~\cite[Theorem~9.1.10]{HvNVWVolumeII}, which states that for all $T \in \gamma(H; E)$, $U \in \LO(E; F)$ and $S \in \LO(K; H)$, we have 
	\begin{equation}\label{eq:ideal-property}
		UTS \in \gamma(K; F)
		\quad \text{with} \quad 
		\norm{UTS}{\gamma(K; F)} 
		\le 
		\norm{U}{\LO(E; F)} \norm{T}{\gamma(H; E)} \norm{S}{\LO(K; H)}.
	\end{equation}
	For any rank-one operator $h \otimes x \in \LO(H; E)$, we have $h \otimes x \in \gamma(H; E)$ with
	\begin{equation}\label{eq:gamma-norm-pure-tensor}
		\norm{h \otimes x}{\gamma(H; E)} = \norm{h}{H} \norm{x}{E}.
	\end{equation}

	The stochastic integral of an elementary integrand $\Phi \from (0,\infty) \to H \otimes E$, i.e., a function of the form $\Phi(t) = \sum_{j=1}^n \indic{(a_j, b_j]}(t) h_j \otimes x_j$, is defined by 
	\begin{equation}
		\int_0^\infty \Phi(t) \rd W(t) \deq \sum_{j=1}^n \bigl(\scalar{h_j, W(b_j)}{H} - \scalar{h_j, W(a_j)}{H}\bigr) x_j
		\in 
		L^2(\Omega; E).
	\end{equation}
	In order to extend the definition of the stochastic integral beyond elementary integrands, one needs to impose further geometric assumptions on the Banach space~$E$.
	In this article we work in one of the standard settings, namely that of spaces with \emph{unconditional martingale differences} and \emph{Rademacher type} $2$ (abbreviated to \emph{UMD-type-2}).
	Definitions of these notions can be found in~\cite[Section~4.2]{HvNVWVolumeI} and~\cite[Section~7.1]{HvNVWVolumeII}, respectively, but we will only use them to ensure existence of stochastic integrals.
	In this case, one can establish the \emph{It\^o inequality}
	\begin{equation}\label{eq:Ito-inequality}
		\Norm[\bigg]{\int_0^\infty \Phi(t) \rd W(t)}_{L^2(\Omega; E)}
		\lesssim_E 
		\norm{\Phi}{L^2(0,\infty; \gamma(H; E))}
	\end{equation}
	for elementary integrands~\cite[Proposition~4.2]{vNVW2015}, and use it to extend the definition of the stochastic integral to all $\Phi \in L^2(0,\infty; \gamma(H; E))$.
	In fact, since we are only concerned with deterministic integrands in this work, we could suffice with the type $2$ assumption.
	Despite this, we additionally impose the UMD assumption for the sake of compatibility with some of the literature, and because the concrete examples of Banach spaces in which we are interested (such as the Lebesgue $L^q$-spaces for $q \in [2,\infty)$) satisfy both properties.
	For more details on stochastic integration in Banach spaces, we refer to the survey article~\cite{vNVW2015}.
	
	The exponent $2$ in $L^2$ appearing on both sides of~\eqref{eq:Ito-inequality} can be replaced by any other $p \in [1, \infty)$ at the cost of a $p$-dependent constant, see for instance~\cite[Theorem~4.7]{vNVW2015}.
	If $E$ is also a Hilbert space, then $\gamma(H; E)$ is isometrically isomorphic to the space $\LO_2(H; E)$, see~\cite[Proposition~9.1.9]{HvNVWVolumeII}, and instead of the inequality~\eqref{eq:Ito-inequality} we have the It\^o \emph{isometry} between $L^2(0,\infty; \LO_2(H; E))$ and $L^2(\Omega; E)$.
	
	\section{Graph-based semilinear stochastic evolution equations  with Whittle--Mat\'ern linear operators}
	\label{sec:graph-WM}
	
	Before developing the general discrete-to-continuum convergence results summarized by Theorem~\ref{thm:d2c-conv-informal} in the upcoming sections, in this section we demonstrate how they can be applied to the particular case of equations whose linear parts are graph discretizations of a generalized Whittle--Mat\'ern operator on a manifold.
	In the spatial and linear case, such convergence results have been proven in~\cite{SanzAlonsoYang2022, SanzAlonsoYang2022JMLR}.
	We also mention the work~\cite{pmlr-v151-nikitin22a}, in which the statistical properties of the spatiotemporal \emph{linear} equation were investigated for fixed $n \in \N$.
	
	\subsection{Geometric graphs and generalized Whittle--Mat\'ern operators}
	\label{sec:geo-graphs-WMoperators}
	
	\begin{assumption}[Manifold assumption]\label{ass:manifold}
			Given $m, d \in \N$, suppose that $\cM$ is an $m$-dimensional smooth, connected, compact Riemannian manifold without boundary, embedded smoothly and isometrically into $\R^d$.
	   Let $\mu$ and $d_{\cM}$ denote the normalized volume measure and geodesic metric on $\cM$, respectively.
	\end{assumption}
	
	For each $n \in \N$, let a point cloud $\cM_n \coloneqq (x_{n}^{(j)})_{j=1}^{n} \subseteq \cM$ be given. 
	We suppose that $\cM$ can be partitioned into $n$ regions of mass $1/n$, which can be transported to the corresponding $n$ points comprising $\cM_n$, in such a way that the maximal geodetic displacement tends to zero as $n \to \infty$. More precisely, we assume that there exists a sequence $(T_n)_{n\in\N}$ of \emph{transport maps} $T_n \from \cM \to \cM_n$ such that 
	\begin{align}
		\label{eq:transport-maps}
		\mu_n ={}& T_{n\sharp} \mu 
		&
		\text{for all } n &\in \N, \quad \text{and}
		\\
		\label{eq:mesh-vanishes}
		\varepsilon_n \deq{}& \sup_{x\in \cM} d_{\cM}(x, T_n(x)) \to 0 
		& \text{as } n &\to \infty.
	\end{align}
	Here, $\mu_{n} \coloneqq \frac{1}{n} \sum_{j=1}^{n} \delta_{x_{n}^{(j)}}$ is the empirical measure on $\cM$ associated to $\cM_n$, and $T_{n\sharp} \mu$ denotes the pushforward measure $T_{n\sharp} \mu(B) \deq \mu(\{T_n \in B\})$ on $\cM_n$.
	Two different examples in which this assumption is satisfied are presented in Settings~\ref{ex:square-grid} and~\ref{ex:sampling-manifold} below.
	
	Given $u_n \from \cM_n \to \R$ for $n \in\clos\N$, these transport maps enable us to define the functions $\Lambda_u u_n \from \cM \to \R$ and $\Pi_n u \from \cM_n \to \R$ by setting
	\begin{equation}\label{eq:graph-disc-Lambda-Pi}
		\Lambda_n u_n(x) \deq u_n(T_n(x))
		\quad \text{and} \quad 
		\Pi_n u(x_n^{(j)}) 
		\deq
		n \int_{V_n^{(j)}} u(x) \rd \mu(x),
	\end{equation}
	respectively,
	for all $x \in \cM$ and $j \in \{1,\dots,n\}$, 
	where $V_n^{(j)} \deq \{T_n = x_n^{(j)}\} \subseteq \cM$.
	
	It turns out that the operations defined in~\eqref{eq:graph-disc-Lambda-Pi} satisfy Assumption~\ref{ass:disc3} with respect to the following function spaces: 
	Given $q \in [1,\infty]$ and $n \in \N$, we set $E_n \deq L^q(\cM_n) \deq L^q(\cM, \mu_n)$, as well as $\widetilde E \deq L^q(\cM)$ and
	\begin{equation}
		E_\infty \deq 
		\begin{cases}
			L^q(\cM), &\text{if } q \in [1,\infty); \\
			C(\cM), &\text{if } q = \infty.
		\end{cases}
	\end{equation}
	Later on, we will need $q \in [2,\infty)$, so that $E_n$ is a UMD-type-2 space for use in stochastic integration, but the statements here hold for all $q \in [1,\infty]$.
	
	For these spaces, Assumption~\ref{ass:disc3}\ref{ass:disc:proj-emb-bounds3} is satisfied with $M_{\Lambda} = 1$ and $M_\Pi \le 1$.
	Indeed, the fact that $\Lambda_n$ is an isometry follows from~\eqref{eq:transport-maps} if $q \in [1,\infty)$, whereas for $q = \infty$ we see directly from the definition that 
	\[
	\norm{\Lambda_n u_n}{L^\infty(\cM)} 
	= 
	\sup_{x \in \cM} \abs{u_n(T_n(x)) }
	= 
	\max_{j=1}^n \abs{u_n(x_n^{(j)})}
	=
	\norm{u_n}{L^\infty(\cM_n)}.
	\]
	To show that $\Pi_n$ is a contraction, we first apply H\"older's inequality in~\eqref{eq:graph-disc-Lambda-Pi} with $\frac{1}{q} + \frac{1}{q'} = 1$ to find
	$\abs{ \Pi_n u(x_n^{(j)})} \le n \norm{u}{L^q(V_n^{(j)})} \mu(V_n^{(j)})^{\frac{1}{q'}} = n^{\frac{1}{q}} \norm{u}{L^q(V_n^{(j)})}$, so that
	\begin{equation}
		\norm{\Pi_n u}{L^q(\cM_n)}^q 
		=
		\frac{1}{n} \sum_{j=1}^n \abs{\Pi_n u(x_n^{(j)})}^q
		\le 
		\sum_{j=1}^n \norm{u}{L^q(V_n^{(j)}\!\!,\, \mu)}^q
		=
		\norm{u}{L^q(\cM)}^q.
	\end{equation}
	Assumption~\ref{ass:disc3}\ref{ass:disc:approx-id3} is a consequence of~\eqref{eq:mesh-vanishes}, from which it follows that $\Lambda_n \Pi_n u \to u$ in $L^q(\cM)$ for any $u \in C(\cM)$.
	For $q = \infty$, this is what we wanted to show; if $q \in [1,\infty)$, then $C(\cM)$ is dense in $L^q(\cM)$, and the fact that $(\Lambda_n \Pi_n)_{n\in\N}$ is uniformly bounded in $\LO(L^q(\cM))$ by Assumption~\ref{ass:disc3}\ref{ass:disc:proj-emb-bounds3} which we have just proven to hold, yields $\Lambda_n \Pi_n u \to u$ for all $u \in L^q(\cM)$ as desired.
	Assumption~\ref{ass:disc3}\ref{ass:disc:id3} can be verified via direct computation using the definitions. 
	Finally, we have
	\begin{align}
		\int_{\cM} \Lambda_n u_n(x) &v(x) \rd \mu(x)
		=
		\int_{\cM} u_n(T_n(x)) v(x) \rd \mu(x)
		=
		\sum_{j=1}^n \int_{V_n^{(j)}} u_n(x_n^{(j)}) v(x) \rd \mu(x)
		\\&=
		\frac{1}{n} 
		\sum_{j=1}^n u_n(x_n^{(j)}) \Pi_n v(x_n^{(j)})
		=
		\int_{\cM_n} u_n(x) \Pi_n v(x) \rd \mu_n(x).
		\label{eq:Lambda-Pi-adjoint}
	\end{align}
	which shows that the adjoint of $\Pi_n \in \LO(L^q(\cM); L^q(\cM_n))$, where $q \in [1,\infty)$, is given by $\Pi_n^* = \Lambda_n \in \LO(L^{q'}(\cM_n); L^{q'}(\cM))$.
	
	The concrete choices of $\cM$ and their discretizations which we will consider in this section are the following two:
	\begin{setting}[Square grid on $\bbT^m$]\label{ex:square-grid}
		Let $\cM \deq \bbT^m$ be the $m$-dimensional flat torus, which we view as the cube $[0,1]^m$ endowed with periodic boundary conditions. 
		For notational convenience, we will index our sequence of discretizations of $\bbT^m$ only by the natural numbers $n$ such that $n^{1/m} \in \N$, for which we define the following square equidistant grid with mesh size $h_n \deq n^{-1/m}$:
		\[
		\cM_{n} 
		\deq 
		\{\tfrac{1}{2}n^{-1/m}, \tfrac{3}{2}n^{-1/m}, \dots, 1 - \tfrac{1}{2}n^{-1/m}\}^m.
		\]
		Then the grid points of $\cM_n$ can be written as $\mathbf x_n^{(\mathbf j)} = n^{-1/m}(\mathbf j - \frac{1}{2}\mathbf 1)$ for some $m$-tuple $\mathbf j \in \{1, \dots, n^{1/m}\}^m$, where $\mathbf 1 = (1,\dots,1) \in \R^m$. 
		To each of these points $\mathbf x_n^{(\mathbf j)} \in \cM_n$, we associate the half-open cube $U_n^{(\mathbf j)} \deq \prod_{k=1}^m \bigl[n^{-1/m}(j_k - 1), n^{-1/m}j_k\bigr)$.
		Since these cubes form a partition of $\cM$ (recalling that opposite sides are identified), we can define the transport map $T_n \from \cM \to \cM_n$ by $T_n(x) \deq \mathbf x_n^{(\mathbf j)}$ whenever $x \in U_n^{(\mathbf j)}$. 
		It readily follows that~\eqref{eq:transport-maps} holds, as does~\eqref{eq:mesh-vanishes}, since
		$
		\varepsilon_n = \frac{1}{2}\sqrt{m} n^{-1/m}
		$ for all $n \in \N$.
	\end{setting}
	
	\begin{setting}[Randomly sampled point cloud]\label{ex:sampling-manifold}
		Let $\cM$ be any manifold satisfying Assumption~\ref{ass:manifold}, and 
		let $(x^{(n)})_{n\in\N} \subseteq \cM$ be a sequence of points independently sampled from $\mu$.
		This sequence can be viewed as a sample from the product probability space $(\widetilde \Omega, \widetilde \cF, \widetilde \bbP) \deq \prod_{n\in\N} (\cM, \cB(\cM), \mu)$, where $\cB(\cM)$ denotes the Borel $\sigma$-algebra on $\cM$. 
		If we set $\cM_n \deq (x^{(j)})_{j=1}^n$ for all $n \in \N$, then~\cite[Proposition~4.1]{SanzAlonsoYang2022} states that, $\widetilde\bbP$-a.s., there exists a sequence $(T_n)_{n\in\N}$ of transport maps $T_n \from \cM \to \cM_n$ for which~\eqref{eq:transport-maps} holds and
		\begin{equation}\label{eq:mesh-size-rate}
			\varepsilon_n \lesssim_{\cM} (\log n)^{c_m} n^{-1/m} \to 0
			\quad 
			\text{as } n \to \infty,
		\end{equation}
		where $c_m = \frac{3}{4}$ if $m = 2$ and $c_m = \frac{1}{m}$ otherwise.
	\end{setting}
	
	We will now introduce the linear operators which we consider on the domains $\cM$ and $(\cM_n)_{n\in\N}$ as in Settings~\ref{ex:square-grid} and~\ref{ex:sampling-manifold}.
	Given the coefficient functions $\tau \from \cM \to [0,\infty)$ and $\kappa \from \cM \to [0,\infty)$, respectively assumed to be Lipschitz and continuously differentiable, we consider the nonnegative and symmetric second-order linear differential operator $\cL^{\tau, \kappa}_\infty$ formally defined by 
	\begin{equation}\label{eq:base-operator-coeffs}
		\cL^{\tau, \kappa}_\infty u \coloneqq \tau u - \nabla \cdot (\kappa \nabla u), 
	\end{equation}
	for	$u$ belonging to some appropriate domain $\dom{\cL^{\tau, \kappa}_\infty} \subseteq L^2(\cM)$. 
	
	For each $n \in \N$, we endow $\cM_n$ with a (weighted, undirected) graph structure by viewing its points as vertices and defining the weight matrix $\mathbf W_n \in \R^{n \times n}$ by
	\begin{equation}\label{eq:graph-weights}
		(\mathbf W_n)_{ij}
		\coloneqq 
		\frac{2(m+2)}{\nu_m}\frac{1}{n h_n^{m+2}}
		\indic{[0, h_n]}\bigl(\norm{x_n^{(i)} - x_n^{(j)}}{\R^d}\bigr),
	\end{equation}
	where $\nu_m$ denotes the volume of the unit sphere in $\R^m$ and $h_n \in (0,\infty)$ is a given graph connectivity length scale. 
	With these weights, the resulting graph is an example of a \emph{geometric graph} (or in fact a \emph{random geometric graph} if the nodes are sampled randomly as in Setting~\ref{ex:sampling-manifold}).
	The results in this section are likely to remain valid if the indicator function $\indic{[0, h_n]}$ in~\eqref{eq:graph-weights} is replaced by a more general (e.g., Gaussian) cut-off kernel (such as in~\cite{CalderGTLewicka2022}), but we only consider $\eta = \indic{[0, h_n]}$ in order to also cite sources which are not formulated in this generality.
	
	The graph-discretized counterpart $\cL_n^{\tau,\kappa}$ of~\eqref{eq:base-operator-coeffs} is then the operator which acts on a given function $u \from \cM_n \to \R$ as 
	\begin{equation}\label{eq:base-operator-disc}
		\cL_n^{\tau,\kappa} u(x_n^{(i)}) 
		\deq 
		\tau(x_n^{(i)}) u(x_n^{(i)}) 
		+
		\sum_{j=1}^n (\mathbf W_n)_{ij} \sqrt{\kappa(x_n^{(i)})\kappa(x_n^{(j)})}
		\bigl(u(x_n^{(i)}) - u(x_n^{(j)})\bigr).
	\end{equation}
	This can be seen as a generalized version of the (unnormalized) graph Laplacian $\Delta_{\cM_n}$, and in fact reduces to it if $\tau \equiv 0$ and $\kappa \equiv 1$.
	\begin{assumption}[Coefficients of $\cL_{n}^{\tau,\kappa}$]\label{ass:kappa-tau}
		Let $\tau \from \cM \to [0,\infty)$ and $\kappa \from \cM \to [0,\infty)$ be the coefficient functions used to define the base operators $(\cL_{n}^{\tau,\kappa})_{n\in\clos\N}$ in~\eqref{eq:base-operator-coeffs} and~\eqref{eq:base-operator-disc}.
		We shall suppose that
		\begin{enumerate}
			\item\label{ass:kappa-tau:general} 
			$\tau$ is Lipschitz, whereas $\kappa$ is continuously differentiable and bounded below away from zero.
		\end{enumerate}
		For some results, we specialize to the case that
		\begin{enumerate}
			\setcounter{enumi}{1}
			\item\label{ass:kappa-tau:Laplacian} 
			$\tau \equiv 0$ and $\kappa \equiv 1$, i.e., $\cL^{\tau,\kappa}_n = \Delta_{\cM_n}$ and $\cL^{\tau,\kappa}_\infty$ and reduces to the Laplace--Beltrami operator on $\cM$.
		\end{enumerate}
	\end{assumption}
	
	\begin{assumption}[Connectivity length scale of random graph]
		\label{ass:connectivity-regimes}
		Let the manifold $\cM$ and the random point clouds $(\cM_n)_{n\in\N}$ on the probability space $(\widetilde \Omega, \widetilde \cF, \widetilde \bbP)$ be as in Setting~\ref{ex:sampling-manifold}.
		Let $(h_n)_{n\in\N} \subseteq (0,\infty)$ determine the connectivity length scales of the graphs associated to $(\cM_n)_{n\in\N}$ via the weights~\eqref{eq:graph-weights}, and suppose that $s \in (0,\infty)$.
		We will assume one of the following:
		\begin{enumerate}
			\item\label{ass:connectivity-regimes:optimal-range} 
			There exists a $\beta > \frac{m}{4s}$ such that
			$
			(\log n)^{c_m} n^{-\frac{1}{m}} \ll h_n \ll n^{-\frac{1}{4s\beta}}.
			$
			\item\label{ass:connectivity-regimes:suboptimal-range} 
			There exists a $\delta > 0$, so small that $\frac{m}{1-\delta} < m + 4 + \delta$, and a $\beta > \frac{m+4+\delta}{2s}$ such that
			$
			n^{-\frac{1}{m+4+\delta}}
			\lesssim 
			h_n 
			\ll
			n^{-\frac{1}{2s\beta}}.
			$
		\end{enumerate}
		Given two sequences 
		$(a_n)_{n\in\N}$ and $(b_n)_{n\in\N}$ of positive real numbers, the notation 
		$a_n \ll b_n$ means $a_n / b_n \to 0$ as $n \to \infty$.
	\end{assumption}

	Since $\id_n + \cL_n^{\tau,\kappa}$ is self-adjoint, positive definite and has a compact inverse for all $n \in \clos\N$ (cf.~\cite[Chapter~XII]{Taylor1981} in the case $n = \infty$), there exists an orthonormal basis $(\psi^{(j)}_n)_{j = 1}^{n}$ of $L^2(\cM_n)$ and a non-decreasing sequence $(\lambda^{(j)}_n)_{j = 1}^{n} \subseteq [0,\infty)$, accumulating only at infinity for $n = \infty$, such that $\cL_n^{\tau,\kappa} \psi^{(j)}_n = \lambda^{(j)}_n 
	\psi^{(j)}_n$ for all $j \in \{1,\dots,n\}$. 
	We summarize this state of affairs by saying that $(\psi^{(j)}_n, \lambda^{(j)}_n)_{j = 1}^{n}$ is an orthonormal eigenbasis of $L^2(\cM_n)$ associated to $\cL_n^{\tau,\kappa}$.
	The asymptotic behavior of the eigenvalues $(\lambda_\infty^{(j)})_{j\in\N}$ is described by \emph{Weyl's law}, cf.~\cite[Theorem~XII.2.1]{Taylor1981}:
	\begin{equation}\label{eq:weyls-law}
		\lambda^{(j)}_\infty
		\eqsim_{(\cM, \tau, \kappa)}
		j^{2/m} \quad \text{for all } j \in \N.
	\end{equation}

	Given any of the above settings and $n \in \clos \N$, we define the generalized Whittle--Mat\'ern operator $A_n$ on $L^2(\cM_n)$ as a fractional power of the symmetric elliptic operator $\cL_n^{\tau,\kappa}$ given by~\eqref{eq:base-operator-coeffs} and~\eqref{eq:base-operator-disc}.
	That is, we set $A_n \deq (\cL^{\tau, \kappa}_n)^s$ for some $s \in [0,\infty)$, where we use the spectral definition of fractional powers:
	\begin{equation}\label{eq:def-WM-operator}
		A_n u = (\cL^{\tau, \kappa}_n)^s u \deq \sum_{j=1}^n [\lambda^{(j)}_n]^{s} \scalar{u, 
			\psi_n^{(j)}}{L^2(\cM_n)} \psi_n^{(j)}, \quad u \in \dom{A_n} 
		\subseteq 
		L^2(\cM_n).
	\end{equation}
	These will be used as the linear operators $(A_n)_{n\in\clos\N}$ in the stochastic partial differential equations in the next subsection.
	
	Since $A_n$ is a nonnegative definite and self-adjoint operator on $L^2(\cM_n)$ for any $n \in \clos \N$, the Lumer--Phillips theorem~\cite[Theorem~13.35]{Neerven2022} implies that $-A_n$ generates a contractive analytic $C_0$-semigroup $(S_n(z))_{z\in\Sigma_\eta} \subseteq \LO(L^2(\cM_n))$ on the sector 
	\begin{equation}\label{eq:def-sector}
		\Sigma_\eta \deq \{\lambda\in\C\setminus \{0\}: \arg\lambda \in (-\eta,\eta)\}
	\end{equation}
	for every $\eta \in (0, \frac{1}{2}\pi)$.
	Thus, the operators $(A_n)_{n\in\clos\N}$ on $(L^2(\cM_n))_{n\in\clos\N}$ are uniformly sectorial of angle $0$, see Appendix~\ref{app:uniform-sectoriality}.
	
	The following additional assumption(s) on the $L^\infty(\cM_n)$-boundedness of the semigroups will be needed for some of the results in Section~\ref{sec:conv-graphdiscr-SPDEs}:
	\begin{assumption}[Uniform $L^\infty$-boundedness of semigroups]
		\label{ass:uniform-Linfty-bdd-semigroup}
		Suppose that
		\begin{enumerate}
			\item 
			\label{ass:uniform-Linfty-bdd-semigroup:bdd}
			there exists a constant $M_{S,\infty} \in [1,\infty)$ such that 
			\begin{equation}
				\norm{S_n(t)}{\LO(L^\infty(\cM_n))} \le M_{S,\infty}
				\quad 
				\text{for all } 
				n \in \clos \N \text{ and } t \ge 0.
			\end{equation}
		\end{enumerate}
		We may sometimes additionally assume that
		\begin{enumerate}
			\setcounter{enumi}{1}
			\item 
			\label{ass:uniform-Linfty-bdd-semigroup:contractive}
			$(S_n(t))_{t\ge0}$ is $L^\infty(\cM_n)$-contractive for all $n \in \clos\N$, i.e., $M_{S, \infty} = 1$ in~\ref{ass:uniform-Linfty-bdd-semigroup:bdd}.
		\end{enumerate}
	\end{assumption}
	Under this assumption, it follows from~\cite[Proposition~3.12]{Ouhabaz2005} that $(S_n(z))_{z\in\Sigma_{\eta_q}}$ is bounded analytic on $L^q(\cM_n)$ with $\eta_q = \frac{2}{q}\eta$ for all $n \in \clos\N$ and $q \in (2,\infty)$, and its uniform norm bound on the sector $\Sigma_{\eta_q}$ only depends on $q$ and $M_{S,\infty}$.
	Therefore, the sequence of operators $(A_n)_{n\in\clos\N}$ on $(L^q(\cM_n))_{n\in\clos\N}$ is uniformly sectorial of angle at most $(\frac{1}{2} - \frac{1}{q})\pi$.
	
	\subsection{Convergence of graph-discretized semilinear SPDEs}
	\label{sec:conv-graphdiscr-SPDEs}
	
	Let $(W(t))_{t\ge0}$ be an $L^2(\cM)$-valued cylindrical Wiener process with respect to a filtered probability space $(\Omega, \cF, (\cF_t)_{t\in [0,T]}, \bbP)$.
	The spaces $\cM$ and $(\cM_n)_{n\in\N}$ are as in Setting~\ref{ex:square-grid} or~\ref{ex:sampling-manifold} above; in the latter case, note that the space $(\Omega, \cF, (\cF_t)_{t\in [0,T]}, \bbP)$ associated to the Wiener noise is independent of the probability space $(\widetilde \Omega, \widetilde{\cF}, \widetilde{\bbP})$ describing the randomness of the point cloud.
	For every $n \in \clos\N$, we set $W_n \deq \Pi_n W$ and consider the following semilinear stochastic partial differential equation (SPDE):
	\begin{equation}\label{eq:example-SPDE}
		\left\lbrace
		\begin{aligned}
			\rd u_n(t, x) + [\cL^{\tau,\kappa}_n]^s u_n(t,x) \rd t &= f_n(t, u_n(t, x)) \rd t + \rd W_n(t, x),
			\\
			u_n(0, x) &= \xi_n(x),
			\qquad 
			\qquad 
			\qquad
			(t,x) \in (0,T] \times \cM_n,
		\end{aligned}
		\right.
	\end{equation}
	where $s \in (0,\infty)$, $T \in (0,\infty)$ is a finite time horizon, $f_n \from \Omega \times [0,T] \times \R \to \R$ is the nonlinearity, and $\xi_n \from \Omega \times \cM_n \to \R$ is the initial datum.
	Note that $(W_n(t))_{t\ge0}$ is a $Q_n$-cylindrical Wiener process with $Q_n = \Pi_n^* \Pi_n = \Lambda_n \Pi_n = \id_n$, where we recall~\eqref{eq:Lambda-Pi-adjoint} for the second identity. 
	Therefore, $(W_n(t))_{t\ge0}$ is a cylindrical Wiener process on $L^2(\cM_n)$ for all $n \in \clos \N$, and its formal time derivative $\rd W_n$ represents spatiotemporal Gaussian white noise on $[0,T]\times\cM_n$.
	
	Solutions to~\eqref{eq:example-SPDE}---and all the other (semi)linear SPDEs that we consider in this work---are always interpreted in the mild sense. 
	This notion of solutions is defined using the semigroup $(S_n(t))_{t\ge0}$ generated by $-[\cL^{\tau,\kappa}_n]^s$.
	We say that $u_n$ is a \emph{global mild solution} to~\eqref{eq:example-SPDE} if it satisfies the following relation for all $t \in [0,T]$:
	\begin{equation}
		u_n(t) = S_n(t)\xi_n + \int_0^t S_n(t-s)F_n(s, u_n(s)) \rd s + 
		\int_0^t S_n(t-s) \rd W_n(s), 
		\quad 
		\bbP\text{-a.s.}
	\end{equation}
	Here, we interpret $u_n = (u_n(t))_{t\in[0,T]}$ as a process taking its values in an infinite-dimensional Banach space of functions on $\cM_n$ (such as $L^q(\cM_n)$ or $C(\cM_n)$), and we define for every $(\omega, t) \in \Omega \times [0,T]$ the \emph{Nemytskii operator} $u_n \mapsto F_n(\omega, t, u_n)$ on this function space by setting $[F_n(\omega, t, u_n)](\xi) \deq f_n(\omega, t, u_n(\xi))$ for all $\xi \in \cM_n$.
	
	This notion of solution is called ``global'' because it exists on the whole of $[0,T]$, in contrast with ``local'' solutions, which may blow up before time $T$.
	However, we note that global solutions generally grow unbounded as $T \to \infty$.
	We will not consider local solutions in this section, but we do work with them in Section~\ref{sec:locLip}.
	
	In this section, the real-valued functions $f_n$ are supposed to satisfy the following:
	
	\begin{assumption}[Nonlinearities]
		\label{ass:nonlinearity}
		We will assume one of the following conditions:
		\begin{enumerate}
			\item\label{ass:nonlinearity:globalLip-linGrowth}
			The nonlinearities $(f_n)_{n\in\clos\N}$ are globally Lipschitz continuous and grow linearly, both uniformly in $n$.
			I.e., there exist $\widetilde L_f, \widetilde C_f \in [0,\infty)$ such that, for all $n \in \clos \N$ and $x,y\in\R$,
			\begin{equation}
				\abs{f_n(\omega, t, x) - f_n(\omega, t, y)}
				\le 
				\widetilde L_f \abs{x - y}
				\quad \text{and} \quad 
				\abs{f_n(\omega, t, x)}
				\le 
				\widetilde C_f(1 + \abs{x}).
			\end{equation}
			\item\label{ass:nonlinearity:polynomial}
			The nonlinearities $(f_n)_{n\in\clos\N}$ are of the polynomial form
			\begin{equation}\label{eq:fn-polynomial-def}
				f_n(\omega, t, x) \deq 
				-a_{2k+1,n}(\omega, t) x^{2k+1}
				+
				\sum_{j=0}^{2k} a_{j,n}(\omega, t) x^j,
			\end{equation}
			where $k \in \N_0$ and $a_{j, n} \from \Omega \times [0,T] \to \R$ for 
			each $j \in \{0,\dots,2k+1\}$, and there exist constants $c, C \in 
			(0,\infty)$ such that 
			\begin{equation}\label{eq:fn-polynomial-coefficients}
				c \le a_{2k+1,n}(\omega,t) \le C 
				\quad \text{and} 
				\quad 
				\abs{a_{j,n}(\omega,t)} \le C
			\end{equation}
			for all $j \in \{0,\dots,2k\}$, $n \in \clos\N$ and $(\omega,t) \in \Omega \times [0,T]$.
		\end{enumerate}
		In either case, we suppose moreover that $f_n \to f$ uniformly on compact intervals; i.e., for all $r \in [0,\infty)$ and $(\omega, t) \in \Omega \times [0,T]$,
		\begin{equation}\label{eq:nonlinearities-conv-unif-compact}
			\sup_{x \in [-r,r]} 
			\abs{f_n(\omega,t,x) - f_\infty(\omega,t,x)}
			\to 
			0 \quad \text{as } n \to \infty.
		\end{equation}
	\end{assumption}
	\begin{example}\label{ex:Allen-Cahn}
        The cubic polynomial $f_n(\omega, t, x) \deq -x^3 + x$, which turns~\eqref{eq:example-SPDE} into the (fractional) stochastic Allen--Cahn equation in case $\tau \equiv 0$, $\kappa \equiv 1$ and $s \le 1$, 
        is of the form asserted in Assumption~\ref{ass:nonlinearity}\ref{ass:nonlinearity:polynomial}.
    	Note that this is also an example of the important situation where~\eqref{eq:nonlinearities-conv-unif-compact} is trivially satisfied by taking the same function $f_n \deq f$ for all $n \in \clos\N$.
	\end{example}
	
	The final technical assumption that we record before moving on to the main theorem of this section is the following:
	\begin{assumption}[Uniform $L^\infty$-boundedness of eigenfunctions]
		\label{ass:uniform-Linfty-bdd-eigenfuncs}
		There exists a constant $M_{\psi,\infty} \in (0,\infty)$ such that 
		\begin{equation}\label{eq:eigenfuncs-unif-bdd}
			\norm{\psi_n^{(j)}}{L^\infty(\cM_n)} \le M_{\psi, \infty}
			\quad 
			\text{for all } 
			n \in \clos \N \text{ and } j \in \{1,\dots,n\}.
		\end{equation}
	\end{assumption}
	
	The interplay of the various choices of spatial domains $\cM_n$, linear operators $A_n$ and nonlinearity functions $f_n$ determines the class of SPDEs to which~\eqref{eq:example-SPDE} belongs.
	Rigorous definitions of the corresponding mild solution concepts, as well as well-posedness and discrete-to-continuum convergence results can be found in Sections~\ref{sec:OU-H}--\ref{sec:reaction-diffusion}, respectively.
	Applying these results in their respective regimes of applicability yields the following discrete-to-continuum convergence theorem for the solutions to~\eqref{eq:example-SPDE};
    note that the setting of part~\ref{thm:d2c-conv-example-SPDEs:polynomial} covers the stochastic (fractional) Allen--Cahn equation on the one-dimensional torus, see Example~\ref{ex:Allen-Cahn}.
	
	\begin{theorem}\label{thm:d2c-conv-example-SPDEs}
		Let $\cM$ and $(\cM_n)_{n\in\clos\N}$ be as in Setting~\ref{ex:square-grid} or~\ref{ex:sampling-manifold}.
		\begin{enumerate}[(a)]
			\item \label{thm:d2c-conv-example-SPDEs:L2}
			Consider Setting~\ref{ex:sampling-manifold}.
			Let $s > \frac{1}{2}m$ and suppose that Assumption~\ref{ass:connectivity-regimes}\ref{ass:connectivity-regimes:optimal-range} holds with $\beta \in (\frac{m}{4s}, \frac{1}{2})$.
			If Assumptions~\ref{ass:kappa-tau}\ref{ass:kappa-tau:general} and~\ref{ass:nonlinearity}\ref{ass:nonlinearity:globalLip-linGrowth} are satisfied, and $p \in [1,\infty)$ is such that $\Lambda_n \xi_n \to \xi_\infty$ in $L^p(\Omega, \cF_0, \bbP; L^2(\cM))$, then there exists a unique global mild solution $u_n$ in $L^p(\Omega; C([0,T]; L^2(\cM_n)))$ to~\eqref{eq:example-SPDE} for every $n \in \clos\N$, and as $n \to \infty$ we have 
			\begin{equation}
				\Lambda_n u_n \to u_\infty
				\quad \widetilde\bbP\text{-a.s.\ in } 
				L^p(\Omega; C([0,T]; L^2(\cM))).
			\end{equation}
			\item \label{thm:d2c-conv-example-SPDEs:Linfty}
			In Setting~\ref{ex:sampling-manifold},
			let $\delta > 0$ be such that $\frac{m}{1-\delta} < m + 4 + \delta$, suppose $s > m + 4 + \delta$ and Assumption~\ref{ass:connectivity-regimes}\ref{ass:connectivity-regimes:suboptimal-range} holds with $\beta \in (\frac{m+4+\delta}{2s}, \frac{1}{2})$.
			Let Assumptions~\ref{ass:kappa-tau}\ref{ass:kappa-tau:Laplacian}, \ref{ass:uniform-Linfty-bdd-semigroup}\ref{ass:uniform-Linfty-bdd-semigroup:bdd}, \ref{ass:nonlinearity}\ref{ass:nonlinearity:globalLip-linGrowth} and~\ref{ass:uniform-Linfty-bdd-eigenfuncs} be satisfied.
			If $p \in [1,\infty)$ is such that $\Lambda_n \xi_n \to \xi_\infty$ in $L^p(\Omega, \cF_0, \bbP; L^\infty(\cM))$, then there exists a unique global mild solution $u_n$ in $L^p(\Omega; C([0,T]; L^\infty(\cM_n)))$ to~\eqref{eq:example-SPDE} for every $n \in \N$, as well as $u_\infty$ in $L^p(\Omega; C([0,T]; C(\cM)))$ for $n = \infty$, and as $n \to \infty$ we have
			\begin{equation}
				\Lambda_n u_n \to u_\infty
				\quad \text{in } 
				L^0(\widetilde\Omega, L^p(\Omega; C([0,T]; L^\infty(\cM)))).
			\end{equation}
			\item \label{thm:d2c-conv-example-SPDEs:polynomial} 
			Consider Setting~\ref{ex:square-grid} with $\cM \deq \bbT$. Let $s \in (\frac{1}{2}, 1]$ and suppose that Assumptions~\ref{ass:kappa-tau}\ref{ass:kappa-tau:Laplacian} and~\ref{ass:nonlinearity}\ref{ass:nonlinearity:polynomial} are satisfied.
			If $p \in (1,\infty)$ is such that $\Lambda_n \xi_n \to \xi_\infty$ in $L^p(\Omega, \cF_0, \bbP; L^\infty(\bbT))$, then 
			there exists a unique global mild solution $u_n$ in $L^p(\Omega; C([0,T]; L^\infty(\cM_n)))$ to~\eqref{eq:example-SPDE} for every $n \in \N$, as well as $u_\infty$ in $L^p(\Omega; C([0,T]; C(\bbT)))$ for $n = \infty$
			and for all $p^- \in [1,p)$ we have, as $n \to \infty$,
			\begin{equation}
				\Lambda_n u_n \to u_\infty
				\quad 
				\text{in } L^{p^-}\!(\Omega; C([0,T]; L^\infty(\bbT))).
			\end{equation}
		\end{enumerate}
	\end{theorem}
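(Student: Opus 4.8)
The plan is to treat each of the three parts as a verification that the concrete data of~\eqref{eq:example-SPDE} fit into one of the abstract frameworks of Sections~\ref{sec:OU-H}--\ref{sec:reaction-diffusion}, after which the claimed convergence follows by invoking the corresponding abstract theorem listed in Table~\ref{tab:SPDEs-per-section}. The discrete-to-continuum structure itself is already in place: the lifting and projection operators~\eqref{eq:graph-disc-Lambda-Pi} were shown above to satisfy Assumption~\ref{ass:disc3} for every $q \in [1,\infty]$, with $M_\Lambda = 1$ and $M_\Pi \le 1$, simultaneously for the scales $L^2(\cM_n)$ and $L^\infty(\cM_n)$ relevant below. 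Thus the only genuine work is to verify the hypotheses on the linear operators $A_n = [\cL_n^{\tau,\kappa}]^s$, on the Nemytskii drifts $F_n$, and on the initial data.

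For part~\ref{thm:d2c-conv-example-SPDEs:L2} I would work in the $E_n$-valued setting with $E_n \deq L^2(\cM_n)$ and $\widetilde E \deq E_\infty \deq L^2(\cM)$. First I would record that each $-A_n$ generates a contractive analytic semigroup (shown above via Lumer--Phillips), so Assumptions~\ref{ass:disc}--\ref{ass:A-conv} hold with uniform constants apart from the resolvent convergence, which is the crux. I would express the lifted resolvent $\widetilde R_n = \Lambda_n (A_n + \id_n)^{-1}\Pi_n$ through the spectral decomposition~\eqref{eq:def-WM-operator}, so that its eigenvalues are $(1 + [\lambda_n^{(j)}]^s)^{-1}$, and then feed in the quantitative spectral convergence of the graph operators $\cL_n^{\tau,\kappa}$ from~\cite{CGT2022}: convergence of eigenvalues together with $L^2(\cM)$-convergence of the lifted eigenfunctions $\Lambda_n \psi_n^{(j)} \to \psi_\infty^{(j)}$. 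Pointwise convergence $\widetilde R_n \to R_\infty$ then follows by truncating the spectral sum via Weyl's law~\eqref{eq:weyls-law}, while the operator-norm convergence $\widetilde R_n^\beta \to R_\infty^\beta$ for some $\beta \in (\tfrac{m}{4s}, \tfrac12)$ is exactly what the connectivity regime in Assumption~\ref{ass:connectivity-regimes}\ref{ass:connectivity-regimes:optimal-range} is calibrated to deliver, the constraint $s > \tfrac{m}{2}$ guaranteeing that such a $\beta < \tfrac12$ exists. With the linear part in hand, Assumption~\ref{ass:nonlinearity}\ref{ass:nonlinearity:globalLip-linGrowth} translates directly into~\ref{item:F-globalLip-linearGrowth}, \eqref{eq:nonlinearities-conv-unif-compact} yields~\ref{item:F-approx}, and the hypothesis on $\Lambda_n \xi_n$ is precisely~\ref{ass:IC}. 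Theorem~\ref{thm:d2c-semilinear-global-linear} then gives the claim, the $\widetilde\bbP$-almost-sure qualifier coming from the fact that the spectral input holds $\widetilde\bbP$-a.s.\ by Setting~\ref{ex:sampling-manifold}.

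For part~\ref{thm:d2c-conv-example-SPDEs:Linfty} I would instead use the $B_n$-valued framework with $B_n \deq L^\infty(\cM_n)$, $\widetilde B \deq L^\infty(\cM)$ and $B_\infty \deq C(\cM)$. Beyond the resolvent convergence, now obtained from the $L^\infty$-eigenfunction convergence of~\cite{CalderGTLewicka2022} in place of~\cite{CGT2022}, the additional ingredient is the uniform ultracontractivity condition~\ref{ass:emb-Htheta-B-H2} with $\theta + 2\beta < 1$. Here I would exploit Assumption~\ref{ass:uniform-Linfty-bdd-semigroup}\ref{ass:uniform-Linfty-bdd-semigroup:bdd} together with the uniform eigenfunction bound~\eqref{eq:eigenfuncs-unif-bdd} to control the smoothing of $S_n$ from $L^2(\cM_n)$ into $L^\infty(\cM_n)$, the relevant spectral growth again governed by Weyl's law. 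The stricter regime~\ref{ass:connectivity-regimes:suboptimal-range} and the larger requirement $s > m + 4 + \delta$ are what allow both the fractional-power convergence and the ultracontractivity exponent $\theta$ to be chosen with $\theta + 2\beta < 1$. The drift and initial-data hypotheses~\ref{item:F-globalLip-linearGrowth-B}--\ref{item:F-approx-B} and~\ref{ass:IC-B} are verified as before, and Theorem~\ref{thm:d2c-semilinear-global-linear-B} yields the stated convergence in measure over $\widetilde\Omega$.

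Part~\ref{thm:d2c-conv-example-SPDEs:polynomial} is the dissipative case: on the square grid over $\bbT$ the spectral convergence is classical, the lifted graph Laplacian being a standard finite-difference scheme, so verifying~\ref{ass:disc2}--\ref{ass:emb-Htheta-B-H2} with $\theta + 2\beta < 1$ is comparatively routine, with $s \in (\tfrac12, 1]$ and $m = 1$ leaving ample room. The polynomial~\eqref{eq:fn-polynomial-def} with odd leading coefficient bounded below as in~\eqref{eq:fn-polynomial-coefficients} is dissipative in the sense of~\ref{item:F-diss-B}, so global well-posedness and convergence follow from Corollary~\ref{cor:d2c-conv-dissB}, giving convergence in $L^{p^-}(\Omega; C([0,T]; L^\infty(\bbT)))$ for every $p^- < p$. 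The main obstacle throughout is the fractional-power resolvent convergence of Assumption~\ref{ass:A-conv}/\ref{ass:A-conv-B}: upgrading the scalar eigenvalue and eigenfunction convergence rates to an operator-norm estimate on $\widetilde R_n^\beta - R_\infty^\beta$, uniformly over the truncated spectrum and with $\beta < \tfrac12$, is delicate and is exactly where the precise calibration of $h_n$ against $s$, $m$ and $\beta$ in Assumption~\ref{ass:connectivity-regimes} is consumed; in the $L^\infty$ parts this is compounded by the need to secure the ultracontractivity exponent simultaneously so that $\theta + 2\beta < 1$.
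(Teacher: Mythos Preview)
Your plan is essentially identical to the paper's proof: parts~\ref{thm:d2c-conv-example-SPDEs:L2}, \ref{thm:d2c-conv-example-SPDEs:Linfty}, \ref{thm:d2c-conv-example-SPDEs:polynomial} are obtained by feeding the concrete data into Theorem~\ref{thm:d2c-semilinear-global-linear}, Theorem~\ref{thm:d2c-semilinear-global-linear-B}, and Corollary~\ref{cor:d2c-conv-dissB} respectively, with the resolvent and ultracontractivity hypotheses supplied by Theorem~\ref{thm:convergence-An} and Lemma~\ref{lem:uniform-ultracontractivity}, and the paper carries out exactly the verifications you outline (including the choice of a large auxiliary $q$ so that $\theta = \tfrac{4}{q}\beta$ satisfies $\theta + 2\beta < 1$). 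The one point you gloss over in part~\ref{thm:d2c-conv-example-SPDEs:polynomial} is that Corollary~\ref{cor:d2c-conv-dissB} requires the semigroups to be $L^\infty$-\emph{contractive} (i.e.\ $\widetilde M_S = 1$, not merely bounded), which the paper establishes separately via the heat-kernel $L^1$-norm on $\bbT$, diagonal dominance of the graph Laplacian, and subordination for $s\in(0,1]$; this is not quite ``routine'' and is the reason the result is restricted to $s\le 1$.
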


	The proof is presented in Section~\ref{sec:example:proof-of-conv}. 
    In the next section, we list the intermediate results on which it relies.
    The motivations behind the various assumptions listed above, as well as their role in Theorem~\ref{thm:d2c-conv-example-SPDEs}, are discussed in Section~\ref{sec:example:discussion-assumptions}.
	
	\subsection{Intermediate results}
	\label{sec:intermediate-results}
	
	In this subsection, we collect a number of intermediate results which imply that the conditions imposed in Theorem~\ref{thm:d2c-conv-example-SPDEs} are sufficient to fit into the setting of the various convergence theorems in Sections~\ref{sec:OU-H}--\ref{sec:reaction-diffusion}.
	More precisely, depending on the setting, we wish to verify a subset of the following: Conditions~\ref{ass:disc}--\ref{ass:A-conv} from Section~\ref{sec:OU-H} on the linear operators, conditions~\ref{item:F-globalLip-linearGrowth}--\ref{item:F-approx} and \ref{ass:IC} from Section~\ref{sec:semilinear-L2} on the nonlinearities and initial conditions, respectively, as well as their extended counterparts~\ref{ass:disc2}--\ref{ass:emb-Htheta-B-H2}, \ref{ass:IC-B}, \ref{item:F-globalLip-linearGrowth-B}--\ref{item:F-approx-B} and~\ref{item:F-diss-B} from Section~\ref{sec:reaction-diffusion}.
	The proofs of the results in this section are deferred to Appendix~\ref{app:proofs-of-intermediate-results} for ease of exposition.
	
	The necessary convergence of the linear operators, given by~\ref{ass:A-conv} and~\ref{ass:A-conv-B}, will ultimately be derived from the spectral convergence of $(\cL_n^{\tau,\kappa})_{n\in\N}$ to $\cL_\infty^{\tau,\kappa}$, i.e., the convergence of the respective eigenvalues and (lifted) eigenfunctions. 
	In the square grid Setting~\ref{ex:square-grid}, we can argue directly using closed-form expressions of all the eigenvalues and eigenfunctions involved, see Lemma~\ref{lem:spectr-conv-squaregrid} below.
	A subtlety arising in the random graph Setting~\ref{ex:sampling-manifold} is that, for any $n \in \N$, we cannot in general control the errors $\abs{\lambda_n^{(j)} - \lambda_\infty^{(j)}}$ and $\norm{\psi_n^{(j)} \circ T_n - \psi_\infty^{(j)}}{L^q(\cM)}$ for all $j \in \{1,\dots,n\}$, but only for indices $j$ up to a sufficiently small integer $k_n$. We present the precise statements below:
	Theorems~\ref{thm:eigenval-conv-randomgraph} and~\ref{thm:eigenfunc-conv-randomgraph}\ref{thm:eigenfunc-conv-randomgraph:L2}, which cover eigenvalue convergence and $L^2(\cM)$-convergence of eigenfunctions, are respectively taken from~\cite[Theorems~4.6 and~4.7]{SanzAlonsoYang2022}. 
	Theorem~\ref{thm:eigenfunc-conv-randomgraph}\ref{thm:eigenfunc-conv-randomgraph:Linfty}, concerning the $L^\infty(\cM)$-convergence of Laplacian eigenvalues, is a consequence of the main results from~\cite{CalderGTLewicka2022}, as shown in~\cite[Lemma~15]{SanzAlonsoYang2022JMLR} and the discussion preceding it.
	\begin{lemma}[{Spectral convergence---square grid}]\label{lem:spectr-conv-squaregrid}
		Let $\cM = \bbT^m$ be discretized by the sequence of square grids described in Setting~\ref{ex:square-grid}.
		If $\tau \equiv 0$ and $\kappa \equiv 1$, then for all $n \in \N$ such that $n^{1/m} \in \N$, the eigenfunction--eigenvalue pairs $(\psi^{(j)}_n, \lambda^{(j)}_n)_{j = 1}^{n}$ and $(\psi^{(j)}_\infty, \lambda^{(j)}_\infty)_{j \in \N}$ corresponding to the graph Laplacian $\cL_n^{\tau,\kappa} = \Delta_n$ and the Laplace--Beltrami operator $\cL_\infty^{\tau,\kappa} = -\Delta_{\cM}$, respectively, satisfy
		\begin{align}
			\label{eq:eigenval-conv-squaregrid}
			0 \le \lambda_\infty^{(j)} - \lambda_n^{(j)} &\le \tfrac{1}{12} j^4 \pi^4 n^{-\frac{2}{m}}
			\quad \text{for all } j \in \{1, \dots, n\};
			\\
			\label{eq:eigenfunc-conv-squaregrid}
			\norm{\psi_\infty^{(j)} - \psi_n^{(j)} \circ T_n}{L^\infty(\cM)}
			&\le 
			\tfrac{1}{2}\sqrt{2} j \pi n^{-\frac{1}{m}}
			\quad \text{for all } j \in \{1,\dots,n-1\}.
		\end{align}
	\end{lemma}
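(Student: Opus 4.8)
The plan is to exploit the fact that, on the equidistant grid of Setting~\ref{ex:square-grid}, both $-\Delta_{\cM}$ and its graph-discretized counterpart $\Delta_n$ are simultaneously diagonalized by the real Fourier modes of $\bbT^m$. First I would record the continuum spectral data explicitly: the $L^2(\cM)$-normalized eigenfunctions of $-\Delta_{\cM}$ are the constant $1$ together with products of $\sqrt2\cos(2\pi k_\ell x_\ell)$ and $\sqrt2\sin(2\pi k_\ell x_\ell)$ over a multi-index $\mathbf k\in\Z^m$, with eigenvalue $4\pi^2\abs{\mathbf k}^2$. Because the cubes $U_n^{(\mathbf j)}$ tile $\bbT^m$ by pure translation and the kernel $\indic{[0,h_n]}$ in~\eqref{eq:graph-weights} only reaches the $2m$ axis-neighbours (the diagonal neighbours sitting at distance $\sqrt2\,h_n>h_n$), the operator $\Delta_n$ in~\eqref{eq:base-operator-disc} is translation-invariant on the grid; hence these same Fourier modes, sampled at the points $\mathbf x_n^{(\mathbf j)}$, are its eigenvectors. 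The key structural observation is that the midpoint grid integrates trigonometric polynomials of degree below the Nyquist threshold exactly, so that for $j$ below this cut-off the $L^2(\cM_n)$- and $L^2(\cM)$-normalizations of a given mode agree, and since $\theta\mapsto1-\cos\theta$ is increasing on $[0,\pi]$ the ordering of eigenvalues is preserved; consequently the $j$-th discrete and $j$-th continuum eigenpairs correspond to the same Fourier mode.

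For the eigenvalue estimate~\eqref{eq:eigenval-conv-squaregrid} I would then reduce to one dimension via the tensor structure: acting with $\Delta_n$ on a sampled mode of frequency $\mathbf k$ produces, along each axis $\ell$, a factor yielding the discrete eigenvalue $\sum_{\ell=1}^m\tfrac{2}{h_n^2}\bigl(1-\cos(2\pi k_\ell h_n)\bigr)$ after inserting the normalization of~\eqref{eq:graph-weights} and using $n h_n^m=1$. The lower bound $0\le\lambda_\infty^{(j)}-\lambda_n^{(j)}$ is immediate from $1-\cos\theta\le\tfrac12\theta^2$ applied coordinatewise, while the upper bound follows from the elementary remainder inequality $\tfrac12\theta^2-(1-\cos\theta)\le\tfrac1{24}\theta^4$ with $\theta=2\pi k_\ell h_n$, summed over $\ell$. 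Converting the multi-index frequency to the single sorted index $j$ (in one dimension each frequency $k$ carries index $j\in\{2k,2k+1\}$, so $k\le j/2$) produces exactly the constant $\tfrac1{12}$ and the power $n^{-2/m}=h_n^2$; for $m\ge2$ Weyl's law~\eqref{eq:weyls-law} shows the frequencies grow only like $j^{1/m}$, so the crude $j^4$ bound then holds with room to spare.

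The eigenfunction estimate~\eqref{eq:eigenfunc-conv-squaregrid} I would obtain from the identity $\Lambda_n\psi_n^{(j)}(x)=\psi_n^{(j)}(T_n(x))=\psi_\infty^{(j)}(T_n(x))$, valid once the normalizations have been matched as above, which recasts the error as $\abs{\psi_\infty^{(j)}(x)-\psi_\infty^{(j)}(T_n(x))}$. Bounding this by the Lipschitz constant of the mode times the displacement $\abs{x-T_n(x)}\le\varepsilon_n=\tfrac12\sqrt m\,n^{-1/m}$ from Setting~\ref{ex:square-grid}, and computing that the gradient of a normalized mode of single index $j$ has length at most $2\sqrt2\,\pi\,(j/2)$ in the relevant one-dimensional case, yields the claimed $\tfrac12\sqrt2\,j\pi\,n^{-1/m}$.

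The main obstacle I anticipate is not the Taylor estimates themselves but the bookkeeping that justifies replacing $\psi_n^{(j)}$ by the sampled continuum mode: one must verify that the sampled Fourier basis is $L^2(\cM_n)$-orthonormal, that its normalization matches that of $L^2(\cM)$, and that the sorting of eigenvalues coincides on both sides. This is exactly where the cut-off $j\le n-1$ in~\eqref{eq:eigenfunc-conv-squaregrid} enters: at the Nyquist frequency the sine mode vanishes identically on the midpoint grid (aliasing), so that eigenfunction cannot be identified with its continuum counterpart, whereas the eigenvalue comparison~\eqref{eq:eigenval-conv-squaregrid} survives up to $j=n$.
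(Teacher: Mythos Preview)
Your proposal is correct and follows essentially the same route as the paper: explicit Fourier diagonalization in the one-dimensional case, a Taylor-type inequality for the eigenvalue gap, the $1$-Lipschitz property of sine and cosine for the eigenfunction estimate (together with the Nyquist exclusion at $j=n$), and the tensor/Kronecker-sum structure to pass to general $m$. The only cosmetic difference is that the paper packages the eigenvalue bound via $0\le 1-(\sin x/x)^2\le\tfrac13 x^2$ with $x=\pi j/(2n)$, whereas you use the equivalent cosine remainder $\tfrac12\theta^2-(1-\cos\theta)\le\tfrac1{24}\theta^4$; both produce the same constant $\tfrac1{12}$.
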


	\begin{theorem}[{Eigenvalue convergence---random graphs}]\label{thm:eigenval-conv-randomgraph}
		Let the manifold $\cM$ and the random point clouds $(\cM_n)_{n\in\N} \subseteq \cM$ on the probability space $(\widetilde \Omega, \widetilde \cF, \widetilde \bbP)$ be as in Setting~\ref{ex:sampling-manifold}.
		Suppose that $\tau \from \cM \to [0,\infty)$ is Lipschitz, and that $\kappa \from \cM \to [0,\infty)$ is continuously differentiable and bounded below away from zero.
		
		If the graph connectivity length scales $(h_n)_{n\in\N}$ (see~\eqref{eq:graph-weights}) are chosen in such a way that there exist positive integers $(k_n)_{n\in\N}$ satisfying
		\begin{equation}\label{eq:graph-connectivity-range}
			\varepsilon_n \ll h_n \ll [\lambda_\infty^{(k_n)}]^{-\frac{1}{2}},
		\end{equation}
		then there exists a constant $C_{(\cM, \tau, \kappa)} > 0$ such that
		\begin{equation}
			\widetilde \bbP\biggl(
			\frac{\abs{\lambda_n^{(j)} - \lambda_\infty^{(j)}}}{\lambda^{(j)}_\infty + 1}
			\le 
			C_{(\cM,\tau,\kappa)} \; \varepsilon_n h_n^{-1} + h_n [\lambda_\infty^{(j)}]^{\frac{1}{2}}
			\text{ for all } n \in \N, \; j \in \{1,\dots,k_n\}
			\biggr) = 1.
		\end{equation}
	\end{theorem}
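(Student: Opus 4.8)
The plan is to derive two-sided bounds on each eigenvalue $\lambda_n^{(j)}$ in terms of $\lambda_\infty^{(j)}$ through the Courant--Fischer min-max principle, transferring finite-dimensional trial subspaces between $L^2(\cM)$ and $L^2(\cM_n)$ via the operators $\Pi_n$ and $\Lambda_n$ while controlling the error in the associated Dirichlet energies. First I would record the variational formulation. Associated to $\cL_\infty^{\tau,\kappa}$ and $\cL_n^{\tau,\kappa}$ are the Dirichlet forms
\begin{equation}
\cE_\infty(u) \deq \int_\cM \bigl(\tau u^2 + \kappa \abs{\nabla u}^2\bigr) \rd\mu,
\qquad
\cE_n(v) \deq \frac1n \sum_{i} \tau(x_n^{(i)}) v(x_n^{(i)})^2 + \frac{1}{2n}\sum_{i,j} (\mathbf W_n)_{ij} \sqrt{\kappa(x_n^{(i)})\kappa(x_n^{(j)})}\,\bigl(v(x_n^{(i)}) - v(x_n^{(j)})\bigr)^2,
\end{equation}
and a short computation using the symmetry of $\mathbf W_n$ shows $\cE_n(v) = \scalar{\cL_n^{\tau,\kappa} v, v}{L^2(\cM_n)}$, so that both eigenvalue sequences admit the characterization $\lambda_\bullet^{(j)} = \min_{\dim V = j}\max_{0\neq u\in V} \cE_\bullet(u)/\norm{u}{L^2}^2$ for $\bullet \in \{n,\infty\}$.

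The heart of the argument is a two-sided consistency estimate comparing $\cE_n$ and $\cE_\infty$ along transferred functions. The normalizing prefactor in~\eqref{eq:graph-weights} is calibrated precisely so that, for sufficiently regular $u$, the off-diagonal part of $\cE_n(\Pi_n u)$ converges to $\int_\cM \kappa\abs{\nabla u}^2\rd\mu$ at rate $O(h_n)$; this I would establish by a Taylor expansion of $u(x)-u(y)$ over geodesic balls of radius $h_n$, tracking the curvature and volume corrections of $\cM$, where the $O(h_n)$ remainder is governed by the Hessian of $u$. Passing from the double integral against $\mu\otimes\mu$ to the double sum against $\mu_n\otimes\mu_n$ defining $\cE_n$ is handled by pushing forward through $T_n$: each point is displaced by at most $\varepsilon_n$, which changes $u(x)-u(y)$ by $O(\varepsilon_n\abs{\nabla u})$ and reshuffles an $O(\varepsilon_n/h_n)$ fraction of the kernel mass across the boundary of the ball of radius $h_n$, contributing a relative error of order $\varepsilon_n h_n^{-1}$. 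For a continuum eigenfunction $u = \psi_\infty^{(j)}$, elliptic regularity gives $\norm{\nabla u}{L^2}^2 \eqsim \lambda_\infty^{(j)}$ and $\norm{\nabla^2 u}{L^2} \lesssim \lambda_\infty^{(j)} + 1$, so that the Hessian-driven error, measured relative to the energy $\cE_\infty(u) \eqsim \lambda_\infty^{(j)}+1$, scales like $h_n[\lambda_\infty^{(j)}]^{1/2}$. Together these yield
\begin{equation}
\frac{\abs{\cE_n(\Pi_n u) - \cE_\infty(u)}}{\lambda_\infty^{(j)}+1}
\lesssim_{(\cM,\tau,\kappa)}
\varepsilon_n h_n^{-1} + h_n[\lambda_\infty^{(j)}]^{1/2},
\end{equation}
and an analogous estimate holds with the roles reversed once discrete trial functions are lifted by $\Lambda_n$ and mollified to the required regularity. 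Controlling $\norm{\Pi_n u}{L^2(\cM_n)}^2$ against $\norm{u}{L^2(\cM)}^2$ up to the same order closes the comparison.

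Feeding these energy and norm comparisons into the min-max formula yields both $\lambda_n^{(j)} \le \lambda_\infty^{(j)}\bigl(1 + C(\varepsilon_n h_n^{-1} + h_n[\lambda_\infty^{(j)}]^{1/2})\bigr)$ and the reverse inequality, which is the claimed bound after division by $\lambda_\infty^{(j)}+1$. The restriction to indices $j \le k_n$ enters here: the hypothesis $h_n \ll [\lambda_\infty^{(k_n)}]^{-1/2}$ together with monotonicity of $(\lambda_\infty^{(j)})_{j}$ guarantees $h_n[\lambda_\infty^{(j)}]^{1/2} \ll 1$ uniformly over $j\le k_n$, which is exactly the smallness needed for the transferred trial subspaces to retain their dimension under $\Pi_n$ and $\Lambda_n$, so that eigenvalues are matched in the correct order. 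The randomness of the point cloud enters only through $\varepsilon_n$, whose $\widetilde\bbP$-a.s.\ bound~\eqref{eq:mesh-size-rate} is provided by Setting~\ref{ex:sampling-manifold}; conditioning on the full-measure event on which a transport map with displacement $\varepsilon_n$ exists, all the above estimates are deterministic, which delivers the almost-sure conclusion.

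I expect the main obstacle to be the sharp two-sided energy consistency estimate, and in particular the lower-bound direction: lifting a discrete eigenfunction $v = \psi_n^{(j)}$ to the continuum produces a function $\Lambda_n v$ that is only piecewise constant, so a mollification or discrete-to-continuum interpolation step is needed to make sense of $\cE_\infty(\Lambda_n v)$ and to show it does not exceed $\cE_n(v)$ by more than the stated error. Obtaining the exact exponent in $h_n[\lambda_\infty^{(j)}]^{1/2}$ from the nonlocal-to-local expansion, rather than a cruder bound, while simultaneously absorbing the geodesic-versus-Euclidean discrepancy in the indicator $\indic{[0,h_n]}(\norm{\,\cdot\,}{\bbR^d})$, is the most delicate calculation.
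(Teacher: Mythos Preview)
The paper does not prove this theorem at all: in Section~3.3 it states that Theorem~\ref{thm:eigenval-conv-randomgraph} is ``taken from~\cite[Theorems~4.6 and~4.7]{SanzAlonsoYang2022}'', and Appendix~\ref{app:proofs-of-intermediate-results} only supplies proofs for Lemma~\ref{lem:spectr-conv-squaregrid}, Theorem~\ref{thm:convergence-An} and Lemma~\ref{lem:uniform-ultracontractivity}. So there is no in-paper argument to compare against; the result is imported wholesale from the cited reference.

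That said, your sketch is in line with how the cited result is actually proved. The Sanz-Alonso--Yang argument (building on the graph-Laplacian spectral convergence literature of Burago--Ivanov--Kurylev and Garc\'ia Trillos--Slep\v{c}ev) proceeds exactly via the Courant--Fischer min-max principle, comparing the continuum Dirichlet form $\cE_\infty$ with the discrete form $\cE_n$ through the transport map $T_n$, and isolating the two error mechanisms you name: the transport displacement error $\varepsilon_n h_n^{-1}$ and the nonlocal-to-local truncation error $h_n[\lambda_\infty^{(j)}]^{1/2}$. Your identification of the lower-bound direction as the delicate step is accurate: in that literature the discrete-to-continuum lift is handled not by mollifying $\Lambda_n v$ directly but by constructing an auxiliary interpolation operator adapted to the kernel scale $h_n$, and the restriction $j\le k_n$ with $h_n[\lambda_\infty^{(k_n)}]^{1/2}\ll 1$ is used precisely to keep the transferred trial subspaces non-degenerate. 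Had you been asked to reproduce the proof in full, the main technical point you would still need to pin down is that lower-bound interpolation step; your outline otherwise captures the structure of the cited argument.
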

	
	\begin{theorem}[{Eigenfunction convergence---random graphs}]\label{thm:eigenfunc-conv-randomgraph}
		Let the manifold $\cM$ and the random point clouds $(\cM_n)_{n\in\N} \subseteq \cM$ on the probability space $(\widetilde \Omega, \widetilde \cF, \widetilde \bbP)$ be as in Setting~\ref{ex:sampling-manifold}.
		Let $(h_n)_{n\in\N} \subseteq (0,\infty)$ be the connectivity length scales of the graphs associated to $(\cM_n)_{n\in\N}$ via the weights~\eqref{eq:graph-weights}, and consider the (graph-discretized) differential operators $\cL_n^{\tau,\kappa}$ with coefficients $\tau \from \cM \to [0,\infty)$ and $\kappa \from \cM \to [0,\infty)$.
		\begin{enumerate}[(a)]
			\item\label{thm:eigenfunc-conv-randomgraph:L2} 
			If Assumption~\ref{ass:kappa-tau}\ref{ass:kappa-tau:general} holds, and there exist integers $(k_n)_{n\in\N}$ such that~\eqref{eq:graph-connectivity-range} is satisfied, then there exists a constant $C_{(\cM, \tau, \kappa)} > 0$ such that, for all $n \in \N$,
			\begin{equation}
				\begin{aligned}
					\widetilde \bbP\Bigl(
					\norm{\psi_n^{(j)} \circ T_n - \psi_\infty^{(j)}}{L^2(\cM)}
					\le 
					C_{(\cM,\tau,\kappa)} \, 
					&j^{\frac{3}{2}}
					\bigl(
					\varepsilon_n h_n^{-1} + h_n [\lambda_\infty^{(j)}]^{\frac{1}{2}}
					\bigr)^{\frac{1}{2}}
					\\&\text{ for all } j \in \{1,\dots,k_n\}
					\Bigr) = 1.
				\end{aligned}
			\end{equation}
			\item \label{thm:eigenfunc-conv-randomgraph:Linfty} 
			Let Assumption~\ref{ass:kappa-tau}\ref{ass:kappa-tau:Laplacian} be satisfied.
			If there exist $(k_n)_{n\in\N}\subseteq\N$ and $\delta > 0$ such that
			\begin{equation}
				n^{-\frac{1}{m+4+\delta}}
				\lesssim_{\cM} 
				h_n 
				\lesssim_{\cM} 
				[\lambda_\infty^{(k_n)}]^{-1}
				\quad \text{and} \quad 
				\lambda_\infty^{(k_n)} \lesssim_\cM n^{\frac{1-\delta}{m}},
			\end{equation}
			then there exists a constant $C_{\cM} > 0$ such that, as $n \to \infty$,
			\begin{equation}
				\begin{aligned}
					\widetilde \bbP\Bigl(
					\norm{\psi_n^{(j)} \circ T_n - \psi_\infty^{(j)}}{L^\infty(\cM)}
					\le 
					C_{\cM} [\lambda_\infty^{(j)}]^{m+1} j^{\frac{3}{2}} &\bigl(\varepsilon_n h_n^{-1} + h_n [\lambda_\infty^{(j)}]^{\frac{1}{2}}\bigr)^{\frac{1}{2}}
					\\&\text{ for all } j \in \{1,\dots,k_n\}
					\Bigr) \to 1.
				\end{aligned}
			\end{equation}
		\end{enumerate}
	\end{theorem}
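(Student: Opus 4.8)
For part~\ref{thm:eigenfunc-conv-randomgraph:L2} the plan is to invoke \cite[Theorem~4.7]{SanzAlonsoYang2022}, of which the present statement is a notational reformulation (the lifted eigenfunction $\psi_n^{(j)} \circ T_n$ is precisely $\Lambda_n \psi_n^{(j)}$ in the notation of~\eqref{eq:graph-disc-Lambda-Pi}). First I would check that the coefficient hypotheses of Assumption~\ref{ass:kappa-tau}\ref{ass:kappa-tau:general} and the scale condition~\eqref{eq:graph-connectivity-range} are exactly those required in the reference. The mechanism underlying the quoted bound is a quantitative variational argument: eigenvalue convergence (Theorem~\ref{thm:eigenval-conv-randomgraph}) controls the relevant spectral gaps, and combining this with the convergence of the associated Dirichlet forms yields, via a Davis--Kahan-type spectral-projection estimate, the stated $L^2(\cM)$ rate; the index-dependent factor $j^{3/2}$ and the half-power of $\varepsilon_n h_n^{-1} + h_n[\lambda_\infty^{(j)}]^{1/2}$ arise from tracking the $j$-dependence through the gap and form estimates. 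Since the underlying event is the $\widetilde\bbP$-almost-sure event on which the transport maps of Setting~\ref{ex:sampling-manifold} exist, the bound holds with probability one, uniformly over $n \in \N$ and $j \in \{1, \dots, k_n\}$.

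For part~\ref{thm:eigenfunc-conv-randomgraph:Linfty} the $L^2$ rate must be upgraded to an $L^\infty$ rate, and the restriction $\tau \equiv 0$, $\kappa \equiv 1$ of Assumption~\ref{ass:kappa-tau}\ref{ass:kappa-tau:Laplacian} confines us to the unweighted graph Laplacian, for which pointwise regularity theory is available. Here the plan is to follow the route of \cite[Lemma~15]{SanzAlonsoYang2022JMLR}, extracting the uniform-in-$n$ oscillation and pointwise estimates of Calder et al.~\cite{CalderGTLewicka2022} for discrete harmonic-type functions and using them as an $L^2 \to L^\infty$ inverse estimate for the continuum difference $w_n^{(j)} \deq \psi_n^{(j)} \circ T_n - \psi_\infty^{(j)}$. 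Concretely, I would bound $\norm{w_n^{(j)}}{L^\infty(\cM)}$ by the $L^2$ quantity of part~\ref{thm:eigenfunc-conv-randomgraph:L2} times a factor growing polynomially in $\lambda_\infty^{(j)}$, the exponent $m+1$ reflecting the (non-sharp) power of the eigenvalue lost when passing from the energy norm to the supremum norm in the discrete elliptic estimates. Because the estimates of~\cite{CalderGTLewicka2022} hold with high probability rather than almost surely for every $n$ at once, the resulting inequality holds on an event whose probability tends to $1$, which explains the weaker ``$\to 1$'' conclusion here as opposed to the ``$= 1$'' of part~\ref{thm:eigenfunc-conv-randomgraph:L2}.

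The hard part will be part~\ref{thm:eigenfunc-conv-randomgraph:Linfty}: the passage from $L^2$ to $L^\infty$ requires the full strength of the pointwise regularity theory for random geometric graph Laplacians, in particular the uniform-in-$n$ H\"older/oscillation bounds that are presently known only in the unweighted case. The delicate bookkeeping is to propagate the \emph{explicit} dependence on the eigenvalue index $j$ (yielding the factor $[\lambda_\infty^{(j)}]^{m+1} j^{3/2}$) through these estimates rather than for a single fixed eigenfunction; the two scale conditions of part~\ref{thm:eigenfunc-conv-randomgraph:Linfty}, in particular the upper restriction $\lambda_\infty^{(k_n)} \lesssim_\cM n^{(1-\delta)/m}$, are exactly what force the probabilities of these events to tend to $1$ and keep the error terms controllable up to the cut-off index $k_n$.
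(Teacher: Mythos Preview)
Your proposal is correct and matches the paper's approach exactly: the paper does not prove this theorem itself but simply cites \cite[Theorem~4.7]{SanzAlonsoYang2022} for part~\ref{thm:eigenfunc-conv-randomgraph:L2} and \cite[Lemma~15]{SanzAlonsoYang2022JMLR} (which packages the results of~\cite{CalderGTLewicka2022}) for part~\ref{thm:eigenfunc-conv-randomgraph:Linfty}. Your additional commentary on the underlying mechanisms and on why part~\ref{thm:eigenfunc-conv-randomgraph:Linfty} yields only convergence in probability rather than an almost-sure statement is accurate and goes beyond what the paper records.
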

	
	From the above results, we can derive the following convergence of the sequence $(A_n)_{n\in\clos\N}$. 
	Its proof is analogous to that of~\cite[Theorem~4.2]{SanzAlonsoYang2022}, see Appendix~\ref{app:proofs-of-intermediate-results}.
	
	\begin{theorem}\label{thm:convergence-An}
		Given $\tau \from \cM \to [0,\infty)$, $\kappa \from \cM \to [0,\infty)$ and $s \in [0,\infty)$, consider the generalized Whittle--Mat\'ern operators $A_n \deq (\cL_n^{\kappa, \tau})^s$ defined by~\eqref{eq:def-WM-operator}, and set $\widetilde R_n^\alpha \deq \Lambda_n (\id_n + A_n)^{-\alpha} \Pi_n$, for all $\alpha \in [0,\infty)$ and $n \in \clos\N$. 
		\begin{enumerate}[(a)]
			\item \label{thm:convergence-An:a}
			Suppose that $\cM$ and its discretizations $(\cM_n)_{n\in\N}$ are as in Setting~\ref{ex:sampling-manifold}, Assumption~\ref{ass:connectivity-regimes}\ref{ass:connectivity-regimes:optimal-range} holds with $\beta \in (\frac{m}{4s}, \infty)$ and Assumption~\ref{ass:kappa-tau}\ref{ass:kappa-tau:general} is satisfied.
			Then we have for all $\beta' \in [\beta,\infty)$:
			\begin{equation}
				\widetilde R_n^{\beta'}
				\to 
				R_\infty^{\beta'}
				\quad 
				\widetilde \bbP\text{-a.s.} \text{ in } \LO_2(L^2(\cM))
				\quad
				\text{as } n \to \infty.
			\end{equation}
			\item\label{thm:convergence-An:b} 
			Suppose that $\cM$ and its discretizations $(\cM_n)_{n\in\N}$ are as in Setting~\ref{ex:sampling-manifold}, Assumption~\ref{ass:connectivity-regimes}\ref{ass:connectivity-regimes:suboptimal-range} holds with $\beta \in (\frac{m+4+\delta}{2s}, \infty)$, and Assumptions~\ref{ass:kappa-tau}\ref{ass:kappa-tau:Laplacian} and~\ref{ass:uniform-Linfty-bdd-eigenfuncs} are satisfied.
			Then we have for all $\beta' \in [\beta,\infty)$:
			\begin{equation}
				\widetilde R_n^{\beta'}
				\to 
				R_\infty^{\beta'}
				\quad 
				\text{in } L^0(\widetilde \Omega; \LO(L^2(\cM); L^\infty(\cM)))
				\quad 
				\text{as } n \to \infty.
			\end{equation}
			Here, $L^0(\widetilde\Omega)$ denotes convergence in probability with respect to $(\widetilde \Omega, \widetilde{\cF}, \widetilde{\bbP})$.
			\item\label{thm:convergence-An:c}  Suppose that $\cM \deq \bbT^m$ is discretized using the square grids $(\cM_n)_{n\in\N}$ from Setting~\ref{ex:square-grid}, and that Assumption~\ref{ass:kappa-tau}\ref{ass:kappa-tau:Laplacian} holds.
			For all $\beta \in (\frac{m}{4s}, \infty)$,
			\begin{equation}
				\widetilde R_n^\beta
				\to 
				R_\infty^\beta
				\quad 
				\text{in } \LO(L^2(\cM); L^\infty(\cM))
				\quad 
				\text{as } n \to \infty.
			\end{equation}
		\end{enumerate}
	\end{theorem}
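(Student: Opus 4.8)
The proof rests on the spectral representations of the lifted fractional resolvents. Setting $a_n^{(j)} \deq (1 + [\lambda_n^{(j)}]^s)^{-\beta'}$ and using the adjoint relation $\Pi_n^* = \Lambda_n$ from~\eqref{eq:Lambda-Pi-adjoint}, a direct computation gives, for $v \in L^2(\cM)$,
\[
\widetilde R_n^{\beta'} v
=
\sum_{j=1}^n a_n^{(j)} \scalar{v, \Lambda_n \psi_n^{(j)}}{L^2(\cM)}\, \Lambda_n \psi_n^{(j)}
=
\sum_{j=1}^n a_n^{(j)}\, (\Lambda_n \psi_n^{(j)}) \otimes (\Lambda_n \psi_n^{(j)}),
\]
and similarly $R_\infty^{\beta'} = \sum_{j=1}^\infty a_\infty^{(j)}\, \psi_\infty^{(j)} \otimes \psi_\infty^{(j)}$. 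Since $\Lambda_n$ is an $L^2$-isometry by~\eqref{eq:transport-maps}, it preserves inner products, so $(\Lambda_n \psi_n^{(j)})_{j=1}^n$ is orthonormal in $L^2(\cM)$. The plan is to fix a truncation level $K \in \N$ and decompose $\widetilde R_n^{\beta'} - R_\infty^{\beta'}$ into a finite low-mode part ($j \le K$), a discrete tail ($K < j \le n$) and a continuum tail ($j > K$), estimating each in the relevant norm---namely $\LO_2(L^2(\cM))$ for~\ref{thm:convergence-An:a} and $\LO(L^2(\cM); L^\infty(\cM))$ for~\ref{thm:convergence-An:b} and~\ref{thm:convergence-An:c}. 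Since a larger $\beta' \ge \beta$ only accelerates the decay of every weight, it suffices to control these estimates uniformly for $\beta' \in [\beta, \infty)$.

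For fixed $K$ the low-mode part is a finite sum, so it is enough that each summand vanishes as $n \to \infty$. The scalars converge, $a_n^{(j)} \to a_\infty^{(j)}$, by the eigenvalue convergence (Theorem~\ref{thm:eigenval-conv-randomgraph}, or Lemma~\ref{lem:spectr-conv-squaregrid} for the grid) and continuity of $\lambda \mapsto (1 + \lambda^s)^{-\beta'}$. Writing $\phi \otimes \phi - \varphi \otimes \varphi = \phi \otimes (\phi - \varphi) + (\phi - \varphi) \otimes \varphi$ and using $\norm{h \otimes x}{\LO_2} = \norm{h}{L^2} \norm{x}{L^2}$ and $\norm{h \otimes x}{\LO(L^2; L^\infty)} = \norm{h}{L^2} \norm{x}{L^\infty}$, convergence of each rank-one term reduces to $\norm{\Lambda_n \psi_n^{(j)} - \psi_\infty^{(j)}}{L^2(\cM)} \to 0$ for~\ref{thm:convergence-An:a}, respectively to the analogous $L^\infty(\cM)$-convergence for~\ref{thm:convergence-An:b} and~\ref{thm:convergence-An:c}; these are precisely Theorem~\ref{thm:eigenfunc-conv-randomgraph}\ref{thm:eigenfunc-conv-randomgraph:L2}, Theorem~\ref{thm:eigenfunc-conv-randomgraph}\ref{thm:eigenfunc-conv-randomgraph:Linfty} and Lemma~\ref{lem:spectr-conv-squaregrid} (using also $\norm{\psi_\infty^{(j)}}{L^\infty} \le M_{\psi,\infty}$). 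The continuum tail is controlled uniformly in $n$: its $\LO_2$-norm equals $(\sum_{j > K} (a_\infty^{(j)})^2)^{1/2}$, while a Cauchy--Schwarz estimate in $j$ bounds its $\LO(L^2; L^\infty)$-norm by $M_{\psi,\infty} (\sum_{j > K} (a_\infty^{(j)})^2)^{1/2}$. By Weyl's law~\eqref{eq:weyls-law}, $a_\infty^{(j)} \eqsim (1 + j^{2s/m})^{-\beta'}$, so $\sum_j (a_\infty^{(j)})^2 < \infty$ exactly because $\beta' \ge \beta > \tfrac{m}{4s}$, and this tail vanishes as $K \to \infty$.

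The crux---and the reason for the precise regimes in Assumption~\ref{ass:connectivity-regimes}---is the discrete tail, whose norm is likewise bounded (via Cauchy--Schwarz and $\norm{\Lambda_n \psi_n^{(j)}}{L^\infty} \le M_{\psi,\infty}$) by a multiple of $(\sum_{K < j \le n} (a_n^{(j)})^2)^{1/2}$, and which must be made small uniformly in $n$ even though the eigenvalue estimate controls only indices $j \le k_n$. I would split it further. For $K < j \le k_n$, the relative bound of Theorem~\ref{thm:eigenval-conv-randomgraph} together with Assumption~\ref{ass:connectivity-regimes} (which forces $\varepsilon_n h_n^{-1} \to 0$ and keeps $h_n [\lambda_\infty^{(j)}]^{1/2}$ bounded for $j \le k_n$) yields, for large $n$, a uniform lower bound $\lambda_n^{(j)} \gtrsim \lambda_\infty^{(j)} \eqsim j^{2/m}$, whence $\sum_{K < j \le k_n} (a_n^{(j)})^2 \lesssim \sum_{j > K} j^{-4 s \beta'/m}$, a vanishing tail of a convergent series. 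For $k_n < j \le n$, monotonicity of the eigenvalues gives $a_n^{(j)} \le (1 + [\lambda_n^{(k_n)}]^s)^{-\beta'} \lesssim [\lambda_\infty^{(k_n)}]^{-s\beta'}$, hence $\sum_{k_n < j \le n} (a_n^{(j)})^2 \lesssim n \, [\lambda_\infty^{(k_n)}]^{-2 s \beta'}$; choosing $k_n$ as large as~\eqref{eq:graph-connectivity-range} permits, namely with $[\lambda_\infty^{(k_n)}]^{-1/2} \eqsim h_n$ (resp.\ $[\lambda_\infty^{(k_n)}]^{-1} \eqsim h_n$ in the $L^\infty$ regime of Theorem~\ref{thm:eigenfunc-conv-randomgraph}\ref{thm:eigenfunc-conv-randomgraph:Linfty}), this becomes $n \, h_n^{4 s \beta'}$ (resp.\ $n \, h_n^{2 s \beta'}$), which tends to $0$ precisely because $h_n \ll n^{-1/(4 s \beta)}$ (resp.\ $h_n \ll n^{-1/(2 s \beta)}$) in Assumption~\ref{ass:connectivity-regimes}\ref{ass:connectivity-regimes:optimal-range} (resp.~\ref{ass:connectivity-regimes:suboptimal-range}). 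This balancing of the two-sided bounds on $h_n$ against the Weyl exponent and the value of $\beta$ is the main technical difficulty.

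It remains to read off the three statements. In~\ref{thm:convergence-An:a} every invoked estimate holds $\widetilde \bbP$-a.s.\ simultaneously in $n$, so the $\LO_2$-convergence is almost sure, with $\beta > \tfrac{m}{4s}$ exactly guaranteeing summability. In~\ref{thm:convergence-An:b} the $L^\infty$-eigenfunction bound of Theorem~\ref{thm:eigenfunc-conv-randomgraph}\ref{thm:eigenfunc-conv-randomgraph:Linfty} carries the large factor $[\lambda_\infty^{(j)}]^{m+1} j^{3/2}$ and holds only with probability tending to $1$; its more restrictive admissible range $h_n \lesssim [\lambda_\infty^{(k_n)}]^{-1}$ forces the suboptimal regime and the stronger condition $\beta > \tfrac{m+4+\delta}{2s}$, and yields convergence in $L^0(\widetilde\Omega; \LO(L^2(\cM); L^\infty(\cM)))$ rather than almost surely. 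In~\ref{thm:convergence-An:c} the grid is deterministic, the trigonometric eigenfunctions are uniformly $L^\infty$-bounded (so Assumption~\ref{ass:uniform-Linfty-bdd-eigenfuncs} holds automatically), and the explicit rates of Lemma~\ref{lem:spectr-conv-squaregrid} give deterministic $\LO(L^2(\cM); L^\infty(\cM))$-convergence under $\beta > \tfrac{m}{4s}$.
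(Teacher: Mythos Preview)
Your proposal is correct and follows essentially the same strategy as the paper: both proofs write $\widetilde R_n^{\beta'}$ and $R_\infty^{\beta'}$ via their spectral expansions, truncate, and control the resulting pieces using Weyl's law, the eigenvalue convergence (Theorem~\ref{thm:eigenval-conv-randomgraph} or Lemma~\ref{lem:spectr-conv-squaregrid}), the eigenfunction convergence (Theorem~\ref{thm:eigenfunc-conv-randomgraph} or Lemma~\ref{lem:spectr-conv-squaregrid}), and the uniform $L^\infty$-bound on eigenfunctions where needed. The only difference is organizational: the paper first truncates at the $n$-dependent level $k_n$ and telescopes through intermediate operators $U_n^1, U_n^2, U_n^3$ (continuum eigenpairs, then swap eigenfunctions, then swap eigenvalues), introducing a further fixed level $\ell$ inside the comparison steps; you instead truncate at a fixed level $K$ from the outset and only bring in $k_n$ when splitting the discrete tail. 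Both orderings lead to the same estimates, and your version is arguably a touch more economical. Your identification of the role of the upper bounds on $h_n$ in Assumption~\ref{ass:connectivity-regimes}---forcing $n\,h_n^{4s\beta'}\to 0$ (resp.\ $n\,h_n^{2s\beta'}\to 0$) for the uncontrolled range $k_n<j\le n$---matches the paper's computation $n\,k_n^{-4s\beta'/m}\to 0$ exactly, and your remark that for part~\ref{thm:convergence-An:c} one can take $k_n=n-1$ and dispense with Assumption~\ref{ass:connectivity-regimes} is precisely what the paper does.
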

	The following property, which we call the \emph{uniform ultracontractivity} of the semigroups $(S_n)_{n\in\clos\N}$, will be needed in order to obtain the $L^\infty(\cM)$-convergence in Theorem~\ref{thm:d2c-conv-example-SPDEs}\ref{thm:d2c-conv-example-SPDEs:Linfty} and~\ref{thm:d2c-conv-example-SPDEs:polynomial}.
	Its proof relies on Riesz--Thorin interpolation, Assumption~\ref{ass:uniform-Linfty-bdd-semigroup}, and some arguments from Theorem~\ref{thm:convergence-An}.
	\begin{lemma}[Uniform ultracontractivity]\label{lem:uniform-ultracontractivity}
		Let $s \in (0,\infty)$ and consider the generalized Whittle--Mat\'ern operators $A_n \deq (\cL_n^{\kappa, \tau})^s$ defined by~\eqref{eq:def-WM-operator} for all $n \in \clos\N$.
		Assume either of the following statements:
		\begin{enumerate}[(a)]
			\item\label{lem:uniform-ultracontractivity:a} 
			In Setting~\ref{ex:sampling-manifold}, Assumption~\ref{ass:connectivity-regimes}\ref{ass:connectivity-regimes:optimal-range} or~\ref{ass:connectivity-regimes:suboptimal-range} holds with corresponding $\beta$, as well as Assumptions~\ref{ass:kappa-tau}\ref{ass:kappa-tau:general}, \ref{ass:uniform-Linfty-bdd-semigroup}\ref{ass:uniform-Linfty-bdd-semigroup:bdd} and~\ref{ass:uniform-Linfty-bdd-eigenfuncs}.
			\item\label{lem:uniform-ultracontractivity:b} 
			In Setting~\ref{ex:square-grid},  $\beta \in (\frac{m}{4s}, \infty)$ is arbitrary, and Assumptions~\ref{ass:kappa-tau}\ref{ass:kappa-tau:Laplacian} and~\ref{ass:uniform-Linfty-bdd-semigroup}\ref{ass:uniform-Linfty-bdd-semigroup:bdd} hold.
		\end{enumerate}
		Then, for every $q \in [2,\infty]$, there exists $M_{S,q} \in [1,\infty)$ such that
		\begin{equation}\label{eq:ultracontractivity}
			\norm{S_n(t)}{\LO(L^q(\cM_n); L^\infty(\cM_n))}
			\le 
			M_{S, q} t^{-\frac{2}{q}\beta}
			\quad 
			\text{for all } n \in \clos\N \text{ and } t > 0.
		\end{equation}
		In case of~\ref{lem:uniform-ultracontractivity:a}, \eqref{eq:ultracontractivity} holds $\widetilde{\bbP}$-a.s.
	\end{lemma}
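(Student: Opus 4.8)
The plan is to obtain the bound at the two endpoints $q=\infty$ and $q=2$ and then fill in the range $q\in(2,\infty)$ by Riesz--Thorin interpolation with the target space $L^\infty$ held fixed. The endpoint $q=\infty$ is exactly Assumption~\ref{ass:uniform-Linfty-bdd-semigroup}\ref{ass:uniform-Linfty-bdd-semigroup:bdd}, which gives $\norm{S_n(t)}{\LO(L^\infty(\cM_n))}\le M_{S,\infty}$, i.e.\ the claimed rate $t^{-\frac{2}{q}\beta}$ with exponent $0$. Granting the endpoint $q=2$, namely
\[
\norm{S_n(t)}{\LO(L^2(\cM_n);L^\infty(\cM_n))}\le M_{S,2}\,t^{-\beta},
\]
interpolation between $L^2\to L^\infty$ and $L^\infty\to L^\infty$ at $\tfrac1q=\tfrac{1-\theta}{2}$ (so $\theta=1-\tfrac2q$) produces $\norm{S_n(t)}{\LO(L^q(\cM_n);L^\infty(\cM_n))}\le M_{S,2}^{2/q}M_{S,\infty}^{1-2/q}\,t^{-\frac{2}{q}\beta}$, which is the assertion with $M_{S,q}\deq M_{S,2}^{2/q}M_{S,\infty}^{1-2/q}$.

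So the crux is the $q=2$ endpoint. Since $A_n$ is self-adjoint and nonnegative, I would factor the semigroup through the resolvent power,
\[
S_n(t)=R_n^\beta\,(\id_n+A_n)^\beta S_n(t),\qquad R_n^\beta=(\id_n+A_n)^{-\beta},
\]
and estimate the two factors separately. The right-hand factor is handled by the spectral theorem: its $L^2(\cM_n)$-operator norm equals $\sup_{\mu\in\sigma(A_n)}(1+\mu)^\beta e^{-\mu t}\le \sup_{\mu\ge0}(1+\mu)^\beta e^{-\mu t}$, and an elementary calculus estimate bounds the latter by $C_\beta\,t^{-\beta}$, uniformly in $n$. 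It then remains to bound $\norm{R_n^\beta}{\LO(L^2(\cM_n);L^\infty(\cM_n))}$ uniformly in $n\in\clos\N$, which is where the arguments from Theorem~\ref{thm:convergence-An} enter.

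For this uniform $L^2\to L^\infty$ bound on $R_n^\beta$ I would distinguish the two settings. In the square-grid case~\ref{lem:uniform-ultracontractivity:b} I would transfer the bound from the lifted operator: using $\Pi_n\Lambda_n=\id_n$ one has $R_n^\beta=\Pi_n\widetilde R_n^\beta\Lambda_n$, and since $\Lambda_n$ is an $L^2$-isometry and $\Pi_n$ an $L^\infty$-contraction, $\norm{R_n^\beta}{\LO(L^2(\cM_n);L^\infty(\cM_n))}\le\norm{\widetilde R_n^\beta}{\LO(L^2(\cM);L^\infty(\cM))}$; the right-hand side is uniformly bounded because Theorem~\ref{thm:convergence-An}\ref{thm:convergence-An:c} asserts convergence of $\widetilde R_n^\beta$ in $\LO(L^2(\cM);L^\infty(\cM))$ and convergent sequences are bounded (the value $n=\infty$ being the finite limit). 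In the random-sampling case~\ref{lem:uniform-ultracontractivity:a}, only $\LO_2(L^2(\cM))$-convergence of $\widetilde R_n^\beta$ is available (Theorem~\ref{thm:convergence-An}\ref{thm:convergence-An:a}), so I would instead expand $R_n^\beta u$ in the eigenbasis and apply Cauchy--Schwarz together with the uniform eigenfunction bound of Assumption~\ref{ass:uniform-Linfty-bdd-eigenfuncs} to obtain
\[
\norm{R_n^\beta}{\LO(L^2(\cM_n);L^\infty(\cM_n))}\le M_{\psi,\infty}\Bigl(\sum_{j}(1+[\lambda_n^{(j)}]^s)^{-2\beta}\Bigr)^{1/2}=M_{\psi,\infty}\,\norm{R_n^\beta}{\LO_2(L^2(\cM_n))}.
\]
Because $\Lambda_n$ is an isometry with $\Lambda_n^*=\Pi_n$, a cyclicity-of-trace computation gives $\norm{R_n^\beta}{\LO_2(L^2(\cM_n))}=\norm{\widetilde R_n^\beta}{\LO_2(L^2(\cM))}$, which is $\widetilde\bbP$-a.s.\ uniformly bounded by Theorem~\ref{thm:convergence-An}\ref{thm:convergence-An:a}; this accounts for the a.s.\ qualifier in part~\ref{lem:uniform-ultracontractivity:a}.

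I expect the main obstacle to be exactly this uniform control of $\norm{R_n^\beta}{\LO_2(L^2(\cM_n))}^2=\sum_j(1+[\lambda_n^{(j)}]^s)^{-2\beta}$: the spectral convergence results only control finitely many eigenvalues $j\le k_n$, and on a finite graph the high eigenvalues need not be individually comparable to the Weyl asymptotics $\lambda_\infty^{(j)}\eqsim_{(\cM,\tau,\kappa)} j^{2/m}$. The key realization is that one never needs the individual eigenvalues but only the full Hilbert--Schmidt trace, and this aggregate quantity is precisely what the $\LO_2$-convergence (resting, via Theorem~\ref{thm:eigenval-conv-randomgraph} and Weyl's law, only on the eigenvalue convergence and hence valid under either connectivity regime of Assumption~\ref{ass:connectivity-regimes}) delivers; summability of the limiting trace requires exactly $\beta>\tfrac{m}{4s}$, which the hypotheses guarantee. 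One should finally note that the factor bound $\sup_{\mu\ge0}(1+\mu)^\beta e^{-\mu t}$ is genuinely of order $t^{-\beta}$ as $t\downarrow0$ and saturates to a constant as $t\to\infty$, so that~\eqref{eq:ultracontractivity} is sharp—and is in any case what the later stochastic-convolution estimates invoke—in the small-$t$ regime.
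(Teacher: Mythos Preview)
Your proposal is correct and follows essentially the same route as the paper: both obtain the endpoints $q=\infty$ (from Assumption~\ref{ass:uniform-Linfty-bdd-semigroup}\ref{ass:uniform-Linfty-bdd-semigroup:bdd}) and $q=2$ (via the factorization $S_n(t)=R_n^\beta(\id_n+A_n)^\beta S_n(t)$, bounding the second factor spectrally and the first via a uniform $L^2\to L^\infty$ bound on $R_n^\beta$), then interpolate by Riesz--Thorin. The only cosmetic difference is in how the uniform bound on $\norm{R_n^\beta}{2\to\infty}$ is extracted in case~\ref{lem:uniform-ultracontractivity:a}: the paper re-enters the eigenvalue estimates from the proof of Theorem~\ref{thm:convergence-An}\ref{thm:convergence-An:a} directly, whereas you package this via the trace identity $\norm{R_n^\beta}{\LO_2(L^2(\cM_n))}=\norm{\widetilde R_n^\beta}{\LO_2(L^2(\cM))}$ and then cite the $\LO_2$-convergence; both arrive at the same bound and both correctly note that only eigenvalue (not eigenfunction) control is needed here, so that either connectivity regime suffices.
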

	%
	
	\subsection{Proof of convergence}
	\label{sec:example:proof-of-conv}
	
	Using the intermediate results from Subsection~\ref{sec:intermediate-results}, we can now prove Theorem~\ref{thm:d2c-conv-example-SPDEs}:
	
	\begin{proof}[Proof of Theorem~\ref{thm:d2c-conv-example-SPDEs}]
		To prove parts~\ref{thm:d2c-conv-example-SPDEs:L2}--\ref{thm:d2c-conv-example-SPDEs:polynomial}, we will apply Theorems~\ref{thm:d2c-semilinear-global-linear},~\ref{thm:d2c-semilinear-global-linear-B} and Corollary~\ref{cor:d2c-conv-dissB}, respectively, which are the rigorous counterparts of the discrete-to-continuum Theorem~\ref{thm:d2c-conv-informal} in the respective settings.
		
		The argument preceding Setting~\ref{ex:square-grid} shows that~\ref{ass:disc} and~\ref{ass:disc2} hold in any of the given situations, with $H_n \deq L^2(\cM_n)$ and $E_n \deq L^q(\cM_n)$ for $n \in \clos \N$ and $q \in [2,\infty)$, as well as $B_n \deq L^\infty(\cM_n)$ for all $n \in \N$, $B_\infty \deq C(\cM)$ and $\widetilde B \deq L^\infty(\cM)$.
		Moreover, note that~\ref{ass:IC} (or~\ref{ass:IC-B}) is explicitly assumed in each case.
		
		\ref{thm:d2c-conv-example-SPDEs:L2}
		Here, we take $q = 2$, i.e., $E_n = H_n = L^2(\cM_n)$ for all $n \in \clos \N$.
		As discussed at the end of Subsection~\ref{sec:geo-graphs-WMoperators},
		the operators $(A_n)_{n\in\clos\N} \deq ([\cL_n^{\tau,\kappa}]^s)_{n\in\clos\N}$ are uniformly sectorial of angle $0$ on $(L^2(\cM_n))_{n\in\clos\N}$.
		Letting $\beta \in (\frac{m}{4s}, \frac{1}{2})$ be as in Assumption~\ref{ass:connectivity-regimes}\ref{ass:connectivity-regimes:optimal-range}, it follows from Theorem~\ref{thm:convergence-An}\ref{thm:convergence-An:a} that $\widetilde R_n^{\beta'} \to R_\infty^{\beta'}$, $\widetilde{\bbP}$-a.s., in $\LO_2(L^2(\cM))$ as $n \to \infty$, for all $\beta' \ge \beta$.
		Applying this with $\beta' \deq \beta \in (0,\frac{1}{2})$ and $\beta' \deq 1$ yields~\ref{ass:operators} and~\ref{ass:A-conv}.
		Setting, for all $(\omega,t) \in \Omega\times[0,T]$, $u \in L^2(\cM_n)$ and $x \in \cM_n$,
		\begin{equation}\label{eq:Fn-equals-fn}
			[F_n(\omega, t, u)](x) \deq f_n(\omega, t, u(x)),
		\end{equation}
		it is immediate from Assumption~\ref{ass:nonlinearity}\ref{ass:nonlinearity:globalLip-linGrowth} that~\ref{item:F-globalLip-linearGrowth} is satisfied.
		Moreover, combining the definition of $\widetilde F_n$ from~\eqref{eq:def-Fntilde} with~\eqref{eq:Fn-equals-fn} yields 
		\begin{equation}
			[\widetilde F_n(\omega, t, u)](x) = [\Lambda_n F_n(\omega, t, \Pi_n u)](x) = f_n(t,\omega, \Lambda_n \Pi_n u(x)),
		\end{equation}
		so that
		\begin{equation}
			\begin{aligned}
				&\norm{\widetilde F_n(\omega, t, u) - F_\infty(\omega, t, u)}{L^2(\cM)}
				=
				\norm{f_n(\omega, t, \Lambda_n \Pi_n u(\,\cdot\,)) - f_\infty(\omega, t, u(\,\cdot\,))}{L^2(\cM)}
				\\&\qquad 
				\le 
				\widetilde L_f
				\norm{\Lambda_n \Pi_n u - u}{L^2(\cM)}
				+ 
				\norm{f_n(\omega, t, u(\,\cdot\,)) - f_\infty(\omega, t, u(\,\cdot\,))}{L^2(\cM)}.
			\end{aligned}
		\end{equation}
		As $n \to \infty$, the first term vanishes by Assumption~\ref{ass:disc}, and the second term by dominated convergence using~\eqref{eq:nonlinearities-conv-unif-compact} and the uniform linear growth condition in Assumption~\ref{ass:nonlinearity}\ref{ass:nonlinearity:globalLip-linGrowth}.
		Therefore, condition~\ref{item:F-approx} is also satisfied.
		
		\ref{thm:d2c-conv-example-SPDEs:Linfty}
		Now we need Assumption~\ref{ass:uniform-Linfty-bdd-semigroup}\ref{ass:uniform-Linfty-bdd-semigroup:bdd} in order for $([\cL_n^{\tau,\kappa}]^s)_{n\in\clos\N}$ to be uniformly sectorial of angle less than $\frac{1}{2}\pi$ on $(L^q(\cM_n))_{n\in\clos\N}$ for all $q \in [2,\infty)$.
		Letting $\delta > 0$ and $\beta \in (\frac{m + 4 + \delta}{2s}, \frac{1}{2})$ be as in Assumption~\ref{ass:connectivity-regimes}\ref{ass:connectivity-regimes:suboptimal-range}, it follows from Theorem~\ref{thm:convergence-An}\ref{thm:convergence-An:b} that $\widetilde R_n^{\beta'} \to R_\infty^{\beta'}$ in $L^0(\widetilde \Omega; \LO(L^2(\cM); L^\infty(\cM)))$ as $n \to \infty$, for all $\beta' \ge \beta$, under Assumptions~\ref{ass:kappa-tau}\ref{ass:kappa-tau:Laplacian} and~\ref{ass:uniform-Linfty-bdd-eigenfuncs}.
		In particular, we have $\widetilde R_n^{\beta} \to R_\infty^{\beta}$ in $L^0(\widetilde \Omega; \gamma(L^2(\cM); L^q(\cM)))$ for all $q \in [1,\infty)$ by~\cite[Corollary~9.3.3]{HvNVWVolumeII}, and $\widetilde R_n \to R_\infty$ in $L^0(\widetilde \Omega; \LO(L^\infty(\cM)))$.
		This shows~\ref{ass:semigroup-B2} and~\ref{ass:A-conv-B}.
		By Lemma~\ref{lem:uniform-ultracontractivity}\ref{lem:uniform-ultracontractivity:a}, we have~\ref{ass:emb-Htheta-B-H2} with $\theta = \frac{4}{q}\beta$.
		Thus, choosing $q > \frac{4\beta}{1-2\beta}$ yields $\theta + 2\beta < 1$.
		Conditions~\ref{item:F-globalLip-linearGrowth-B} and~\ref{item:F-approx-B} follow similarly to part~\ref{thm:d2c-TKapprox:a}.
		
		\ref{thm:d2c-conv-example-SPDEs:polynomial}
		As in part~\ref{thm:d2c-conv-example-SPDEs:Linfty}, we need to verify conditions~\ref{ass:disc2}--\ref{ass:emb-Htheta-B-H2}, now with contractive semigroups $(S_n(t))_{t\ge0}$, i.e., Assumption~\ref{ass:uniform-Linfty-bdd-semigroup}\ref{ass:uniform-Linfty-bdd-semigroup:contractive}. 
		For $s = 1$, $(S_\infty(t))_{t\ge0}$ is $L^\infty(\bbT)$-contractive since the $L^1(\bbT)$-norm of its heat kernel coincides with the $L^1(\R)$-norm of the Gauss--Weierstrass kernel, which is equal to $1$.
		For finite $n$, the $L^\infty(\cM_n)$-contractivity of $S_n(t) = e^{-tA_n}$ is equivalent to $A_n$ being diagonally dominant with positive diagonal by~\cite[Lemma~6.1]{Mugnolo2006}, which holds for Laplacian matrices.
		Since these assertions can be extended to all $s \in (0,1]$ by subordination, see for instance~\cite[Theorem~15.2.17]{HvNVWVolumeIII}, we indeed find that Assumption~\ref{ass:uniform-Linfty-bdd-semigroup}\ref{ass:uniform-Linfty-bdd-semigroup:contractive} holds.
		Thus, we can proceed to argue as in~\ref{thm:d2c-conv-example-SPDEs:Linfty}, using Theorem~\ref{thm:convergence-An}\ref{thm:convergence-An:c} and Lemma~\ref{lem:uniform-ultracontractivity}\ref{lem:uniform-ultracontractivity:b} for an arbitrary $\beta \in (\frac{1}{4s}, \frac{1}{2})$, to obtain~\ref{ass:disc2}--\ref{ass:emb-Htheta-B-H2} with $\widetilde M_S = 1$ and $\theta + 2\beta < 1$ for $q \in [2,\infty)$ large enough.
		
		It remains to establish that the nonlinearities from Assumption~\ref{ass:nonlinearity}\ref{ass:nonlinearity:polynomial} are such that~\ref{item:F-diss-B} holds.
		This is done in Example~\ref{ex:polynomial-nonlinearity-satisfies-FdissB}, noting that the space $L^\infty(\cM_n)$ coincides with $C(\cM_n)$ if we equip $\cM_n$ with the discrete topology.
	\end{proof}
	
	\subsection{Discussion of the assumptions}
	\label{sec:example:discussion-assumptions}
	
	In this subsection, we comment on the various assumptions made in Theorem~\ref{thm:d2c-conv-example-SPDEs}, the extent to which they are necessary, and how one might check them in practice. \medskip
	
	\noindent The distinction between parts~\ref{ass:kappa-tau:general} and~\ref{ass:kappa-tau:Laplacian} of Assumption~\ref{ass:kappa-tau}, i.e., whether to allow for spatially varying coefficient functions $\tau$ and $\kappa$ in the second-order symmetric base operators $(\cL_n^{\tau,\kappa})_{n\in\clos\N}$ instead of merely considering Laplacians, is mainly due to the availability of spectral convergence theorems in the respective situations.
	Most of the literature on eigenfunction convergence of graph-discretized second-order operators is focused on the Laplacian case, see for instance~\cite{CGT2022, GTGHS2020} for $L^2$-convergence and~\cite{CalderGTLewicka2022, DWW2021, WR2021} for $L^\infty$-convergence.
	However, the authors of~\cite{SanzAlonsoYang2022} show how the $L^2$-convergence results can be extended to coefficient functions satisfying Assumption~\ref{ass:kappa-tau}\ref{ass:kappa-tau:general}.
	We expect that most spectral convergence results for graph Laplacians can be extended to allow for varying coefficients, but doing so requires significant effort, hence we sometimes make Assumption~\ref{ass:kappa-tau}\ref{ass:kappa-tau:Laplacian} for the sake of convenience.
	
	Similarly, the difference between the bounds on the graph connectivity length scales in the two parts of Assumption~\ref{ass:connectivity-regimes} is a result of the current availability of spectral convergence literature.
	Eigenfunction convergence of $(\cL_n^{\tau,\kappa})_{n\in\clos\N}$ in $L^2$ has for instance been proved in~\cite{SanzAlonsoYang2022} under Assumption~\ref{ass:connectivity-regimes}\ref{ass:connectivity-regimes:optimal-range}, but for graphs and manifolds as in our setting, the optimal available $L^\infty$-convergence results (for graph Laplacians) seem to be those of~\cite{CalderGTLewicka2022}, which require Assumption~\ref{ass:connectivity-regimes}\ref{ass:connectivity-regimes:suboptimal-range}.
	However, according to~\cite[Remark~2.7]{CalderGTLewicka2022}, it is plausible that the $L^\infty(\cM)$-convergence of Laplacian eigenfunctions can be proved under the same assumptions as the $L^2(\cM)$-convergence, with the same rate. 
	Some recent results in this direction can be found in~\cite{ArmstrongVenkatraman2023}, where the authors show $L^\infty$-convergence of Laplacian eigenvectors with optimal rates and loose lower bounds on the connectivity lengths, using homogenization theory, for point clouds on less general spatial domains. \medskip
	
	\noindent Assumption~\ref{ass:uniform-Linfty-bdd-semigroup} is natural in the sense that the results regarding $L^\infty$-convergence in space (for instance Theorem~\ref{thm:d2c-conv-example-SPDEs}\ref{thm:d2c-conv-example-SPDEs:Linfty} and~\ref{thm:d2c-conv-example-SPDEs:polynomial}) rely on uniform $L^\infty$-convergence of semigroup orbits on compact time intervals. 
	The latter necessitates that Assumption~\ref{ass:uniform-Linfty-bdd-semigroup}\ref{ass:uniform-Linfty-bdd-semigroup:bdd} is satisfied, at least for $t \in [0,T]$ with arbitrarily large $T \in (0,\infty)$.
	
	Moreover, for typical choices of differential operators, one can often check that Assumption~\ref{ass:uniform-Linfty-bdd-semigroup}\ref{ass:uniform-Linfty-bdd-semigroup:contractive} holds, meaning that the semigroups are in fact $L^\infty$-contractive.
	One such example is outlined in the proof of Theorem~\ref{thm:d2c-conv-example-SPDEs}\ref{thm:d2c-conv-example-SPDEs:polynomial}: 
	Matrix exponentials $(e^{-tL_n})_{t\ge0}$ are $L^\infty$-contractive if and only if $L_n \in \R^{n\times n}$ is diagonally dominant with nonnegative diagonal entries~\cite[Lemma~6.1]{Mugnolo2006}. 
	Sufficient conditions for the $L^\infty$-contractivity of the semigroup $(S_\infty(t))_{t\ge0}$ generated by the negative of a uniformly elliptic second-order differential operator on a Euclidean domain $\cD\subsetneq \R^d$, subject to appropriate boundary conditions, can be found in~\cite[Section~4.3]{Ouhabaz2005}.
	Likewise, the heat semigroup associated to the Laplace--Beltrami operator on a compact Riemannian manifold $\cM$ is $L^\infty$-contractive, cf.~\cite[p.~148]{Davies1989}.
	
	As mentioned in the proof of Theorem~\ref{thm:d2c-conv-example-SPDEs}\ref{thm:d2c-conv-example-SPDEs:polynomial}, all of the above $L^\infty$-contractivity results for second-order differential operators can be extended to fractional powers $s \in (0,1)$ by using a subordination formula such as~\cite[Theorem~15.2.17]{HvNVWVolumeIII}, noting that the definition of fractional power operators in this reference coincides with ours (more details are given in the first half of the proof of Lemma~\ref{lem:strong-conv-AnalphaS} below).
	The semigroups generated by higher-order differential operators, however, are in general not contractive on $L^\infty$ (or any $L^q$ for $q \neq 2$, see for instance~\cite{LangerMazya1999}); this is closely related to their lack of positivity preservation.
	As an example, the fractional heat kernel associated to $(-\Delta)^s$ on $\R^d$ with $s \in (0,\infty)$ at time $t \in (0,\infty)$ is given by the inverse Fourier transform of $\xi \mapsto \exp(-t \norm{\xi}{\R^d}^{2s})$, which is positive for $s \le 1$ but fails to be sign-definite if $s > 1$, see~\cite[p.~626 and pp.~632--633]{EOR1991}, respectively.%
	
	Thus, for operators $([\cL_n^{\tau,\kappa}]^s)_{n\in\clos\N}$ with $s > 1$, we have to content ourselves with uniform $L^\infty$-boundedness of the semigroups $(S_n(t))_{t\ge0}$ in $n$ and $t$, i.e., Assumption~\ref{ass:uniform-Linfty-bdd-semigroup}\ref{ass:uniform-Linfty-bdd-semigroup:bdd}.
	In the absence of positivity preservation, one route to verifying such uniformity is through Gaussian upper bounds on the integral kernels corresponding to the semigroups, cf.~\cite[Proposition~7.1]{Ouhabaz2005}.
	Such bounds have been established for higher-order differential operators on Euclidean domains, as well as Laplacian operators on more general domains such as manifolds, graphs and fractals (see~\cite[pp.~194--196]{Ouhabaz2005} and the references therein).
	While it may be possible to unify these results in the setting of graph-discretized higher-order differential operators on manifolds, and thus obtain the uniform $L^\infty$-bounds required by Assumption~\ref{ass:uniform-Linfty-bdd-semigroup}\ref{ass:uniform-Linfty-bdd-semigroup:bdd}, this appears to be highly nontrivial and outside the scope of this work.
	
	We also remark that certain higher-order operators have been shown to exhibit \emph{(local) eventual positivity}, meaning that for every nonnegative initial datum $u_0 \ge 0$ and subset $\cD^*$ of the spatial domain $\cD$, there exists $t^* > 0$ such that $S(t)u_0 \ge 0$ on $\cD^*$ for all $t \ge t^*$. 
	For instance, in~\cite{GG2008}, this was shown for the bi-Laplacian $\Delta^2$ on $\cD = \R^d$. 
	In~\cite{GregorioMugnolo2020}, the authors apply the theory of~\cite{DGK2016a, DGK2016b} to treat the squared graph Laplacian $\Delta_n^2$, deduce that $(e^{-\Delta_n^2 t})_{t\ge0}$ is eventually $L^\infty$-contractive~\cite[Proposition~6.7]{GregorioMugnolo2020}, and note that this implies $L^\infty$-boundedness uniformly in $t \ge 0$~\cite[Remark~6.8]{GregorioMugnolo2020}.
	However, as $n \to \infty$, their upper bound $\norm{e^{-\Delta_n^2 t}}{\infty} \le \exp(\norm{\Delta_n}{\infty}^2 t^*)$ blows up in our setting. Hence, these results do not appear to be directly useful for our purpose of verifying Assumption~\ref{ass:uniform-Linfty-bdd-semigroup}\ref{ass:uniform-Linfty-bdd-semigroup:bdd}.
	\medskip
	
	\noindent If we restrict ourselves to nonlinearities of the form $[F_n(\omega, t, u)](x) \deq f_n(\omega, t, u(x))$ (see~\eqref{eq:Langevin-equation-infinite-dim-A} and~\eqref{eq:reaction-diffusion}), then the conditions in Assumption~\ref{ass:nonlinearity} are the natural ones to ensure the global (in time) convergence results formulated in Theorem~\ref{thm:d2c-conv-example-SPDEs}.
	Convergence results for more general nonlinearities, possibly formulated only in terms of local-in-time convergence and in weaker norms, can be found in Sections~\ref{sec:semilinear-L2} and~\ref{sec:reaction-diffusion}. \medskip 
	
	\noindent Assumption~\ref{ass:uniform-Linfty-bdd-eigenfuncs} was used (explicitly or implicitly) to establish the $L^\infty$-convergence asserted in Theorem~\ref{thm:convergence-An} and the uniform $L^2$--$L^\infty$-ultracontractivity in Lemma~\ref{lem:uniform-ultracontractivity}.
	The $L^\infty$-norms of the $L^2$-normalized eigenfunctions of the Laplace--Beltrami operator on a general compact Riemannian manifold $\cM$ of dimension $m$ satisfy the upper bound $\norm{\psi^{(j)}}{L^\infty(\cM)} \lesssim [\lambda^{(j)}]^{\frac{m-1}{4}}$ due to Hörmander~\cite{Hormander1968}. 
	This bound is sharp in the sense that it is attained by the symmetric spherical harmonics on the sphere.
	On the other hand, the $L^\infty$-norms are uniformly bounded (i.e., satisfy Assumption~\ref{ass:uniform-Linfty-bdd-eigenfuncs}) if $m = 1$ or if $\cM = \bbT^m$ is a flat torus. 
	Some results relating the $L^\infty$-growth rate of these eigenfunctions to the geometry of the manifold can be found in~\cite{Donnelly2006, SoggeZelditch2002, TothZelditch2002}.
	
	These observations indicate that Assumption~\ref{ass:uniform-Linfty-bdd-eigenfuncs} poses strong restrictions on the curvature of the manifold, which raises the question whether this assumption could be removed.
	Its central role in the proofs of Theorem~\ref{thm:convergence-An}\ref{thm:convergence-An:b},~\ref{thm:convergence-An:c} and Lemma~\ref{lem:uniform-ultracontractivity} is due to the $L^2$--$L^\infty$-norm bounds~\eqref{eq:L2-Linfty-estimate} of operators which are defined in terms of eigenvalue expansions, such as the fractional powers defined by~\eqref{eq:def-WM-operator}.
	This suggests that disposing of Assumption~\ref{ass:uniform-Linfty-bdd-eigenfuncs} would involve techniques which are not based on spectral representations and spectral convergence of the operators involved.
	For Lemma~\ref{lem:uniform-ultracontractivity} in particular, one indication that this should be possible is the fact that, like $L^\infty$-boundedness, the $L^2$--$L^\infty$-ultracontractivity of $(S_\infty(t))_{t\ge0}$ follows from certain upper bounds on its heat kernel~\cite[Theorem~3.2]{GHL2014}.
	For the Laplace--Beltrami operator on a compact Riemannian manifold, we indeed have such bounds by~\cite[Proposition~5.5.1 and Theorem~5.5.2]{Davies1989}, which imply~\eqref{eq:ultracontractivity} with $\beta = \frac{m}{4}$ (for $n = \infty$ and $s = 1$).
	
	\section{Infinite-dimensional Ornstein--Uhlenbeck process}
	\label{sec:OU-H}
	
	This section and the subsequent Sections~\ref{sec:semilinear-L2} and~\ref{sec:reaction-diffusion} are devoted to proving the abstract discrete-to-continuum approximation results which are applied to the Whittle--Mat\'ern graph discretization setting in Section~\ref{sec:graph-WM}.
	Thus, from this point onwards we no longer necessarily work with graphs or Whittle--Mat\'ern operators.
	Instead, we have the following abstract setting.
	
	Let the filtered probability space $(\Omega, \cF, (\cF_t)_{t\in [0,T]}, \bbP)$ be given.
	For any $n \in \clos\N$, we consider the following \emph{linear} stochastic 
	evolution equation, whose state space is a real and separable UMD-type-2 Banach 
	space $(E_n, \norm{\,\cdot\,}{E_n})$:
	\begin{equation}\label{eq:OU}
		\left\lbrace
		\begin{aligned}
			\mathrm d X_n(t) &= -A_n X_n(t) \rd t + \mathrm dW_n(t), \quad t \in 
			(0,T], 
			\\
			X_n(0) &= 0.
		\end{aligned}
		\right.
	\end{equation}
	Here, $A_n \from \dom{A_n} \subseteq E_n \to E_n$ is a linear operator and $T \in (0,\infty)$ is a time horizon.
	Moreover, we take $W_n \deq \Pi_n W_\infty$, where $(W_\infty(t))_{t\ge0}$ denotes a cylindrical Wiener process on $(\Omega, \cF, (\cF_t)_{t\in [0,T]}, \bbP)$ taking values in a separable Hilbert space $(H_\infty, \scalar{\,\cdot\,,\,\cdot\,}{H_\infty})$ and the operator $\Pi_n \in \LO(H_\infty; H_n)$ is as in assumption~\ref{ass:disc} below.
	Thus, the formal time derivative $\dot W_\infty$ of $W_\infty$ represents space--time Gaussian white noise and $(W_n)_{t\ge0}$ is an $H_n$-valued $Q_n$-cylindrical Wiener process colored in space by the covariance operator $Q_n \deq \Pi_n \Pi_n^* \in \LO(H_n)$.
	
	We impose the following uniformity 
	assumptions 
	on the spaces $(E_n)_{n\in\clos\N}$, $(H_n)_{n\in\clos\N}$ and the operators $(A_n)_{n\in\clos \N}$:
	\begin{enumerate}[label=(A{\arabic*}), series=ass-A]
		\item\label{ass:disc} 
		Assumption~\ref{ass:disc3} holds for the UMD-type-2 Banach spaces $(E_n)_{n\in\clos\N}$ and the Hilbert spaces $(H_n)_{n\in\clos\N}$, with $\widetilde E \deq E_\infty$ and $\widetilde H \deq H_\infty$, both with the same sequence $(\Lambda_n, \Pi_n)_{n\in\clos\N}$ of lifting and projection operators.
		\item\label{ass:operators} 
		The operators $(A_n)_{n\in\clos\N}$ on $(E_n)_{n\in\clos\N}$ are uniformly sectorial of angle less than $\frac{1}{2}\pi$ (see Appendix~\ref{app:uniform-sectoriality} for the definition of this concept). 
		In particular, their negatives generate bounded analytic $C_0$-semigroups $(S_n(t))_{t\ge0} \subseteq \LO(E_n)$ which satisfy Assumption~\ref{ass:operators3} with $w = 0$.
		Moreover, there exists a $\beta \in [0, \frac{1}{2})$ such that $R_n^\beta \deq (\id_n + A_n)^{-\beta} \in \gamma(H_n; E_n)$ for every $n \in \clos\N$.
	\end{enumerate}
	In general, the fractional powers of the sectorial operators $A_n$ appearing in~\ref{ass:operators} can be defined using any of the equivalent definitions in~\cite[Chapter~3]{Haase2006}.
	If, as in Section~\ref{sec:graph-WM}, the operator $A_n$ given as the restriction of an operator whose eigenvalues form an orthonormal basis on some Hilbert space containing $E_n$, then one can use the spectral definition~\eqref{eq:def-WM-operator} of fractional powers of $A_n$.
	
	The solution concept which we consider for all of the equations in this work is that of a \emph{mild solution}.
	For the linear equation~\eqref{eq:OU}, it is given by the following stochastic convolution:
	
	\begin{proposition}\label{prop:existence-stoch-WAn}
		Let $n \in \clos\N$ and $T \in (0,\infty)$.
		Under Assumptions~\ref{ass:disc}--\ref{ass:operators}, the stochastic convolution
		\begin{equation}\label{eq:mildsol}
			W_{A_n}(t) \coloneqq \int_0^t S_n(t-s) \rd W_n(s), \quad t \in [0,T],
		\end{equation}
		is a well-defined process in $C([0,T]; L^p(\Omega; E_n))$ for every $p \in [1, \infty)$.
	\end{proposition}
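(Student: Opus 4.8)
The plan is to reduce everything to the Itô inequality \eqref{eq:Ito-inequality} applied to the deterministic operator-valued integrand $s \mapsto S_n(t-s)\Pi_n$, the membership of which in $L^2$ will be extracted from the fractional-power factorization built into~\ref{ass:operators}. Since $W_n = \Pi_n W_\infty$, I would first rewrite the stochastic convolution as $W_{A_n}(t) = \int_0^t S_n(t-s)\Pi_n \rd W_\infty(s)$, where $W_\infty$ is the $H_\infty$-valued cylindrical Wiener process. To estimate the integrand in $\gamma(H_\infty; E_n)$, I would write $S_n(t-s)\Pi_n = [S_n(t-s)(\id_n + A_n)^\beta]\, R_n^\beta\, \Pi_n$ (using $(\id_n + A_n)^\beta R_n^\beta = \id_n$) and invoke the ideal property \eqref{eq:ideal-property}, together with $R_n^\beta \in \gamma(H_n; E_n)$ from~\ref{ass:operators} and $\norm{\Pi_n}{\LO(H_\infty; H_n)} \le M_\Pi$ from~\ref{ass:disc}. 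The analytic ingredient is the smoothing estimate $\norm{(\id_n + A_n)^\beta S_n(\sigma)}{\LO(E_n)} \lesssim_T \sigma^{-\beta}$ for $\sigma \in (0,T]$, valid because $-A_n$ generates a bounded analytic semigroup. These combine to give $\norm{S_n(t-s)\Pi_n}{\gamma(H_\infty; E_n)} \lesssim_T (t-s)^{-\beta}$, so that (as $2\beta < 1$) the integrand lies in $L^2(0,t; \gamma(H_\infty; E_n))$; the Itô inequality in its $L^p$-version then yields $W_{A_n}(t) \in L^p(\Omega; E_n)$ for each fixed $t \in [0,T]$ and every $p \in [1,\infty)$, with $\norm{W_{A_n}(t)}{L^p(\Omega; E_n)} \lesssim_{E_n,p,T} t^{(1-2\beta)/2}$.

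It then remains to prove continuity of $t \mapsto W_{A_n}(t)$ into $L^p(\Omega; E_n)$. For $0 \le t' \le t \le T$ I would split $W_{A_n}(t) - W_{A_n}(t')$ into $\int_{t'}^t S_n(t-s)\Pi_n \rd W_\infty(s)$ and $\int_0^{t'} [S_n(t-s) - S_n(t'-s)]\Pi_n \rd W_\infty(s)$. The first term is controlled directly by the Itô inequality and the bound above, giving $\bigl(\int_{t'}^t (t-s)^{-2\beta}\rd s\bigr)^{1/2} \lesssim_T (t - t')^{(1-2\beta)/2}$. For the second term I would use $S_n(t-s) - S_n(t'-s) = (S_n(t-t') - \id_n)S_n(t'-s)$ together with the Hölder-type estimate $\norm{(S_n(h) - \id_n)(\id_n + A_n)^{-\theta}}{\LO(E_n)} \lesssim h^\theta$ for analytic semigroups, for a fixed $\theta \in (0, \tfrac{1}{2} - \beta)$. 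Factoring through $R_n^\beta$ exactly as before then yields $\norm{[S_n(t-s)-S_n(t'-s)]\Pi_n}{\gamma(H_\infty; E_n)} \lesssim_T (t-t')^\theta (t'-s)^{-(\theta + \beta)}$, whose square is integrable in $s$ over $(0,t')$ precisely because $\theta + \beta < \tfrac{1}{2}$; the Itô inequality bounds the second contribution by a constant times $(t-t')^\theta$.

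The main obstacle is this continuity step, and specifically the exponent bookkeeping: one must extract a strictly positive Hölder power $(t-t')^\theta$ from the difference of semigroups while keeping the residual singularity $(t'-s)^{-(\theta+\beta)}$ square-integrable near $s = t'$. The strict inequality $\beta < \tfrac{1}{2}$ guaranteed by~\ref{ass:operators} is exactly what leaves room to choose such a $\theta > 0$, and hence to conclude that $W_{A_n}$ is in fact (Hölder) continuous from $[0,T]$ into $L^p(\Omega; E_n)$, i.e.\ an element of $C([0,T]; L^p(\Omega; E_n))$.
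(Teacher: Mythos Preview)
Your argument is correct. The well-definedness step is essentially identical to the paper's: the same factorization $S_n(\sigma)=(\id_n+A_n)^\beta S_n(\sigma)R_n^\beta$, the same use of the ideal property~\eqref{eq:ideal-property} together with $R_n^\beta\in\gamma(H_n;E_n)$ and the analytic-semigroup estimate~\eqref{eq:app-analytic-semigroup-est}, leading to square-integrability of $\sigma\mapsto\sigma^{-\beta}$ on $(0,T)$ because $\beta<\tfrac12$. Your insertion of $\Pi_n$ and passage to integration against $W_\infty$ is a harmless (and arguably cleaner) reformulation of the same thing.

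The continuity argument, however, is genuinely different. The paper does not split the increment into two stochastic integrals; instead it observes that $\norm{W_{A_n}(t+h)-W_{A_n}(t)}{L^p(\Omega;E_n)}\lesssim\norm{S_n(\cdot+h)-S_n}{L^2(0,T;\gamma(H_n;E_n))}$ and invokes the strong continuity of the translation semigroup on the Bochner space $L^2(0,T;\gamma(H_n;E_n))$. This is a one-line soft argument that yields continuity (no rate). Your route, by contrast, exploits the analytic-semigroup H\"older estimate $\norm{(S_n(h)-\id_n)(\id_n+A_n)^{-\theta}}{\LO(E_n)}\lesssim h^\theta$ and the freedom to pick $\theta\in(0,\tfrac12-\beta)$; it costs an extra factorization but delivers an explicit H\"older exponent $\min\{\theta,\tfrac{1-2\beta}{2}\}$ for $t\mapsto W_{A_n}(t)$ in $L^p(\Omega;E_n)$. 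So the paper's proof is shorter, while yours is more quantitative.
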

	\begin{proof}
		For every $p \in [1,\infty)$ and $t \in [0,T]$, we have by the It\^o inequality~\eqref{eq:Ito-inequality}:
		\begin{equation}
			\norm{W_{A_n}(t)}{L^p(\Omega; E_n)}^2
			\lesssim_{(p, E_n)}
			\int_0^t \norm{S_n(t-s)}{\gamma(H_n; E_n)}^2 \rd s
			\le
			\norm{S_n}{L^2(0,T; \gamma(H_n; E_n))}^2
		\end{equation}
		To thow that the right-hand side is bounded, we use the ideal property~\eqref{eq:ideal-property} of $\gamma(H_n; E_n)$ and the estimate~\eqref{eq:app-analytic-semigroup-est} for analytic semigroups (in conjuction with Assumptions~\ref{ass:disc} and~\ref{ass:operators}) to see that
		\begin{equation}
			\begin{aligned}
				\norm{S_n}{L^2(0,T; \gamma(H_n; E_n))}^2
				&=
				\int_0^T \norm{(\id_n + A_n)^\beta S_n(t) R_n^\beta}{\gamma(H_n; E_n)}^2 \rd t
				\\
				&\le 
				\norm{R_n^\beta}{\gamma(H_n; E_n)}^2
				\int_0^T \norm{(\id_n + A_n)^\beta S_n(t)}{\LO(E_n)}^2 \rd t
				\\&\lesssim_{\beta} 
				\norm{R_n^\beta}{\gamma(H_n; E_n)}^2 \int_0^T t^{-2\beta} \rd t
				=
				\norm{R_n^\beta}{\gamma(H_n; E_n)}^2 \frac{T^{1-2\beta}}{1-2\beta} < \infty.
			\end{aligned}
		\end{equation}
		Note that $\norm{R_n^\beta}{\gamma(H_n; E_n)}^2$ is finite by~\ref{ass:operators}.
		Next, applying the It\^o inequality~\eqref{eq:Ito-inequality} to the difference $W_{A_n}(t + h) - W_{A_n}(t)$ for small enough $h \in \R$ yields
		\begin{equation}
			\norm{W_{A_n}(t + h) - W_{A_n}(t)}{L^p(\Omega; E_n)}
			\lesssim_{(p, E_n)}
			\norm{S_n(\,\cdot\, + h) - S_n}{L^2(0,T; \gamma(H_n; E_n))}
			\to 0,
		\end{equation}
		as $h \to 0$, by the strong continuity of translation operators on the Bochner space $L^2(0,T; \gamma(H_n; E_n))$.
		This shows $W_{A_n} \in L^p(\Omega; C([0,T]; E_n))$.
	\end{proof}
	
	Two stochastic processes $(X(t))_{t\in[0,T]}$ and $(Y(t))_{t\in[0,T]}$ are said to be \emph{modifications} of each other if $\bbP(X(t) = Y(t)) = 1$ for all $t \in [0,T]$.
	
	\begin{definition}\label{def:OU-mildsol}
		An $E_n$-valued stochastic process $X_n = (X_n(t))_{t\in[0,T]}$ belonging to $C([0,T]; L^p(\Omega; E_n))$ for some $p \in [1,\infty)$ is said to be a \emph{mild solution} to~\eqref{eq:OU} if it is a modification of the process $W_{A_n}$ defined in~\eqref{eq:mildsol}.
	\end{definition}
	
	The existence and uniqueness (up to modification) of the mild solution to~\eqref{eq:OU} in $C([0,T]; L^p(\Omega; E_n))$ is then immediate from Definition~\ref{def:OU-mildsol}.
	
	As remarked in Section~\ref{sec:d2c}, we mainly have applications in mind where the problem corresponding to $n = \infty$ is interpreted as a spatiotemporal stochastic partial differential equation.
    In the present linear setting, its solution is also known as an \emph{infinite-dimensional Ornstein--Uhlenbeck process}, and solutions to~\eqref{eq:OU} for $n \in \N$ are spatially discretized approximations. 
	Therefore, it is natural to ask whether we can identify the right mode of 
	convergence of the operators $(A_n)_{n\in\N}$ to $A_\infty$ as $n \to \infty$ 
	to 
	ensure the 
	convergence of the processes
	$(W_{A_n})_{n\in\N}$ to $W_{A_\infty}$. 
	
	The answer is provided by Proposition~\ref{prop:d2c-meansquare-conv-Wdelta} 
	below, which is a stochastic counterpart of the discrete-to-continuum 
	Trotter--Kato approximation theorem for 
	strongly continuous semigroups recalled in Theorem~\ref{thm:d2c-TKapprox}. 
	In fact, with an eye towards the proof of 
	Proposition~\ref{prop:d2c-pathwise-conv-Wdelta} below, we consider a more 
	general class of auxiliary processes, see equation~\eqref{eq:def-WAndelta}.
	
	Before stating any discrete-to-continuum results, let us introduce some 
	convenient notation for this goal.
	Using the operators 
	$(\Lambda_n)_{n\in\N}$ and~$(\Pi_n)_{n\in\N}$, we can take a mapping which has 
	$E_n$ as its domain or state space, 
	and turn it into an analogous mapping from or to $E_\infty$. 
	For instance, we define the $E_\infty$-valued processes $\widetilde W_{A_n} \deq \Lambda_n W_{A_n}$, as well as the operators $\widetilde R_n^\alpha \deq \Lambda_n R_n^{\alpha} \Pi_n$ and $\widetilde S_n(t) \deq \Lambda_n S_n(t) \Pi_n$ in $\LO(E_\infty)$ for $\alpha, t \in [0,\infty)$.
	Now we can formulate our notion of the convergence $A_n \to A_\infty$ as $n \to \infty$ as follows:
	\begin{enumerate}[label=(A{\arabic*}), resume=ass-A]
		\item \label{ass:A-conv}
		For every $x \in E_\infty$, we have $\widetilde R_n^1 x \to R_\infty^1 x$ in 
		$E_\infty$ as $n \to \infty$. Moreover, given $\beta \in [0, \frac{1}{2})$ as in~\ref{ass:operators}, we have $\widetilde R_n^{\beta} \to R_\infty^\beta$ in $\gamma(H_\infty; E_\infty)$.
	\end{enumerate}
	Proposition~\ref{prop:d2c-meansquare-conv-Wdelta} states that this type of convergence of the operators is 
	sufficient to ensure 
	convergence of the solutions to the linear stochastic evolution 
	equation~\eqref{eq:OU}. 
	Its proof is based on Lemma~\ref{lem:strong-conv-AnalphaS} in Appendix~\ref{app:uniform-sectoriality}, as well as the following general approximation lemma for square-integrable functions with values in the space of $\gamma$-radonifying operators.
	It is a simpler analog to~\cite[Lemma~2.6]{KvN2011}, which was only necessary to allow for stochastic equations in UMD Banach spaces without type $2$.
	
	\begin{lemma}\label{lem:approx-L2gamma}
		Let $(E, \norm{\,\cdot\,}{E})$ and $(F, \norm{\,\cdot\,}{F})$ be Banach spaces, and let $(H, \scalar{\,\cdot\, , \,\cdot\,}{H})$ be a Hilbert space.
		Given $a, b \in \R$ with $a < b$, let $M_n \from (a,b) \to \LO(E; F)$ for all $n \in \clos\N$ and suppose that
		\begin{enumerate}
			\item\label{lem:approx-L2gamma:strong-conv}
			$M_n \otimes x \to M \otimes x$ uniformly on compact subsets of $(a,b)$ for all $x \in E$, and
			\item\label{lem:approx-L2gamma:bdd}
			$\sup_{n\in\clos\N} \sup_{t \in (a,b)} \norm{M_n(t)}{\LO(E; F)} < \infty$.
		\end{enumerate}
		For all $R \in L^2(a,b; \gamma(H; E))$ and $n \in \clos\N$, we have $M_n \otimes R \in L^2(a,b; \gamma(H; F))$ and
		\begin{equation}
			M_n R \to M R
			\quad \text{in } L^2(a,b; \gamma(H; F))
			\quad \text{as } n \to \infty.
		\end{equation}
	\end{lemma}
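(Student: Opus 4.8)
The plan is to reduce the claim to a pointwise-in-$t$ convergence statement in $\gamma(H; F)$ and then pass to a general integrand $R$ by a density-plus-uniform-bound argument, mirroring the classical fact that strongly convergent, uniformly bounded operators converge uniformly on compacta.

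First I would record the uniform bound. Writing $C \deq \sup_{n\in\clos\N}\sup_{t\in(a,b)}\norm{M_n(t)}{\LO(E; F)} < \infty$ from hypothesis~\ref{lem:approx-L2gamma:bdd}, the ideal property~\eqref{eq:ideal-property} gives, for every $t$, that $M_n(t)R(t)\in\gamma(H; F)$ with $\norm{M_n(t)R(t)}{\gamma(H; F)} \le C\,\norm{R(t)}{\gamma(H; E)}$. Strong measurability of $t\mapsto M_n(t)R(t)$ follows by approximating $R$ by simple functions and each fixed operator in $\gamma(H; E)$ by finite-rank ones, using that $t\mapsto M_n(t)x$ is strongly measurable for each $x\in E$ (in our applications these orbits are even continuous). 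Integrating the pointwise bound shows $M_n R\in L^2(a,b; \gamma(H; F))$ and that $R\mapsto M_n R$ has norm at most $C$, uniformly in $n\in\clos\N$; this uniform bound is what will later absorb the density error.

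The heart of the argument is the pointwise claim: for each fixed $t\in(a,b)$ and each fixed $T\in\gamma(H; E)$, one has $(M_n(t)-M(t))T\to 0$ in $\gamma(H; F)$. Hypothesis~\ref{lem:approx-L2gamma:strong-conv} only supplies $M_n(t)x\to M(t)x$ in $F$ for each vector $x$, so the upgrade to the $\gamma$-norm is the crux, and I would handle it by finite-rank approximation. For a finite-rank $T=\sum_{j=1}^k h_j\otimes x_j$ with $(h_j)$ orthonormal, the definition of the $\gamma$-norm identifies $\norm{(M_n(t)-M(t))T}{\gamma(H; F)}$ with $\norm{\sum_{j=1}^k \gamma_j (M_n(t)-M(t))x_j}{L^2(\Omega'; F)}$, a finite sum dominated by $\sum_{j=1}^k \norm{(M_n(t)-M(t))x_j}{F}\to 0$. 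For general $T$, approximating it in $\gamma(H; E)$ by a finite-rank $T_\varepsilon$ and using the ideal property to estimate $\norm{(M_n(t)-M(t))(T-T_\varepsilon)}{\gamma(H; F)}\le 2C\,\norm{T-T_\varepsilon}{\gamma(H; E)}$ gives $\limsup_n\norm{(M_n(t)-M(t))T}{\gamma(H; F)}\le 2C\,\norm{T-T_\varepsilon}{\gamma(H; E)}$, which is arbitrarily small.

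With the pointwise claim in hand, I would finish by dominated convergence and density. For an elementary integrand $R=\indic{A}T$ with $A\subseteq(a,b)$ of finite measure, the integrand $\norm{(M_n(t)-M(t))T}{\gamma(H; F)}^2$ tends to $0$ for a.e.\ $t$ and is dominated by the integrable function $4C^2\norm{T}{\gamma(H; E)}^2\indic{A}$, so $M_n R\to MR$ in $L^2(a,b; \gamma(H; F))$; linearity extends this to all simple $R$. For arbitrary $R\in L^2(a,b; \gamma(H; E))$, choosing a simple $R_\varepsilon$ with $\norm{R-R_\varepsilon}{L^2(a,b; \gamma(H; E))}<\varepsilon$ and invoking the uniform bound $C$ for both $n$ and $\infty$ yields $\norm{M_n R-MR}{L^2(a,b; \gamma(H; F))}\le \norm{M_n R_\varepsilon-MR_\varepsilon}{L^2(a,b; \gamma(H; F))}+2C\varepsilon$; the first term vanishes as $n\to\infty$, and $\varepsilon$ is arbitrary. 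The main obstacle is genuinely the third paragraph: passing from strong (vectorwise) operator convergence to convergence of the composed $\gamma$-radonifying operators, for which the uniform-bound hypothesis~\ref{lem:approx-L2gamma:bdd} is essential, since without it the finite-rank tail cannot be controlled.
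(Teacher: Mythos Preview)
Your proof is correct and shares the paper's overall strategy: use the uniform bound from~\ref{lem:approx-L2gamma:bdd} together with the ideal property to reduce to a dense class of integrands. The execution differs in one respect worth noting. You stop the density reduction at simple functions $\indic{A}\,T$ with general $T\in\gamma(H;E)$, which forces you to prove an intermediate pointwise claim---that $(M_n(t)-M(t))T\to 0$ in $\gamma(H;F)$ for each fixed $T$---via finite-rank approximation, and then invoke dominated convergence. The paper instead pushes the density reduction one step further, to integrands of the form $R(t)=\indic{(a',b')}(t)\,h\otimes x$ with $[a',b']\subset(a,b)$ compact and $(h,x)\in H\times E$. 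For such rank-one $R$, the identity $\norm{h\otimes y}{\gamma(H;F)}=\norm{h}{H}\norm{y}{F}$ collapses the $\gamma$-norm to an $F$-norm, and hypothesis~\ref{lem:approx-L2gamma:strong-conv} (uniform convergence on the compact set $[a',b']$) finishes the argument directly, without any separate pointwise-$\gamma$ step or dominated convergence. Your route is slightly longer but isolates a reusable principle (strong operator convergence plus a uniform bound upgrades to $\gamma$-norm convergence after composition); the paper's is more economical for this specific lemma.
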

	\begin{proof}
		Arguing as in the proof of Proposition~\ref{prop:strong-continuity-IAntilde} and using~\ref{lem:approx-L2gamma:bdd}, it follows that we only need to prove the claim for all $R$ belonging to some dense subset $D$ of $L^2(a,b; \gamma(H; E))$.
		Note that every $R \in L^2(a,b; \gamma(H; E))$ can be approximated by a step function $\sum_{j=1}^N \mathbf 1_{(a_j, b_j')} \otimes T_j$ with $a < a_j' < b_j' < b$ and $T_j \in \gamma(H; E)$, and by definition of $\gamma(H; E)$ the latter can be approximated by finite-rank operators.
		By linearity, it thus suffices to prove the statement for $R$ of the form
		\[
		R(t) = \mathbf 1_{(a',b')}(t) \, h \otimes x, 
		\quad \text{where } a < a' < b' < b \text{ and } (h,x) \in H \times E.
		\]
		Substituting this representation, using~\eqref{eq:gamma-norm-pure-tensor} and~\ref{lem:approx-L2gamma:strong-conv}, we find as $n \to \infty$:
		\begin{equation}
			\begin{aligned}
				\norm{M_n R - MR}{L^2(a,b; \gamma(H; F))}^2
				&=
				\int_{a'}^{b'} 
				\norm{h \otimes [M_n(t)x - M(t)x]}{\gamma(H; F)}^2 \rd t
				\\
				&= 
				\norm{h}{H}^2
				\int_{a'}^{b'} \norm{M_n(t)x - M(t)x}{F}^2 \rd t
				\\&\le 
				\norm{h}{H}^2
				(b' - a')
				\sup_{t \in (a',b')} \norm{M_n(t)x - M(t)x}{F}^2
				\to 0. \qedhere
			\end{aligned}
		\end{equation}
	\end{proof}
	
	\begin{proposition}\label{prop:d2c-meansquare-conv-Wdelta}
		Suppose that Assumptions~\ref{ass:disc} and~\ref{ass:operators} hold. 
		Let us define the auxiliary processes
		\begin{equation}\label{eq:def-WAndelta}
			W^\delta_{A_n}(t) \deq \frac{1}{\Gamma(\delta)} \int_0^t 
			(t-s)^{\delta-1} 
			S_n(t-s) 
			\rd W_n(s), 
			\quad 
			\delta \in (\nicefrac{1}{2}, \infty), \; t \in [0,\infty),
		\end{equation}
		where $\Gamma$ denotes the Gamma function~\cite[Section~5.2]{OlverBoisvertClark2010}.
		Then, for every $\beta' \in (\beta, \infty)$, $T \in (0,\infty)$ and $p \in [1,\infty)$, we have $W_{A_n}^{\beta' + \frac{1}{2}} \in C([0,T]; L^p(\Omega; E_n))$ for all $n \in \clos\N$. 
		If we suppose in addition that Assumption~\ref{ass:A-conv} is satisfied, then
		\begin{equation}
			\widetilde W_{A_n}^{\beta' + \frac{1}{2}} 
			\to W_{A_\infty}^{\beta' + \frac{1}{2}}
			\quad 
			\text{in } C([0,T]; L^p(\Omega; E_\infty))
			\quad 
			\text{as } n \to \infty.
		\end{equation}
	\end{proposition}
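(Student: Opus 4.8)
The plan is to mirror the proof of Proposition~\ref{prop:existence-stoch-WAn}, first upgrading the It\^o-inequality estimate to the fractional kernel $(t-s)^{\delta-1}$ with $\delta \deq \beta' + \tfrac12 > \tfrac12$, and then passing to the limit through the factorization $\widetilde S_n(\sigma) = \widetilde G_n(\sigma)\,\widetilde R_n^\beta$, where I abbreviate $G_n(\sigma) \deq (\id_n + A_n)^\beta S_n(\sigma) \in \LO(E_n)$ and $\widetilde G_n(\sigma) \deq \Lambda_n G_n(\sigma)\Pi_n$. For well-definedness, I apply the $L^p$-version of the It\^o inequality~\eqref{eq:Ito-inequality} (see the remark following it) to the fixed-$t$ integrand $s \mapsto \tfrac{1}{\Gamma(\delta)}(t-s)^{\delta-1}S_n(t-s)$, and combine the ideal property~\eqref{eq:ideal-property} with the identity $S_n(\sigma) = G_n(\sigma)R_n^\beta$ and the uniform analytic bound $\norm{G_n(\sigma)}{\LO(E_n)} \lesssim_\beta \sigma^{-\beta}$ from~\eqref{eq:app-analytic-semigroup-est} (uniform in $n$ by Assumption~\ref{ass:operators}). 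This yields
\[
\norm{W^\delta_{A_n}(t)}{L^p(\Omega; E_n)}^2 \lesssim_{(p, E_n, \delta)} \norm{R_n^\beta}{\gamma(H_n; E_n)}^2 \int_0^t \sigma^{2(\beta'-\beta)-1}\rd\sigma = \norm{R_n^\beta}{\gamma(H_n; E_n)}^2 \frac{t^{2(\beta'-\beta)}}{2(\beta'-\beta)},
\]
which is finite precisely because $\beta' > \beta$. Continuity of $t \mapsto W^\delta_{A_n}(t)$ in $L^p(\Omega; E_n)$ then follows exactly as in Proposition~\ref{prop:existence-stoch-WAn}, bounding increments by the $L^2(0,\infty; \gamma(H_n; E_n))$-norm of the corresponding integrand difference and invoking dominated convergence (the tail interval shrinks, the kernel is continuous for $s < t$, and the integrand is dominated by the integrable $\sigma^{2(\beta'-\beta)-1}$).

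For the convergence, since $W_n = \Pi_n W_\infty$, the lifted process reads $\widetilde W^\delta_{A_n}(t) = \tfrac{1}{\Gamma(\delta)}\int_0^t (t-s)^{\delta-1}\widetilde S_n(t-s)\rd W_\infty(s)$, and likewise for $n = \infty$ with $\widetilde S_\infty = S_\infty$. The crucial algebraic point is the factorization $\widetilde S_n(\sigma) = \widetilde G_n(\sigma)\widetilde R_n^\beta$: indeed, using $\Pi_n\Lambda_n = \id_{E_n}$ (Assumption~\ref{ass:disc3}\ref{ass:disc:id3}) and $G_n(\sigma)R_n^\beta = S_n(\sigma)$, one has $\widetilde G_n(\sigma)\widetilde R_n^\beta = \Lambda_n G_n(\sigma)\Pi_n\Lambda_n R_n^\beta\Pi_n = \Lambda_n G_n(\sigma) R_n^\beta \Pi_n = \widetilde S_n(\sigma)$. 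By the It\^o inequality in the UMD-type-2 space $E_\infty$, it suffices to prove that $\sup_{t\in[0,T]}\norm{\Psi_n^t}{L^2(0,\infty; \gamma(H_\infty; E_\infty))} \to 0$, where $\Psi_n^t(s) \deq \tfrac{1}{\Gamma(\delta)}(t-s)^{\delta-1}[\widetilde S_n(t-s) - S_\infty(t-s)]\indic{(0,t)}(s)$. Substituting $\sigma = t - s$ and using $t \le T$ together with nonnegativity of the integrand bounds this supremum, uniformly in $t$, by the $t$-free quantity $J_n \deq \Gamma(\delta)^{-2}\int_0^T \sigma^{2(\delta-1)}\norm{\widetilde S_n(\sigma) - S_\infty(\sigma)}{\gamma(H_\infty; E_\infty)}^2\rd\sigma$, so the whole matter reduces to showing $J_n \to 0$.

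Writing $\widetilde S_n(\sigma) - S_\infty(\sigma) = \widetilde G_n(\sigma)[\widetilde R_n^\beta - R_\infty^\beta] + [\widetilde G_n(\sigma) - G_\infty(\sigma)]R_\infty^\beta$, with $G_\infty(\sigma) = (\id_\infty + A_\infty)^\beta S_\infty(\sigma)$, splits $J_n$ into two pieces. The first is controlled, via the ideal property and the uniform bound $\norm{\widetilde G_n(\sigma)}{\LO(E_\infty)} \lesssim \sigma^{-\beta}$, by $\tfrac{T^{2(\beta'-\beta)}}{2(\beta'-\beta)}\norm{\widetilde R_n^\beta - R_\infty^\beta}{\gamma(H_\infty; E_\infty)}^2$, which vanishes by Assumption~\ref{ass:A-conv}. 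The second piece is the main obstacle: the weighted family $\sigma^{\delta-1}\widetilde G_n(\sigma)$ has $\LO(E_\infty)$-norm of order $\sigma^{\beta'-\beta-\frac12}$, which blows up as $\sigma \downarrow 0$ whenever $\beta' < \beta + \tfrac12$, so Lemma~\ref{lem:approx-L2gamma} cannot be applied on all of $(0,T)$. I circumvent this by splitting the integral at a small $\epsilon > 0$. On $(0,\epsilon)$ the integrand is dominated, uniformly in $n$, by $C\sigma^{2(\beta'-\beta)-1}\norm{R_\infty^\beta}{\gamma(H_\infty; E_\infty)}^2$ (using $\norm{\widetilde G_n(\sigma) - G_\infty(\sigma)}{\LO(E_\infty)} \lesssim \sigma^{-\beta}$), whose integral tends to $0$ as $\epsilon \downarrow 0$. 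On $(\epsilon, T)$ I invoke Lemma~\ref{lem:approx-L2gamma} with $M_n \deq \widetilde G_n$---uniformly bounded there by $\epsilon^{-\beta}$ and converging strongly to $G_\infty$ uniformly on compacts of $(0,\infty)$ by Lemma~\ref{lem:strong-conv-AnalphaS}---and the fixed radonifying integrand $\sigma \mapsto \tfrac{1}{\Gamma(\delta)}\sigma^{\delta-1}R_\infty^\beta \in L^2(\epsilon, T; \gamma(H_\infty; E_\infty))$, obtaining that the $(\epsilon,T)$-piece tends to $0$ for each fixed $\epsilon$. A standard two-stage estimate (first fix $\epsilon$ to make the near-singularity part small uniformly in $n$, then take $n$ large) gives $J_n \to 0$, hence $\widetilde W^{\beta'+\frac12}_{A_n} \to W^{\beta'+\frac12}_{A_\infty}$ in $C([0,T]; L^p(\Omega; E_\infty))$.
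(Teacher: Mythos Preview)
Your proof is correct and follows the paper's argument almost verbatim up to and including the decomposition of $\widetilde S_n(\sigma)-S_\infty(\sigma)$ into the two pieces and the treatment of the first piece via the ideal property and Assumption~\ref{ass:A-conv}.

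The only difference lies in how you handle the second piece. You observe that $\widetilde G_n(\sigma)$ is not uniformly bounded near $\sigma=0$, and therefore split the $\sigma$-integral at a small $\epsilon$, controlling $(0,\epsilon)$ by the uniform pointwise bound $\|\widetilde G_n(\sigma)-G_\infty(\sigma)\|_{\LO(E_\infty)}\lesssim\sigma^{-\beta}$ and applying Lemma~\ref{lem:approx-L2gamma} only on $(\epsilon,T)$. The paper avoids this detour by absorbing the singularity into the weighting of $M_n$ and $R$ in Lemma~\ref{lem:approx-L2gamma}: it sets
\[
M_n(t)\deq t^\beta\,\Lambda_n(\id_n+A_n)^\beta S_n(t)\Pi_n,
\qquad
R(t)\deq t^{\beta'-\beta-\frac12}R_\infty^\beta,
\]
so that $\sup_{n,t}\|M_n(t)\|_{\LO(E_\infty)}<\infty$ by~\eqref{eq:app-analytic-semigroup-est}, while $R\in L^2(0,T;\gamma(H_\infty;E_\infty))$ because $\int_0^T t^{2(\beta'-\beta)-1}\rd t<\infty$. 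With this choice of factors, Lemma~\ref{lem:approx-L2gamma} applies on the full interval $(0,T)$ in one stroke. Your $\epsilon$-splitting is a perfectly valid substitute; the paper's re-weighting is simply a cleaner packaging of the same estimates.
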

	\begin{proof}
		The fact that $W_{A_n}^{\beta' + \frac{1}{2}} \in C([0,T]; L^p(\Omega; E_n))$ for all $n \in \clos\N$ can be established by arguing as in Proposition~\ref{prop:existence-stoch-WAn}, thus using Assumptions~\ref{ass:disc} and~\ref{ass:operators}.
		For all $t \in [0,T]$, the It\^o inequality~\eqref{eq:Ito-inequality} yields
		\begin{align}
			&\norm{\widetilde W_{A_n}^{\beta' + \frac{1}{2}}(t) - W_{A_\infty}^{\beta' + \frac{1}{2}}(t)}{L^p(\Omega; E_\infty)}
			\\&\quad \lesssim_{(p, E_\infty)}
			\frac{1}{\Gamma(\beta' + \frac{1}{2})}
			\biggl(
			\int_0^t (t-s)^{2\beta' - 1} 
			\norm{\widetilde S_n(t-s) - S_\infty(t-s)}{\gamma(H_\infty; E_\infty)}^2
			\rd s\biggr)^{\frac{1}{2}}
			\\&\quad \le
			\frac{1}{\Gamma(\beta' + \frac{1}{2})}
			\biggl(\int_0^T s^{2\beta' - 1} 
			\norm{\widetilde S_n(s) - S_\infty(s)}{\gamma(H_\infty; E_\infty)}^2
			\rd s\biggr)^{\frac{1}{2}}.
		\end{align}
		Since semigroups commute with fractional powers of their infinitesimal generators, we can write the difference between the semigroups as follows:
		\begin{align}
			\widetilde S_n(s) - S_\infty(s)
			&=
			\Lambda_n (\id_n + A_n)^\beta S_n(s)\Pi_n \widetilde R_n^{\beta} 
			-
			(\id_\infty + A_\infty)^\beta S_\infty(s) R_\infty^\beta
			\\
			&=
			\begin{aligned}[t]
				&\Lambda_n (\id_n + A_n)^\beta S_n(s) \Pi_n (\widetilde R_n^{\beta} - R_\infty^{\beta})
				\\&+
				(\Lambda_n (\id_n + A_n)^\beta S_n(s) \Pi_n  - (\id_\infty + A_\infty)^{\beta} S_\infty) R_\infty^{\beta}.
			\end{aligned}
		\end{align}
		Thus, by the triangle inequality, it suffices to show that 
		\begin{equation}
			\begin{aligned}
				\text{(I)} &\deq \int_0^T s^{2\beta' - 1} 
				\norm{\Lambda_n (\id_n + A_n)^\beta S_n(s) \Pi_n (\widetilde R_n^{\beta} - R_\infty^{\beta})}{\gamma(H_\infty; E_\infty)}^2
				\rd s \quad \text{ and } \\
				\text{(II)} &\deq \int_0^T s^{2\beta' - 1} 
				\norm{[\Lambda_n (\id_n + A_n)^\beta S_n(s) \Pi_n  - (\id_\infty + R_\infty)^{\beta} S_\infty]R_\infty^{\beta}}{\gamma(H_\infty; E_\infty)}^2
				\rd s
			\end{aligned}
		\end{equation}
		tend to zero as $n \to \infty$.
		Applying the ideal property~\eqref{eq:ideal-property} of $\gamma(H_\infty; E_\infty)$, followed by the analytic semigroup estimate~\eqref{eq:app-analytic-semigroup-est} in conjunction with Assumptions~\ref{ass:disc} and~\ref{ass:operators}, we find
		\begin{equation}\label{eq:d2c-meansquare-conv-Wdelta-estI}
			\begin{aligned}
				\text{(I)}
				&\le 
				\norm{\widetilde R_n^{\beta} - R_\infty^{\beta}}{\gamma(H_\infty; E_\infty)}^2
				\int_0^T s^{2\beta' - 1} \norm{\Lambda_n (\id_n + A_n)^\beta S_n(s) \Pi_n}{\LO(E_\infty)}^2 \rd s
				\\&\lesssim_{(\beta, M_\Lambda, M_\Pi)} 
				\norm{\widetilde R_n^{\beta} - R_\infty^{\beta}}{\gamma(H_\infty; E_\infty)}^2
				\int_0^T s^{2(\beta' - \beta) - 1} \rd s
				\\&=
				\frac{T^{2(\beta' - \beta)}}{2(\beta' - \beta)}
				\norm{\widetilde R_n^{\beta} - R_\infty^{\beta}}{\gamma(H_\infty; E_\infty)}^2
				\to 0,
			\end{aligned}
		\end{equation}
		where the convergence on the last line follows from the second part of~\ref{ass:A-conv}.
		The convergence $\mathrm{(II)} \to 0$ follows by applying Lemma~\ref{lem:approx-L2gamma} with 
		\begin{equation}
			M_n(t) \deq t^\beta \Lambda_n (\id_n + A_n)^\beta S_n(t) \Pi_n
			\quad \text{and} \quad 
			R(t) \deq t^{\beta' - \beta - \frac{1}{2}} R_\infty^{\beta},
		\end{equation}
		Indeed, this is justified since $R \in L^2(0,T; \gamma(H_\infty; E_\infty))$ with 
		\begin{equation}
			\norm{R}{L^2(0,T; \gamma(H_\infty; E_\infty))}^2
			=
			\frac{T^{2(\beta' - \beta)}}{2(\beta' - \beta)} \norm{R^\beta_\infty}{\gamma(H_\infty; E_\infty)}^2
			<
			\infty,
		\end{equation}
		condition~\ref{lem:approx-L2gamma:bdd} is verified by applying~\eqref{eq:app-analytic-semigroup-est} to $\norm{M_n(t)}{\LO(E_\infty)}$ combined with Assumptions~\ref{ass:disc} and~\ref{ass:operators} as in~\eqref{eq:d2c-meansquare-conv-Wdelta-estI}, and hypothesis~\ref{lem:approx-L2gamma:strong-conv} holds by Lemma~\ref{lem:strong-conv-AnalphaS}.
	\end{proof}
	
	We will show that there exist modifications of $W_{A_\infty}$ and 
	$(\widetilde W_{A_n})_{n\in\N}$ which, for all $p \in [1, \infty)$ and $T \in 
	(0,\infty)$, belong to $L^p(\Omega; C([0,T]; E_\infty))$ and converge in 
	this norm. 
	In 
	particular, as $n \to \infty$, their trajectories converge uniformly on 
	bounded time intervals, $\bbP$-a.s.
	
	The proof is based on the Da Prato--Kwapień--Zabczyk factorization method, 
	first formulated in~\cite{DaPratoKwapienZabczyk1987} for Hilbert spaces (see also~\cite[Section~5.3]{DaPrato2014}), and later extended to UMD-type-2 Banach spaces in~\cite[Theorem~3.2]{Brzezniak1997}.
	The general idea is to express the process $W_{A_n}$ as the `product' $\fI_{A_n}^{\frac{1}{2} - \beta'} W^{\frac{1}{2} + \beta'}_{A_n}$ of a fractional parabolic integral operator $\fI_{A_n}^{\frac{1}{2} - \beta'}$ as in Appendix~\ref{app:frac-par-int} and auxiliary process $W^{\frac{1}{2} + \beta'}_{A_n}$ as in~\eqref{eq:def-WAndelta}, and using the smoothing properties of the former.
	
	\begin{proposition}\label{prop:d2c-pathwise-conv-Wdelta}
		Let $p \in [1,\infty)$ and $T \in (0,\infty)$.
		If Assumptions~\ref{ass:disc}--\ref{ass:operators} 
		hold, then for every $n \in \clos \N$ there exists a modification of $W_{A_n}$ which belongs to $L^p(\Omega; C([0,T]; E_n)$, which we will identify with $W_{A_n}$ itself.
		
		If, in addition, Assumption~\ref{ass:A-conv} holds, then the sequence $(\widetilde W_{A_n})_{n\in\clos\N}$ satisfies
		\begin{equation}
			\widetilde W_{A_n} \to W_{A_\infty} \quad \text{in } L^p(\Omega; 
			C([0,T]; E_\infty))
			\quad \text{as } n \to \infty.
		\end{equation}
	\end{proposition}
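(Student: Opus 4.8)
The plan is to prove both assertions at once via the Da Prato--Kwapie\'n--Zabczyk factorization, thereby reducing everything to the mean-square convergence already established in Proposition~\ref{prop:d2c-meansquare-conv-Wdelta}. Fix $\beta' \in (\beta, \tfrac12)$, which is possible since $\beta \in [0,\tfrac12)$, and set $\delta \deq \beta' + \tfrac12 \in (\tfrac12, 1)$, so that $1 - \delta = \tfrac12 - \beta' \in (0,\tfrac12)$. The starting point is the factorization identity
\[
W_{A_n} = \fI_{A_n}^{1-\delta}\, W_{A_n}^{\delta}
\quad \text{(as modifications)},
\]
where $\fI_{A_n}^{1-\delta}$ is the fractional parabolic integration operator of order $1-\delta$ from Appendix~\ref{app:frac-par-int}, acting pathwise by $\fI_{A_n}^{1-\delta} g(t) = \tfrac{1}{\Gamma(1-\delta)}\int_0^t (t-s)^{-\delta} S_n(t-s) g(s)\rd s$. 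This identity follows from the composition (semigroup) property of fractional parabolic integration recorded in Appendix~\ref{app:frac-par-int}, realized via a stochastic Fubini argument, using that $W_{A_n}^{\delta} \in C([0,T]; L^p(\Omega; E_n))$ by Proposition~\ref{prop:d2c-meansquare-conv-Wdelta}. The singular kernel $(t-s)^{-\delta}$ is integrable precisely because $\delta < 1$.

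First I would settle the existence of the continuous modification. The key input is the smoothing property of $\fI_{A_n}^{1-\delta}$ from Appendix~\ref{app:frac-par-int}: since $1-\delta > 0$, it maps $L^r(0,T; E_n)$ boundedly into $C([0,T]; E_n)$ as soon as $r(1-\delta) > 1$, with an operator norm depending only on $M_S$, $w = 0$, $\delta$, $T$ and $r$ --- crucially \emph{uniform in} $n$ by Assumption~\ref{ass:operators}. Choosing any $r > 1/(1-\delta)$ and applying this estimate pathwise, then interchanging it with the $L^p(\Omega)$-norm via Fubini (using the continuous embedding $C([0,T]; L^p(\Omega; E_n)) \emb L^r(0,T; L^p(\Omega; E_n))$ and Minkowski's integral inequality), shows that $\fI_{A_n}^{1-\delta} W_{A_n}^{\delta} \in L^p(\Omega; C([0,T]; E_n))$. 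We identify $W_{A_n}$ with this modification.

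For the convergence, introduce the lifted operators $\widetilde\fI_n \deq \Lambda_n \fI_{A_n}^{1-\delta} \Pi_n$, so that $\widetilde W_{A_n} = \Lambda_n \fI_{A_n}^{1-\delta} W_{A_n}^{\delta} = \widetilde\fI_n \widetilde W_{A_n}^{\delta}$ by $\Pi_n\Lambda_n = \id_{E_n}$ (Assumption~\ref{ass:disc3}\ref{ass:disc:id3}). I would then split
\[
\widetilde W_{A_n} - W_{A_\infty}
= \widetilde\fI_n\bigl(\widetilde W_{A_n}^{\delta} - W_{A_\infty}^{\delta}\bigr)
+ \bigl(\widetilde\fI_n - \fI_{A_\infty}^{1-\delta}\bigr) W_{A_\infty}^{\delta}.
\]
The first term is controlled by the uniform boundedness of $\widetilde\fI_n \from C([0,T]; L^p(\Omega; E_\infty)) \to L^p(\Omega; C([0,T]; E_\infty))$ from the previous step (now also absorbing $M_\Lambda$ and $M_\Pi$) together with $\widetilde W_{A_n}^{\delta} \to W_{A_\infty}^{\delta}$ in $C([0,T]; L^p(\Omega; E_\infty))$ from Proposition~\ref{prop:d2c-meansquare-conv-Wdelta}; hence it tends to zero. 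For the second term, applied to the \emph{fixed} process $W_{A_\infty}^{\delta}$, I would use the integral representation $\widetilde\fI_n g(t) = \tfrac{1}{\Gamma(1-\delta)}\int_0^t (t-s)^{-\delta} \widetilde S_n(t-s) g(s)\rd s$ and invoke the strong convergence of the lifted semigroups $\widetilde S_n \otimes x \to S_\infty \otimes x$, uniformly on compact time intervals (Theorem~\ref{thm:d2c-TKapprox} / Lemma~\ref{lem:strong-conv-AnalphaS}). A dominated convergence argument then applies: the integrand converges pointwise, the kernel $(t-s)^{-\delta}$ is integrable, and $\norm{\widetilde S_n(t-s)}{\LO(E_\infty)} \le M_S M_\Lambda M_\Pi$ provides the uniform domination, so this term also vanishes in $L^p(\Omega; C([0,T]; E_\infty))$.

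The main obstacle I anticipate is not the final splitting but the two factorization technicalities in the first two paragraphs: making the identity $W_{A_n} = \fI_{A_n}^{1-\delta} W_{A_n}^{\delta}$ rigorous requires a stochastic Fubini theorem in the UMD-type-$2$ setting, and obtaining the continuous modification requires interchanging the deterministic fractional integration with the stochastic integral and verifying that the Da Prato--Zabczyk smoothing estimate holds \emph{uniformly in} $n$ (this uniformity, which rests on Assumption~\ref{ass:operators}, is exactly what lets the first term above be handled by a single uniform operator bound). Once these are in place, the convergence follows routinely from Proposition~\ref{prop:d2c-meansquare-conv-Wdelta} and the strong convergence of the lifted semigroups.
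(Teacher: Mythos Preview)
Your proposal is correct and follows essentially the same route as the paper: the Da Prato--Kwapie\'n--Zabczyk factorization $W_{A_n} = \fI_{A_n}^{1-\delta} W_{A_n}^{\delta}$, the same two-term splitting, uniform boundedness of $\widetilde\fI_n$ (Corollary~\ref{cor:unifbb-IAtilde}\ref{cor:unifbb-IAtilde:b}) combined with Proposition~\ref{prop:d2c-meansquare-conv-Wdelta} for the first term, and strong convergence of the lifted fractional integrals (Proposition~\ref{prop:strong-continuity-IAntilde}\ref{prop:strong-continuity-IAntilde:a}) plus dominated convergence for the second.

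One small technical point: your attempt to decouple the integrability exponent $r$ from $p$ via Minkowski is slightly off, since the inequality $\norm{f}{L^p(\Omega; L^r(0,T))} \le \norm{f}{L^r(0,T; L^p(\Omega))}$ requires $p \ge r$, which fails when $p$ is small and $r > 1/(1-\delta) > 2$. The paper sidesteps this cleanly by first assuming without loss of generality that $p > (\tfrac12 - \beta')^{-1}$ (using that $\bbP$ is a probability measure, so lower $L^p$-norms are dominated by higher ones) and then simply taking $r = p$; you should do the same.
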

	\begin{proof}
		Let $\beta' \in (\beta, \frac{1}{2})$, where $\beta \in [0, \frac{1}{2})$ is as in~\ref{ass:operators}.
		Since $L^p(\Omega)$-spaces with higher exponents are embedded in those with lower ones contractively (because $\bbP(\Omega) = 1$), we assume without loss of generality that $p \in ((\frac{1}{2} - \beta)^{-1}, \infty)$.
		By the first part of Proposition~\ref{prop:d2c-meansquare-conv-Wdelta} (thus using Assumptions~\ref{ass:disc} and~\ref{ass:operators}) and the Fubini theorem, we have for all $n \in \clos\N$:
		\begin{equation}
			W^{\frac{1}{2} + \beta'}_{A_n} 
			\in 
			C([0,T]; L^p(\Omega; E_n))
			\emb 
			L^p(0,T; L^p(\Omega; E_n))
			\cong 
			L^p(\Omega; L^p(0,T; E_n)),
		\end{equation}
		where the constants for the first embedding depend only on $p$ and $T$.
		In particular, there exists an event $\Omega_1 \subseteq \Omega$ with $\bbP(\Omega_1) = 1$ such that $W^{\frac{1}{2} + \beta'}_{A_n}(\omega, \,\cdot\,)$ belongs to $L^p(0,T; E_n)$ for all $\omega \in \Omega_1$.
		It then follows from Proposition~\ref{prop:app-basicPropsIA}\ref{prop:app-basicPropsIA:Lp-CHdot} that $\fI_{A_n}^{\frac{1}{2} - \beta'} W^{\frac{1}{2} + \beta'}_{A_n}(\omega, \,\cdot\,)$ belongs to $C([0,T]; E_n)$, where $(\mathfrak I_{A_n}^s)_{s\in[0,\infty)}$ are the fractional parabolic integral operators defined by~\eqref{eq:app-def-fracParab} in Appendix~\ref{app:frac-par-int}.
		In this case, 
		the process 
		$\fI_{A_n}^{\frac{1}{2} - \beta'} W^{\frac{1}{2} + \beta'}_{A_n}$ (set to 
		zero outside of $\Omega_1$) belongs 
		to $L^p(\Omega; C([0,T]; E_n))$, 
		and	by the factorization theorem~\cite[Theorem~3.2]{Brzezniak1997} it is a modification of $W_{A_n}$. 
		
		For the lifted processes, the 
		properties of the embeddings and 
		projections from assumption~\ref{ass:disc} imply that
		$\widetilde{\mathfrak I}_{A_n}^{\frac{1}{2} - \beta'} \widetilde
		W_{A_n}^{\frac{1}{2} + \beta'} \in L^p(\Omega; C([0,T]; E_\infty))$ is a 
		continuous 
		modification of 
		$\widetilde W_{A_n}$, where
		$\widetilde{\mathfrak I}_{A_n}^s \deq \Lambda_n \mathfrak I_{A_n}^s \Pi_n$.
		Identifying $\widetilde W_{A_n}$ with its factorized continuous 
		modification for every $n \in \clos \N$, we can estimate as follows:
		\begin{equation}
			\begin{aligned}
				\norm{\widetilde W_{A_n} - W_{A_\infty}}{L^p(\Omega; C([0,T]; 
					E_\infty))}
				&=
				\norm{\widetilde{\mathfrak I}_{A_n}^{\frac{1}{2} - \beta'} 
					\widetilde W_{A_n}^{\frac{1}{2} + \beta'} 
					- 
					{\mathfrak I}_{A_\infty}^{\frac{1}{2} - \beta'}
					W_{A_\infty}^{\frac{1}{2} + \beta'}
				}{L^p(\Omega; C([0,T]; E_\infty))}
				\\
				&\le 
				\norm{\widetilde{\mathfrak I}_{A_n}^{\frac{1}{2} - \beta'} 
					(\widetilde W_{A_n}^{\frac{1}{2} + \beta'} - 
					W_{A_\infty}^{\frac{1}{2} + \beta'})}{L^p(\Omega; C([0,T]; 
					E_\infty))}
				\\&\quad +
				\norm{(\widetilde{\mathfrak I}_{A_n}^{\frac{1}{2} - \beta'} - 
					\mathfrak I_{A_\infty}^{\frac{1}{2} - \beta'}) 
					W_{A_\infty}^{\frac{1}{2} + \beta'}}{L^p(\Omega; C([0,T]; 
					E_\infty))}.
			\end{aligned}
		\end{equation}
		Since $\frac{1}{2} - \beta' > \frac{1}{p}$, we can apply	Corollary~\ref{cor:unifbb-IAtilde}\ref{cor:unifbb-IAtilde:b} to find that $\widetilde{\mathfrak I}_{A_n}^{\frac{1}{2} - \beta'}$ is a bounded 
		linear operator from 
		$L^p(0,T; E_\infty)$ 
		to $C([0,T]; E_\infty)$ whose norm can be bounded independently of $n$. Thus, by the 
		above 
		discussion and the second part of Proposition~\ref{prop:d2c-meansquare-conv-Wdelta} (which uses Assumption~\ref{ass:A-conv}), we find
		\begin{equation}\begin{aligned}
				&\norm{\widetilde{\mathfrak I}_{A_n}^{\frac{1}{2} - \beta'} 
					(\widetilde W_{A_n}^{\frac{1}{2} + \beta'} - 
					W_{A_\infty}^{\frac{1}{2} + \beta'})}{L^p(\Omega; C([0,T]; 
					E_\infty))}
				\\&\lesssim
				\norm{\widetilde  W_{A_n}^{\frac{1}{2} + \beta'} - 
					W_{A_\infty}^{\frac{1}{2} + \beta'}}{L^p(\Omega \times (0,T); 
					E_\infty)}
				\le T^{\frac{1}{p}} \norm{\widetilde  
					W_{A_n}^{\frac{1}{2} + \beta'} - 
					W_{A_\infty}^{\frac{1}{2} + \beta'}}{C([0,T]; L^p(\Omega; 
					E_\infty))}
				\to 0.
			\end{aligned}
		\end{equation}
		Now we note that, for all $\omega \in \Omega$, 
		Proposition~\ref{prop:strong-continuity-IAntilde}\ref{prop:strong-continuity-IAntilde:a}
		implies that
		\begin{align}
			\norm{
				\widetilde{\mathfrak I}_{A_n}^{\frac{1}{2} - \beta'} 
				W_{A_\infty}^{\frac{1}{2} + \beta'}(\omega, \,\cdot\,) 
				- 
				\mathfrak I_{A_\infty}^{\frac{1}{2} - \kappa} 
				W_{A_\infty}^{\frac{1}{2} +	\beta'}(\omega, \,\cdot\,) 
			}{C([0,T]; E_\infty)} \to 0.
		\end{align}
		Again by Corollary~\ref{cor:unifbb-IAtilde}\ref{cor:unifbb-IAtilde:b},
		we moreover have 
		\begin{align}
			\norm{
				\widetilde{\mathfrak I}_{A_n}^{\frac{1}{2} - \beta'} 
				W_{A_\infty}^{\frac{1}{2} + \beta'}(\omega, \,\cdot\,) 
				- 
				\mathfrak I_{A_\infty}^{\frac{1}{2} - \beta'} 
				W_{A_\infty}^{\frac{1}{2} + \beta'}(\omega, \,\cdot\,) 
			}{C([0,T]; E_\infty)}
			\lesssim
			2 \norm{W_{A_\infty}^{\frac{1}{2} + \beta'}}{L^p(0,T; E_\infty)}
		\end{align}
		with constant independent of $n \in \N$, and since 
		$W_{A_\infty}^{\frac{1}{2} + \beta'} 
		\in L^p((0,T) \times \Omega; E_\infty)$, 
		the dominated convergence theorem yields
		\[
		\norm{(\widetilde{\mathfrak I}_{A_n}^{\frac{1}{2} - \beta'} - 
			\mathfrak I_{A_\infty}^{\frac{1}{2} - \beta'}) 
			W_{A_\infty}^{\frac{1}{2} + \beta'}}%
		{L^p(\Omega; C([0,T]; E_\infty))}
		\to 
		0. \qedhere
		\]
	\end{proof}
	
	\section{Approximation of semilinear stochastic evolution equations with additive cylindrical 
		Wiener noise}
	\label{sec:semilinear-L2}
	
	In this section, we shall extend the results from Section~\ref{sec:OU-H} regarding the \emph{linear} $E_n$-valued equation~\eqref{eq:OU} to the \emph{semilinear case}.
	As before, let the spaces $(E_n)_{n\in\clos\N}$, $(H_n)_{n\in\clos \N}$ and the operators $(A_n)_{n\in\clos\N}$ satisfy assumptions~\ref{ass:disc} and~\ref{ass:operators}, respectively, and suppose that $W_n \deq \Pi_n W_\infty$ is an $H_n$-valued $Q_n$-cylindrical Wiener process (with $Q_n \deq \Pi_n \Pi_n^*$), supported on $(\Omega, \cF, (\cF_t)_{t\in[0,T]}, \bbP)$.
	Let $T \in (0,\infty)$ be a finite time horizon. 
	In this section, we suppose moreover that we are given a drift coefficient function $F_n \from \Omega \times [0,T]  \times E_n \to E_n$ and initial datum $\xi_n \from \Omega \to E_n$. 
	We will consider the following semilinear stochastic evolution equation:
	\begin{equation}\label{eq:Langevin-equation-infinite-dim-A}
		\left\lbrace
		\begin{aligned}
			\rd X_n(t) &= -A_n X_n(t)\rd t + F_n(t, X_n(t)) \rd t + \rd W_n(t), 
			\quad 
			t 
			\in 
			(0,T], 
			\\
			X_n(0) &= \xi_n.
		\end{aligned}
		\right.
	\end{equation}
	Note that $F_n(\omega, t, \,\cdot\,)$ is a (nonlinear) operator on $E_n$ for all $(\omega, t) \in \Omega \times [0,T]$; in Section~\ref{sec:graph-WM}, we considered the specific case $[F_n(\omega, t, u_n)](x) \deq f_n(\omega, t, u_n(x))$ for some real-valued nonlinearity $f_n$.
	
	In what follows, we shall impose more precise conditions on the $F_n$ and 
	$\xi_n$ to ensure the well-posedness 
	of~\eqref{eq:Langevin-equation-infinite-dim-A} for every fixed $n \in \clos 
	\N$ and to obtain discrete-to-continuum convergence of the respective solutions 
	as $n \to 
	\infty$. 
	
	In Section~\ref{sec:globalLip-linGrowth} we will assume in particular 
	that the drifts $(F_n)_{n\in\clos{\N}}$ are uniformly globally Lipschitz and of linear growth to obtain the existence of unique global solutions $(X_n(t))_{t\in[0,T]}$, whose lifted counterparts $\widetilde X_n$ converge to $X_\infty$ in 
	$L^p(\Omega; C([0,T]; E_\infty))$ as $n \to \infty$, where $p \in [1,\infty)$ is the stochastic integrability of the initial data.  
	These assumptions 
	are relaxed 
	in 
	Section~\ref{sec:locLip}, where we suppose that the drifts are uniformly 
	locally Lipschitz and uniformly bounded near zero. In general, this comes at 
	the  
	cost of obtaining merely local solutions, converging in a weaker norm.
	However, if one can show independently that the solutions are global and the 
	$L^p(\Omega; C([0,T]; E_n))$-norms of $X_n$ are uniformly bounded in $n \in 
	\N$, then we recover the stronger sense of convergence.
	
	\subsection{Globally Lipschitz drifts of linear growth}
	\label{sec:globalLip-linGrowth}
	
	In this section we suppose that the drift coefficients 
	$F_n \from \Omega \times [0,T] \times E_n \to E_n$ 
	in~\eqref{eq:Langevin-equation-infinite-dim-A} for $n \in 
	\clos \N$ are uniformly globally Lipschitz continuous and of linear growth. More 
	precisely:
	\begin{enumerate}[label=(F{\arabic*}), series=Fconditions]
		\item\label{item:F-globalLip-linearGrowth} There exist $L_F, C_F \in 
		(0,\infty)$ such that, for 
		all 
		$t \in [0,T]$, $\omega \in \Omega$, $n \in \clos \N$ 
		and $x_n, y_n \in E_n$,
		\begin{align}
			\norm{F_n(\omega, t, x_n) - F_n(\omega, t, y_n)}{E_n}
			&\le 
			L_F \norm{x_n - y_n}{E_n}; \\ 
			\norm{F_n(\omega, t, x_n)}{E_n} &\le C_F (1 + \norm{x_n}{E_n}).
		\end{align}
		Moreover, the process 
		$(\omega, t) \mapsto F_n(\omega, t, x_n)$ is 
		strongly measurable and adapted to the filtration $(\cF_t)_{t\in[0,T]}$.
	\end{enumerate}
	Now let us comment on the existence and uniqueness of solutions 
	to~\eqref{eq:Langevin-equation-infinite-dim-A} for fixed $n \in \clos\N$. 
	We will use the following concept of global mild solutions, see~\cite[pp.~969--970]{vNVW2008}. 
	In Subsection~\ref{sec:locLip}, we also introduce the concept of local solutions, which may blow up in finite time. 
	In particular, a local solution which exists $\bbP$-a.s.\ on the whole of $[0,T]$ is in fact global.
	
	Recall that $(S_n(t))_{t\ge0}$ denotes the $C_0$-semigroup on $E_n$ generated by $-A_n$.
	\begin{definition}\label{def:mildsol}
		An $E_n$-valued stochastic process $X_n = (X_n(t))_{t \in [0,T]}$ is a 
		global mild solution 
		to~\eqref{eq:Langevin-equation-infinite-dim-A} with coefficients $(A_n, 
		F_n, \xi_n)$ if 
		\begin{enumerate}
			\item $X_n \from \Omega \times [0,T] \to E_n$ is strongly measurable 
			and 
			$(\cF_t)_{t\in[0,T]}$-adapted; 
			\item $s \mapsto S_n(t-s) F_n(s, X_n(s)) \in L^0(\Omega; L^1(0,t; 
			E_n))$ 
			for every $t \in [0,T]$; 
			\item $s \mapsto S_n(t-s) \in L^2(0,t; \gamma(H_n; E_n))$ for 
			every $t \in [0,T]$;
			\item for all $t \in [0,T]$, we have 
			\begin{equation}
				X_n(t) = S_n(t)\xi_n + \int_0^t S_n(t-s)F_n(s, X_n(s))\rd s + 
				W_{A_n}(t), 
				\quad 
				\bbP\text{-a.s.}
			\end{equation}
		\end{enumerate}
	\end{definition}
	In the present framework, existence and uniqueness can be proven by 
	showing that 
	the operator 
	$\Phi_{n,T}$ given by
	\begin{equation}\label{eq:fixed-point-operator}
		[\Phi_{n,T}(u_n)](t) 
		\deq 
		S_n(t)\xi_n + \int_0^t S_n(t-s)F_n(s, u_n(s))\rd s + W_{A_n}(t)
	\end{equation}
	is a well-defined and Lipschitz-continuous mapping on $L^p(\Omega; C([0,T]; E_n))$, whose Lipschitz constant tends to zero as $T \downarrow 0$ (see~\cite[Proposition~6.1]{vNVW2008} or~\cite[Theorem~3.7]{KvN2011} for more general results):
	\begin{proposition}\label{prop:existence-uniqueness-globLip-linGrowth}
		Suppose that Assumptions~\ref{ass:disc},~\ref{ass:operators} and~\ref{item:F-globalLip-linearGrowth} are satisfied. 	
		Let $n \in \clos\N$, $p \in [1, \infty)$, $\xi_n \in L^p(\Omega, \cF_0, \bbP; E_n)$ and $T \in (0,\infty)$. 
		The operator $\Phi_{n,T}$ given by~\eqref{eq:fixed-point-operator} is well defined and Lipschitz continuous on $L^p(\Omega; C([0,T]; E_n))$. 
		Its Lipschitz constant is independent of $\xi_n$, depends on $A_n$ and $F_n$ only through $M_S$ and $L_F$, and tends to zero as $T \downarrow 0$.
	\end{proposition}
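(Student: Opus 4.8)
The plan is to decompose $\Phi_{n,T}$ into its three constituent terms---the homogeneous part $S_n(\,\cdot\,)\xi_n$, the deterministic drift convolution, and the stochastic convolution $W_{A_n}$---and to treat them separately, exploiting that the first and last terms do not depend on the argument $u_n$ and hence drop out when estimating differences. Throughout, the crucial structural input is the uniform bound $\norm{S_n(t)}{\LO(E_n)} \le M_S$ for all $t \ge 0$, furnished by~\ref{ass:operators} with $w = 0$.

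For \emph{well-definedness} I would verify that each term lies in $L^p(\Omega; C([0,T]; E_n))$. The stochastic convolution $W_{A_n}$ is already known to admit a modification in this space by Proposition~\ref{prop:existence-stoch-WAn} together with the factorization argument of Proposition~\ref{prop:d2c-pathwise-conv-Wdelta}, and it does not depend on $u_n$. For the homogeneous term, strong continuity of the semigroup yields pathwise continuity of $t \mapsto S_n(t)\xi_n$, while the uniform bound gives $\sup_{t\in[0,T]} \norm{S_n(t)\xi_n}{E_n} \le M_S \norm{\xi_n}{E_n}$, so that its $L^p(\Omega; C([0,T]; E_n))$-norm is at most $M_S \norm{\xi_n}{L^p(\Omega; E_n)} < \infty$. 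For the deterministic convolution, the linear growth estimate in~\ref{item:F-globalLip-linearGrowth} bounds the integrand pathwise by $\norm{F_n(s, u_n(s))}{E_n} \le C_F(1 + \norm{u_n}{C([0,T]; E_n)})$; since this integrand then belongs to $L^1(0,T; E_n)$ for $\bbP$-a.e.\ $\omega$, the standard fact that the convolution of an $L^1$-function against a uniformly bounded strongly continuous semigroup yields a continuous $E_n$-valued path applies, and the semigroup bound gives the pathwise estimate $\sup_{t} \norm{\int_0^t S_n(t-s) F_n(s, u_n(s)) \rd s}{E_n} \le M_S C_F T (1 + \norm{u_n}{C([0,T]; E_n)})$. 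Taking $L^p(\Omega)$-norms, and noting that strong measurability and adaptedness propagate from $u_n$, $\xi_n$ and the hypotheses in~\ref{item:F-globalLip-linearGrowth} to the integral (so that $\Phi_{n,T}$ preserves these properties, as needed for Definition~\ref{def:mildsol}), completes the verification that $\Phi_{n,T}$ maps $L^p(\Omega; C([0,T]; E_n))$ into itself.

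For the \emph{Lipschitz estimate} I would observe that for $u_n, v_n \in L^p(\Omega; C([0,T]; E_n))$ the homogeneous and stochastic terms cancel in the difference, leaving only
\[
[\Phi_{n,T}(u_n)](t) - [\Phi_{n,T}(v_n)](t) = \int_0^t S_n(t-s)\bigl[F_n(s, u_n(s)) - F_n(s, v_n(s))\bigr] \rd s.
\]
Applying the uniform semigroup bound and the global Lipschitz estimate from~\ref{item:F-globalLip-linearGrowth}, the pathwise norm at any $t \in [0,T]$ is at most $M_S L_F \int_0^t \norm{u_n(s) - v_n(s)}{E_n} \rd s \le M_S L_F T \norm{u_n - v_n}{C([0,T]; E_n)}$. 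Taking the supremum over $t$ and then the $L^p(\Omega)$-norm yields a Lipschitz constant of $M_S L_F T$, which is independent of $\xi_n$, depends on $(A_n, F_n)$ only through $M_S$ and $L_F$, and vanishes as $T \downarrow 0$.

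The computations here are essentially routine; the only mildly delicate points are the pathwise continuity and joint measurability of the deterministic convolution term, and the invocation of Proposition~\ref{prop:existence-stoch-WAn} and Proposition~\ref{prop:d2c-pathwise-conv-Wdelta} to place $W_{A_n}$ in the correct pathwise space. The heart of the statement---that the Lipschitz constant tends to zero as $T \downarrow 0$---is immediate from the explicit factor of $T$ produced by integrating the constant bound $M_S L_F$ over $[0,t]$, which is precisely what will later permit a contraction-mapping argument on small time intervals and then a concatenation to the full horizon.
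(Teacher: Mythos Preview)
Your proposal is correct and follows essentially the same approach as the paper: both decompose $\Phi_{n,T}$ into its three terms, invoke Proposition~\ref{prop:d2c-pathwise-conv-Wdelta} for $W_{A_n}$, and obtain the identical Lipschitz constant $M_S L_F T$ from the drift convolution. The only cosmetic difference is that the paper packages the continuity and boundedness of the deterministic convolution as an instance of the fractional parabolic integration result Proposition~\ref{prop:app-basicPropsIA}\ref{prop:app-basicPropsIA:Lp-CHdot} (with $s=1$, $\alpha=0$), whereas you argue directly from the semigroup bound and strong continuity; these amount to the same computation.
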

	\begin{proof}
		The fact that $S_n \otimes \xi_n \in L^p(\Omega; C([0,T]; E_n))$ is immediate from~\ref{ass:disc}--\ref{ass:operators} and $\xi_n \in L^p(\Omega, \cF_0, \bbP; E_n)$.
		We also have $W_{A_n} \in L^p(\Omega; C([0,T]; E_n))$ by the first part of Proposition~\ref{prop:d2c-pathwise-conv-Wdelta}.
		Given $u_n \in L^p(\Omega; C([0,T]; E_n))$, it follows from~\ref{item:F-globalLip-linearGrowth} that $\norm{s \mapsto F_n(s, u_n(s))}{L^\infty(0,T; E_n)} \le C_F (1 + \norm{u_n}{C([0,T]; E_n)})$, so that $S_n * F_n(\,\cdot\,, u_n)$ belongs to $L^p(\Omega; C([0,T]; E_n))$ with
		\begin{equation}
			\norm{S_n * F_n(\,\cdot\,, u_n)}{L^p(\Omega; C([0,T]; E_n))}
			\le 
			C_F (1 + \norm{u_n}{L^p(\Omega; C([0,T]; E_n))})
		\end{equation}
		by Proposition~\ref{prop:app-basicPropsIA}\ref{prop:app-basicPropsIA:Lp-CHdot} with $E = F = E_n$, $\alpha = 0$ and $s = 1$ (noting that $S_n * f = \mathfrak I_{A_n}^1 f$, see Appendix~\ref{app:frac-par-int}).
		This shows that $\Phi_{n,T}$ is well-defined.
		
		Now let $u_n, v_n \in L^p(\Omega; C([0,T]; E_n))$ and observe that
		\begin{equation}
			\Phi_{n,T}(u_n) - \Phi_{n,T}(v_n)
			=
			\int_0^t S_n(t-s) [F_n(s, u_n(s)) - F_n(s, v_n(s))] \rd s.
		\end{equation}
		A straightforward estimate involving Assumptions~\ref{ass:disc},~\ref{ass:operators} and~\ref{item:F-globalLip-linearGrowth} then yields
		\[
		\norm{\Phi_{n,T}(u_n) - \Phi_{n,T}(v_n)}{L^p(\Omega; C([0,T]; E_n))}
		\le M_S L_F T \, \norm{u_n - v_n}{L^p(\Omega; C([0,T]; E_n))}. \qedhere
		\]
	\end{proof}
	
	Under the conditions of 
	Proposition~\ref{prop:existence-uniqueness-globLip-linGrowth}, it follows from 
	the Banach fixed-point theorem 
	that~\eqref{eq:Langevin-equation-infinite-dim-A}
	has a unique solution on a small enough time interval $[0, T_0]$, which can be 
	extended to a unique global mild solution on any $[0,T]$ by ``patching together'' solutions on small time 
	intervals:
	\begin{proposition}\label{prop:global-well-posedness-En}
		Suppose that Assumptions~\ref{ass:disc},~\ref{ass:operators} and~\ref{item:F-globalLip-linearGrowth} are satisfied, and let $n \in \clos\N$, $p \in [1, \infty)$, $\xi_n \in L^p(\Omega, \cF_0, \bbP; E_n)$ and $T \in (0,\infty)$. 
		Then~\eqref{eq:Langevin-equation-infinite-dim-A} has a unique global mild solution $X_n \in L^p(\Omega; C([0,T]; E_n))$.
	\end{proposition}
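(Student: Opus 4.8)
The plan is to obtain the global mild solution by first solving~\eqref{eq:Langevin-equation-infinite-dim-A} on a short time interval via the Banach fixed-point theorem, and then extending to all of $[0,T]$ by concatenating finitely many short-time solutions. Fix $n \in \clos\N$ and $p \in [1,\infty)$. By Proposition~\ref{prop:existence-uniqueness-globLip-linGrowth}, the map $\Phi_{n,T_0}$ is Lipschitz continuous on $L^p(\Omega; C([0,T_0]; E_n))$ with Lipschitz constant at most $M_S L_F T_0$, which is strictly less than $1$ as soon as $T_0 < (M_S L_F)^{-1}$. Crucially, this threshold depends on $A_n$ and $F_n$ only through the uniform constants $M_S$ (from~\ref{ass:operators}) and $L_F$ (from~\ref{item:F-globalLip-linearGrowth}), so it may be chosen independently of $n$ and of the initial datum. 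For such a $T_0$, the Banach fixed-point theorem yields a unique fixed point $X_n \in L^p(\Omega; C([0,T_0]; E_n))$ of $\Phi_{n,T_0}$, which by~\eqref{eq:fixed-point-operator} is precisely the unique global mild solution on $[0,T_0]$ in the sense of Definition~\ref{def:mildsol}.

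For the extension I would partition $[0,T]$ into $N \deq \lceil T/T_0 \rceil$ subintervals $[t_{k-1}, t_k]$ of length at most $T_0$, where $t_k \deq (k T_0) \wedge T$. Having constructed the solution up to $t_{k-1}$, its terminal value $X_n(t_{k-1})$ is $\cF_{t_{k-1}}$-measurable and lies in $L^p(\Omega; E_n)$, so it may serve as the initial datum on the next subinterval. On $[t_{k-1}, t_k]$ one solves the analogue of~\eqref{eq:Langevin-equation-infinite-dim-A} driven by the shifted noise $s \mapsto W_n(s) - W_n(t_{k-1})$, which is again a $Q_n$-cylindrical Wiener process with respect to the shifted filtration $(\cF_{t_{k-1} + s})_{s \ge 0}$; the corresponding restarted stochastic convolution lies in $L^p(\Omega; C([t_{k-1}, t_k]; E_n))$ by the first part of Proposition~\ref{prop:d2c-pathwise-conv-Wdelta}. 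The same contraction estimate, with the same threshold $T_0$, furnishes a unique mild solution on this subinterval, and iterating over $k = 1, \dots, N$ produces a process $X_n$ on all of $[0,T]$.

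It remains to verify that the concatenated process is a global mild solution and that it is unique. The essential point for gluing is the cocycle identity $W_{A_n}(t) - S_n(t - a) W_{A_n}(a) = \int_a^t S_n(t-s) \rd W_n(s)$ for $0 \le a \le t \le T$, together with the semigroup property $S_n(t) = S_n(t-a) S_n(a)$ and the additivity of the Bochner integral; combining these shows that the restarted fixed-point relation on each $[t_{k-1}, t_k]$ assembles into the single mild-solution identity of Definition~\ref{def:mildsol} on $[0,T]$, while measurability, adaptedness and the integrability conditions are preserved under concatenation. Uniqueness follows by the same splitting: any two global mild solutions must agree on $[0,T_0]$ by uniqueness of the fixed point of $\Phi_{n,T_0}$, hence share the value at $t_1$, whence they agree on $[t_1, t_2]$, and so on up to $t_N = T$. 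I expect the main technical obstacle to be the cocycle identity for the stochastic convolution, which simultaneously underlies the gluing step and the identification of the restarted integral with a stochastic convolution against the shifted noise; once this is established, the finiteness of $N$ makes the extension routine.
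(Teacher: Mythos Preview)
Your proposal is correct and follows essentially the same approach as the paper: obtain a unique fixed point on a short interval via Proposition~\ref{prop:existence-uniqueness-globLip-linGrowth}, then iterate by restarting with the terminal value as new initial datum and concatenating. The paper restarts at $\tfrac{1}{2}T_0$ (overlapping intervals) rather than at $T_0$, and glosses over the gluing step you spell out via the cocycle identity, but these are cosmetic differences.
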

	\begin{proof}
		By Proposition~\ref{prop:existence-uniqueness-globLip-linGrowth}, there exists $T_0 \in (0,\infty)$ such that $\Phi_{n,T_0}$ is a strict contraction on $L^p(\Omega; C([0,T_0]; E_n))$, and thus has a unique fixed point $X_n$.
		Since the bound on the Lipschitz constant of $\Phi_{n,T}$ only depended on $M_S$, $L_F$ and $T$, we can repeat the previous argument to obtain a unique solution $Y \in L^p(\Omega; C([0,T_0]; E_n))$ for~\eqref{eq:Langevin-equation-infinite-dim-A} with initial datum $\eta_n \deq X_n(\frac{1}{2}T_0)$, drift $G_n(\,\cdot\,, u_n) \deq F_n(\,\cdot\, + \frac{1}{2}T_0, u_n)$ and noise $\widehat W_n \deq W_n(\,\cdot\, + \frac{1}{2}T_0)$.
		It can then be argued directly using Definition~\ref{def:mildsol} that the concatenation of the processes $X_n$ and $Y_n(\,\cdot\,+\frac{1}{2}T_0)$ is the unique mild solution to~\eqref{eq:Langevin-equation-infinite-dim-A} with the original data on $[0,\frac{3}{2}T_0]$.
		Proceeding inductively, we find the same conclusion for all intervals $[0, (k + \frac{1}{2})T_0]$ with $k \in \N$ and thus for $[0,T]$.
	\end{proof}
	
	For every $n \in \N$, we analogously define the lifted initial datum $\widetilde \xi_n \from \Omega \to E_\infty$ by~$\widetilde \xi_n \deq \Lambda_n \xi_n$ and the lifted drift coefficient $\widetilde F_n \from \Omega \times [0,T] \times E_\infty \to E_\infty$ by
	\begin{equation}\label{eq:def-Fntilde}
		\widetilde F_n(\omega, t, x) \deq \Lambda_n F_n(\omega, t, \Pi_n x), 
		\quad 
		(\omega, t, x) \in \Omega \times [0,T] \times E_\infty.
	\end{equation}
	We will assume that the initial data and drift coefficients are approximated in 
	the following way:
	
	\begin{enumerate}[label=(IC)]
		\item\label{ass:IC}%
		There exists $p \in [1,\infty)$ such that 
		$(\xi_n)_{n\in\clos\N} \in \prod_{n\in\clos\N} L^p(\Omega, \cF_0, \bbP; E_n)$ and
		\begin{equation}
			\widetilde \xi_n \to \xi_\infty 
			\quad \text{in } L^p(\Omega; E_\infty) \text{ as } n \to \infty.
		\end{equation}
	\end{enumerate}
	\begin{enumerate}[label=(F{\arabic*}), resume=Fconditions]
		\item\label{item:F-approx} 
		For a.e.\ $(\omega, t) \in \Omega \times [0,T]$ and every $x \in E_\infty$, we have
		\begin{equation}
			\widetilde F_n(\omega, t, x) 
			\to 
			F_\infty(\omega, t, x)
			\quad 
			\text{ in } E_\infty 
			\text{ as } n \to \infty.
		\end{equation}
	\end{enumerate}
	Under these assumptions, we obtain the main result of this section, namely the 
	following discrete-to-continuum convergence theorem in the context of uniformly 
	globally 
	Lipschitz 
	nonlinearities of linear growth.
	It is analogous to~\cite[Theorem~4.3]{KvN2011}.
	\begin{theorem}\label{thm:d2c-semilinear-global-linear}
		Suppose that~\ref{ass:disc}--\ref{ass:A-conv},~\ref{item:F-globalLip-linearGrowth}--\ref{item:F-approx} and~\ref{ass:IC} are satisfied, with $p \in [1,\infty)$.
		For all $n \in \clos \N$ and $T \in 
		(0,\infty)$, let $X_n = (X_n(t))_{t\in[0,T]}$ denote the unique global 
		solution to~\eqref{eq:Langevin-equation-infinite-dim-A}, and let 
		$\widetilde X_n \deq \Lambda_n X_n$.
		Then we have
		\begin{equation}
			\widetilde X_n \to X_\infty
			\quad 
			\text{in } 
			L^p(\Omega; C([0,T]; E_\infty))
			\quad 
			\text{as } 
			n \to \infty.
		\end{equation}
	\end{theorem}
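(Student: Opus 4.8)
The plan is to lift the mild-solution identity for $X_n$ to $E_\infty$ and compare it term-by-term with the one for $X_\infty$, isolating a self-referential term to be absorbed by a Grönwall argument and a handful of error terms that vanish as $n\to\infty$. Existence and uniqueness of the global mild solutions $X_n \in L^p(\Omega; C([0,T]; E_n))$ for every $n \in \clos\N$ is already guaranteed by Proposition~\ref{prop:global-well-posedness-En}. Applying $\Lambda_n$ to the defining identity and inserting $\id_n = \Pi_n\Lambda_n$ where appropriate, one rewrites the equation for $\widetilde X_n \deq \Lambda_n X_n$ entirely on $E_\infty$ as
\[
\widetilde X_n(t) = \widetilde S_n(t)\widetilde\xi_n + \int_0^t \widetilde S_n(t-s)\,\widetilde F_n(s, \widetilde X_n(s))\rd s + \widetilde W_{A_n}(t),
\]
using $\Pi_n\widetilde X_n = X_n$ together with the definitions $\widetilde S_n = \Lambda_n S_n\Pi_n$, $\widetilde F_n(\omega,t,\,\cdot\,) = \Lambda_n F_n(\omega,t,\Pi_n\,\cdot\,)$, $\widetilde\xi_n = \Lambda_n\xi_n$ and $\widetilde W_{A_n} = \Lambda_n W_{A_n}$. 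A direct computation shows that~\ref{item:F-globalLip-linearGrowth} transfers to the lifted drift: $\widetilde F_n$ is globally Lipschitz with constant $M_\Lambda M_\Pi L_F$ and of linear growth, both uniformly in $n$, and $\widetilde S_n$ is uniformly bounded by $M_\Lambda M_S M_\Pi$.

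Subtracting the $n=\infty$ identity, I would split $D_n \deq \widetilde X_n - X_\infty$ into five groups: (I)~$\widetilde S_n(t)\widetilde\xi_n - S_\infty(t)\xi_\infty$; (II)~$\int_0^t \widetilde S_n(t-s)[\widetilde F_n(s,\widetilde X_n(s)) - \widetilde F_n(s,X_\infty(s))]\rd s$; (III)~$\int_0^t \widetilde S_n(t-s)[\widetilde F_n(s,X_\infty(s)) - F_\infty(s,X_\infty(s))]\rd s$; (IV)~$\int_0^t [\widetilde S_n(t-s) - S_\infty(t-s)]F_\infty(s,X_\infty(s))\rd s$; and (V)~$\widetilde W_{A_n}(t) - W_{A_\infty}(t)$. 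The term (II) is the self-referential one, while (I), (III), (IV), (V) are the errors. The key preliminary observation is that the first part of~\ref{ass:A-conv} (strong convergence $\widetilde R_n^1 \to R_\infty^1$) combined with the discrete-to-continuum Trotter--Kato Theorem~\ref{thm:d2c-TKapprox} yields $\widetilde S_n \otimes x \to S_\infty \otimes x$ in $C([0,T]; E_\infty)$ for every $x \in E_\infty$.

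The terms (I), (III) and (V) are then dispatched quickly. Writing (I) as $\widetilde S_n(t)(\widetilde\xi_n - \xi_\infty) + (\widetilde S_n(t) - S_\infty(t))\xi_\infty$, the first summand is controlled by $M_\Lambda M_S M_\Pi$ and~\ref{ass:IC}, while the second tends to zero by the pathwise Trotter--Kato convergence above and dominated convergence over $\Omega$ (the envelope $(M_\Lambda M_S M_\Pi + M_S)\norm{\xi_\infty}{E_\infty}$ lies in $L^p(\Omega)$). For (III), uniform boundedness of $\widetilde S_n$ bounds its $C([0,T];E_\infty)$-norm by $M_\Lambda M_S M_\Pi\int_0^T\norm{\widetilde F_n(s,X_\infty(s)) - F_\infty(s,X_\infty(s))}{E_\infty}\rd s$, whose integrand vanishes a.e.\ by~\ref{item:F-approx} and is dominated via linear growth and $X_\infty \in L^p(\Omega; C([0,T];E_\infty))$; dominated convergence gives (III)~$\to 0$ in $L^p(\Omega; C([0,T];E_\infty))$. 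Finally (V)~$\to 0$ is exactly Proposition~\ref{prop:d2c-pathwise-conv-Wdelta}. The hard part is (IV), because the argument $F_\infty(s,X_\infty(s))$ varies with $s$ and $\omega$, so the fixed-vector Trotter--Kato convergence does not apply verbatim. I would fix $\omega$, set $g(s)\deq F_\infty(s,X_\infty(s,\omega)) \in L^1(0,T;E_\infty)$, and for a single vector $y$ use the substitution $r = t-s$ to bound $\sup_{t\in[0,T]}\norm{\int_0^t[\widetilde S_n(t-s) - S_\infty(t-s)]y\,\rd s}{E_\infty} \le \int_0^T\norm{(\widetilde S_n(r) - S_\infty(r))y}{E_\infty}\rd r \to 0$ by pathwise Trotter--Kato and dominated convergence in $r$; the uniform operator bound $\norm{\widetilde S_n(r) - S_\infty(r)}{\LO(E_\infty)} \le M_\Lambda M_S M_\Pi + M_S$ together with density of $E_\infty$-valued step functions in $L^1(0,T;E_\infty)$ then upgrades this to $\norm{\mathrm{(IV)}(\omega)}{C([0,T];E_\infty)} \to 0$ for a.e.\ $\omega$, and a final dominated convergence over $\Omega$ yields (IV)~$\to 0$ in $L^p(\Omega;C([0,T];E_\infty))$.

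It remains to close the estimate via Grönwall. Since $\widetilde F_n$ has Lipschitz constant $M_\Lambda M_\Pi L_F$ and $\widetilde S_n$ is bounded by $M_\Lambda M_S M_\Pi$, one has $\norm{\mathrm{(II)}(t)}{E_\infty} \le C_*\int_0^t\norm{D_n(s)}{E_\infty}\rd s$ with $C_* \deq (M_\Lambda M_\Pi)^2 M_S L_F$. Taking the supremum over $[0,t]$, then the $L^p(\Omega)$-norm, and applying Minkowski's integral inequality to the convolution term, I obtain for $\phi_n(t)\deq\norm{\widetilde X_n - X_\infty}{L^p(\Omega; C([0,t]; E_\infty))}$ the inequality
\[
\phi_n(t) \le \cE_n + C_*\int_0^t \phi_n(s)\rd s,
\]
where $\cE_n$ collects the $L^p(\Omega;C([0,T];E_\infty))$-norms of (I), (III), (IV), (V) and satisfies $\cE_n \to 0$ by the previous paragraph. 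Grönwall's lemma then gives $\phi_n(T) \le \cE_n\,e^{C_* T} \to 0$, which is precisely the asserted convergence $\widetilde X_n \to X_\infty$ in $L^p(\Omega; C([0,T]; E_\infty))$. I expect the main obstacle to be term (IV), i.e.\ promoting the fixed-vector, uniform-in-time Trotter--Kato convergence of $\widetilde S_n$ to the time-dependent random convolution, which the substitution-plus-density argument above is designed to overcome.
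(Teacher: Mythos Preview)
Your proof is correct. The decomposition (I)--(V) is exactly the one underlying the paper's Lemmas~\ref{lem:ICterm-convs}--\ref{lem:deterministic-convs} and Proposition~\ref{prop:d2c-pathwise-conv-Wdelta}: term~(I) is Lemma~\ref{lem:ICterm-convs}, terms~(III)+(IV) together are Lemma~\ref{lem:deterministic-convs} (your substitution-plus-density treatment of~(IV) is precisely what the appendix proves in Proposition~\ref{prop:strong-continuity-IAntilde}\ref{prop:strong-continuity-IAntilde:a}), and term~(V) is Proposition~\ref{prop:d2c-pathwise-conv-Wdelta}. So the handling of the error terms is not different at all.

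The genuine difference is in closing the estimate with the self-referential term~(II). The paper absorbs it via the uniform contraction of the lifted fixed-point maps $\widetilde\Phi_{n,T_0}$ on a short interval $[0,T_0]$, obtaining $\norm{X_\infty - \widetilde X_n} \le (1-c)^{-1}\norm{\Phi_{\infty,T_0}(X_\infty)-\widetilde\Phi_{n,T_0}(X_\infty)}$, and then patches intervals of length $\tfrac12 T_0$ to reach an arbitrary $T$. You instead take the $L^p(\Omega)$-norm of the $C([0,t])$-sup directly, swap $L^p(\Omega)$ with the time integral via Minkowski, and apply Gr\"onwall on $[0,T]$ in one stroke. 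Both are standard routes; yours is slightly more direct (no short-interval/patching bookkeeping), while the paper's approach recycles verbatim the contraction structure already set up for well-posedness in Propositions~\ref{prop:existence-uniqueness-globLip-linGrowth}--\ref{prop:global-well-posedness-En} and mirrors the argument in~\cite{KvN2011}. Neither approach yields more than the other here.
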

	Its proof involves the lifted counterparts of $\Phi_{n,T}$, defined by
	\begin{equation}
		\widetilde \Phi_{n,T} \deq \Lambda_n \Phi_{n,T} \circ \Pi_n 
		\from 
		L^p(\Omega; C([0,T]; E_\infty))
		\to 
		L^p(\Omega; C([0,T]; E_\infty)),
	\end{equation}
	i.e., for all $u \in L^p(\Omega; C([0,T]; E_\infty))$ and $t \in [0,T]$, we have 
	\begin{align}
		[\widetilde \Phi_{n, T}(u)](t) 
		\deq{}& 
		\Lambda_n S_n(t)\xi_n + \int_0^t \Lambda_n S_n(t-s) F_n(s, 
		\Pi_n u(s))\rd s + \Lambda_n W_{A_n}(t)
		\\
		={}&
		\widetilde S_n(t) \widetilde \xi_n + \int_0^t \widetilde S_n(t-s) 
		\widetilde F_n(s, 
		u(s))\rd s + \widetilde W_{A_n}(t),
		\quad 
		\bbP\text{-a.s.}, \label{eq:lifted-fixedPoint}
	\end{align}
	where the second identity is due to 
	Assumption~\ref{ass:disc3}\ref{ass:disc:id3}. 
	Using the tensor and convolution notations from Section~\ref{sec:prelims:notation}, it can be expressed even more concisely as
	\begin{equation}\label{eq:lifted-fixedPoint-shorter}
		\widetilde \Phi_{n, T}(u)
		=
		\widetilde S_n \otimes \widetilde \xi_n + \widetilde S_n * \widetilde F_n(\,\cdot\,, u) + \widetilde W_{A_n}
	\end{equation}
	In particular, we will show that all three terms of~\eqref{eq:lifted-fixedPoint-shorter} converge to their ``continuum'' counterparts; they are addressed by Lemmas~\ref{lem:ICterm-convs}--\ref{lem:deterministic-convs} below (which are analogous to~\cite[Lemma~4.4, 4.5(1) and 4.5(3)]{KvN2011}), as well as Proposition~\ref{prop:d2c-pathwise-conv-Wdelta}.
	\begin{lemma}\label{lem:ICterm-convs}
		If~\ref{ass:disc}--\ref{ass:A-conv}
		and~\ref{ass:IC} hold with $p \in [1,\infty)$,
		then we have 
		\begin{equation}
			\widetilde S_n \otimes \xi_n
			\to 
			S_\infty \otimes \xi_\infty
			\quad 
			\text{in } L^p(\Omega; C([0,T]; E_\infty))
			\quad 
			\text{as } n \to \infty.
		\end{equation}
	\end{lemma}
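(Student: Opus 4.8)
The plan is to reduce the claim, by linearity, to two separate convergences and to obtain the deterministic semigroup convergence from the discrete-to-continuum Trotter--Kato theorem (Theorem~\ref{thm:d2c-TKapprox}). First I would record the uniform operator bound: writing $\widetilde S_n(t) \deq \Lambda_n S_n(t) \Pi_n$, Assumptions~\ref{ass:disc}--\ref{ass:operators} give $\norm{\widetilde S_n(t)}{\LO(E_\infty)} \le M_\Lambda M_S M_\Pi$ for all $n \in \clos\N$ and $t \ge 0$, since $\norm{S_n(t)}{\LO(E_n)} \le M_S$ by~\eqref{eq:semigroup-estimates} with $w = 0$; for $n = \infty$ this reads $\norm{S_\infty(t)}{\LO(E_\infty)} \le M_S$. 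Writing out the definitions, the term $\widetilde S_n \otimes \xi_n$ is the process $t \mapsto \Lambda_n S_n(t)\xi_n$, which equals $\widetilde S_n(t)\widetilde\xi_n$ by $\Pi_n\Lambda_n = \id_n$ (Assumption~\ref{ass:disc3}\ref{ass:disc:id3}). I would then split its difference from the limit as
\[
\widetilde S_n(t)\widetilde\xi_n - S_\infty(t)\xi_\infty = \widetilde S_n(t)(\widetilde \xi_n - \xi_\infty) + (\widetilde S_n(t) - S_\infty(t))\xi_\infty
\]
and estimate each summand in $L^p(\Omega; C([0,T]; E_\infty))$ separately.

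For the first summand the uniform bound yields, pathwise, $\sup_{t\in[0,T]} \norm{\widetilde S_n(t)(\widetilde \xi_n - \xi_\infty)}{E_\infty} \le M_\Lambda M_S M_\Pi \norm{\widetilde \xi_n - \xi_\infty}{E_\infty}$. Taking $L^p(\Omega)$-norms and invoking Assumption~\ref{ass:IC}, this contribution is bounded by $M_\Lambda M_S M_\Pi \norm{\widetilde \xi_n - \xi_\infty}{L^p(\Omega; E_\infty)} \to 0$, so it causes no trouble.

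The delicate part, where I expect the main work, is the second summand. For each \emph{fixed} $x \in E_\infty$ the first part of Assumption~\ref{ass:A-conv} states $\widetilde R_n^1 x \to R_\infty^1 x$; since $-A_n$ is the generator and $R_n^1 = (\id_n + A_n)^{-1} = -R(-1, A_n)$ with $\Re(-1) < 0 = w$, this is exactly hypothesis~\ref{thm:d2c-TKapprox:a} of Theorem~\ref{thm:d2c-TKapprox} at $\lambda = -1$. Its equivalent~\ref{thm:d2c-TKapprox:b} then provides the deterministic convergence $\sup_{t\in[0,T]} \norm{(\widetilde S_n(t) - S_\infty(t)) x}{E_\infty} \to 0$ for every fixed $x$. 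The obstacle is that $\xi_\infty$ is random, so this pointwise-in-$x$ statement must be promoted to convergence in $L^p(\Omega; C([0,T]; E_\infty))$; I would do this by dominated convergence. Applying the deterministic statement with $x = \xi_\infty(\omega)$ gives $\sup_{t}\norm{(\widetilde S_n(t) - S_\infty(t))\xi_\infty(\omega)}{E_\infty} \to 0$ for $\bbP$-a.e.\ $\omega$, while the uniform semigroup bounds furnish the integrable majorant $M_S^p(M_\Lambda M_\Pi + 1)^p \norm{\xi_\infty(\omega)}{E_\infty}^p$, which lies in $L^1(\Omega)$ precisely because $\xi_\infty \in L^p(\Omega; E_\infty)$ by~\ref{ass:IC}. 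Hence $\bbE \sup_t \norm{(\widetilde S_n(t) - S_\infty(t))\xi_\infty}{E_\infty}^p \to 0$, and the triangle inequality combined with the two estimates concludes the proof. I note that only the first (non-$\gamma$-radonifying) half of~\ref{ass:A-conv} is needed here, the second half being reserved for the stochastic convolution term.
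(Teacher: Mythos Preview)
Your proof is correct and follows essentially the same approach as the paper's: the same decomposition $\widetilde S_n(t)\widetilde\xi_n - S_\infty(t)\xi_\infty = \widetilde S_n(t)(\widetilde \xi_n - \xi_\infty) + (\widetilde S_n(t) - S_\infty(t))\xi_\infty$, the same uniform semigroup bound for the first term, and the same combination of the discrete-to-continuum Trotter--Kato theorem with dominated convergence (using the majorant $M_S(M_\Lambda M_\Pi + 1)\norm{\xi_\infty}{E_\infty}$) for the second. Your remark that only the strong resolvent convergence in~\ref{ass:A-conv} is used here, and your explicit identification of $\lambda = -1$ in Theorem~\ref{thm:d2c-TKapprox}\ref{thm:d2c-TKapprox:a}, are welcome clarifications not spelled out in the paper.
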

	\begin{proof}
		As in the beginning of the proof of Proposition~\ref{prop:existence-uniqueness-globLip-linGrowth}, it follows from~\ref{ass:disc}--\ref{ass:operators} and~\ref{ass:IC} that $S_n \otimes \xi_n \in L^p(\Omega; C([0,T]; E_n))$ for all $n \in \clos\N$.
		Applying the projection and lifting operators from~\ref{ass:disc}, we thus find $\widetilde S_n \otimes \widetilde \xi_n \in L^p(\Omega; C([0,T]; E_\infty))$.
		
		The triangle inequality implies
		\begin{align}
			&\norm{\widetilde S_n \otimes \widetilde \xi_n - S_\infty \otimes \xi_\infty}{L^p(\Omega; C([0,T]; E_\infty))}
			\\&\quad \le 
			\norm{\widetilde S_n \otimes (\widetilde \xi_n - \xi_\infty)}{L^p(\Omega; C([0,T]; E_\infty))}
			+
			\norm{(\widetilde S_n - S_\infty) \otimes \xi_\infty}%
			{L^p(\Omega; C([0,T]; E_\infty))}.
		\end{align}
		By~\ref{ass:disc}--\ref{ass:operators} and~\ref{ass:IC}, for the first term we have, as $n \to \infty$:
		\begin{equation}
			\norm{\widetilde S_n \otimes (\widetilde \xi_n - \xi_\infty)}{L^p(\Omega; C([0,T]; E_\infty))}
			\le 
			M_\Lambda M_S M_\Pi 
			\norm{\widetilde \xi_n - \xi_\infty}{L^p(\Omega; E_\infty)} \to 0
		\end{equation}
		For the second term, first note that 
		$\widetilde S_n \otimes \xi_\infty(\omega) \to 
		S_\infty \otimes \xi_\infty(\omega)$ in 
		$C([0,T]; E_\infty)$, $\bbP$-a.s., by Theorem~\ref{thm:d2c-TKapprox}, where we now also use~\ref{ass:A-conv}. 
		Since we moreover have 
		\begin{equation}
			\norm{\widetilde S_n \otimes \xi_\infty(\omega) - 
				S_\infty \otimes \xi_\infty(\omega)}{C([0,T]; E_\infty)}
			\le 
			M_S(M_\Lambda M_\Pi + 1) \norm{\xi_\infty(\omega)}{E_\infty},
		\end{equation}
		and the right-hand side belongs to $L^p(\Omega)$ by assumption, we deduce that also 
		\[
		\norm{(\widetilde S_n - S_\infty) \otimes \xi_\infty}{L^p(\Omega; C([0,T]; E_\infty))} \to 0
		\quad \text{as } 
		n \to \infty. \qedhere
		\]
	\end{proof}
	\begin{lemma}\label{lem:deterministic-convs}
		Suppose that Assumptions~\ref{ass:disc},~\ref{ass:operators},~\ref{item:F-globalLip-linearGrowth} and~\ref{item:F-approx} are satisfied. 
		Let $p \in [1,\infty)$ and $u \in L^p(\Omega; C([0,T]; E_\infty))$ be given.
		Then we have
		\begin{equation}
			\widetilde S_n * \widetilde F_n(\,\cdot\,, u)
			\to 
			S_\infty * F_\infty(\,\cdot\,, u )
			\quad 
			\text{in } L^p(\Omega; C([0,T]; E_\infty))
			\text{ as } n \to \infty.
		\end{equation}
	\end{lemma}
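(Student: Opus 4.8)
The plan is to estimate the difference in $L^p(\Omega; C([0,T]; E_\infty))$ by inserting the intermediate term $\widetilde S_n * F_\infty(\,\cdot\,, u)$ and splitting via the triangle inequality:
\begin{equation}
\widetilde S_n * \widetilde F_n(\,\cdot\,, u) - S_\infty * F_\infty(\,\cdot\,, u)
= \widetilde S_n * [\widetilde F_n(\,\cdot\,, u) - F_\infty(\,\cdot\,, u)]
+ (\widetilde S_n - S_\infty) * F_\infty(\,\cdot\,, u).
\end{equation}
Two elementary building blocks feed the estimate. First, the uniform bound $\norm{\widetilde S_n(r)}{\LO(E_\infty)} \le M_\Lambda M_S M_\Pi$ from~\ref{ass:disc}--\ref{ass:operators} shows that the convolution map $g \mapsto \widetilde S_n * g$ is bounded from $L^1(0,T; E_\infty)$ into $C([0,T]; E_\infty)$ with operator norm at most $M_\Lambda M_S M_\Pi$, uniformly in $n$ (this is the $\mathfrak I_{A_n}^1$ map from Proposition~\ref{prop:app-basicPropsIA}). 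Second, the uniform linear growth~\ref{item:F-globalLip-linearGrowth} lifts to $\norm{\widetilde F_n(\omega, t, x)}{E_\infty} \le M_\Lambda C_F(1 + M_\Pi \norm{x}{E_\infty})$, and evaluating at $x = u(\omega, t)$ produces an $L^p(\Omega)$-majorant $H(\omega) \deq C(1 + \norm{u(\omega, \,\cdot\,)}{C([0,T]; E_\infty)})$ (with $C$ depending on $M_\Lambda, M_\Pi, M_S, C_F, T$) that dominates both $\widetilde F_n(\,\cdot\,, u)$ and $F_\infty(\,\cdot\,, u)$; it lies in $L^p(\Omega)$ since $u \in L^p(\Omega; C([0,T]; E_\infty))$.

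For the first term I would use the uniform convolution bound to reduce to
\begin{equation}
\norm{\widetilde S_n * [\widetilde F_n(\,\cdot\,, u) - F_\infty(\,\cdot\,, u)]}{L^p(\Omega; C([0,T]; E_\infty))}
\le M_\Lambda M_S M_\Pi \, \norm{\widetilde F_n(\,\cdot\,, u) - F_\infty(\,\cdot\,, u)}{L^p(\Omega; L^1(0,T; E_\infty))},
\end{equation}
and then show the right-hand side vanishes by dominated convergence. The integrand $\norm{\widetilde F_n(\omega, t, u(\omega, t)) - F_\infty(\omega, t, u(\omega, t))}{E_\infty}$ tends to $0$ for a.e.\ $(\omega, t)$ by~\ref{item:F-approx}; here it is essential that the exceptional null set in~\ref{item:F-approx} does not depend on $x$, so that the (a.e.\ fixed) element $x = u(\omega, t)$ may be inserted. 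The integrand is bounded by $2H(\omega)$, which is $t$-integrable on $[0,T]$ and $\omega$-$L^p$, so dominated convergence applies first in $t$ and then in $\omega$.

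The second term is the main obstacle, since it demands \emph{uniform-in-$t$} convergence of a convolution while the available convergence of the semigroups is only strong (pointwise on $E_\infty$). By the discrete-to-continuum Trotter--Kato theorem (Theorem~\ref{thm:d2c-TKapprox}, whose hypothesis is the resolvent convergence~\ref{ass:A-conv}) one has, for each fixed $x \in E_\infty$, that $\sup_{r\in[0,T]} \norm{(\widetilde S_n(r) - S_\infty(r))x}{E_\infty} \to 0$. To transfer this to the $E_\infty$-valued integrand $g(\,\cdot\,) \deq F_\infty(\omega, \,\cdot\,, u(\omega, \,\cdot\,)) \in L^1(0,T; E_\infty)$ for fixed $\omega$, I would approximate $g$ in $L^1(0,T; E_\infty)$ by step functions $\sum_{j=1}^N \indic{(a_j, b_j)} x_j$: the uniform operator bound $\norm{\widetilde S_n(r) - S_\infty(r)}{\LO(E_\infty)} \le M_S(M_\Lambda M_\Pi + 1)$ controls the approximation error uniformly in $n$ and $t$, while for a step function the convolution sees only the finitely many pointwise limits above and hence converges to $0$ uniformly in $t$. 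This yields $\norm{(\widetilde S_n - S_\infty) * F_\infty(\omega, \,\cdot\,, u)}{C([0,T]; E_\infty)} \to 0$ for a.e.\ $\omega$; since this quantity is bounded by $M_S(M_\Lambda M_\Pi + 1) H(\omega) \in L^p(\Omega)$, a final dominated convergence in $\omega$ upgrades it to convergence in $L^p(\Omega; C([0,T]; E_\infty))$, completing the argument. (This step is the deterministic, $L^1$--$C$ analog of Lemma~\ref{lem:approx-L2gamma}.)
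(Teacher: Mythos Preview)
Your proposal is correct and follows essentially the same route as the paper: the same splitting via $\widetilde S_n * F_\infty(\,\cdot\,, u)$, the same dominated-convergence argument for the first term, and the same step-function density argument for the second (which the paper packages as Proposition~\ref{prop:strong-continuity-IAntilde}\ref{prop:strong-continuity-IAntilde:a}). One remark: you correctly invoke~\ref{ass:A-conv} for Trotter--Kato in the second term, although it is not listed among the lemma's hypotheses; the paper's proof also needs it through Proposition~\ref{prop:strong-continuity-IAntilde}, so this is a minor omission in the lemma's statement rather than a defect in your argument.
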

	\begin{proof}
		Similarly to the proof of Proposition~\ref{prop:existence-uniqueness-globLip-linGrowth}, it follows from~\ref{ass:disc}--\ref{ass:operators} and~\ref{item:F-globalLip-linearGrowth} that $\widetilde S_n * \widetilde F_n(\,\cdot\, , u) \in L^p(\Omega; C([0,T]; E_\infty))$ for all $n \in \clos\N$.
		By the triangle inequality, we can split up the statement into the following two assertions:
		\begin{enumerate}
			\item \label{lem:deterministic-convs:item1}
			$\widetilde S_n * \widetilde F_n(\,\cdot\, , u) - \widetilde S_n 
			* F_\infty(\,\cdot\, , u) \to 0$ in $L^p(\Omega; C([0,T]; E_\infty))$ 
			as $n \to \infty$; 
			\item \label{lem:deterministic-convs:item2}
			$\widetilde S_n * F_\infty(\,\cdot\, , u) \to S_\infty * 
			F_\infty(\,\cdot\, , u)$ in 
			$L^p(\Omega; C([0,T]; E_\infty))$  
			as $n \to \infty$.
		\end{enumerate}
		For almost every $(\omega, t) \in \Omega \times [0,T]$, we have by~\ref{item:F-globalLip-linearGrowth} and~\ref{ass:disc}:
		\begin{equation}\label{eq:Ftilde-n-unifBdd}
			\begin{aligned}
				&\norm{\widetilde F_n(\omega, t, u(\omega, t)) - F_\infty(\omega, t, u(\omega, t))}{E_\infty}
				\\&\quad 
				\le
				C_F(M_\Lambda + 1 + (M_\Pi M_\Lambda + 1)\norm{u(\omega, t)}{E_\infty}).
			\end{aligned}
		\end{equation}
		It follows that
		\begin{equation}\label{eq:Ftilde-n-unifBdd-LpOmegaZeroT}
			\begin{aligned}
				\norm{\widetilde F_n(\,\cdot\,, u) - F_\infty(\,\cdot\,, u)}{L^p(\Omega \times (0,T); E_\infty)}
				&\lesssim_{(C_F, M_\Lambda, M_\Pi)}
				\norm{u}{L^p(\Omega \times (0,T); E_\infty)} \\
				&\lesssim_{(p, T)} 
				\norm{u}{L^p(\Omega; C([0,T]; E_\infty))} < \infty.
			\end{aligned}
		\end{equation}
		Since $\widetilde S_n * f = \widetilde \fI_{A_n}^1 f$ for all $f \in L^p(0,T; E_\infty)$, we can apply Proposition~\ref{prop:app-basicPropsIA}\ref{prop:app-basicPropsIA:Lp-CHdot} with $E = F = E_\infty$, $\alpha = 0$ and $s = 1$ to find that 
		\begin{equation}
			\begin{aligned}
				&\norm{\widetilde S_n * (
					\widetilde F_n(\,\cdot\,, u) - F_\infty(\,\cdot\,, u)
					)
				}{C([0,T]; E_\infty)} 
				\\&\quad 
				\lesssim_{(s,r,T,M_S)}
				\norm{\widetilde F_n(\,\cdot\,, u) - F_\infty(\,\cdot\,, u)}{L^p(\Omega \times (0,T); E_\infty)}.
			\end{aligned}
		\end{equation}
		The latter tends to zero as $n \to \infty$ by the dominated convergence theorem, which applies in view of~\ref{item:F-approx} and~\eqref{eq:Ftilde-n-unifBdd}--\eqref{eq:Ftilde-n-unifBdd-LpOmegaZeroT}.  
		This shows~\ref{lem:deterministic-convs:item1}.
		
		For~\ref{lem:deterministic-convs:item2}, we derive in the same way that, for almost every $\omega \in \Omega$,
		\begin{equation}
			t \mapsto F_\infty(\omega, t, u(\omega, t)) \in L^p(0,T; E_\infty),
		\end{equation}
		which implies, cf.\ 
		Proposition~\ref{prop:strong-continuity-IAntilde}\ref{prop:strong-continuity-IAntilde:a} with $\widetilde E \deq E_\infty$, that
		\begin{equation}
			\widetilde S_n * F_\infty(\omega, \,\cdot\,, u(\omega, \,\cdot\,))
			\to 
			S_\infty * F_\infty(\omega, \,\cdot\,, u(\omega, \,\cdot\,))
			\quad 
			\text{in } C([0,T]; E_\infty) \quad \text{as } n \to \infty. 
		\end{equation}
		The conclusion follows by using the uniform boundedness of the operators $(\widetilde{\fI}_{A_n}^1)_{n\in\clos\N}$ in $\LO(L^p(0,T; E_\infty); C([0,T]; E_\infty))$, asserted in Corollary~\ref{cor:unifbb-IAtilde}\ref{cor:unifbb-IAtilde:b} (with $\widetilde E \deq E_\infty$ once more), and finishing the dominated convergence argument as in part~\ref{lem:deterministic-convs:item1}.
	\end{proof}
	\begin{proof}[Proof of Theorem~\ref{thm:d2c-semilinear-global-linear}]
		By Proposition~\ref{prop:existence-uniqueness-globLip-linGrowth} and~\ref{ass:disc}, for small enough $T_0 \in (0,\infty)$ there exists a constant $c \in [0, 1)$, depending only on $L_F$, $M_S$, $M_\Lambda$ and $M_\Pi$, such that, for all $u, v \in L^p(\Omega; C([0,T_0]; E_\infty))$,
		\begin{equation}
			\sup\nolimits_{n\in\N} \, \norm{\widetilde \Phi_{n,T_0}(u) - \widetilde 
				\Phi_{n, T_0}(v)}{L^p(\Omega; C([0, T_0]; E_\infty))}
			\le 
			c \norm{u - v}{L^p(\Omega; C([0, T_0]; E_\infty))}.
		\end{equation}
		Moreover, by Proposition~\ref{prop:global-well-posedness-En}, for every $n \in \clos \N$, there exists a unique global solution $X_n \in L^p(\Omega; C([0,T]; E_n))$ to~\eqref{eq:Langevin-equation-infinite-dim-A}, which in particular satisfies $X_n = \Phi_{n,T_0}(X_n)$ when restricted to $[0,T_0]$. 
		By~\ref{ass:disc}, this implies $\widetilde X_n = \widetilde \Phi_{n,T_0}(\widetilde X_n)$.
		Hence,
		\begin{align}
			&\norm{X_\infty - \widetilde X_n}{L^p(\Omega; C([0, T_0]; E_\infty))}
			=
			\norm{\Phi_{\infty, T_0}(X_\infty) - \widetilde \Phi_{n, 
					T_0}(\widetilde X_n)}{L^p(\Omega; C([0, T_0]; E_\infty))}
			\\
			&\quad \le 
			\norm{\Phi_{\infty, T_0}(X_\infty) - \widetilde 
				\Phi_{n, T_0}(X_\infty)}{L^p(\Omega; C([0, T_0]; E_\infty))}
			\\&\qquad\quad +
			\norm{\widetilde \Phi_{n, T_0}(X_\infty) - \widetilde 
				\Phi_{n,T_0}(\widetilde 
				X_n)}{L^p(\Omega; C([0, T_0]; E_\infty))}
			\\
			&\quad \le 
			\norm{\Phi_{\infty, T_0}(X_\infty) - \widetilde 
				\Phi_{n, T_0}(X_\infty)}{L^p(\Omega; C([0, T_0]; E_\infty))}
			\\&\qquad \quad +
			c \norm{X_\infty - \widetilde X_n}{L^p(\Omega; C([0, T_0]; E_\infty))},
		\end{align}
		so that Lemmas~\ref{lem:ICterm-convs}--\ref{lem:deterministic-convs} 
		and Proposition~\ref{prop:d2c-pathwise-conv-Wdelta} yield (using all of the Assumptions~\ref{ass:disc}--\ref{ass:A-conv},~\ref{item:F-globalLip-linearGrowth}--\ref{item:F-approx} and~\ref{ass:IC}):
		\begin{equation}\label{eq:Thm32-1}
			\begin{aligned}
				&\norm{X_\infty - \widetilde X_n}{L^p(\Omega; C([0, T_0]; E_\infty))}
				\\&\quad \le 
				\frac{1}{1-c} \, \norm{\Phi_{\infty, T_0}(X_\infty) - \widetilde \Phi_{n, T_0}(X_\infty)}{L^p(\Omega; C([0, T_0]; E_\infty))} \to 0
			\end{aligned}
		\end{equation}
		as $n \to \infty$.
		In order to extend the convergence to arbitrary time horizons, we write
		\begin{align}
			\norm{X_\infty - \widetilde X_n}{L^p(\Omega; C([0, \frac{3}{2}T_0]; E_\infty))}
			&\le 
			\norm{X_\infty - \widetilde X_n}{L^p(\Omega; C([0, \frac{1}{2}T_0]; E_\infty))} \\
			&\quad +
			\norm{X_\infty - \widetilde X_n}{L^p(\Omega; C([\frac{1}{2}T_0, \frac{3}{2}T_0]; E_\infty))}.
		\end{align}
		The first term tends to zero as $n \to 
		\infty$ by~\eqref{eq:Thm32-1}. As for the second term, we note that 
		$X_n \vert_{[\frac{1}{2}T_0,\frac{3}{2}T_0]}$
		are the respective solutions to~\eqref{eq:Langevin-equation-infinite-dim-A} with shifted drift functions 
		$(F_n(\,\cdot\, + \frac{1}{2}T_0, \,\cdot\,))_{n\in\clos\N}$ and 
		initial values $(X_n(\frac{1}{2}T_0))_{n\in\clos\N}$. Since the Lipschitz 
		constants of the 
		fixed point operators defined above did not depend on the initial datum and 
		only 
		depended on $F$ through its (time-independent) Lipschitz constant $L_F$, 
		we can repeat the same argument to find that the second term tends to zero. 
		Proceeding by 
		induction, we obtain the convergence 
		$\widetilde X_n \to X_\infty$ in $L^p(\Omega; C([0, (1 + \frac{k}{2})T_0]; E_\infty))$
		for any $k \in \N$, and thus in $L^p(\Omega; C([0,T]; E_\infty))$ for any $T \in (0,\infty)$.
	\end{proof}
	
	\subsection{Locally Lipschitz nonlinearities}
	\label{sec:locLip}
	
	In this section, we work under the weaker assumption that the drift coefficients 
	$(F_n)_{n\in\clos\N}$ are \emph{locally} Lipschitz continuous, with local-Lipschitz constants uniformly bounded in $t \in [0,T]$, $\omega \in \Omega$ and $n \in \clos\N$.
	Moreover, we replace the uniform linear growth condition from~\ref{item:F-globalLip-linearGrowth} by the assumption that $F_n(t, \omega, 0)$ is bounded, again uniformly in $t$, $\omega$ and $n$; we will also call this notion of boundedness local since it only involves $u = 0$.
	Thus, we assume that the $(F_n)_{n\in\clos\N}$ are locally uniformly Lipschitz and locally uniformly bounded:
	\begin{enumerate}[label=(F{\arabic*}$'$)]
		\item\label{item:F-locLip-unifBdd} 
		For every $r \in (0,\infty)$ there exists a constant $L_F^{(r)} \in 
		(0,\infty)$ such that
		for almost every $(\omega, t) \in \Omega \times [0,T]$, all 
		$n \in \clos\N$ and every $x_n, y_n \in E_n$ such that $\norm{x_n}{E_n}, 
		\norm{y_n}{E_n} \le r$, we have 
		\begin{equation}
			\norm{F_n(\omega, t, x_n) - F_n(\omega, t, y_n)}{E_n}
			\le 
			L_F^{(r)} \norm{x_n - y_n}{E_n}.
		\end{equation}
		Moreover, for every $x_n \in E_n$, $n \in \clos\N$ the process $(\omega, t) \mapsto F_n(\omega, t, x_n)$ is strongly measurable and adapted, and there exists a constant $C_{F, 0}$ such that 
		\begin{equation}
			\norm{F_n(\omega, t, 0)}{E_n} \le C_{F,0}
			\quad \text{for all }
			n \in \clos\N.
		\end{equation}
	\end{enumerate}
	Under these conditions, we can in general not expect to obtain global solutions of~\eqref{eq:Langevin-equation-infinite-dim-A} in the sense of Definition~\ref{def:mildsol}. 
	Instead, we need to work with locally defined $E_n$-valued stochastic processes, i.e., with mappings of the form 
	\begin{equation}\label{eq:locally-defined-process}
		Y \from \{ (\omega, t) \in \Omega \times [0,T] : t \in [0, \tau(\omega)) 
		\}
		\to E_n
	\end{equation}
	for some stopping time $\tau \from \Omega \to [0,T]$. We denote such a process 
	by~$Y = (Y(t))_{t \in [0, \tau)}$. 
	If the half-open interval $[0, \tau(\omega))$ in~\eqref{eq:locally-defined-process} is replaced by $[0, \tau(\omega)]$, then we write $Y = (Y(t))_{t\in[0,\tau]}$ instead. 
	We say that $(Y(t))_{t \in [0, \tau)}$ is \emph{admissible} if 
	\begin{itemize}[series=mildsol-loc-ass]
		\item for all $t \in [0,T]$, the mapping $\{\omega \in \Omega : t < \tau(\omega) \} \ni \omega \mapsto Y(\omega, t) \in E_n$ is $\cF_t$-measurable;
		\item the mapping $[0, \tau(\omega)) \ni t \mapsto Y(\omega, t) \in E_n$ is continuous, $\bbP$-a.s.
	\end{itemize}
	We denote by $V^{\mathrm{loc}}([0,\tau) \times \Omega; E_n)$ the space of admissible $E_n$-valued processes $(Y(t))_{t \in [0,\tau)}$ for which there exists a sequence $(\tau_m)_{m\in\N}$ of stopping times such that, for $\bbP$-a.e.\ $\omega \in \Omega$, we have $\tau_m(\omega) \uparrow \tau(\omega)$ as $m \to \infty$ and $\norm{Y}{C([0, \tau_m(\omega)]; E_n)} < \infty$ for all $m \in \N$.
	As in~\cite[Section~8]{vNVW2008}, we define local solutions
	to~\eqref{eq:Langevin-equation-infinite-dim-A} as follows:
	\begin{definition}\label{def:mildsol-loc}
		An admissible $E_n$-valued stochastic process $X_n = (X_n(t))_{t \in [0,\tau)}$ is said to be a local solution to~\eqref{eq:Langevin-equation-infinite-dim-A} with coefficients $(A_n, F_n, \xi_n)$ if there exists a sequence $(\tau_m)_{m\in\N}$ of stopping times such that 
		$\tau_m \uparrow \tau$ as $m \to \infty$, $\bbP$-a.s., and for all $m \in 
		\N$ we have
		\begin{enumerate}
			\item for every $t \in [0,T]$, the process $(\omega, s) \mapsto S_n(t-s) F_n(\omega, s, X_n(\omega, s)) \indic{[0,\tau_m]}(s)$ belongs to $L^0(\Omega; L^1(0,t; E_n))$;
			\item 
			for every $t \in [0,T]$, $s \mapsto S_n(t-s)\indic{[0,\tau_m]}(s) \in L^2(0,t; \gamma(H_n; E_n))$ ;
			\item it holds $\bbP$-a.s.\ that for all $t \in [0,\tau_m]$, we have 
			\begin{align}
				X_n(t) = S_n(t)\xi_n &+ \int_0^t S_n(t-s)F_n(s, X_n(s)) 
				\indic{[0,\tau_m]}(s) \rd s \\&+ 
				\int_0^t S_n(t-s) \indic{[0,\tau_m]}(s) \rd W_n(s).
			\end{align}
		\end{enumerate}
		We say that a local solution $(X_n(t))_{t\in[0,\tau)}$ 
		to~\eqref{eq:Langevin-equation-infinite-dim-A} is \emph{maximal} if for any 
		other 
		local solution $(\overline X_n(t))_{t \in [0, \overline\tau)}$ it holds 
		$\bbP$-a.s.\ that 
		$\overline \tau \le \tau$ and $X_n|_{[0,\overline\tau)} \equiv \overline 
		X_n$. It is called \emph{global} if $\tau = T$ holds $\bbP$-a.s.\ and there 
		exists a 
		solution $(\widehat X_n(t))_{t\in[0,T]}$ 
		to~\eqref{eq:Langevin-equation-infinite-dim-A} in the sense of 
		Definition~\ref{def:mildsol} such that $\widehat X_n|_{[0, \tau)} 
		\equiv X_n$, $\bbP$-a.s.
		The stopping time $\tau$ is called an 
		\emph{explosion time} if 
		\begin{equation}\label{eq:explosion-time}
			\limsup\nolimits_{t \uparrow \tau(\omega)}
			\norm{X_n(\omega, t)}{E_n} = \infty
			\quad 
			\text{for a.e.\ } \omega \in \Omega \text{ such that } 
			\tau(\omega) < T.
		\end{equation}
	\end{definition}
	The following local well-posedness result then follows from~\cite[Theorem~8.1]{vNVW2008}:
	\begin{theorem}[{\cite[Theorem~8.1]{vNVW2008}}]\label{thm:local-well-posedness-En}
		Suppose that Assumptions~\ref{ass:disc},~\ref{ass:operators} and~\ref{item:F-locLip-unifBdd} are satisfied, and let $n \in \clos\N$, $p \in [1, \infty)$, $\xi_n \in L^p(\Omega, \cF_0, \bbP; E_n)$ and $T \in (0,\infty)$ be given. 
		Then~\eqref{eq:Langevin-equation-infinite-dim-A} has a unique maximal local mild solution $(X_n(t))_{t \in [0,\sigma_n)}$ in $V^{\mathrm{loc}}([0, \sigma_n) \times \Omega; E_n)$, where $\sigma_n \from \Omega \to [0,T]$ is an explosion time. 
	\end{theorem}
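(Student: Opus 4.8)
The plan is to verify that our hypotheses place equation~\eqref{eq:Langevin-equation-infinite-dim-A} within the scope of the abstract local well-posedness theory of van Neerven, Veraar and Weis, so that \cite[Theorem~8.1]{vNVW2008} applies directly for each fixed $n \in \clos\N$. The statement is thus reduced to a verification of hypotheses, and no new estimates beyond those already obtained in Section~\ref{sec:OU-H} are needed.

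First I would record the structural requirements on the linear part and the noise. By Assumption~\ref{ass:disc}, the state space $E_n$ is a real, separable UMD-type-2 Banach space, and by Assumption~\ref{ass:operators} the operator $-A_n$ generates a bounded analytic $C_0$-semigroup $(S_n(t))_{t\ge0}$ on $E_n$. Since the noise is additive, with $W_n = \Pi_n W_\infty$ an $H_n$-valued $Q_n$-cylindrical Wiener process, the diffusion coefficient is the constant (state-independent) inclusion, whose $\gamma$-Lipschitz and $\gamma$-boundedness requirements in \cite{vNVW2008} hold trivially; what remains of the noise contribution is exactly the well-definedness and path-continuity of the stochastic convolution $W_{A_n}$. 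This is guaranteed by the fractional-power condition $R_n^\beta \in \gamma(H_n; E_n)$ with $\beta \in [0,\frac{1}{2})$ from Assumption~\ref{ass:operators}, which through Proposition~\ref{prop:existence-stoch-WAn} (and the factorization argument underlying Proposition~\ref{prop:d2c-pathwise-conv-Wdelta}) yields $W_{A_n} \in L^p(\Omega; C([0,T]; E_n))$ for every $p \in [1,\infty)$.

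Next I would check the conditions imposed on the drift. Assumption~\ref{item:F-locLip-unifBdd} states that $(\omega,t) \mapsto F_n(\omega,t,x_n)$ is strongly measurable and adapted for each $x_n \in E_n$, that $F_n(\omega,t,\,\cdot\,)$ is Lipschitz on balls of radius $r$ with a constant $L_F^{(r)}$ independent of $(\omega,t)$, and that $\norm{F_n(\omega,t,0)}{E_n} \le C_{F,0}$. These are precisely the local Lipschitz and local boundedness hypotheses on the drift required in the framework of \cite[Section~8]{vNVW2008}; in particular, combining the local Lipschitz estimate with the bound at the origin shows that $F_n$ maps bounded sets to bounded sets, so that the integrability of $s \mapsto S_n(t-s) F_n(s, X_n(s))\indic{[0,\tau_m]}(s)$ on each stopped interval demanded by Definition~\ref{def:mildsol-loc} holds. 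With the hypotheses matched, \cite[Theorem~8.1]{vNVW2008} furnishes a unique maximal local mild solution $(X_n(t))_{t\in[0,\sigma_n)}$ in $V^{\mathrm{loc}}([0,\sigma_n)\times\Omega; E_n)$ with $\sigma_n$ an explosion time in the sense of~\eqref{eq:explosion-time}.

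I expect the only genuine point of friction to be notational and definitional rather than mathematical: the abstract theorem in \cite{vNVW2008} is formulated with nonlinearities that are Lipschitz between fractional interpolation/extrapolation spaces and with a possibly state-dependent diffusion coefficient, so the main care lies in specializing those hypotheses to the present situation (zero interpolation exponent, additive noise, and drift $F_n \from E_n \to E_n$) and in confirming that the uniformity in $(\omega,t)$ built into Assumption~\ref{item:F-locLip-unifBdd} is exactly the form needed there. Since uniformity over $n$ plays no role here---the conclusion is asserted for each fixed $n$---this specialization is routine.
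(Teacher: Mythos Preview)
Your proposal is correct and matches the paper's treatment: the paper does not give a proof of this theorem at all, but simply states it as an immediate consequence of \cite[Theorem~8.1]{vNVW2008}. Your hypothesis-verification sketch is exactly the kind of argument one would supply if asked to elaborate, and the points you flag (additive noise trivializing the diffusion hypotheses, the drift conditions specializing with zero interpolation exponent, uniformity in $n$ being irrelevant here) are the right ones.
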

	
	Combined with the convergence assumptions~\ref{ass:IC} and~\ref{item:F-approx}, we can argue analogously to~\cite[Theorem~3.3 and~Corollary~3.4]{KvN2012} to derive the following extension of Theorem~\ref{thm:d2c-semilinear-global-linear} to the present setting.
	
	\begin{theorem}\label{thm:d2c-conv-locLip-unifBound}
		Suppose that Assumptions~\ref{ass:disc},~\ref{ass:operators},~\ref{ass:IC},~\ref{item:F-locLip-unifBdd}
		and~\ref{item:F-approx} are satisfied. 
		For $n \in \clos\N$, let $(X_n(t))_{t \in [0,\sigma_n)}$ be the maximal 
		local solution to~\eqref{eq:Langevin-equation-infinite-dim-A}
		with explosion time $\sigma_n \from \Omega \to [0,T]$, and set
		$\widetilde X_n \deq \Lambda_n X_n$.
		Then the following is true:
		\begin{enumerate}[series=d2c-conv-locLip-unifBound]
			\item\label{item:d2c-conv-locLip-unifBound-sigmaConv}%
			We have $\widetilde X_n \indic{[0, \sigma_\infty \wedge \sigma_n)} \to X_\infty \indic{[0,\sigma_\infty)}$ in $L^0(\Omega \times [0,T]; E_\infty)$ as $n \to \infty$.
		\end{enumerate}
		If, moreover, $\sigma_n = T$ holds
		$\bbP$-a.s.\ for all $n \in \N$ and $p \in [1,\infty)$ is such 
		that 
		\begin{equation}\label{eq:Xn-unifBdd-LpOmegaCHn}
			\sup_{n\in\N} \norm{X_n}{L^p(\Omega; C([0,T]; E_n))} < \infty, 
		\end{equation}
		then the following assertions also hold:
		\begin{enumerate}[resume=d2c-conv-locLip-unifBound]
			\item\label{item:d2c-conv-locLip-unifBound-globalInfty} We have 
			$\sigma_\infty = T$, $\bbP$-a.s.
			\item\label{item:d2c-conv-locLip-unifBound-convInCty} If $p \in 
			(1,\infty)$, then for all $p^- \in [1, p)$ we have 
			\begin{equation}
				\widetilde X_n \to X_\infty 
				\quad 
				\text{ in } 
				L^{p^-}(\Omega; C([0,T]; E_\infty))
				\text{ as } 
				n \to \infty.
			\end{equation}
		\end{enumerate}
	\end{theorem}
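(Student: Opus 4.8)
The plan is to adapt the truncation-and-localization strategy underlying \cite[Theorem~3.3 and Corollary~3.4]{KvN2012} to the present discrete-to-continuum framework, using the globally Lipschitz convergence result Theorem~\ref{thm:d2c-semilinear-global-linear} as the engine. The one genuinely new ingredient is that every truncation must be performed in the \emph{lifted} norm $\norm{\Lambda_n\,\cdot\,}{E_\infty}$ rather than in $\norm{\,\cdot\,}{E_n}$, so as to stay compatible with the projection and lifting operators. Concretely, for $R>0$ I would fix a $1$-Lipschitz cut-off $\chi_R\from[0,\infty)\to[0,1]$ with $\chi_R\equiv1$ on $[0,R]$ and $\chi_R\equiv0$ on $[2R,\infty)$, and set $F_n^{(R)}(\omega,t,x_n)\deq\chi_R(\norm{\Lambda_n x_n}{E_n})F_n(\omega,t,x_n)$. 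Since $\chi_R$ localizes to $\norm{\Lambda_n x_n}{E_\infty}\le 2R$, hence to $\norm{x_n}{E_n}=\norm{\Pi_n\Lambda_n x_n}{E_n}\le 2M_\Pi R$, a boundary-crossing argument shows that each $F_n^{(R)}$ is globally Lipschitz (with constant depending on $L_F^{(2M_\Pi R)}$, $C_{F,0}$, $M_\Lambda$, $M_\Pi$) and bounded, so that~\ref{item:F-globalLip-linearGrowth} holds uniformly in $n$. Crucially, the lifted-norm truncation gives $\widetilde F_n^{(R)}(\omega,t,x)=\chi_R(\norm{\Lambda_n\Pi_n x}{E_\infty})\widetilde F_n(\omega,t,x)$, and $\Lambda_n\Pi_n x\to x$ by Assumption~\ref{ass:disc3}\ref{ass:disc:approx-id3}, so~\ref{item:F-approx} is inherited as well. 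Thus Theorem~\ref{thm:d2c-semilinear-global-linear} applies to each fixed $R$: the truncated problems have global solutions $X_n^{(R)}$ with $\widetilde X_n^{(R)}\to X_\infty^{(R)}$ in $L^p(\Omega;C([0,T];E_\infty))$. Defining the exit times $\sigma_n^{(R)}\deq\inf\{t:\norm{\widetilde X_n^{(R)}(t)}{E_\infty}\ge R\}\wedge T$, uniqueness in Theorem~\ref{thm:local-well-posedness-En} yields $X_n=X_n^{(R)}$ on $[0,\sigma_n^{(R)})$, and $\sigma_n^{(R)}\uparrow\sigma_n$ as $R\to\infty$ (using $\norm{\widetilde X_n}{E_\infty}\ge\norm{X_n}{E_n}/M_\Pi$ to transfer explosion).

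For part~\ref{item:d2c-conv-locLip-unifBound-sigmaConv} I would decompose $\Omega\times[0,T]$ by the exit times. On $\{t<\sigma_n^{(R)}\wedge\sigma_\infty^{(R)}\}$ both $\widetilde X_n$ and $X_\infty$ coincide with their truncated versions, so there the difference equals $\widetilde X_n^{(R)}-X_\infty^{(R)}$, which tends to $0$ in $L^p(\Omega\times(0,T);E_\infty)$ and hence in measure. The remaining contribution to $\widetilde X_n\indic{[0,\sigma_\infty\wedge\sigma_n)}-X_\infty\indic{[0,\sigma_\infty)}$ is supported on the set where $\sigma_n^{(R)}\wedge\sigma_\infty^{(R)}\le t<\sigma_n\wedge\sigma_\infty$ together with the extra slice $[\sigma_n,\sigma_\infty)$ arising when $\sigma_n<\sigma_\infty$; its $\bbP\times\mathrm{Leb}$-measure is at most $\E[(\sigma_n\wedge\sigma_\infty)-(\sigma_n^{(R)}\wedge\sigma_\infty^{(R)})]$, which vanishes as $R\to\infty$ by monotone convergence of the exit times. \emph{The hard part will be} making this last step uniform enough to interchange the limits $R\to\infty$ and $n\to\infty$ in the absence of any a priori bound: I expect to handle it by fixing $R$, letting $n\to\infty$ first — using the $L^p$-convergence of the truncated paths and choosing $R$ outside the at most countable set of levels at which the occupation measure of $X_\infty^{(R)}$ charges the sphere of radius $R$, so that $\sigma_n^{(R)}\to\sigma_\infty^{(R)}$ in probability — and only afterwards letting $R\to\infty$.

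For parts~\ref{item:d2c-conv-locLip-unifBound-globalInfty} and~\ref{item:d2c-conv-locLip-unifBound-convInCty} I would now add the hypotheses $\sigma_n=T$ a.s.\ for all finite $n$ and the uniform bound~\eqref{eq:Xn-unifBdd-LpOmegaCHn}, so that $K\deq\sup_{n\in\N}\norm{X_n}{L^p(\Omega;C([0,T];E_n))}<\infty$ and $\sup_{n\in\N}\norm{\widetilde X_n}{L^p(\Omega;C([0,T];E_\infty))}\le M_\Lambda K$. Since $X_n=X_n^{(R)}$ up to the first hitting of level $R$, Chebyshev's inequality gives the uniform tail bound $\bbP(\sigma_n^{(R)}<T)=\bbP(\norm{\widetilde X_n}{C([0,T];E_\infty)}\ge R)\le(M_\Lambda K/R)^p$ for all $n\in\N$; passing to the limit along $\widetilde X_n^{(R)}\to X_\infty^{(R)}$ transfers this (up to a factor) to $n=\infty$, so that $\bbP(\sigma_\infty<T)\le\bbP(\sigma_\infty^{(R)}<T)\lesssim R^{-p}\to0$, which proves $\sigma_\infty=T$ a.s.\ and also shows $\norm{X_\infty}{C([0,T];E_\infty)}\in L^{p'}(\Omega)$ for every $p'<p$.

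Finally, for the $L^{p^-}$-convergence, I would fix $\epsilon,\eta>0$ and choose $R$ so large that $\bbP(\sigma_n^{(R)}<T)+\bbP(\sigma_\infty^{(R)}<T)<\eta$ uniformly in $n$; on the complementary event both $\widetilde X_n$ and $X_\infty$ agree with their truncated versions on all of $[0,T]$, and there $\norm{\widetilde X_n^{(R)}-X_\infty^{(R)}}{C([0,T];E_\infty)}\to0$ in probability by the first step. Hence $\norm{\widetilde X_n-X_\infty}{C([0,T];E_\infty)}\to0$ in probability. The uniform $L^p$-bound then renders the family $\{\norm{\widetilde X_n-X_\infty}{C([0,T];E_\infty)}^{p^-}\}_{n}$ uniformly integrable for any $p^-\in[1,p)$, so Vitali's convergence theorem upgrades convergence in probability to convergence in $L^{p^-}(\Omega)$, giving $\widetilde X_n\to X_\infty$ in $L^{p^-}(\Omega;C([0,T];E_\infty))$ as claimed.
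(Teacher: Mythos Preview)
Your proposal is correct and follows essentially the same strategy as the paper: truncate the drifts in the \emph{lifted} norm $\norm{\Lambda_n\,\cdot\,}{E_\infty}$, verify that the truncated family satisfies~\ref{item:F-globalLip-linearGrowth} and~\ref{item:F-approx} uniformly in $n$, apply Theorem~\ref{thm:d2c-semilinear-global-linear} to obtain $\widetilde X_n^{(R)}\to X_\infty^{(R)}$, and then run a localization argument via exit times. The only differences are cosmetic or organizational. First, the paper truncates via the radial retraction $R_r(x)=x\min\{1,r/\norm{x}{E_\infty}\}$ rather than a multiplicative cut-off $\chi_R$; both give the same localized equation inside the ball and yield the same Lipschitz and approximation properties. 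Second, and more substantially, where you re-derive the exit-time comparison and the upgrade from convergence in measure to $L^{p^-}$-convergence by hand (your Chebyshev/Vitali argument), the paper simply packages conditions~\ref{item:thm:approx-local-processes-a} and~\ref{item:thm:approx-local-processes-b} of Theorem~\ref{thm:approx-local-processes} (quoted from \cite[Theorem~2.1 and Corollary~2.5]{KvN2012}) and reads off parts~\ref{item:thm:approx-local-processes-iii}--\ref{item:thm:approx-local-processes-v} directly. Your ``hard part'' --- the convergence $\sigma_n^{(R)}\to\sigma_\infty^{(R)}$ for generic $R$ --- is precisely the content of part~\ref{item:thm:approx-local-processes-i} of that theorem, and your idea of avoiding levels $R$ charged by the distribution of $\sup_t\norm{X_\infty(t)}{E_\infty}$ is one standard way to prove it. So your route is more self-contained but longer; the paper's is shorter by outsourcing that step. (Minor typo: in your definition of $F_n^{(R)}$ you wrote $\norm{\Lambda_n x_n}{E_n}$ where you clearly meant $\norm{\Lambda_n x_n}{E_\infty}$.)
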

	Similarly to~\cite[Theorem~3.3 and~Corollary~3.4]{KvN2012}, the proof of Theorem~\ref{thm:d2c-conv-locLip-unifBound} relies on the following general approximation results for locally defined processes:
	\begin{theorem}[{\cite[Theorem~2.1 and Corollary~2.5]{KvN2012}}]\label{thm:approx-local-processes}
		Let $(E, \norm{\,\cdot\,}{E})$ be a real and separable Banach space and $T \in (0,\infty)$.
		For every $n \in \clos\N$, suppose that $(Y_n(t))_{t\in[0,\sigma_n)}$ is a continuous and adapted $E$-valued locally defined process with explosion time $\sigma_n \from \Omega \to (0, T]$, and define the stopping times $\rho_n^{(r)} \from \Omega \to [0,T]$ by
		\begin{equation}\label{eq:def-rho-n-r}
			\rho_n^{(r)} \deq 
			\inf \{ t \in (0, \sigma_n) : \norm{Y_n(t)}{E} > r 
			\}, 
			\quad 
			r \in (0,\infty),
		\end{equation}
		with the convention that $\inf \emptyset \deq T$. 
		Moreover, suppose that for each $r \in (0,\infty)$ there exists a (globally defined) continuous and adapted $E$-valued process $(Y_n^{(r)}(t))_{t\in[0,T]}$ which satisfies the following two conditions:
		\begin{enumerate}[label=\normalfont(\alph*)]
			\item\label{item:thm:approx-local-processes-a}%
			For all $n \in \clos \N$ and $r \in (0,\infty)$, it holds $\bbP$-a.s.\ that 
			\begin{equation}
				Y_n^{(r)} \indic{[0,\rho_n^{(r)}]}
				\equiv 
				Y_n \indic{[0,\rho_n^{(r)}]}
				\quad 
				\text{ on } 
				[0,T];
			\end{equation}
			\item\label{item:thm:approx-local-processes-b}%
			For all $r \in (0,\infty)$ we have 
			\begin{equation}
				Y_n^{(r)} \to Y_\infty^{(r)}
				\quad 
				\text{ in } 
				L^0(\Omega; C([0,T]; E))
				\text{ as } 
				n \to \infty.
			\end{equation}
		\end{enumerate}
		Then the following assertions hold:
		\begin{enumerate}[series=approx-local-processes-assertions]
			\item\label{item:thm:approx-local-processes-i}%
			For all $r \in (0,\infty)$ and $\varepsilon > 0$ it holds $\bbP$-a.s.\ that
			\begin{equation}
				\liminf_{n\to\infty} \rho_n^{(r)}
				\le 
				\rho_\infty^{(r)}
				\le 
				\limsup_{n\to\infty} \rho_n^{(r + \varepsilon)}.
			\end{equation}
			\item\label{item:thm:approx-local-processes-ii}%
			For all $r \in (0,\infty)$ and $\varepsilon > 0$, we have
			\begin{equation}
				Y_n \indic{[0, \rho_\infty^{(r)} \wedge \rho_n^{(r + \varepsilon)})} 
				\to Y_\infty \indic{[0, \rho_\infty^{(r)})}
				\quad 
				\text{ in }
				L^0(\Omega; B_{\mathrm b}([0,T]; E))
				\text{ as } 
				n \to \infty,
			\end{equation}
			where $B_{\mathrm b}([0,T]; E)$ denotes the space of bounded and strongly measurable functions from $[0,T]$ to $E$.
			\item\label{item:thm:approx-local-processes-iii}%
			We have
			\begin{equation}
				Y_n \indic{[0, \sigma_\infty \wedge \sigma_n)}
				\to 
				Y_\infty \indic{[0,\sigma_\infty)} \quad 
				\text{ in }
				L^0(\Omega \times [0,T]; E)
				\text{ as } 
				n \to \infty.
			\end{equation}
		\end{enumerate}
		If, in addition, we have $\bbP(\sigma_n = T) = 1$ for all $n \in \N$, and $p \in [1,\infty)$ is such that 
		\begin{equation}
			\sup_{n\in\N} \norm{Y_n}{L^p(\Omega; C([0,T]; E))} < \infty,
		\end{equation}
		then the following assertions also hold:
		\begin{enumerate}[resume=approx-local-processes-assertions]
			\item\label{item:thm:approx-local-processes-iv}%
			We have $\bbP(\sigma_\infty = T) = 1$ and $X_\infty \in L^q(\Omega; C([0,T]; E))$.
			\item\label{item:thm:approx-local-processes-v}%
			If $p \in (1,\infty)$, then for all $p^- \in [1, p)$ we have
			\begin{equation}
				Y_n \to Y_\infty 
				\quad 
				\text{in } L^{p^-}(\Omega; C([0,T]; E)) \text{ as } n \to \infty.
			\end{equation}
		\end{enumerate}
	\end{theorem}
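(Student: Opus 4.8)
The plan is to deduce all five assertions from a \emph{pathwise} analysis of the first-passage times~\eqref{eq:def-rho-n-r}, transported from the deterministic level to convergence in probability via the subsequence principle (a sequence converges in $L^0$ if and only if every subsequence admits a further subsequence converging $\bbP$-a.s.). Throughout I would therefore fix, for each relevant level, a subsequence along which the globally defined processes converge $\bbP$-a.s.\ in $C([0,T]; E)$---which is possible by hypothesis~\ref{item:thm:approx-local-processes-b}---and argue for fixed $\omega$. The central preliminary step is to read off the \emph{local} hitting times from the \emph{global} processes: since~\ref{item:thm:approx-local-processes-a} gives $Y_n^{(r)} \equiv Y_n$ on $[0,\rho_n^{(r)}]$ while $\norm{Y_n(t)}{E} \le r$ for $t < \rho_n^{(r)}$, one checks that $\rho_n^{(r)}$ coincides $\bbP$-a.s.\ with the first-passage time of $Y_n^{(r)}$ strictly above level $r$, with $\norm{Y_n^{(r)}(\rho_n^{(r)})}{E} = r$ whenever $\rho_n^{(r)} < T$. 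This is what makes $\rho_n^{(r)}$ a functional of a process that we actually know converges.

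With this reduction, assertion~\ref{item:thm:approx-local-processes-i} becomes a deterministic lemma on first-passage times of uniformly convergent continuous paths. If $g_n \to g$ uniformly on $[0,T]$, then the (strict) passage time of $g$ above $r$ is at most $\liminf_n$ of those of $g_n$, because a strict excess of $g$ over $r$ survives small perturbations; this yields $\liminf_n \rho_n^{(r)} \le \rho_\infty^{(r)}$. For the reverse bound one must compensate for the generic failure of upper semicontinuity of hitting times, and this is exactly the role of the buffer $\varepsilon$: if $Y_n^{(r+\varepsilon)}$ reaches the higher level $r+\varepsilon$ before some time, the uniform limit is forced to exceed $r$ strictly there, whereas on $[0,\rho_\infty^{(r)})$ the limit stays at norm $\le r$; this contradiction delivers $\rho_\infty^{(r)} \le \limsup_n \rho_n^{(r+\varepsilon)}$.

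Assertion~\ref{item:thm:approx-local-processes-ii} then follows by combining~\ref{item:thm:approx-local-processes-i} with~\ref{item:thm:approx-local-processes-b}. The key pathwise observation is that, by continuity, reaching level $r+\varepsilon$ takes \emph{strictly} longer than reaching level $r$, so along the a.s.-convergent subsequence one has $\liminf_n \rho_n^{(r+\varepsilon)} > \rho_\infty^{(r)}$; hence eventually $\rho_\infty^{(r)} \wedge \rho_n^{(r+\varepsilon)} = \rho_\infty^{(r)}$, the residual boundary interval is empty, and on $[0,\rho_\infty^{(r)})$ the processes $Y_n$, $Y_n^{(r+\varepsilon)}$, $Y_\infty$ all agree with their global counterparts, reducing the $B_{\mathrm b}$-difference to $\norm{Y_n^{(r+\varepsilon)} - Y_\infty^{(r+\varepsilon)}}{C([0,T];E)} \to 0$. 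For~\ref{item:thm:approx-local-processes-iii} I would send $r \to \infty$: by the explosion property~\eqref{eq:explosion-time} one has $\rho_n^{(r)} \uparrow \sigma_n$, and the discrepancy between $Y_n \indic{[0,\sigma_\infty \wedge \sigma_n)}$ and $Y_n \indic{[0,\rho_\infty^{(r)} \wedge \rho_n^{(r+\varepsilon)})}$ is supported on a time set whose product measure is controlled by tails such as $\bbP(\rho_\infty^{(r)} < \sigma_\infty)$; a diagonal argument (choose $r$ large, then $n$ large) gives convergence in $L^0(\Omega \times [0,T]; E)$.

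Under the additional global-existence and uniform-$L^p$ hypotheses, I expect~\ref{item:thm:approx-local-processes-iv} and~\ref{item:thm:approx-local-processes-v} to follow by soft arguments: lower semicontinuity of the norm under the convergence in measure from~\ref{item:thm:approx-local-processes-iii}, together with $\sup_n \norm{Y_n}{L^p(\Omega; C([0,T];E))} < \infty$, forces $Y_\infty \in L^p(\Omega; C([0,T];E))$, so $Y_\infty$ cannot explode and $\sigma_\infty = T$ $\bbP$-a.s.; then $\sigma_n = \sigma_\infty = T$ turns~\ref{item:thm:approx-local-processes-ii} (with $r$ large) into genuine convergence $Y_n \to Y_\infty$ in $L^0(\Omega; C([0,T];E))$, and the uniform $L^p$-bound renders $\{\norm{Y_n}{C([0,T];E)}^{p^-}\}_n$ uniformly integrable for every $p^- < p$, so Vitali's theorem upgrades this to $L^{p^-}$-convergence. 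The main obstacle is the pathwise first-passage analysis behind~\ref{item:thm:approx-local-processes-i} and~\ref{item:thm:approx-local-processes-ii}---in particular justifying the strict inequality $\liminf_n \rho_n^{(r+\varepsilon)} > \rho_\infty^{(r)}$ and carefully tracking which interval each process coincides with its global extension on---together with the repeated, routine passage between $\bbP$-a.s.\ subsequential statements and the convergence-in-probability conclusions.
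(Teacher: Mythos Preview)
The paper does not contain a proof of this theorem: it is stated purely as a citation of \cite[Theorem~2.1 and Corollary~2.5]{KvN2012}, and the text immediately moves on to the proof of Theorem~\ref{thm:d2c-conv-locLip-unifBound}. There is therefore no in-paper argument to compare your sketch against.

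That said, your outline is broadly in the spirit of the original argument in \cite{KvN2012}: reduce via subsequences to pathwise uniform convergence of the globally defined processes $Y_n^{(r)}$, analyse first-passage times deterministically, and for \ref{item:thm:approx-local-processes-iv}--\ref{item:thm:approx-local-processes-v} use Fatou plus uniform integrability. One point to be careful with: your preliminary claim that ``$\rho_n^{(r)}$ coincides $\bbP$-a.s.\ with the first-passage time of $Y_n^{(r)}$ strictly above level $r$'' is not quite correct as stated. Condition~\ref{item:thm:approx-local-processes-a} only forces $Y_n^{(r)} = Y_n$ on $[0,\rho_n^{(r)}]$; after $\rho_n^{(r)}$ the global process $Y_n^{(r)}$ need not immediately exceed $r$, so its own first-passage time above $r$ may be strictly larger than $\rho_n^{(r)}$. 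What you do get is the one-sided inequality (first passage of $Y_n^{(r)}$ above $r$ is $\ge \rho_n^{(r)}$), together with $\norm{Y_n^{(r)}}{E} \le r$ on $[0,\rho_n^{(r)}]$. This is enough to run the comparison arguments for~\ref{item:thm:approx-local-processes-i}--\ref{item:thm:approx-local-processes-ii}, but the bookkeeping is a little more delicate than your sketch suggests---in particular, the two inequalities in~\ref{item:thm:approx-local-processes-i} use $Y_n^{(r)}$ and $Y_n^{(r+\varepsilon)}$ in slightly asymmetric ways, and you should track explicitly which global process you are reading the hitting time from at each step.
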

	\begin{proof}[Proof of Theorem~\ref{thm:d2c-conv-locLip-unifBound}]
		For every $n \in \clos \N$ and $r \in (0,\infty)$, let us define the mapping
		$F_n^{(r)} \from \Omega \times [0,T] \times E_n \to E_n$ by
		\begin{equation}\label{eq:globLip-retract}
			F_n^{(r)}(\omega, t, x_n) \deq 
			\begin{cases}
				F_n(\omega, t, x_n), &\text{if } \norm{\Lambda_n x_n}{E_\infty} \le 
				r, \\
				F_n\Bigl(\omega, t, \frac{r x_n}{\norm{\Lambda_n x_n}{E_\infty}}\Bigr), 
				&\text{otherwise}.
			\end{cases}
		\end{equation}
		For any fixed $r > 0$, the sequence 
		$(F_n^{(r)})_{n\in \clos \N}$ satisfies 
		conditions~\ref{item:F-globalLip-linearGrowth} and~\ref{item:F-approx}.
		Indeed, to establish the former, we first note that $F_n^{(r)}$ can be 
		written as 
		\begin{equation}\label{eq:Fnr-identity}
			F_n^{(r)}(\omega, t, x_n) 
			=
			F_n(\omega, t, \Pi_n R_r(\Lambda_n x_n)),
		\end{equation}
		where $R_r \from E_\infty \to E_\infty$ denotes the canonical retraction of $E_\infty$ onto the closed ball around $0 \in E_\infty$ with radius $r$:
		\begin{equation}
			R_r(x) \deq \begin{cases}
				x, &\text{if } \norm{x}{E_\infty} \le r; \\
				\frac{r x}{\norm{x}{E_\infty}}, &\text{otherwise}.
			\end{cases}
		\end{equation}
		An elementary estimate shows that $R_r$ is Lipschitz with constant $2$.
		It follows that $(F_n^{(r)})_{n\in\clos \N}$ is uniformly 
		globally Lipschitz with constant $L_{F^{(r)}} \le L_F^{(r M_\Pi)} M_\Pi 
		M_\Lambda$, and thus of linear growth with uniform constant 
		$C_{F^{(r)}} \le \max\{ L_{F^{(r)}}, C_{F,0}\}$, so that it 
		satisfies~\ref{item:F-globalLip-linearGrowth}.
		
		In order to show that $(F_n^{(r)})_{n\in 
			\clos \N}$ satisfies~\ref{item:F-approx},
		first note that for every sequence $(y_n)_{n\in\N} \subseteq E_\infty$ converging to some $y$ 
		in~$E_\infty$, we have $\widetilde F_n(\omega, t, y_n) \to F_\infty(\omega, t, y)$.
		Indeed, by the triangle inequality it suffices to note that
		\begin{equation}
			\norm{\widetilde F_n(\omega, t, y_n) - 
				\widetilde F_n(\omega, t, y)}{E_\infty} \to 0
			\quad 
			\text{and}
			\quad
			\norm{\widetilde F_n(\omega, t, y) - F_\infty(\omega, t, y)}{E_\infty}
			\to 
			0  
		\end{equation}
		as $n \to \infty$, respectively because $(\widetilde 
		F_n)_{n\in\clos\N}$ is 
		uniformly locally 
		Lipschitz (with constants $L_{\widetilde F}^{(r)} \le M_\Pi M_\Lambda 
		L_F^{(r)}$) and since~\ref{item:F-approx} was assumed for $(F_n)_{n\in\N}$.
		Writing
		\begin{equation}
			\widetilde F_n^{(r)}(\omega, t, x)
			=
			\widetilde F_n(\omega, t, R_r(\Lambda_n \Pi_n x)),
		\end{equation}
		see~\eqref{eq:Fnr-identity}, we can apply the above observation to the 
		sequence $y_n \deq R_r(\Lambda_n \Pi_n x)$, which converges 
		to $y \deq R_r(x)$ in $E_\infty$ as $n \to \infty$ in view of 
		Assumption~\ref{ass:disc3}\ref{ass:disc:approx-id3} and the (Lipschitz) 
		continuity of $R_r$. Therefore, we find 
		$\widetilde F_n^{(r)}(\omega, t, x) \to F^{(r)}_\infty(\omega, t, x)$, thus 
		proving 
		the claim that~\ref{item:F-approx} holds 
		for~$(F_n^{(r)})_{n\in\clos\N}$ as well.
		
		For each $r > 0$, condition~\ref{item:F-globalLip-linearGrowth} for 
		$(F_n^{(r)})_{n\in\clos \N}$ yields the existence of a unique global 
		solution
		$(X_n^{(r)}(t))_{t\in[0,T]}$ to~\eqref{eq:Langevin-equation-infinite-dim-A} 
		with coefficients $(A_n, F_n^{(r)}, \xi_n)$.
		In order to establish statement~\ref{item:d2c-conv-locLip-unifBound-sigmaConv} of the present theorem, we will apply the corresponding parts~\ref{item:thm:approx-local-processes-i}--\ref{item:thm:approx-local-processes-iii} of Theorem~\ref{thm:approx-local-processes} to the processes $Y_n \deq \widetilde X_n$; hence we need to verify its conditions~\ref{item:thm:approx-local-processes-a} and~\ref{item:thm:approx-local-processes-b} for $(\widetilde X_n)_{n\in\clos\N}$.
		First note that we have
		$\rho_n^{(r)} \le \sigma_n$, where $\rho_n^{(r)}$ is defined by~\eqref{eq:def-rho-n-r}, and that the restrictions of
		$X_n^{(r)}$
		and $X_n$ to $[0, \rho_n^{(r)})$ are local solutions 
		to~\eqref{eq:Langevin-equation-infinite-dim-A} with coefficients 
		$(A_n, F_n^{(r)}, \xi_n)$ and $(A_n, F_n, \xi_n)$, respectively. Since it 
		holds $\bbP$-a.s.\ that
		$\norm{\Lambda_n X_n(t)}{E_\infty} \le r$ for $t \in [0, \rho_n^{(r)})$, 
		we find
		\[
		F_n(\,\cdot\, , X_n) \equiv F_n^{(r)}(\,\cdot\, , X_n)
		\quad 
		\text{ on } 
		[0,\rho_n^{(r)}), 
		\quad 
		\bbP\text{-a.s.},
		\]
		hence $(X_n(t))_{t\in[0,\rho_n^{(r)})}$ is 
		in fact also a local solution of the equation with coefficients
		$(A_n, F_n^{(r)}, \xi_n)$. Therefore, the local uniqueness 
		of~\eqref{eq:Langevin-equation-infinite-dim-A} 
		(cf.~\cite[Lemma~8.2]{vNVW2008}) implies that 
		$X_n^{(r)} \equiv X_n$ on $[0, \rho_n^{(r)}]$ holds $\bbP$-a.s., and 
		applying $\Lambda_n$ on both sides 
		verifies~\ref{item:thm:approx-local-processes-a}.
		Condition~\ref{item:thm:approx-local-processes-b} follows by applying Theorem~\ref{thm:d2c-semilinear-global-linear} to $(F_n^{(r)})_{n\in\clos \N}$, proving~\ref{item:d2c-conv-locLip-unifBound-sigmaConv}.
		
		Finally, since $(\Lambda_n)_{n\in\N}$ is uniformly bounded in view of 
		Assumption~\ref{ass:disc3}\ref{ass:disc:proj-emb-bounds3}, we see 
		that~\eqref{eq:Xn-unifBdd-LpOmegaCHn} implies that the conditions
		of Theorem~\ref{thm:approx-local-processes}\ref{item:thm:approx-local-processes-iv} and~\ref{item:thm:approx-local-processes-v} are satisfied, which directly yields
		the remaining assertions~\ref{item:d2c-conv-locLip-unifBound-globalInfty} 
		and~\ref{item:d2c-conv-locLip-unifBound-convInCty}.
	\end{proof}
	
	\section{Reaction--diffusion type equations}
	\label{sec:reaction-diffusion}
	
	In this section, we introduce another family of Banach spaces $(B_n)_{n\clos\N}$ such that each $B_n$ embeds into $E_n$ and $\widetilde B \supseteq B_\infty$ (along with other assumptions, given in Subsection~\ref{sec:d2c-Nemytskii}), and consider the $B_n$-valued counterparts of~\eqref{eq:Langevin-equation-infinite-dim-A}.
	The main purpose of this setting is to eventually specialize to the class of stochastic reaction--diffusion 
	type equations which are formally given, for any $n \in \clos \N$, by
	\begin{equation}\label{eq:reaction-diffusion}
		\left\lbrace
		\begin{aligned}
			\rd X_n(t, x) &= -A_n X_n(t, x)\rd t + f_n(t, X_n(t, x)) \rd t + \rd 
			W_n(t, x),
			\\
			X_n(0, x) &= \xi_n(x),
		\end{aligned}
		\right.
	\end{equation}
	where $t \in (0,T]$, $T \in (0,\infty)$, $x \in \cD_n \subseteq \R^d$ and $f_n \from \Omega \times [0,T] \times \R \to \R$ is a locally Lipschitz (real-valued) function. 
	This problem amounts to letting the drift $F_n$ in~\eqref{eq:Langevin-equation-infinite-dim-A} be a Nemytskii operator (also called a \emph{superposition operator}), i.e., defining it by
	\begin{equation}\label{eq:Nemytskii}
		[F_n(\omega, t, u)](x) \deq f_n(\omega, t, u(x)), 
		\quad 
		(\omega,t,x) \in \Omega \times [0,T] \times \cD_n,
	\end{equation}
	for a given function $u \from \cD_n \to \R$. However, in order for a Nemytskii operator $F_n$ to 
	inherit the local Lipschitz continuity from $f_n$, we cannot view it as acting on the UMD-type-2 Banach space $E_n = L^q(\cD_n)$ with $q \in [1,\infty)$. 
	In fact, by~\cite[Theorem~3.9]{AppellZabrejko1990}, the operator $u \mapsto F_n(\omega, t, u)$ defined by~\eqref{eq:Nemytskii} is weakly continuous on $L^q(\cD_n)$ (meaning that it maps weakly convergent sequences to weakly convergent sequences) if and only if it is affine in $u$, i.e., there exist coefficients $a_n(\omega, t), b_n(\omega, t) \in \R$ such that $[F_n(\omega, t, u)](x) = a_n(\omega, t) + b_n(\omega, t)u(x)$ for all $x \in \cD_n$.
	In particular, if $(F_n)_{n\in\clos\N}$ is a family of Nemytskii operators which is uniformly locally Lipschitz in the sense of~\ref{item:F-locLip-unifBdd} on the spaces $E_n = L^q(\cD_n)$, then it is in fact globally Lipschitz and of linear growth in the sense of~\ref{item:F-globalLip-linearGrowth}.
	
	Thus, we will instead view $F_n$ as an operator on $B_n \deq C(\clos{\cD_n})$ (which coincides with $B_n = L^\infty(\cD_n)$ if $\cD_n$ is discrete).
	This, in turn, poses a difficulty for stochastic evolution equations, since there is no theory for the stochastic integration of integrands taking their values in a space of continuous functions. 
	This is due to the poor geometric properties of $(C(\clos{\cD_n}), \norm{\,\cdot\,}{\infty})$ as a Banach space:
	The most general notion of stochastic integration in Banach spaces (see~\cite{vNVW2015}) requires at least the UMD property; such spaces are, in particular, reflexive~\cite[Theorem~4.3.3]{HvNVWVolumeI}, which $C(\clos{\cD_n})$ fails to be.
	
	One way to circumvent this issue is to proceed as in~\cite[Section~3.2]{KvN2012}; 
	namely, defining the fractional domain spaces
	\begin{equation}
		\dot E_n^\alpha \deq \dom{A_n^{\alpha/2}}, 
		\quad 
		\norm{x_n}{\dot E_n^\alpha} \deq \norm{(\id_n + A_n)^{\alpha/2} x_n}{E_n},
	\end{equation}
	and supposing that 
	$\dot E_n^{\theta} \emb C(\clos{\cD_n}) \emb L^q(\cD_n)$
	continuously and densely for some $\theta \in [0, 1)$,
	one can carry out the stochastic 
	integration in the space $\dot E_n^{\theta}$, while working with 
	$C(\clos{\cD_n})$-valued processes for the fixed-point arguments.
	
	In applications, we typically assume that $A_n$ is an (unbounded) linear 
	differential 
	operator on $L^q(\cD_n)$, where $q \in [2,\infty)$, augmented with some boundary conditions 
	(``b.c.'')\ such 
	that $\dot E^{\alpha}_n$ is the fractional Sobolev space 
	$W^{\alpha, q}_{\text{b.c.}}(\cD_n)$ of order $\alpha$. We 
	then suppose that $\theta$ is chosen large enough in 
	relation to the dimension $d$ that we have the continuous and dense Sobolev 
	embedding $W^{\alpha, q}_{\text{b.c.}}(\cD_n) \hookrightarrow  C_{\text{b.c.}}(\clos{\cD_n})$.
	
	In Section~\ref{sec:d2c-Nemytskii} we will specify the abstract 
	formulation of the setting outlined above, as well as some additional 
	uniformity conditions with respect to $n \in \clos\N$. 
	These will be used to, as a first step, derive $B_n$-valued counterparts to the $E_n$-valued discrete-to-continuum approximation results for globally Lipschitz drifts of linear growth from Subsection~\ref{sec:globalLip-linGrowth}; in Subsection~\ref{sec:locLip-B} we do the same for the $B_n$-valued setting with locally Lipschitz and locally bounded drifts.
	In the latter case, the solution are local in general, but in 
	Section~\ref{sec:global-wellposed-RDeq} we state an extra dissipativity 
	assumption on $F_n$ under which the existence of global solutions 
	to~\eqref{eq:reaction-diffusion} has been proven in~\cite[Section~4]{KvN2012}.
	These processes then also converge in an improved sense, and we can apply this to Section~\ref{sec:graph-WM}.
	\subsection{Setting and convergence for globally Lipschitz drifts}
	\label{sec:d2c-Nemytskii}
	We start by specifying the abstract setting for the treatment of 
	reaction--diffusion type equations which was outlined at the beginning of this 
	section. 
	That is, we complement the Hilbert spaces $(H_n)_{n\in\clos\N}$ and UMD-type-2 Banach spaces $(E_n)_{n\in\clos\N}$ from 
	Section~\ref{sec:semilinear-L2} with a sequence of real separable Banach spaces 
	$(B_n, \norm{\,\cdot\,}{B_n})_{n\in\clos\N}$, embedded continuously and densely 
	into $E_n$ for each $n \in \clos \N$.
	Moreover, we introduce the real Banach space $(\widetilde B, \norm{\,\cdot\,}{\widetilde B})$, containing $B_\infty$ as a closed subspace, and we suppose that all the $B_n$ are embedded by into $\widetilde B$ by the lifting operators $(\Lambda_n)_{n\in\N}$ from~\ref{ass:disc}. 
	The spaces $B_\infty \subseteq \widetilde B$ should respectively be thought of as $C(\clos \cD) \subseteq L^\infty(\cD)$. 
	More precisely, we will work with the following extensions of assumptions~\ref{ass:disc}--\ref{ass:A-conv}:
	\begin{enumerate}[label=(A{\arabic*}-B), leftmargin=1.4cm, series=ass-A-B]
		\item\label{ass:disc2}
		Assumption~\ref{ass:disc} holds, and Assumption~\ref{ass:disc3} is 
		satisfied for 
		$(B_n, \norm{\,\cdot\,}{B_n})_{n \in \clos \N}$ 
		and $(\widetilde B, \norm{\,\cdot\,}{B_\infty})$, with the same projection 
		and lifting 
		operators $(\Pi_n)_{n\in\clos\N}$, $(\Lambda_n)_{n\in\clos\N}$ 
		from~\ref{ass:disc}, for which we set
		$\widetilde M_{\Pi} \deq \sup_{n\in\N} 
		\norm{\Pi_n}{\LO(\widetilde B; B_n)}$
		and 
		$\widetilde M_{\Lambda} \deq \sup_{n\in\N} 
		\norm{\Lambda_n}{\LO(B_n; \widetilde B)}$.
		\item\label{ass:semigroup-B2}%
		Assumption~\ref{ass:operators} holds, the semigroup $(S_n(t))_{t\ge0} \subseteq \LO(E_n)$ leaves $B_n$ invariant for all $n \in \clos\N$, and its restriction $(S_n(t)|_{B_n})_{t\ge0}$ to $B_n$ is a strongly continuous semigroup in $\LO(B_n)$.
		Moreover, there exists an $\widetilde M_{S} \in [1,\infty)$ such that
		\begin{equation}\label{eq:unif-bdd-Sn-Bn}
			\norm{S_n(t)}{\LO(B_n)}
			\le 
			\widetilde M_{S}
			\quad 
			\text{for all }
			n \in \clos\N \text{ and }
			t \in [0,\infty).
		\end{equation}
		\item \label{ass:A-conv-B} Assumption~\ref{ass:A-conv} holds, and $\widetilde R_n x \to R_\infty x$ in 
		$\widetilde B$ as $n \to \infty$ for all $x\in B_\infty$.
	\end{enumerate}
	By~\cite[Chapter~II, Proposition~2.3]{EngelNagel2000}, assumptions~\ref{ass:disc2} and~\ref{ass:semigroup-B2} imply that the generator of $(S_n(t)|_{B_n})_{t\ge0}$ is the operator $-A_n \vert_{B_n} \from \dom{A_n \vert_{B_n}} \subseteq B_n \to B_n$ 
	defined by 
	\[
	-A_n \vert_{B_n} x_n \deq -A_n x_n
	\quad \text{on } 
	\dom{A_n \vert_{B_n}} \deq \{ x_n \in B_n \cap \dom{A_n} : A_n x_n \in 
	B_n\},
	\]
	which is known as \emph{the part of $-A_n$ in~$B_n$}.
	Therefore, by Theorem~\ref{thm:d2c-TKapprox}, assumption~\ref{ass:A-conv-B} implies $\widetilde S_n|_{B_n} \otimes x \to S|_{B_\infty} \otimes x$ in $C([0,T]; \widetilde B)$, as $n \to \infty$, 
	for all $x \in B_\infty$ and $T \in (0,\infty)$.
	
	The following \emph{uniform ultracontractivity} assumption is necessary in order to circumvent the aforementioned 
	problem 
	regarding stochastic integration in 
	arbitrary separable Banach spaces $B_n$. 
	It replaces assumption (A3) in~\cite{KvN2012}, which forces all the spaces $(\dot E_n^{\alpha})_{n \in \clos\N}$ to essentially be the same, which is not satisfied in applications such as discrete-to-continuum approximation, where each $\dot E_n^{\alpha}$ consists of functions defined on a different domain.
	\begin{enumerate}[label=(A{\arabic*}-B), resume=ass-A-B, leftmargin=1.35cm]
		\item\label{ass:emb-Htheta-B-H2}%
		There exist $\theta \in [0, 1)$ and $M_\theta \in [0,\infty)$ such that, for all $n \in \clos \N$, we have
		$\dot E^{\theta}_n \emb B_n \emb E_n$ continuously and densely, with 
		\begin{equation}
			\norm{S_n(t)x_n}{B_n} \le M_{\theta} t^{-\theta / 2} \norm{x_n}{E_n}
			\quad 
			\text{for all }
			x_n \in E_n \text{ and } t \in [0,\infty).
		\end{equation}
	\end{enumerate}
	The uniformity in $n$ of the constant $M_\theta$ enables us to prove the following discrete-to-continuum approximation result for the stochastic convolutions $(W_{A_n})_{n\in\clos\N}$ as $B_n$-valued processes (i.e., a $B_n$-valued counterpart to Proposition~\ref{prop:d2c-pathwise-conv-Wdelta}):
	\begin{proposition}\label{prop:d2c-pathwise-conv-Wdelta2-B}
		Let $p \in [1,\infty)$ and $T \in (0,\infty)$ be given, and suppose that Assumptions~\ref{ass:disc2}, \ref{ass:semigroup-B2} and~\ref{ass:emb-Htheta-B-H2} hold, with $\theta + 2\beta < 1$. 
		For every $n \in \clos\N$, there exists a modification of $W_{A_n}$ which belongs to $L^p(\Omega; C([0,T]; B_n))$, and we identify these modifications with the processes $(W_{A_n})_{n\in\clos\N}$ themselves. 
		
		Under the additional Assumption~\ref{ass:A-conv-B}, we have
		\begin{equation}
			\widetilde W_{A_n} \to W_{A_\infty}
			\quad \text{in } L^p(\Omega; C([0,T]; \widetilde B))
			\text{ as } n \to \infty.
		\end{equation}
	\end{proposition}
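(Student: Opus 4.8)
The plan is to follow the Da Prato--Kwapień--Zabczyk factorization argument used for the $E_n$-valued statement in Proposition~\ref{prop:d2c-pathwise-conv-Wdelta}, the only genuinely new ingredient being that the uniform ultracontractivity~\ref{ass:emb-Htheta-B-H2} is used to absorb the loss of regularity incurred when passing from $E_n$ to the smaller space $B_n$. Since $\theta + 2\beta < 1$, I first fix $\beta' \in (\beta, \tfrac{1}{2}(1-\theta))$, so that $s \deq \tfrac{1}{2} - \beta'$ satisfies $s - \tfrac{\theta}{2} > 0$; because $L^p(\Omega)$-norms decrease with $p$ (as $\bbP(\Omega) = 1$), I may also assume without loss of generality that $p$ is so large that $\tfrac{1}{p} < s - \tfrac{\theta}{2} = \tfrac{1}{2}(1-\theta) - \beta'$.

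For the existence of the $B_n$-valued modification, the auxiliary process $W^{1/2+\beta'}_{A_n}$ from~\eqref{eq:def-WAndelta} lies in $C([0,T]; L^p(\Omega; E_n))$ by the first part of Proposition~\ref{prop:d2c-meansquare-conv-Wdelta}; by Fubini this embeds into $L^p(\Omega; L^p(0,T; E_n))$, so for $\bbP$-a.e.\ $\omega$ the trajectory $W^{1/2+\beta'}_{A_n}(\omega,\,\cdot\,)$ belongs to $L^p(0,T; E_n)$. I then apply the fractional parabolic integral operator $\fI^{s}_{A_n}$: combining its kernel $(t-\sigma)^{s-1}$ with the ultracontractive bound $\norm{S_n(t-\sigma)x}{B_n} \le M_\theta (t-\sigma)^{-\theta/2}\norm{x}{E_n}$ from~\ref{ass:emb-Htheta-B-H2} produces an effective kernel singularity $(t-\sigma)^{s-1-\theta/2}$, which lies in $L^{p'}(0,T)$ precisely because $s - \tfrac{\theta}{2} > \tfrac{1}{p}$. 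Hence $\fI^s_{A_n}$ maps $L^p(0,T; E_n)$ boundedly into $C([0,T]; B_n)$---the $B_n$-valued analogue of Corollary~\ref{cor:unifbb-IAtilde}\ref{cor:unifbb-IAtilde:b}---and the factorization identity $W_{A_n} = \fI^s_{A_n} W^{1/2+\beta'}_{A_n}$, valid at the level of the $E_n$-valued stochastic integral by~\cite[Theorem~3.2]{Brzezniak1997}, exhibits $\fI^s_{A_n} W^{1/2+\beta'}_{A_n} \in L^p(\Omega; C([0,T]; B_n))$ as a modification of $W_{A_n}$.

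For the convergence under~\ref{ass:A-conv-B}, I identify each $\widetilde W_{A_n}$ with its factorized modification $\widetilde\fI^s_{A_n}\widetilde W^{1/2+\beta'}_{A_n}$, where $\widetilde\fI^s_{A_n} \deq \Lambda_n \fI^s_{A_n}\Pi_n$, and split
\[
\widetilde W_{A_n} - W_{A_\infty}
= \widetilde\fI^s_{A_n}\bigl(\widetilde W^{1/2+\beta'}_{A_n} - W^{1/2+\beta'}_{A_\infty}\bigr)
+ \bigl(\widetilde\fI^s_{A_n} - \fI^s_{A_\infty}\bigr) W^{1/2+\beta'}_{A_\infty}.
\]
The first term is controlled by the uniform (in $n$) boundedness of $\widetilde\fI^s_{A_n}$ as an operator from $L^p(0,T; E_\infty)$ into $C([0,T]; \widetilde B)$, together with the convergence $\widetilde W^{1/2+\beta'}_{A_n} \to W^{1/2+\beta'}_{A_\infty}$ in $C([0,T]; L^p(\Omega; E_\infty))$ from the second part of Proposition~\ref{prop:d2c-meansquare-conv-Wdelta} (which is where~\ref{ass:A-conv-B}, hence~\ref{ass:A-conv}, enters). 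For the second term I use the pathwise strong convergence $\widetilde\fI^s_{A_n} g \to \fI^s_{A_\infty} g$ in $C([0,T]; \widetilde B)$ for fixed $g \in L^p(0,T; E_\infty)$---the $B_n$-valued counterpart of Proposition~\ref{prop:strong-continuity-IAntilde}---applied to $g = W^{1/2+\beta'}_{A_\infty}(\omega,\,\cdot\,)$, and dominate by a constant (independent of $n$) multiple of $\norm{W^{1/2+\beta'}_{A_\infty}}{L^p(0,T;E_\infty)}$, which lies in $L^p(\Omega)$, so that the dominated convergence theorem closes the argument.

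The main obstacle is establishing the two $\widetilde B$-valued properties of the lifted fractional parabolic integral operators $\widetilde\fI^s_{A_n}$ invoked above: their uniform boundedness from $L^p(0,T; E_\infty)$ into $C([0,T]; \widetilde B)$, and their strong convergence to $\fI^s_{A_\infty}$. These are the $B_n$-analogues of the $E_n$-valued appendix results, and they rest on the uniform ultracontractivity~\ref{ass:emb-Htheta-B-H2}: the convolution kernel now carries the combined singularity $(t-\sigma)^{s-1-\theta/2}$, and the delicate point is that the constant $M_\theta$ is uniform in $n$, so that the bounds survive the lift by $(\Lambda_n, \Pi_n)$ and the passage to the limit $n \to \infty$. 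The condition $\theta + 2\beta < 1$ is exactly what guarantees that, after choosing $\beta' \in (\beta, \tfrac{1}{2}(1-\theta))$, this kernel remains integrable against $L^p$ data for $p$ large, so that the factorized process has continuous $B_n$-valued paths.
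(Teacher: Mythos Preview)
Your proposal is correct and follows essentially the same route as the paper: fix $\beta'\in(\beta,\tfrac12(1-\theta))$, take $p$ large so that $\tfrac1p < \tfrac12 - \beta' - \tfrac\theta2$, factorize $W_{A_n} = \fI^{1/2-\beta'}_{A_n} W^{1/2+\beta'}_{A_n}$, and then use the ultracontractive estimate~\ref{ass:emb-Htheta-B-H2} to obtain the $L^p(0,T;E_n)\to C([0,T];B_n)$ mapping property of $\fI^{1/2-\beta'}_{A_n}$. The ``$\widetilde B$-valued properties'' you flag as the main obstacle are precisely what the paper records as Corollary~\ref{cor:unifbb-IAtilde}\ref{cor:unifbb-IAtilde:c} and Proposition~\ref{prop:strong-continuity-IAntilde}\ref{prop:strong-continuity-IAntilde:b}, and your splitting and dominated-convergence argument for the two terms matches the paper's conclusion of \eqref{eq:conv-1sttermFact-CB}--\eqref{eq:conv-2ndtermFact-CB}.
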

	\begin{proof}
		Fix a $\beta' \in (\beta, \frac{1}{2})$ such that $\theta + 2\beta' < 1$, where $\beta \in [0, \frac{1}{2})$ is as in~\ref{ass:operators}.
		Without loss of generality, we may assume that $p \in (2, \infty)$ is so large 
		that, in fact, $\theta + 2\beta' < 1 - \frac{2}{p}$. 
		As in the proof of Proposition~\ref{prop:d2c-pathwise-conv-Wdelta}, we see that $W_{A_n}^{\frac{1}{2} + \beta'}(\omega, \,\cdot\,)$, where $W_{A_n}^{\frac{1}{2} + \beta'}$ is the auxiliary process defined by~\eqref{eq:def-WAndelta}, belongs to $L^p(0,T; E_n)$ for $\bbP$-a.e.\ $\omega \in \Omega$, hence $\fI_{A_n}^{\frac{1}{2} - \beta'}W_{A_n}^{\frac{1}{2} + \beta'}(\omega, \,\cdot\,) \in C([0,T]; B_n)$	by applying Proposition~\ref{prop:app-basicPropsIA}\ref{prop:app-basicPropsIA:Lp-CHdot} with $E \deq E_n$, $F \deq B_n$ and $\alpha \deq \theta$ from assumption~\ref{ass:emb-Htheta-B-H2}. 
		Moreover, one finds that the process $\fI_{A_n}^{\frac{1}{2} - \beta'}W_{A_n}^{\frac{1}{2} + \beta'}$ is a continuous modification of $W_{A_n}$, belonging to $L^p(\Omega; C([0,T]; B_n))$.
		It now suffices to establish the following, as $n \to \infty$:
		\begin{align}
			\norm{\widetilde{\mathfrak I}_{A_n}^{\frac{1}{2} - \kappa}
				(\widetilde W_{A_n}^{\frac{1}{2} + \kappa} - 
				W_{A_\infty}^{\frac{1}{2} + \kappa})}{L^p(\Omega; 
				C([0,T];\widetilde B))}
			&\to 0, \label{eq:conv-1sttermFact-CB}
			\\
			\norm{
				\widetilde{\mathfrak I}_{A_n}^{\frac{1}{2} - \kappa} 
				W_{A_\infty}^{\frac{1}{2} + \kappa} 
				- 
				\mathfrak I_{A_\infty}^{\frac{1}{2} - \kappa} 
				W_{A_\infty}^{\frac{1}{2} + 
					\kappa}
			}{L^p(\Omega; C([0,T];\widetilde B))}
			&\to 0. \label{eq:conv-2ndtermFact-CB}
		\end{align}
		These convergences follow by arguing as in the proof of 
		Proposition~\ref{prop:d2c-pathwise-conv-Wdelta}, where we now need 
		Corollary~\ref{cor:unifbb-IAtilde}\ref{cor:unifbb-IAtilde:c}
		and 
		Proposition~\ref{prop:strong-continuity-IAntilde}\ref{prop:strong-continuity-IAntilde:b}
		instead of Corollary~\ref{cor:unifbb-IAtilde}\ref{cor:unifbb-IAtilde:b}
		and 
		Proposition~\ref{prop:strong-continuity-IAntilde}\ref{prop:strong-continuity-IAntilde:a},
		respectively.
	\end{proof}
	For the initial data, we consider the following analog to~\ref{ass:IC}:
	\begin{enumerate}[label=(IC-B), leftmargin=1.3cm]
		\item\label{ass:IC-B}%
		There exists $p \in [1,\infty)$ such that $(\xi_n)_{n\in\clos\N} \in
		\prod_{n\in\clos\N} L^p(\Omega; B_n)$ and
		\begin{equation}
			\widetilde \xi_n \to \xi_\infty \quad 
			\text{in } L^p(\Omega; \widetilde B) \text{ as } n \to \infty.
		\end{equation}
	\end{enumerate}
	Finally, 
	regarding the drift coefficients $(F_n)_{n\in\clos\N}$, we now suppose that
	\begin{enumerate}[label=(F{\arabic*}-B), series=Fconditions-B, leftmargin=1.3cm]
		\item\label{item:F-globalLip-linearGrowth-B} 
		Assumption~\ref{item:F-globalLip-linearGrowth} holds with 
		$(B_n)_{n\in\clos\N}$ in place of $(E_n)_{n\in\clos\N}$, and 
		$B_n$-valued
		Lipschitz and growth constants respectively denoted by $\widetilde L_F$ 
		and $\widetilde C_F$.
		\item\label{item:F-approx-B} For almost every 
		$(\omega, t) \in \Omega \times [0,T]$ and every $x \in B_\infty$ 
		we have
		\begin{equation}
			\widetilde F_n(\omega, t, x) 
			\to 
			F_\infty(\omega, t, x)
			\quad 
			\text{ in } \widetilde B
			\text{ as } n \to \infty.
		\end{equation}
	\end{enumerate}
	Note that in the approximation assumptions~\ref{ass:A-conv-B} and~\ref{item:F-approx-B}, we only impose convergence for $x \in B_\infty \subseteq \widetilde B$, and similarly we only consider $\xi_\infty \in L^p(\Omega; B_\infty)$ in~\ref{ass:IC-B}.
	Recall that 
	this is sufficient since Theorem~\ref{thm:d2c-TKapprox}, on which the 
	approximation results ultimately rely, is formulated in this setting.
	
	Under~\ref{ass:disc2}, \ref{ass:semigroup-B2}, \ref{ass:emb-Htheta-B-H2} (with $\theta + 2\beta < 1$), \ref{ass:IC-B} and~\ref{item:F-globalLip-linearGrowth-B}, we can derive well-posedness of \emph{$B_n$-valued} global solutions to~\eqref{eq:Langevin-equation-infinite-dim-A}; these are defined by simply replacing the space $E_n$ by $B_n$ in Definition~\ref{def:mildsol}.
	To this end, we again investigate the fixed-point operators $\Phi_{n,T}$ defined by~\eqref{eq:fixed-point-operator}, viewing them now as acting on $L^p(\Omega; C([0,T]; B_n))$.
	We have the following analog to Proposition~\ref{prop:existence-uniqueness-globLip-linGrowth}:
	\begin{proposition}\label{prop:existence-uniqueness-globLip-linGrowth-B}
		Suppose that Assumptions~\ref{ass:disc2}, \ref{ass:semigroup-B2}, \ref{ass:emb-Htheta-B-H2} hold with $\theta + 2\beta < 1$, and~\ref{item:F-globalLip-linearGrowth-B} is satisfied. 	
		Let $n \in \clos\N$, $p \in [1, \infty)$, $\xi_n \in L^p(\Omega, \cF_0, \bbP; B_n)$ and $T \in (0,\infty)$ be given. 
		The operator $\Phi_{n,T}$ given by~\eqref{eq:fixed-point-operator} is well defined and Lipschitz continuous on $L^p(\Omega; C([0,T]; B_n))$. 
		Its Lipschitz constant is independent of $\xi_n$, depends on $A_n$ and $F_n$ only through $\widetilde M_S$ and $\widetilde L_F$, and tends to zero as $T \downarrow 0$.
	\end{proposition}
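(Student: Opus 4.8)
The plan is to transcribe the proof of Proposition~\ref{prop:existence-uniqueness-globLip-linGrowth} almost verbatim, replacing the state space $E_n$ by $B_n$, the semigroup bound $M_S$ by $\widetilde M_S$ from~\eqref{eq:unif-bdd-Sn-Bn}, and the drift constants $L_F, C_F$ by their $B_n$-valued counterparts $\widetilde L_F, \widetilde C_F$ from~\ref{item:F-globalLip-linearGrowth-B}. First I would check that each of the three terms of $\Phi_{n,T}$ in~\eqref{eq:fixed-point-operator} lands in $L^p(\Omega; C([0,T]; B_n))$. The orbit $S_n \otimes \xi_n$ does so because, by~\ref{ass:semigroup-B2}, the restriction $(S_n(t)|_{B_n})_{t\ge0}$ is a strongly continuous semigroup on $B_n$ uniformly bounded by $\widetilde M_S$, while $\xi_n \in L^p(\Omega, \cF_0, \bbP; B_n)$ by hypothesis. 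The stochastic convolution $W_{A_n}$ belongs to $L^p(\Omega; C([0,T]; B_n))$ by the first part of Proposition~\ref{prop:d2c-pathwise-conv-Wdelta2-B}, whose hypotheses (Assumptions~\ref{ass:disc2}, \ref{ass:semigroup-B2} and~\ref{ass:emb-Htheta-B-H2} with $\theta + 2\beta < 1$) are exactly those assumed here; this is the only place where the uniform ultracontractivity condition~\ref{ass:emb-Htheta-B-H2} enters.

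For the deterministic convolution term, given $u_n \in L^p(\Omega; C([0,T]; B_n))$, the uniform linear growth in~\ref{item:F-globalLip-linearGrowth-B} gives $\| s \mapsto F_n(s, u_n(s)) \|_{L^\infty(0,T; B_n)} \le \widetilde C_F (1 + \| u_n \|_{C([0,T]; B_n)})$ pathwise in $\omega$. Since $S_n * f = \mathfrak I_{A_n}^1 f$ reduces to plain convolution with the uniformly bounded, strongly continuous semigroup $(S_n(t)|_{B_n})_{t\ge0}$ when $\alpha = 0$ and $s = 1$ (so that no analyticity on $B_n$ is needed), I would either invoke Proposition~\ref{prop:app-basicPropsIA}\ref{prop:app-basicPropsIA:Lp-CHdot} with $E = F = B_n$, $\alpha = 0$ and $s = 1$, or argue directly from the elementary bound $\| \int_0^t S_n(t-s) f(s) \rd s \|_{B_n} \le \widetilde M_S T \| f \|_{L^\infty(0,T; B_n)}$ together with the standard path-continuity of $t \mapsto (S_n * f)(t)$ for strongly continuous semigroups. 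Either way, $S_n * F_n(\,\cdot\,, u_n) \in L^p(\Omega; C([0,T]; B_n))$ with norm at most $\widetilde M_S T \widetilde C_F (1 + \| u_n \|_{L^p(\Omega; C([0,T]; B_n))})$, and summing the three contributions shows that $\Phi_{n,T}$ is well defined.

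For the Lipschitz estimate I would subtract; the orbit and stochastic convolution cancel, leaving $[\Phi_{n,T}(u_n)](t) - [\Phi_{n,T}(v_n)](t) = \int_0^t S_n(t-s) [F_n(s, u_n(s)) - F_n(s, v_n(s))] \rd s$. Applying the uniform bound $\widetilde M_S$ and the global Lipschitz constant $\widetilde L_F$ pathwise, then taking $L^p(\Omega)$-norms, yields $\| \Phi_{n,T}(u_n) - \Phi_{n,T}(v_n) \|_{L^p(\Omega; C([0,T]; B_n))} \le \widetilde M_S \widetilde L_F T \, \| u_n - v_n \|_{L^p(\Omega; C([0,T]; B_n))}$. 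The constant $\widetilde M_S \widetilde L_F T$ is manifestly independent of $\xi_n$, depends on $A_n$ and $F_n$ only through $\widetilde M_S$ and $\widetilde L_F$, and tends to zero as $T \downarrow 0$, establishing the remaining claims.

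I do not anticipate a genuine obstacle, as the argument is a direct adaptation of the $E_n$-valued case; the only substantive input beyond that proof is the $B_n$-valuedness and path-continuity of $W_{A_n}$, which is already secured by Proposition~\ref{prop:d2c-pathwise-conv-Wdelta2-B}. The single point that warrants a little care is confirming that the drift convolution estimate does \emph{not} covertly rely on analyticity of the semigroup on $B_n$ (which is not among the assumptions); this is why I would favour the elementary $\alpha = 0$ estimate over the appendix citation, relying solely on the strong continuity and uniform boundedness granted by~\ref{ass:semigroup-B2}.
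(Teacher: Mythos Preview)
Your proposal is correct and follows essentially the same route as the paper's own proof: handle the three terms of $\Phi_{n,T}$ via~\ref{ass:semigroup-B2}, Proposition~\ref{prop:d2c-pathwise-conv-Wdelta2-B}, and Proposition~\ref{prop:app-basicPropsIA}\ref{prop:app-basicPropsIA:Lp-CHdot} with $E=F=B_n$, $\alpha=0$, $s=1$, then obtain the Lipschitz constant $\widetilde M_S \widetilde L_F T$ by the same subtraction argument. Your caution about analyticity on $B_n$ is well placed but unnecessary here, since Proposition~\ref{prop:app-basicPropsIA} only assumes a strongly continuous semigroup, and with $\alpha=0$ condition~\eqref{eq:app-assumption-analyticEst} reduces to the uniform bound~\eqref{eq:unif-bdd-Sn-Bn}.
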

	\begin{proof}
		The fact that $S_n \otimes \xi_n \in L^p(\Omega; C([0,T]; B_n))$ is immediate from Assumptions~\ref{ass:disc2}, \ref{ass:semigroup-B2} and $\xi_n \in L^p(\Omega, \cF_0, \bbP; E_n)$.
		By the first part of Proposition~\ref{prop:d2c-pathwise-conv-Wdelta2-B} (which also uses~\ref{ass:emb-Htheta-B-H2}), we find $W_{A_n} \in L^p(\Omega; C([0,T]; B_n))$.
		By~\ref{item:F-globalLip-linearGrowth-B} and Proposition~\ref{prop:app-basicPropsIA}\ref{prop:app-basicPropsIA:Lp-CHdot} (with $E = F = B_n$, $\alpha = 0$ and $s = 1$), we have $S_n * F_n(\,\cdot\,, u_n) \in L^p(\Omega; C([0,T]; B_n))$ for all $u_n \in L^p(\Omega; C([0,T]; B_n))$.
		This shows that $\Phi_{n,T}$ is well-defined.
		A straightforward estimate involving Assumptions~\ref{ass:disc2},~\ref{ass:semigroup-B2} and~\ref{item:F-globalLip-linearGrowth-B} yields, for all $u_n, v_n \in L^p(\Omega; C([0,T]; B_n))$,
		\[
		\norm{\Phi_{n,T}(u_n) - \Phi_{n,T}(v_n)}{L^p(\Omega; C([0,T]; B_n))}
		\le \widetilde M_S \widetilde L_F T \, \norm{u_n - v_n}{L^p(\Omega; C([0,T]; B_n))}. \qedhere
		\]
	\end{proof}
	
	Consequently, the proof of the following global well-posedness result is entirely analogous to that of Proposition~\ref{prop:global-well-posedness-En}:
	\begin{proposition}\label{prop:global-well-posedness-Bn}
		Suppose that Assumptions~\ref{ass:disc2}, \ref{ass:semigroup-B2}, \ref{ass:emb-Htheta-B-H2} hold with $\theta + 2\beta < 1$, and~\ref{item:F-globalLip-linearGrowth-B} is satisfied.
		Let $n \in \clos\N$, $p \in [1, \infty)$, $\xi_n \in L^p(\Omega, \cF_0, \bbP; E_n)$ and $T \in (0,\infty)$ be given.
		Then~\eqref{eq:Langevin-equation-infinite-dim-A} has a unique global mild solution $X_n$ in $L^p(\Omega; C([0,T]; B_n))$.
	\end{proposition}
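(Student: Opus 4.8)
The plan is to mirror the proof of Proposition~\ref{prop:global-well-posedness-En}, replacing the $E_n$-valued contraction estimate of Proposition~\ref{prop:existence-uniqueness-globLip-linGrowth} by its $B_n$-valued counterpart Proposition~\ref{prop:existence-uniqueness-globLip-linGrowth-B}. The essential structure is a local-existence-plus-patching argument. First I would invoke Proposition~\ref{prop:existence-uniqueness-globLip-linGrowth-B} to obtain a $T_0 \in (0,\infty)$ small enough that $\Phi_{n,T_0}$, viewed as a map on $L^p(\Omega; C([0,T_0]; B_n))$, is a strict contraction; the Banach fixed-point theorem then yields a unique fixed point $X_n \in L^p(\Omega; C([0,T_0]; B_n))$, which is by definition the unique $B_n$-valued global mild solution on $[0,T_0]$. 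Here it is crucial that the $B_n$-valued stochastic convolution $W_{A_n}$ appearing in~\eqref{eq:fixed-point-operator} is a well-defined element of $L^p(\Omega; C([0,T_0]; B_n))$, which is precisely guaranteed by the first part of Proposition~\ref{prop:d2c-pathwise-conv-Wdelta2-B} under Assumptions~\ref{ass:disc2}, \ref{ass:semigroup-B2} and~\ref{ass:emb-Htheta-B-H2} with $\theta + 2\beta < 1$.

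The decisive feature enabling the extension to arbitrary $T$ is that, by Proposition~\ref{prop:existence-uniqueness-globLip-linGrowth-B}, the bound on the Lipschitz constant of $\Phi_{n,T}$ is independent of the initial datum $\xi_n$ and depends on the data only through $\widetilde M_S$ and $\widetilde L_F$ (and on $T$, vanishing as $T \downarrow 0$). Consequently the \emph{same} $T_0$ works for every admissible choice of initial datum. I would therefore restart the equation at time $\tfrac{1}{2}T_0$ with shifted coefficients: initial datum $\eta_n \deq X_n(\tfrac{1}{2}T_0) \in L^p(\Omega, \cF_{T_0/2}, \bbP; B_n)$, drift $G_n(\,\cdot\,, u_n) \deq F_n(\,\cdot\, + \tfrac{1}{2}T_0, u_n)$, and shifted noise $\widehat W_n \deq W_n(\,\cdot\, + \tfrac{1}{2}T_0)$, which is again a $Q_n$-cylindrical Wiener process relative to the shifted filtration. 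Applying the local result once more produces a unique solution $Y_n$ on $[0,T_0]$ for this shifted problem, and a direct verification against (the $B_n$-valued version of) Definition~\ref{def:mildsol} shows that the concatenation of $X_n$ and $Y_n(\,\cdot\, - \tfrac{1}{2}T_0)$ is the unique mild solution of the original problem on $[0,\tfrac{3}{2}T_0]$.

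Iterating this step, one obtains a unique global mild solution on each interval $[0, (k + \tfrac{1}{2})T_0]$ with $k \in \N$, hence on $[0,T]$ for the given finite horizon $T$. I expect no genuine obstacle here beyond the bookkeeping of the concatenation and the measurability/adaptedness of $\eta_n$; the only point that differs substantively from the $E_n$-valued case is that the availability of the contraction estimate itself now rests on the uniform ultracontractivity assumption~\ref{ass:emb-Htheta-B-H2} (through Proposition~\ref{prop:existence-uniqueness-globLip-linGrowth-B} and the regularity of $W_{A_n}$ in $B_n$), rather than merely on Assumptions~\ref{ass:disc}--\ref{ass:operators}. Once that input is in place, the patching argument is verbatim the one in the proof of Proposition~\ref{prop:global-well-posedness-En}, so I would simply state that it carries over and refer back to that proof for the details.
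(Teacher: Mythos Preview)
Your proposal is correct and follows exactly the approach the paper takes: the paper simply states that the proof is ``entirely analogous to that of Proposition~\ref{prop:global-well-posedness-En}'', i.e., the local-existence-plus-patching argument you describe, with Proposition~\ref{prop:existence-uniqueness-globLip-linGrowth-B} supplying the $B_n$-valued contraction estimate in place of Proposition~\ref{prop:existence-uniqueness-globLip-linGrowth}.
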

	
	Under the additional convergence assumptions~\ref{ass:A-conv-B}, \ref{ass:IC-B} and~\ref{item:F-approx-B}, we can again set out to prove discrete-to-continuum convergence of $B_n$-valued global mild solutions to~\eqref{eq:Langevin-equation-infinite-dim-A} by showing that all the expressions appearing in the fixed point maps $\widetilde \Phi_{n,T}$ from~\eqref{eq:fixed-point-operator} are continuous as mappings on $L^p(\Omega; C([0,T]; \widetilde B))$.
	For the first term, the following can be proven 
	exactly in the same way as Lemma~\ref{lem:ICterm-convs}:
	\begin{lemma}\label{lem:ICterm-convs-B}
		If Assumptions~\ref{ass:disc2}--\ref{ass:A-conv-B}
		and~\ref{ass:IC-B} are satisfied,
		then we have 
		$\widetilde S_n \otimes \widetilde \xi_n
		\to S_\infty \otimes \xi_\infty$ in $L^p(\Omega; C([0,T]; \widetilde B))$ 
		as $n \to 
		\infty.$
	\end{lemma}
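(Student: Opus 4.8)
The plan is to follow the proof of Lemma~\ref{lem:ICterm-convs} line by line, substituting the $B_n$-structure for the $E_n$-structure throughout. First I would verify that the lifted process actually lies in the target space. Using Assumptions~\ref{ass:disc2} and~\ref{ass:semigroup-B2} together with $\xi_n \in L^p(\Omega, \cF_0, \bbP; B_n)$ from~\ref{ass:IC-B}, the argument at the start of the proof of Proposition~\ref{prop:existence-uniqueness-globLip-linGrowth-B} gives $S_n \otimes \xi_n \in L^p(\Omega; C([0,T]; B_n))$ for every $n \in \clos\N$; applying $\Lambda_n$ and recalling $\Pi_n \Lambda_n = \id_{E_n}$ (so that $\widetilde S_n(t) \widetilde \xi_n = \Lambda_n S_n(t)\xi_n$) yields $\widetilde S_n \otimes \widetilde \xi_n \in L^p(\Omega; C([0,T]; \widetilde B))$. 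I would then split by the triangle inequality as
\[
\norm{\widetilde S_n \otimes \widetilde \xi_n - S_\infty \otimes \xi_\infty}{L^p(\Omega; C([0,T]; \widetilde B))} \le \norm{\widetilde S_n \otimes (\widetilde \xi_n - \xi_\infty)}{L^p(\Omega; C([0,T]; \widetilde B))} + \norm{(\widetilde S_n - S_\infty) \otimes \xi_\infty}{L^p(\Omega; C([0,T]; \widetilde B))}.
\]

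For the first term, the uniform bound $\norm{\widetilde S_n(t)}{\LO(\widetilde B)} \le \widetilde M_\Lambda \widetilde M_S \widetilde M_\Pi$ coming from~\ref{ass:disc2} and~\ref{ass:semigroup-B2} reduces it to $\widetilde M_\Lambda \widetilde M_S \widetilde M_\Pi \norm{\widetilde \xi_n - \xi_\infty}{L^p(\Omega; \widetilde B)}$, which tends to zero by~\ref{ass:IC-B}. For the second term, the essential input is the $B_n$-valued Trotter--Kato consequence of~\ref{ass:A-conv-B} recorded immediately after that assumption: for every $x \in B_\infty$ one has $\widetilde S_n|_{B_n} \otimes x \to S|_{B_\infty} \otimes x$ in $C([0,T]; \widetilde B)$. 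Applied pointwise in $\omega$ to $x = \xi_\infty(\omega) \in B_\infty$, this gives $\widetilde S_n \otimes \xi_\infty(\omega) \to S_\infty \otimes \xi_\infty(\omega)$ in $C([0,T]; \widetilde B)$ for $\bbP$-a.e.\ $\omega$. Since moreover $\norm{(\widetilde S_n - S_\infty) \otimes \xi_\infty(\omega)}{C([0,T]; \widetilde B)} \le \widetilde M_S(\widetilde M_\Lambda \widetilde M_\Pi + 1)\norm{\xi_\infty(\omega)}{\widetilde B}$, with the right-hand side in $L^p(\Omega)$ because $\xi_\infty \in L^p(\Omega; B_\infty) \hookrightarrow L^p(\Omega; \widetilde B)$ by~\ref{ass:IC-B}, dominated convergence forces the second term to zero as well.

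The computation is entirely routine; the only point requiring care is that the pointwise convergence must be invoked in the $B_n$-valued form of Trotter--Kato rather than the $E_n$-valued Theorem~\ref{thm:d2c-TKapprox} directly, which is exactly why~\ref{ass:A-conv-B} (and the identification of $-A_n|_{B_n}$ as the part of $-A_n$ in $B_n$, via~\cite[Chapter~II, Proposition~2.3]{EngelNagel2000}) was built into the hypotheses. I do not anticipate any genuine obstacle beyond this bookkeeping, so the proof should indeed reduce to ``exactly the same argument as Lemma~\ref{lem:ICterm-convs}'' with the constants $M_S, M_\Lambda, M_\Pi$ replaced by $\widetilde M_S, \widetilde M_\Lambda, \widetilde M_\Pi$ and $E_\infty$ replaced by $\widetilde B$.
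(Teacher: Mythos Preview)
Your proposal is correct and takes essentially the same approach as the paper, which simply states that Lemma~\ref{lem:ICterm-convs-B} ``can be proven exactly in the same way as Lemma~\ref{lem:ICterm-convs}.'' You have faithfully carried out that substitution, correctly identifying that the $B_n$-valued Trotter--Kato consequence of~\ref{ass:A-conv-B} is what replaces the appeal to Theorem~\ref{thm:d2c-TKapprox} in the second term.
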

	The following is an analog to Lemma~\ref{lem:deterministic-convs}:
	\begin{lemma}\label{lem:deterministic-convs-B}
		Suppose that 
		Assumptions~\ref{ass:disc2}--\ref{ass:A-conv-B},~\ref{item:F-globalLip-linearGrowth-B}
		and~\ref{item:F-approx-B} are satisfied. Let $p \in [1,\infty)$ and $u \in 
		L^p(\Omega; C([0,T]; B_\infty))$ be given.	Then we have
		\begin{equation}
			\widetilde S_n * \widetilde F_n(\,\cdot\,, u)
			\to 
			S_\infty * F_\infty(\,\cdot\,, u )
			\quad 
			\text{in } L^p(\Omega; C([0,T]; \widetilde B))
			\text{ as } n \to \infty.
		\end{equation}
	\end{lemma}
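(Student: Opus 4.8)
The plan is to mimic the proof of Lemma~\ref{lem:deterministic-convs} essentially line by line, working in the $\widetilde B$-valued setting and replacing the auxiliary results for the lifted fractional parabolic integration operators by their $B$-variants. Recalling the identity $\widetilde S_n * g = \widetilde{\fI}_{A_n}^1 g$ for $\widetilde B$-valued $g$ (where $\widetilde{\fI}_{A_n}^1 \deq \Lambda_n \fI_{A_n}^1 \Pi_n$), I would first note that $\widetilde S_n * \widetilde F_n(\,\cdot\,, u) \in L^p(\Omega; C([0,T]; \widetilde B))$ for all $n \in \clos\N$, which follows from Proposition~\ref{prop:app-basicPropsIA}\ref{prop:app-basicPropsIA:Lp-CHdot} (applied with $E = F = B_n$, $\alpha = 0$, $s = 1$) together with~\ref{ass:disc2}, \ref{ass:semigroup-B2} and the linear growth bound in~\ref{item:F-globalLip-linearGrowth-B}. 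By the triangle inequality it then suffices to prove, as $n \to \infty$,
\begin{enumerate}
  \item $\widetilde S_n * \bigl(\widetilde F_n(\,\cdot\,, u) - F_\infty(\,\cdot\,, u)\bigr) \to 0$, and
  \item $\widetilde S_n * F_\infty(\,\cdot\,, u) \to S_\infty * F_\infty(\,\cdot\,, u)$,
\end{enumerate}
both in $L^p(\Omega; C([0,T]; \widetilde B))$.

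For~(i) I would establish the uniform-in-$n$ domination
\begin{equation}
  \norm{\widetilde F_n(\omega, t, u(\omega,t)) - F_\infty(\omega, t, u(\omega,t))}{\widetilde B} \le \widetilde C_F\bigl(\widetilde M_\Lambda + 1 + (\widetilde M_\Pi \widetilde M_\Lambda + 1)\norm{u(\omega,t)}{B_\infty}\bigr),
\end{equation}
obtained exactly as~\eqref{eq:Ftilde-n-unifBdd} from the growth half of~\ref{item:F-globalLip-linearGrowth-B} and the uniform bounds $\widetilde M_\Pi, \widetilde M_\Lambda$ from~\ref{ass:disc2}; since $u \in L^p(\Omega; C([0,T]; B_\infty))$, the right-hand side lies in $L^p(\Omega \times (0,T))$. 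The uniform boundedness of $(\widetilde{\fI}_{A_n}^1)_{n\in\clos\N}$ in $\LO(L^p(0,T; \widetilde B); C([0,T]; \widetilde B))$, asserted in Corollary~\ref{cor:unifbb-IAtilde}\ref{cor:unifbb-IAtilde:c}, bounds the $L^p(\Omega; C([0,T]; \widetilde B))$-norm of the convolution by a constant times $\norm{\widetilde F_n(\,\cdot\,, u) - F_\infty(\,\cdot\,, u)}{L^p(\Omega \times (0,T); \widetilde B)}$, and the latter tends to $0$ by the dominated convergence theorem, using the pointwise convergence $\widetilde F_n(\omega,t,x) \to F_\infty(\omega,t,x)$ from~\ref{item:F-approx-B}.

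For~(ii) I would argue $\omega$-wise. For $\bbP$-a.e.\ $\omega$ the map $t \mapsto F_\infty(\omega, t, u(\omega,t))$ takes values in $B_\infty$ and lies in $L^p(0,T; B_\infty)$, so Proposition~\ref{prop:strong-continuity-IAntilde}\ref{prop:strong-continuity-IAntilde:b} (which is where~\ref{ass:A-conv-B} enters) yields $\widetilde S_n * F_\infty(\omega, \,\cdot\,, u(\omega, \,\cdot\,)) \to S_\infty * F_\infty(\omega, \,\cdot\,, u(\omega, \,\cdot\,))$ in $C([0,T]; \widetilde B)$. Dominating by the uniform bound from Corollary~\ref{cor:unifbb-IAtilde}\ref{cor:unifbb-IAtilde:c} and applying dominated convergence over $\Omega$ completes~(ii).

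The only ingredients not already contained in the proof of Lemma~\ref{lem:deterministic-convs} are the $\widetilde B$-valued strong-continuity and uniform-boundedness statements for $\widetilde{\fI}_{A_n}^1$ --- precisely the parts~\ref{prop:strong-continuity-IAntilde:b} and~\ref{cor:unifbb-IAtilde:c} already invoked in Proposition~\ref{prop:d2c-pathwise-conv-Wdelta2-B} --- so I anticipate no real obstacle; the only point requiring care is to verify that each object involved, in particular the difference $\widetilde F_n(\,\cdot\,, u) - F_\infty(\,\cdot\,, u)$, genuinely takes values in $\widetilde B$, with $B_\infty$ sitting inside $\widetilde B$ as a closed subspace.
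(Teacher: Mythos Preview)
Your overall plan is the paper's plan: split into (i) and (ii), handle (i) by dominated convergence in $L^p(\Omega\times(0,T);\widetilde B)$ together with a uniform $L^p\to C$ bound on $\widetilde{\fI}_{A_n}^1$, and handle (ii) $\omega$-wise via strong convergence of $\widetilde{\fI}_{A_n}^1$ followed by dominated convergence over $\Omega$. The problem is in which auxiliary results you call.

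In (i) you cite Corollary~\ref{cor:unifbb-IAtilde}\ref{cor:unifbb-IAtilde:c} for uniform boundedness in $\LO(L^p(0,T;\widetilde B);C([0,T];\widetilde B))$. That part actually gives an $E_\infty\to\widetilde B$ bound and sits under~\ref{ass:emb-Htheta-B-H2}. The statement you wrote down is the content of part~\ref{cor:unifbb-IAtilde:b} applied with $E_n\deq B_n$ and $\widetilde E\deq\widetilde B$, so here the mislabel is harmless.

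In (ii) it is not harmless. Proposition~\ref{prop:strong-continuity-IAntilde}\ref{prop:strong-continuity-IAntilde:b} and Corollary~\ref{cor:unifbb-IAtilde}\ref{cor:unifbb-IAtilde:c} both require~\ref{ass:emb-Htheta-B-H2} and the condition $s>\tfrac1p+\tfrac\theta2$; for $s=1$ this reads $\theta<2(1-\tfrac1p)$, which fails outright for $p=1$ (allowed by the lemma) and in any case~\ref{ass:emb-Htheta-B-H2} is not among the hypotheses here. The paper sidesteps this: since $F_\infty(\omega,\cdot,u(\omega,\cdot))$ already lies in $L^p(0,T;B_\infty)$, no ultracontractive $E_\infty\to\widetilde B$ smoothing is needed, and one applies Proposition~\ref{prop:strong-continuity-IAntilde}\ref{prop:strong-continuity-IAntilde:a} and Corollary~\ref{cor:unifbb-IAtilde}\ref{cor:unifbb-IAtilde:b} with $E_n\deq B_n$, $\widetilde E\deq\widetilde B$, using only~\ref{ass:disc2}--\ref{ass:A-conv-B} and the innocuous condition $1>\tfrac1p$. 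Once you redirect the citations in this way, your argument goes through verbatim.
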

	\begin{proof}
		As in Lemma~\ref{lem:deterministic-convs}, we split up the statement into 
		the following two assertions:
		\begin{enumerate}
			\item \label{lem:deterministic-convs-B:item1}
			$\widetilde S_n * \widetilde F_n(\,\cdot\, , u) - \widetilde S_n 
			* F_\infty(\,\cdot\, , u) \to 0$ in $L^p(\Omega; C([0,T]; \widetilde 
			B))$ 
			as $n \to \infty$; 
			\item \label{lem:deterministic-convs-B:item2}
			$\widetilde S_n * F_\infty(\,\cdot\, , u) \to S_\infty * 
			F_\infty(\,\cdot\, , u)$ in 
			$L^p(\Omega; C([0,T]; \widetilde B))$ 
			as $n \to \infty$.
		\end{enumerate}
		Part~\ref{lem:deterministic-convs-B:item1} is shown exactly as Lemma~\ref{lem:deterministic-convs}\ref{lem:deterministic-convs:item1}, up to replacing $E_\infty$ by $B_\infty$ (or $\widetilde B$).
		
		For~\ref{lem:deterministic-convs-B:item2}, we instead note that~\ref{item:F-globalLip-linearGrowth-B} implies, for almost every $\omega \in \Omega$,
		\begin{equation}
			t \mapsto F_\infty(\omega, t, u(\omega, t)) \in L^p(0,T; B_\infty).
		\end{equation}
		Hence, in order to argue as in Lemma~\ref{lem:deterministic-convs}\ref{lem:deterministic-convs:item2}, one needs to apply Proposition~\ref{prop:strong-continuity-IAntilde}\ref{prop:strong-continuity-IAntilde:a} and Corollary~\ref{cor:unifbb-IAtilde}\ref{cor:unifbb-IAtilde:b} with $E_n \deq B_n$ for all $n \in \clos\N$ and $\widetilde E \deq \widetilde B$.
	\end{proof}
	With these auxiliary results in place, we can prove the first main 
	discrete-to-continuum approximation result for solutions 
	to~\eqref{eq:reaction-diffusion} with globally Lipschitz drift coefficients of 
	linear growth, analogously to Theorem~\ref{thm:d2c-semilinear-global-linear}:
	\begin{theorem}\label{thm:d2c-semilinear-global-linear-B}
		Let
		Assumptions~\ref{ass:disc2}--\ref{ass:emb-Htheta-B-H2},~\ref{item:F-globalLip-linearGrowth-B}, 
		\ref{item:F-approx-B} 
		and~{\ref{ass:IC-B}} be satisfied, with
		$
		\theta + 2\beta < 1
		$
		and $p \in [1, \infty)$.
		Denoting by $X_n$ the unique $B_n$-valued global mild solution to~\eqref{eq:Langevin-equation-infinite-dim-A}, we have
		\begin{equation}
			\widetilde X_n \to X_\infty
			\quad 
			\text{in } 
			L^p(\Omega; C([0,T]; \widetilde B))
			\quad 
			\text{as } 
			n \to \infty.
		\end{equation}
	\end{theorem}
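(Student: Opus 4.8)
The plan is to follow the proof of Theorem~\ref{thm:d2c-semilinear-global-linear} line by line, replacing each $E_n$-valued ingredient by its $B_n$-valued counterpart established in this subsection. First I would fix a sufficiently small $T_0 \in (0,\infty)$ and show that the lifted fixed-point operators $\widetilde\Phi_{n,T_0} = \Lambda_n \Phi_{n,T_0} \circ \Pi_n$ are \emph{uniform} contractions on $L^p(\Omega; C([0,T_0]; \widetilde B))$. Indeed, by Proposition~\ref{prop:existence-uniqueness-globLip-linGrowth-B} each $\Phi_{n,T_0}$ is Lipschitz on $L^p(\Omega; C([0,T_0]; B_n))$ with a constant depending only on $\widetilde M_S$, $\widetilde L_F$ and $T_0$, tending to zero as $T_0 \downarrow 0$; combined with the uniform bounds $\widetilde M_\Pi, \widetilde M_\Lambda$ from~\ref{ass:disc2}, this furnishes a constant $c \in [0,1)$, independent of $n$, such that
\begin{equation}
\sup\nolimits_{n\in\N} \norm{\widetilde\Phi_{n,T_0}(u) - \widetilde\Phi_{n,T_0}(v)}{L^p(\Omega; C([0,T_0]; \widetilde B))}
\le c \norm{u - v}{L^p(\Omega; C([0,T_0]; \widetilde B))}
\end{equation}
for all $u, v \in L^p(\Omega; C([0,T_0]; \widetilde B))$ once $T_0$ is small enough.

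Next I would invoke Proposition~\ref{prop:global-well-posedness-Bn} to obtain, for each $n \in \clos\N$, the unique global mild solution $X_n \in L^p(\Omega; C([0,T]; B_n))$, which on $[0,T_0]$ satisfies $X_n = \Phi_{n,T_0}(X_n)$; applying $\Lambda_n$ and using Assumption~\ref{ass:disc3}\ref{ass:disc:id3} (via~\ref{ass:disc2}) yields $\widetilde X_n = \widetilde\Phi_{n,T_0}(\widetilde X_n)$. The triangle inequality and the contraction estimate then give, exactly as in~\eqref{eq:Thm32-1},
\begin{equation}
\norm{X_\infty - \widetilde X_n}{L^p(\Omega; C([0,T_0]; \widetilde B))}
\le
\frac{1}{1-c}\norm{\Phi_{\infty,T_0}(X_\infty) - \widetilde\Phi_{n,T_0}(X_\infty)}{L^p(\Omega; C([0,T_0]; \widetilde B))}.
\end{equation}
It remains to show the right-hand side vanishes as $n \to \infty$. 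Using the decomposition~\eqref{eq:lifted-fixedPoint-shorter}, namely $\widetilde\Phi_{n,T_0}(X_\infty) = \widetilde S_n \otimes \widetilde\xi_n + \widetilde S_n * \widetilde F_n(\,\cdot\,, X_\infty) + \widetilde W_{A_n}$, I would treat the three terms separately: the initial-data term by Lemma~\ref{lem:ICterm-convs-B}, the drift term (with $u \deq X_\infty$) by Lemma~\ref{lem:deterministic-convs-B}, and the stochastic convolution term by the second part of Proposition~\ref{prop:d2c-pathwise-conv-Wdelta2-B}. All of these apply precisely under the standing hypotheses~\ref{ass:disc2}--\ref{ass:emb-Htheta-B-H2} with $\theta + 2\beta < 1$, \ref{item:F-globalLip-linearGrowth-B}, \ref{item:F-approx-B} and~\ref{ass:IC-B}.

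Finally, to pass from $[0,T_0]$ to an arbitrary horizon $T$, I would patch solutions on consecutive intervals exactly as in the proof of Theorem~\ref{thm:d2c-semilinear-global-linear}: since the Lipschitz constant of $\widetilde\Phi_{n,T_0}$ is independent of the initial datum and depends on the drift only through its time-independent constant $\widetilde L_F$, the same argument applied with shifted drifts $(F_n(\,\cdot\, + \tfrac{1}{2}T_0, \,\cdot\,))_{n\in\clos\N}$ and initial values $(X_n(\tfrac{1}{2}T_0))_{n\in\clos\N}$ propagates the convergence across each interval $[\tfrac{k}{2}T_0, (1+\tfrac{k}{2})T_0]$, yielding convergence in $L^p(\Omega; C([0,T]; \widetilde B))$ by induction. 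I do not expect a genuine obstacle here, as every required estimate has been isolated into the cited auxiliary results; the only point demanding care is verifying that the contraction constant $c$ is truly uniform in $n$ (which rests on the uniform boundedness $\widetilde M_\Pi, \widetilde M_\Lambda, \widetilde M_S$ together with the uniform Lipschitz constant $\widetilde L_F$), and that the well-posedness of the $B_n$-valued stochastic convolution underpinning every step is guaranteed by the condition $\theta + 2\beta < 1$ from~\ref{ass:emb-Htheta-B-H2}.
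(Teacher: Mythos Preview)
Your proposal is correct and matches the paper's proof essentially line for line: uniform contraction of $\widetilde\Phi_{n,T_0}$ via Proposition~\ref{prop:existence-uniqueness-globLip-linGrowth-B} and~\ref{ass:disc2}, convergence of the three terms via Lemma~\ref{lem:ICterm-convs-B}, Lemma~\ref{lem:deterministic-convs-B}, and Proposition~\ref{prop:d2c-pathwise-conv-Wdelta2-B}, followed by the same inductive patching. The one point the paper singles out that you leave implicit is that $X_\infty$ takes its values in $B_\infty \subseteq \widetilde B$ (from Proposition~\ref{prop:global-well-posedness-Bn} with $n=\infty$), which is exactly what is needed to feed $u \deq X_\infty$ into Lemma~\ref{lem:deterministic-convs-B}, whose hypothesis is $u \in L^p(\Omega; C([0,T]; B_\infty))$ rather than $\widetilde B$.
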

	\begin{proof}
		By Proposition~\ref{prop:existence-uniqueness-globLip-linGrowth-B} and~\ref{ass:disc2}, for small enough $T_0 \in (0,\infty)$ there exists a constant $c \in [0, 1)$, depending only on $\widetilde L_F$, $\widetilde M_S$, $\widetilde M_\Lambda$ and $\widetilde M_\Pi$, such that, for all $u, v \in L^p(\Omega; C([0,T_0]; B_\infty))$,
		\begin{equation}
			\sup\nolimits_{n\in\N} \, \norm{\widetilde \Phi_{n,T_0}(u) - \widetilde 
				\Phi_{n, T_0}(v)}{L^p(\Omega; C([0, T_0]; \widetilde B))}
			\le 
			c \norm{u - v}{L^p(\Omega; C([0, T_0]; \widetilde B))}.
		\end{equation}
		By Proposition~\ref{prop:global-well-posedness-Bn}, there exists a unique global solution $X_n \in L^p(\Omega; C([0,T]; B_n))$ to~\eqref{eq:Langevin-equation-infinite-dim-A} for every $n \in \clos \N$. 
		In particular, note that $X_\infty$ takes its values in $B_\infty \subseteq \widetilde B$.
		Thus, in order to finish the argument analogously to the proof of Theorem~\ref{thm:d2c-semilinear-global-linear}, it suffices to establish that $\widetilde \Phi_{n,T}(\phi) \to \Phi_{\infty, T}(\phi)$ in 	$L^p(\Omega; C([0,T]; \widetilde B))$ as $n \to \infty$ for all $\phi \in L^p(\Omega; C([0,T]; B_\infty))$.
		This is precisely the combined content of (the second part of) Proposition~\ref{prop:d2c-pathwise-conv-Wdelta2-B}, along with Lemmas~\ref{lem:ICterm-convs-B} and~\ref{lem:deterministic-convs-B}.
	\end{proof}
	%
	\subsection{Locally Lipschitz drifts}
	\label{sec:locLip-B}
	As in Section~\ref{sec:locLip}, we can extend Theorem~\ref{thm:d2c-semilinear-global-linear-B} to the locally 
	Lipschitz setting. Namely, we assume that
	\begin{enumerate}[label=(F{\arabic*}$'$-B), leftmargin=1.4cm]
		\item\label{item:F-locLip-unifBdd-B} 
		Assumption~\ref{item:F-locLip-unifBdd} holds with $(B_n)_{n\in\clos\N}$ in 
		place of $(E_n)_{n\in\clos\N}$. For every $r > 0$, the
		$B_n$-valued local Lipschitz and local boundedness constants are respectively denoted by $\widetilde 
		L_F^{(r)}$ 
		and $\widetilde C_{F,0}^{(r)}$.
	\end{enumerate}
	Then, arguing in the same way as Theorem~\ref{thm:d2c-conv-locLip-unifBound}, we obtain the following result.
	\begin{theorem}\label{thm:d2c-conv-locLip-unifBound-B}
		Suppose that
		Assumptions~\ref{ass:disc2}--\ref{ass:emb-Htheta-B-H2},~\ref{item:F-locLip-unifBdd-B}, 
		\ref{item:F-approx-B} 
		and~{\ref{ass:IC-B}} are satisfied, with
		$
		\theta + 2\beta < 1.
		$
		For $n \in \clos\N$, let $(X_n(t))_{t \in [0,\sigma_n)}$ be the maximal 
		local solution to~\eqref{eq:reaction-diffusion}
		with explosion time $\sigma_n \from \Omega \to [0,T]$. Then we have
		\begin{enumerate}[series=d2c-conv-locLip-unifBound-B]
			\item\label{item:d2c-conv-locLip-unifBound-sigmaConv-B} 
			$\widetilde X_n \indic{[0, \sigma_\infty \wedge \sigma_n)}
			\to 
			X_\infty \indic{[0,\sigma_\infty)}$ 
			in $L^0(\Omega \times [0,T]; \widetilde B)$
			as $n \to \infty$.
		\end{enumerate}
		If, moreover, $\sigma_n = T$ holds
		$\bbP$-a.s.\ for all $n \in \N$ and $p \in [1,\infty)$ is such 
		that 
		\begin{equation}\label{eq:Xn-unifBdd-LpOmegaCHn-B}
			\sup_{n\in\N} \norm{X_n}{L^p(\Omega; C([0,T]; B_n))} < \infty, 
		\end{equation}
		then the following assertions also hold:
		\begin{enumerate}[resume=d2c-conv-locLip-unifBound-B]
			\item\label{item:d2c-conv-locLip-unifBound-globalInfty-B} We have 
			$\sigma_\infty = T$, $\bbP$-a.s.
			\item\label{item:d2c-conv-locLip-unifBound-convInCty-B} If $p \in 
			(1,\infty)$, then for all $p^- \in [1, p)$ we have 
			\begin{equation}
				\widetilde X_n \to X_\infty 
				\quad 
				\text{ in } 
				L^{p^-}(\Omega; C([0,T]; \widetilde B))
				\text{ as } 
				n \to \infty.
			\end{equation}
		\end{enumerate}
	\end{theorem}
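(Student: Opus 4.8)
The plan is to follow the proof of Theorem~\ref{thm:d2c-conv-locLip-unifBound} essentially verbatim, replacing the role of $E_\infty$ by $\widetilde B$ throughout and substituting each $E_n$-valued ingredient by its $B_n$-valued analog. First I would establish $B_n$-valued local well-posedness of~\eqref{eq:reaction-diffusion}, i.e.\ a $B_n$-valued counterpart of Theorem~\ref{thm:local-well-posedness-En} yielding, for each $n \in \clos\N$, a unique maximal local solution $(X_n(t))_{t\in[0,\sigma_n)}$ with explosion time $\sigma_n$. The stochastic convolution $W_{A_n}$ is a $B_n$-valued process by the first part of Proposition~\ref{prop:d2c-pathwise-conv-Wdelta2-B} (using~\ref{ass:emb-Htheta-B-H2} with $\theta + 2\beta < 1$), and the convolution $S_n * F_n(\,\cdot\,, X_n)$ can be treated pathwise in $C([0,T]; B_n)$ via Proposition~\ref{prop:app-basicPropsIA}\ref{prop:app-basicPropsIA:Lp-CHdot}, so the truncation/fixed-point machinery underlying~\cite[Theorem~8.1]{vNVW2008} can be rerun in $B_n$.

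Next, for each truncation level $r \in (0,\infty)$ I would introduce the retracted drifts via the canonical retraction $R_r$ of $\widetilde B$ onto its closed ball of radius $r$, setting $F_n^{(r)}(\omega, t, x_n) \deq F_n(\omega, t, \Pi_n R_r(\Lambda_n x_n))$ exactly as in~\eqref{eq:Fnr-identity}. Since $R_r$ is Lipschitz with constant $2$ on $\widetilde B$ and $(F_n)_{n\in\clos\N}$ is uniformly locally Lipschitz and locally bounded in the $B_n$-norms by~\ref{item:F-locLip-unifBdd-B}, the same elementary estimates as in the $E_n$-case show that $(F_n^{(r)})_{n\in\clos\N}$ is uniformly globally Lipschitz of linear growth in the sense of~\ref{item:F-globalLip-linearGrowth-B}, with constants controlled by $\widetilde L_F^{(r\widetilde M_\Pi)}$, $\widetilde M_\Pi$, $\widetilde M_\Lambda$ and $\widetilde C_{F,0}^{(r)}$. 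The approximation property~\ref{item:F-approx-B} for $(F_n^{(r)})_{n\in\clos\N}$ follows just as before: for $y_n \to y$ in $\widetilde B$ one has $\widetilde F_n(\omega,t,y_n) \to F_\infty(\omega,t,y)$ by combining uniform local Lipschitz continuity with~\ref{item:F-approx-B}, and then applying this to $y_n \deq R_r(\Lambda_n \Pi_n x) \to R_r(x)$, which converges by Assumption~\ref{ass:disc3}\ref{ass:disc:approx-id3} and the continuity of $R_r$. Consequently Proposition~\ref{prop:global-well-posedness-Bn} yields unique $B_n$-valued global solutions $X_n^{(r)}$, and Theorem~\ref{thm:d2c-semilinear-global-linear-B} gives $\widetilde X_n^{(r)} \to X_\infty^{(r)}$ in $L^p(\Omega; C([0,T]; \widetilde B))$, hence in $L^0(\Omega; C([0,T]; \widetilde B))$.

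With these pieces in hand I would apply Theorem~\ref{thm:approx-local-processes} with $E \deq \widetilde B$ and $Y_n \deq \widetilde X_n$. Its hypothesis~\ref{item:thm:approx-local-processes-a} is verified exactly as in Theorem~\ref{thm:d2c-conv-locLip-unifBound}: since $\norm{\Lambda_n X_n(t)}{\widetilde B} \le r$ on $[0,\rho_n^{(r)})$ one has $F_n(\,\cdot\,,X_n) \equiv F_n^{(r)}(\,\cdot\,,X_n)$ there, so by local uniqueness (the $B_n$-analog of~\cite[Lemma~8.2]{vNVW2008}) the truncated global solution agrees with $X_n$ up to $\rho_n^{(r)}$, and applying $\Lambda_n$ gives $\widetilde X_n^{(r)}\indic{[0,\rho_n^{(r)}]} \equiv \widetilde X_n \indic{[0,\rho_n^{(r)}]}$; hypothesis~\ref{item:thm:approx-local-processes-b} is the convergence $\widetilde X_n^{(r)} \to X_\infty^{(r)}$ just established. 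Part~\ref{item:thm:approx-local-processes-iii} then yields assertion~\ref{item:d2c-conv-locLip-unifBound-sigmaConv-B}, and under the additional uniform bound~\eqref{eq:Xn-unifBdd-LpOmegaCHn-B} (which, via the uniform boundedness of $(\Lambda_n)_{n\in\N}$ from~\ref{ass:disc2}, transfers to a uniform bound on $\widetilde X_n$ in $L^p(\Omega; C([0,T]; \widetilde B))$) parts~\ref{item:thm:approx-local-processes-iv} and~\ref{item:thm:approx-local-processes-v} give assertions~\ref{item:d2c-conv-locLip-unifBound-globalInfty-B} and~\ref{item:d2c-conv-locLip-unifBound-convInCty-B}.

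The main obstacle I anticipate is the first step: rigorously establishing $B_n$-valued local well-posedness (existence, uniqueness, and the explosion-time dichotomy) in the non-UMD, non-reflexive space $B_n$, where~\cite{vNVW2008} does not apply as stated. One must reroute the stochastic integration through $E_n$ (equivalently $\dot E_n^\theta$) via the factorization of Proposition~\ref{prop:d2c-pathwise-conv-Wdelta2-B}, handle the drift convolution pathwise in $C([0,T]; B_n)$, and re-run the truncation/patching construction to produce the maximal local solution and verify the local-uniqueness lemma in $B_n$. Once this infrastructure is in place, the convergence argument is a faithful transcription of the $E_n$-valued proof of Theorem~\ref{thm:d2c-conv-locLip-unifBound}.
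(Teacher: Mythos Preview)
Your proposal is correct and follows precisely the approach indicated by the paper, which does not give an explicit proof but merely states that one obtains the result ``arguing in the same way as Theorem~\ref{thm:d2c-conv-locLip-unifBound}''. Your detailed transcription of that argument to the $B_n$-valued setting---retracting the drifts via $R_r$ on $\widetilde B$, verifying~\ref{item:F-globalLip-linearGrowth-B} and~\ref{item:F-approx-B} for $(F_n^{(r)})_{n\in\clos\N}$, invoking Theorem~\ref{thm:d2c-semilinear-global-linear-B} for the truncated equations, and then applying Theorem~\ref{thm:approx-local-processes} with $E \deq \widetilde B$---is exactly what the paper intends, and your flagged obstacle concerning $B_n$-valued local well-posedness is a point the paper glosses over but which you correctly identify as resolvable via the factorization already established in Proposition~\ref{prop:d2c-pathwise-conv-Wdelta2-B}.
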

	%
	\subsection{Global well-posedness and convergence for dissipative drifts}
	\label{sec:global-wellposed-RDeq}
	
	In this section, we consider a class of equations whose drift coefficients satisfy not only~\ref{item:F-locLip-unifBdd-B} (which would only guarantee \emph{local} well-posedness), but additionally a type of dissipativity condition, also used in~\cite{KvN2012}, which allows us to deduce global existence.
	
	Let the subdifferential
	$\partial \norm{x_n}{B_n}$ of the 
	norm $\norm{\cdot}{B_n}$ at $x_n \in B_n$ be given by
	\begin{equation}
		\partial \norm{x_n}{B_n}
		=
		\{
		x^*_n \in B^*_n : 
		\norm{x^*_n}{B^*_n} \le 1 \text{ and } {}_{B_n}\scalar{x_n, x^*_n}{B^*_n} = 
		\norm{x_n}{B_n}
		\}.
	\end{equation}
	The assumptions on $(F_n)_{n\in\clos\N}$ under 
	which we can derive global well-posedness, see Lemma~\ref{lem:globalEx} below (which is a simplified version of~\cite[Theorem~4.3]{KvN2012} for equations driven by additive noise), 
	are as follows:
	\begin{enumerate}[label=(F{\arabic*}$''$-B), leftmargin=1.5cm]
		\item\label{item:F-diss-B} Let the conditions 
		of~\ref{item:F-locLip-unifBdd-B} 
		hold. Suppose that there exist
		$a', b' \in [0,\infty)$ and $N \in \N$ such that for all $n \in \clos \N$, 
		$(\omega, t) \in \Omega \times [0,T]$, $x_n \in \dom{A_n|_{B_n}}$, $x^*_n 
		\in \partial 
		\norm{x_n}{B_n}$ 
		and $y_n \in B_n$ we have
		\begin{equation}\label{eq:F-B}
			{}_{B_n} \langle -A_n x_n + F_n(\omega, t, x_n + y_n), x_n^* 
			\rangle_{B^*_n}
			\le 
			a'(1 + \norm{y_n}{B_n})^N
			+
			b'\norm{x_n}{B_n}.
		\end{equation}
	\end{enumerate}
	If the semigroups $(S_n(t)|_{B_n})_{t\ge0}$ are contractive on $B_n$, i.e., if $\widetilde M_S = 1$ in~\ref{ass:semigroup-B2}, then we know that $A_n \vert_{B_n}$ is \emph{accretive}, i.e.,
	\begin{equation}
		{}_{B_n} \scalar{A_n x_n, x^*_n}{B^*_n} \ge 0 \quad 
		\text{for all } 
		x_n \in \dom{A_n\vert_{B_n}}, \, x^*_n \in \partial \norm{x}{B_n}.
	\end{equation}
	Thus, in this case, it suffices to 
	check that 
	\begin{equation}\label{eq:F-B2}
		{}_{B_n} \langle F_n(\omega, t, x_n + y_n), x^*_n \rangle_{B^*_n}
		\le 
		a'(1 + \norm{y_n}{B_n})^N,
	\end{equation}
	in order to establish that~\ref{item:F-diss-B} holds for $b' = 0$.
	The next example shows how the relation~\eqref{eq:F-B2} can be verified in our 
	situation of main interest.
	It is an elaborated version of~\cite[Example~4.2]{KvN2012}.
	
	\begin{example}\label{ex:polynomial-nonlinearity-satisfies-FdissB}
		Given $n \in \clos\N$, let $B_n \deq C(\cM_n)$ be the space of continuous real-valued functions on a compact Hausdorff space $\cM_n$ equipped with the supremum norm $\norm{u_n}{B_n} \deq \sup_{\xi \in \cM_n} \abs{u_n(\xi)}$.
		In this case, for all $u_n \in B_n$, the subdifferential $\partial \norm{u_n}{B_n}$ is the weak${}^*$-closed convex hull of 
		\begin{equation}\label{eq:subdifferential-CMn}
			\{ 
			r \delta_{\hat \xi}
			:
			r \in \sgn u_n(\hat \xi) \text{ for }
			\hat \xi \in \cM_n
			\text{ such that }
			\norm{u_n}{B_n} = \abs{u_n(\hat \xi)}
			\},
		\end{equation}
		where $\delta_\xi \in C(\cM_n)^*$ denotes the evaluation functional at $\xi \in \cM_n$ and, for $y \in \R$,
		\begin{equation}
			\sgn y \deq \begin{cases}
				\{-1\}, &\text{if } y < 0; \\
				\{-1, 1\}, &\text{if } y = 0; \\
				\{1\}, &\text{if } y > 0.
			\end{cases}
		\end{equation}
		Indeed, since subdifferential sets are convex by definition and $\partial 
		\norm{u_n}{B_n}$, being contained in the closed unit ball in~$B_n^*$, is weak${}^*$ compact by the 
		Banach--Alaoglu theorem, the Krein--Milman theorem implies that it suffices 
		to argue that the extreme points of $\partial \norm{u_n}{B_n}$ are 
		precisely given by~\eqref{eq:subdifferential-CMn}. 
		This, in turn, follows from a 
		characterization of the extreme points of the closed unit ball in $C(\cM_n)^*$ 
		due to Arens and 
		Kelley~\cite{ArensKelley1947CharacterizationSpaceContinuous}.
		
		Moreover, suppose that $(F_n)_{n\in\clos\N}$ is a family of Nemytskii operators on $(B_n)_{n\in\clos\N}$ (see equation~\eqref{eq:Nemytskii}), generated by a family $(f_n)_{n\in\clos\N}$ of functions satisfying the polynomial form introduced in~\eqref{eq:fn-polynomial-def}--\eqref{eq:fn-polynomial-coefficients}.
		Fixing $n \in \clos\N$, $(\omega, t) \in \Omega \times [0,T]$ and $u_n, v_n \in B_n$, inequality~\eqref{eq:F-B2} becomes
		\begin{equation}\label{eq:polynomial-nonlinearity-satisfies-FdissB}
			{}_{B_n} \langle F_n(\omega, t, u_n + v_n), x^*_n \rangle_{B^*_n}
			\le 
			a'(1 + \norm{v_n}{B_n})^N 
			\quad 
			\text{for all } x_n^* \in \partial \norm{u_n}{B_n}.
		\end{equation}
		Since the inequality is preserved under convex combinations and weak${}^*$ limits of~$x_n^*$, the above characterization of $\partial \norm{u_n}{B_n}$ shows that it only needs to be checked for $x_n^* = r \delta_{\hat \xi}$, where $r \in \sgn u_n(\hat \xi)$ for $\hat \xi \in \cM_n$ such that $\norm{u_n}{B_n} = \abs{u_n(\hat \xi)}$.
		That is, it suffices to verify that
		\begin{equation}
			r f_n(\omega, t, u_n(\hat\xi) + v_n(\hat \xi))
			\le 
			a'(1 + \norm{v_n}{B_n})^N.
		\end{equation}
		Indeed, for $(f_n)_{n\in\clos\N}$ satisfying~\eqref{eq:fn-polynomial-def}--\eqref{eq:fn-polynomial-coefficients}, we can establish the estimate
		\begin{equation}
			r f_n(\omega, t, y + z)
			\lesssim_{(c, C, k)} 
			(1 + \abs{z})^{2k+1}
		\end{equation}
		for all $(\omega, t) \in \Omega \times [0,T]$,
		$y, z \in \R$ and $r \in \sgn y$.
		This implies the existence of a constant $a' \in [0,\infty)$, depending only on $c, C \in (0,\infty)$ from~\eqref{eq:fn-polynomial-coefficients} and $k \in \N_0$ from~\eqref{eq:fn-polynomial-def}, such that~\eqref{eq:polynomial-nonlinearity-satisfies-FdissB} holds with $N = 2k+1$.
	\end{example}
	\begin{lemma}\label{lem:globalEx}
		Let	Assumptions~\ref{ass:disc2}--\ref{ass:emb-Htheta-B-H2} and~\ref{item:F-diss-B} hold with $\widetilde M_S = 1$, let $n \in \clos\N$ and suppose that $\theta + 2\beta < 1$. If $\xi_n \in L^p(\Omega, \cF_0, \bbP; B_n)$ for some $p \in [1,\infty)$, then the maximal solution $(X_n(t))_{t \in [0,\sigma_n)}$ to~\eqref{eq:reaction-diffusion} is global (i.e., it holds $\bbP$-a.s.\ that $\sigma_n = T$), and we have 
		\begin{equation}
			\norm{X_n}{L^p(\Omega; C([0,T]; B_n))}
			\lesssim_{(a', b', T, N)}
			1 + \norm{\xi_n}{L^p(\Omega; B_n)}
			+ \norm{W_{A_n}}%
			{L^{Np}(\Omega; C([0,T]; B_n))}^N.
		\end{equation}
	\end{lemma}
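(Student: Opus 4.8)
The plan is to decouple the noise from the dynamics by a pathwise shift, reducing the claim to a deterministic a priori bound to which the dissipativity condition~\eqref{eq:F-B} can be applied. First I would set $Y_n \deq X_n - W_{A_n}$. By Proposition~\ref{prop:d2c-pathwise-conv-Wdelta2-B} the stochastic convolution $W_{A_n}$ has, $\bbP$-a.s., trajectories in $C([0,T]; B_n)$, and subtracting it from the mild-solution identity for the maximal solution $X_n$ shows that, for $\bbP$-a.e.\ $\omega$, the path $Y_n(\omega,\cdot)$ is the mild solution on $[0,\sigma_n(\omega))$ of the deterministic equation
\[
Y_n(t) = S_n(t)\xi_n + \int_0^t S_n(t-s) G_n(\omega, s, Y_n(s))\, \rd s, \qquad G_n(\omega, s, y) \deq F_n(\omega, s, y + W_{A_n}(\omega, s)).
\]
Thus it suffices to bound $\sup_{t < \sigma_n}\norm{Y_n(t)}{B_n}$ pathwise by a quantity that is finite whenever $W_{A_n}(\omega,\cdot) \in C([0,T]; B_n)$.

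The core is the following a priori estimate. For $x_n \in \dom{A_n|_{B_n}}$ and $x^*_n \in \partial\norm{x_n}{B_n}$, condition~\eqref{eq:F-B} applied with $y_n \deq W_{A_n}(\omega, t)$ gives
\[
{}_{B_n}\langle -A_n x_n + G_n(\omega, t, x_n), x^*_n \rangle_{B_n^*}
\le
a'\bigl(1 + \norm{W_{A_n}(\omega, t)}{B_n}\bigr)^N + b'\norm{x_n}{B_n}.
\]
Since $Y_n(\omega,\cdot)$ solves $\dot Y_n = -A_n Y_n + G_n(\omega,\cdot, Y_n)$, this translates into the differential inequality $D^+\norm{Y_n(t)}{B_n} \le a'(1 + \norm{W_{A_n}(t)}{B_n})^N + b'\norm{Y_n(t)}{B_n}$ for the upper right Dini derivative, whence Gronwall's lemma yields
\[
\sup_{t\in[0,T']}\norm{Y_n(t)}{B_n}
\lesssim_{(a',b',T,N)}
\norm{\xi_n}{B_n} + \bigl(1 + \norm{W_{A_n}}{C([0,T]; B_n)}\bigr)^N
\]
for every $T' \le \sigma_n$, with a right-hand side independent of $T'$ and finite $\bbP$-a.s. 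Consequently $\limsup_{t\uparrow\sigma_n}\norm{X_n(t)}{B_n} < \infty$ almost surely on $\{\sigma_n < T\}$, which contradicts the explosion characterization~\eqref{eq:explosion-time} unless $\sigma_n = T$, $\bbP$-a.s.; this establishes global existence.

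The main obstacle is making this differential inequality rigorous: the mild solution $Y_n(t)$ need not lie in $\dom{A_n|_{B_n}}$, so the pairing ${}_{B_n}\langle -A_n Y_n(t), x^*_n\rangle_{B_n^*}$ is not literally defined and $t \mapsto Y_n(t)$ is not classically differentiable. This is precisely the point where the hypothesis $\widetilde M_S = 1$ enters. As in~\cite[Theorem~4.3]{KvN2012}, I would resolve it by an approximation argument—either replacing $A_n$ by its Yosida regularization $A_n(\id_n + \lambda A_n)^{-1}$ or smoothing the data so that the approximating solutions are classical—for which the inequality above holds pointwise; the contractivity $\widetilde M_S = 1$ guarantees that the approximating semigroups are contractive uniformly in the regularization parameter, so that the Gronwall bound passes to the limit via the continuous dependence of mild solutions on the data. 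Finally, for the quantitative bound I would take $L^p(\Omega)$-norms in the pathwise estimate. From $\norm{X_n}{C([0,T]; B_n)} \le \norm{Y_n}{C([0,T]; B_n)} + \norm{W_{A_n}}{C([0,T]; B_n)}$ and the Gronwall bound, applying the triangle inequality in $L^p(\Omega)$ together with the elementary estimate $\norm{(1 + Z)^N}{L^p(\Omega)} \lesssim_N 1 + \norm{Z}{L^{Np}(\Omega)}^N$ for $Z \deq \norm{W_{A_n}}{C([0,T]; B_n)} \ge 0$ produces
\[
\norm{X_n}{L^p(\Omega; C([0,T]; B_n))}
\lesssim_{(a',b',T,N)}
1 + \norm{\xi_n}{L^p(\Omega; B_n)}
+ \norm{W_{A_n}}{L^{Np}(\Omega; C([0,T]; B_n))}^N,
\]
as claimed.
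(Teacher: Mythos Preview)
Your approach is correct and shares its core with the paper's: subtract the stochastic convolution, then exploit the dissipativity condition~\eqref{eq:F-B} together with contractivity of $(S_n(t)|_{B_n})_{t\ge 0}$ to obtain a Gronwall-type a priori bound, exactly as in~\cite[Lemma~4.4]{KvN2012}. The organization, however, differs. You work directly with the maximal local solution $X_n$, derive the pathwise bound on $Y_n = X_n - W_{A_n}$ up to the explosion time, and conclude globality from the explosion characterization~\eqref{eq:explosion-time}; the regularity issue for the differential inequality is deferred to a Yosida-type approximation. The paper instead truncates the nonlinearity to globally Lipschitz $F_{n,m}$, obtains global solutions $X_{n,m}$ by Proposition~\ref{prop:global-well-posedness-Bn}, notes that $F_{n,m}$ inherits~\eqref{eq:F-B} with the same constants, applies~\cite[Lemma~4.4]{KvN2012} directly to the deterministic piece $S_n\otimes\xi_n + S_n * F_{n,m}(\cdot,X_{n,m}) = X_{n,m} - W_{A_n}$ to get a bound uniform in $m$, and then invokes Theorem~\ref{thm:approx-local-processes}\ref{item:thm:approx-local-processes-iv}--\ref{item:thm:approx-local-processes-v} to pass to the limit. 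The paper's route has the advantage that the truncation simultaneously provides global approximating solutions and absorbs the regularization needed to justify the differential inequality into a single citation, whereas your direct route makes the underlying mechanism more visible but requires you to handle the regularization of a merely local solution, which is slightly more delicate.
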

	\begin{proof}
		Fix $n \in \clos \N$.
		For each $m \in \N$, let $F_{n,m}$ denote the globally Lipschitz 
		retraction of $F_n$ onto the closed ball of radius $m$ around $0 \in B_n$, cf.~\eqref{eq:globLip-retract} (replacing $E_n$ by~$B_n$).
		Then $F_{n,m}$ satisfies the global Lipschitz and (global) linear growth estimates in~\ref{item:F-globalLip-linearGrowth-B}, hence by Proposition~\ref{prop:global-well-posedness-Bn} there exists a unique global $B_n$-valued mild solution $X_{n,m} \in L^p(\Omega; C([0,T]; B_n))$ to~\eqref{eq:Langevin-equation-infinite-dim-A} with drift coefficient operator~$F_{n,m}$.
		By the triangle inequality,
		\begin{equation}
			\begin{aligned}
				&\norm{X_{n,m}}{L^p(\Omega; C([0,T]; B_n))}
				\\
				&\qquad \le 
				\norm{S_n \otimes \xi_n + S_n * F_{n,m}(\,\cdot\, , X_{n,m})}%
				{L^p(\Omega; C([0,T]; B_n))}
				+
				\norm{W_{A_n}}%
				{L^p(\Omega; C([0,T]; B_n))}.
			\end{aligned}
		\end{equation}
		As shown in the proof of~\cite[Theorem~4.3]{KvN2012}, $F_{n,m}$ inherits the dissipativity estimate~\ref{item:F-diss-B}, with the same constants $a'$, $b'$ and $N$, from $F_n$. 
		Thus, by~\cite[Lemma~4.4]{KvN2012},
		\begin{align}
			&\norm{S_n \otimes \xi_n + S_n * F_{n,m}(\,\cdot\, , X_m)}{C([0,T]; 
				B_n)}
			\\&\qquad \le 
			e^{b' T} \biggl( \norm{\xi}{B_n} + a' \int_0^T (1 + 
			\norm{W_{A_n}(s)}{B_n})^N \rd s\biggr)
			\\
			&\qquad \le 
			e^{b' T} \norm{\xi}{B_n} + a' T 2^{N-1} e^{b' T} (1 + 
			\norm{W_{A_n}}{C([0,T]; B_n)}^N), 
			\quad 
			\bbP\text{-a.s.}
		\end{align}
		It follows that
		\begin{equation}
			\begin{aligned}
				&\norm{S_n \otimes \xi_n + S_n * F_{n,m}(\,\cdot\, , X_m)}{L^p(\Omega; C([0,T]; 
					B_n))}
				\\&\qquad \lesssim_{(a', b', T, N)}
				1 +	\norm{\xi_n}{L^p(\Omega; B_n)} + \norm{W_{A_n}}{L^{Np}(\Omega; 
					C([0,T];B_n))}^N.
			\end{aligned}
		\end{equation}
		Note that $W_{A_n} \in L^{Np}(\Omega; C([0,T]; B_n))$ by 
		Proposition~\ref{prop:d2c-pathwise-conv-Wdelta2-B} (with $Np$ taking the role of $p$). Combining these 
		estimates, we find
		\begin{equation}
			\begin{aligned}
				\sup_{m\in\N} \norm{X_{n,m}}{L^p(\Omega; C([0,T]; B_n))}
				\lesssim_{(a', b', T, N)}
				1 +	\norm{\xi_n}{L^p(\Omega; B_n)}
				+ \norm{W_{A_n}}%
				{L^{Np}(\Omega; C([0,T]; B_n))}^N,
			\end{aligned}
		\end{equation}
		so the result follows by Theorem~\ref{thm:approx-local-processes}\ref{item:thm:approx-local-processes-iv}--\ref{item:thm:approx-local-processes-v} applied to $Y_m \deq X_{n,m}$.
	\end{proof}
	Combined with Theorem~\ref{thm:d2c-conv-locLip-unifBound-B}%
	\ref{item:d2c-conv-locLip-unifBound-globalInfty-B}--\ref{item:d2c-conv-locLip-unifBound-convInCty-B},
	whose uniform-boundedness hypothesis~\eqref{eq:Xn-unifBdd-LpOmegaCHn-B} is 
	verified by the combination of 
	Lemma~\ref{lem:globalEx} and
	Proposition~\ref{prop:d2c-pathwise-conv-Wdelta2-B} under the assumption in the following corollary, we derive:
	\begin{corollary}\label{cor:d2c-conv-dissB}
		Suppose that~\ref{ass:disc2}--\ref{ass:emb-Htheta-B-H2},~\ref{item:F-diss-B},~\ref{item:F-approx-B} 
		and~\ref{ass:IC-B} are 
		satisfied with $\widetilde M_S = 1$, $p \in (1,\infty)$ and $\theta + 2\beta < 1$. 
		Then for any $p^{-} \in [1,p)$, the sequence 
		$((X_n)_{t\in[0,T]})_{n\in\clos\N}$ of $B_n$-valued global solutions 
		to~\eqref{eq:reaction-diffusion} satisfies
		\begin{equation}
			\widetilde X_n \to X_\infty 
			\quad 
			\text{ in } 
			L^{p^{-}}\!(\Omega; C([0,T]; \widetilde B))
			\text{ as } 
			n \to \infty.
		\end{equation}
	\end{corollary}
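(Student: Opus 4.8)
The plan is to verify the hypotheses of Theorem~\ref{thm:d2c-conv-locLip-unifBound-B}\ref{item:d2c-conv-locLip-unifBound-globalInfty-B}--\ref{item:d2c-conv-locLip-unifBound-convInCty-B} and then invoke it directly. First I would observe that the dissipativity condition~\ref{item:F-diss-B} by construction includes the local-Lipschitz-and-bounded condition~\ref{item:F-locLip-unifBdd-B}, so that, together with~\ref{ass:disc2}--\ref{ass:emb-Htheta-B-H2},~\ref{item:F-approx-B} and~\ref{ass:IC-B}, all the standing hypotheses of Theorem~\ref{thm:d2c-conv-locLip-unifBound-B} are in force; in particular, for each $n \in \clos\N$ there is a unique maximal local solution $(X_n(t))_{t\in[0,\sigma_n)}$ to~\eqref{eq:reaction-diffusion}. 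Since moreover $\widetilde M_S = 1$ and $\theta + 2\beta < 1$, Lemma~\ref{lem:globalEx} applies for every $n \in \clos\N$ and yields both that $\sigma_n = T$ holds $\bbP$-a.s.\ (so each solution is in fact global) and the a priori estimate
\begin{equation}
	\norm{X_n}{L^p(\Omega; C([0,T]; B_n))}
	\lesssim_{(a',b',T,N)}
	1 + \norm{\xi_n}{L^p(\Omega; B_n)}
	+ \norm{W_{A_n}}{L^{Np}(\Omega; C([0,T]; B_n))}^N.
\end{equation}

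Thus the remaining task is to bound the right-hand side uniformly in $n \in \N$, which is exactly the uniform-boundedness hypothesis~\eqref{eq:Xn-unifBdd-LpOmegaCHn-B}. Here I would exploit the identities $\xi_n = \Pi_n \widetilde \xi_n$ and $W_{A_n} = \Pi_n \widetilde W_{A_n}$, which follow from $\Pi_n \Lambda_n = \id_{B_n}$ (Assumption~\ref{ass:disc3}\ref{ass:disc:id3}, imported through~\ref{ass:disc2}), together with the uniform operator bound $\sup_{n\in\N} \norm{\Pi_n}{\LO(\widetilde B; B_n)} = \widetilde M_\Pi < \infty$ from~\ref{ass:disc2}. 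These give $\norm{\xi_n}{L^p(\Omega; B_n)} \le \widetilde M_\Pi \norm{\widetilde \xi_n}{L^p(\Omega; \widetilde B)}$ and $\norm{W_{A_n}}{L^{Np}(\Omega; C([0,T]; B_n))} \le \widetilde M_\Pi \norm{\widetilde W_{A_n}}{L^{Np}(\Omega; C([0,T]; \widetilde B))}$. The first right-hand side is uniformly bounded because $\widetilde \xi_n \to \xi_\infty$ in $L^p(\Omega; \widetilde B)$ by~\ref{ass:IC-B}; the second because $\widetilde W_{A_n} \to W_{A_\infty}$ in $L^{Np}(\Omega; C([0,T]; \widetilde B))$ by Proposition~\ref{prop:d2c-pathwise-conv-Wdelta2-B} (applied with the exponent $Np$ in place of $p$), both convergent sequences being bounded. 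Substituting into the displayed estimate then yields $\sup_{n\in\N} \norm{X_n}{L^p(\Omega; C([0,T]; B_n))} < \infty$.

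With globalness of the solutions for all $n \in \N$ and the uniform bound~\eqref{eq:Xn-unifBdd-LpOmegaCHn-B} in hand, Theorem~\ref{thm:d2c-conv-locLip-unifBound-B}\ref{item:d2c-conv-locLip-unifBound-globalInfty-B}--\ref{item:d2c-conv-locLip-unifBound-convInCty-B} applies (using $p \in (1,\infty)$) and delivers $\sigma_\infty = T$ $\bbP$-a.s.\ as well as the desired convergence $\widetilde X_n \to X_\infty$ in $L^{p^-}(\Omega; C([0,T]; \widetilde B))$ for every $p^- \in [1,p)$, completing the proof.

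The argument is essentially an assembly of previously established results, so there is no single deep obstacle; the one point that requires genuine care is the passage from the $B_n$-norm bound supplied by Lemma~\ref{lem:globalEx} to a bound that is \emph{uniform} in $n$. This is precisely where the projection identities and the uniform operator bounds from~\ref{ass:disc2} are indispensable, and where it matters that Proposition~\ref{prop:d2c-pathwise-conv-Wdelta2-B} can be run at the elevated integrability $Np$ dictated by the polynomial order $N$ appearing in the dissipativity estimate~\eqref{eq:F-B}.
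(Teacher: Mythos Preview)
Your proposal is correct and follows essentially the same route as the paper: apply Lemma~\ref{lem:globalEx} to obtain globalness and the a~priori estimate, verify the uniform bound~\eqref{eq:Xn-unifBdd-LpOmegaCHn-B} via Proposition~\ref{prop:d2c-pathwise-conv-Wdelta2-B} (at integrability $Np$) together with~\ref{ass:IC-B}, and then invoke Theorem~\ref{thm:d2c-conv-locLip-unifBound-B}\ref{item:d2c-conv-locLip-unifBound-globalInfty-B}--\ref{item:d2c-conv-locLip-unifBound-convInCty-B}. Your write-up is in fact more explicit than the paper's one-sentence justification, in particular the step where you pass from $B_n$-norms to $\widetilde B$-norms via $\Pi_n\Lambda_n = \id_{B_n}$ and the uniform bound $\widetilde M_\Pi$.
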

	
	\section{Outlook}
	\label{sec:outlook}
	In this section we suggest some possible extensions of both the convergence of graph-discretized Whittle--Mat\'ern SPDEs shown in Section~\ref{sec:graph-WM} as well as the underlying abstract results from Sections~\ref{sec:OU-H}--\ref{sec:reaction-diffusion}.
	
	As discussed in Subsection~\ref{sec:example:discussion-assumptions}, the approximation results from Theorem~\ref{thm:d2c-conv-example-SPDEs} for~\eqref{eq:example-SPDE} might be extended to broader classes of domains $\cM$, connectivity length regimes $(h_n)_{n\in\N}$, coefficient functions $\tau, \kappa \from \cM \to [0,\infty)$ and (fractional) powers ${s \in (0,\infty)}$.
	Possible advancements to this end include the establishment of more general $L^\infty$-convergence results for graph Laplacian eigenfunctions (or convergence of Whittle--Mat\'ern operators without using spectral convergence), as well as uniform $L^\infty$-boundedness of the semigroups (for instance via heat kernel estimates).
	Under more restrictive assumptions on the connectivity parameter regime, rates of convergence for the case of purely spatial (i.e., stationary) graph-discretized linear SPDEs were established in~\cite{SanzAlonsoYang2022}.
	The same might be possible in the linear spatiotemporal setting since the discrete-to-continuum Trotter--Kato theorem can be extended to yield error estimates, see~\cite[Section~2.2]{ItoKappel1998}.
	The semilinear cases, however, appear to be out of reach for the methods used in this work.
	
	The proofs of the abstract discrete-to-continuum approximation results from Sections~{\ref{sec:OU-H}--\ref{sec:reaction-diffusion}} largely rely on incorporating projection and lifting operators into arguments from~\cite{KvN2011, KvN2012} in an appropriate way.
	By adapting other proofs from these sources along similar lines, it is likely that our results can be extended further, in particular enabling us to relax the simplifying assumptions that the UMD Banach spaces $(E_n)_{n\in\clos\N}$ have type $2$ and that the driving noise is additive. 
	In fact, we expect more generally that many results asserting continuous dependence of stochastic evolution problems on their coefficients can be extended to discrete-to-continuum approximation theorems via this procedure.
	
	One particular type of problem for which this would be interesting is the class of stochastic evolution \emph{inclusions}, whose drift operators are allowed to be multi-valued; this occurs, for instance, in the Langevin setting if $F_n = \partial\varphi_n$ is the subdifferential of a convex and lower-semicontinuous but non-differentiable functional $\varphi_n$ on the state space, taking values in $(-\infty, \infty]$.
	Continuous dependence results for stochastic inclusion problems have been established in two different settings in~\cite{GessToelle2016, SS2021}; however, neither of these covers the class of \emph{semilinear} inclusions driven by \emph{cylindrical} (i.e., \emph{white}) noise.
	Not much theory appears to be available for such problems, with even the question of well-posedness (for fixed $n \in \clos\N$) being highly nontrivial. 
	In fact, to the best of our knowledge, the only results in this direction concern the (important) subclass of \emph{stochastic reflection problems} (also known as \emph{Skorokhod problems} in the scalar-valued case), given by
	\begin{equation}\label{eq:inclusion}
		\left\lbrace
		\begin{aligned}
			\rd X(t) &\in -A X(t)\rd t - \partial I_{\Gamma}(X(t)) \rd t + \rd W(t), 
			\quad 
			t 
			\in 
			(0,T], 
			\\
			X(0) &= \xi,
		\end{aligned}
		\right.
	\end{equation}
	where $\Gamma$ is a convex subset of a (Hilbertian) state space $H$ and the \emph{indicator functional} $I_{\Gamma} \from H \to (-\infty,\infty]$ is defined to vanish on $\Gamma$ and equal $\infty$ outside of it.
	In the first work on this problem, Nualart and Pardoux~\cite{nualart_white_1992} used a direct approach to show existence and uniqueness in a setting which corresponds to $H \deq L^2(0,1)$, $A \deq -\frac{\rd^2}{\rd x^2}$ with homogeneous Dirichlet or Neumann boundary conditions, and $\Gamma \deq K_0$, where 
	\[
	K_\alpha \deq \{ u \in L^2(0,1) : u(x) \ge -\alpha \text{ for a.e.\ } x \in (0,1) \}, 
	\quad \alpha \in [0,\infty).
	\]
	In~\cite{RZZ2012}, the authors first use the theory of Dirichlet forms to establish well-posedness of~\eqref{eq:inclusion} in the case that $\Gamma$ is a ``regular'' convex subset of a general Hilbert space $H$, a condition which includes $\Gamma \deq K_\alpha$ for $\alpha > 0$ but not $K_0$, which is treated separately using different techniques.
	Lastly, the work~\cite{BdPT2012} describes a variational approach to study~\eqref{eq:inclusion} in a similar setting under the assumption that $0$ belongs to the interior of $\Gamma$, which excludes the choice $\Gamma \deq K_0$. 
	We point out that the argument used on~\cite[p.\ 362]{BdPT2012} to extract a weak${}^*$-convergent sequence from the set $(u_\varepsilon)_{\varepsilon>0}$ in the dual space of $L^\infty(0,T; H)$ appears to be flawed, as it seems to imply that the closed unit ball of the dual of this (non-separable) space is \emph{sequentially} compact, which is not the case.
	Hence, the argument would need to be finished using a \emph{generalized} subsequence (also known as a sub\emph{net}) converging in the weak${}^*$ sense to some $u^*$.
	For this reason, and since the theory for convergence of Dirichlet forms and their associated processes is well established---see for instance~\cite{kolesnikov_mosco_2006} for general results and~\cite{zanella_dirichlet_2017} for an application to Markov chain Monte Carlo scaling---the setting of~\cite{RZZ2012} is perhaps the most promising for an attempt at establishing discrete-to-continuum convergence results for~\eqref{eq:inclusion}.
	
	\section*{Conflict of interest and data availability statement}
	
	The authors declare that they have no conflict of interest.
	Data sharing is not applicable to this article as no datasets were generated or analyzed during the current study.
	
	\appendix 
	
	\section{Proofs of intermediate results in Section~\ref{sec:intermediate-results}}
	\label{app:proofs-of-intermediate-results}
	
	We start with the proof of Lemma~\ref{lem:spectr-conv-squaregrid} which establishes spectral convergence rates for the Laplace--Beltrami operator on the torus, discretized by a uniform grid.
	
	\begin{proof}[Proof of Lemma~\ref{lem:spectr-conv-squaregrid}]
		First suppose $m = 1$. 
		In this case, the continuum Laplace--Beltrami operator reduces to the second derivative $-\frac{\rd^2}{\rd x^2}$ with periodic boundary conditions.
		Its eigenvalues and $L^2(\cM)$-normalized eigenfunctions are respectively given, for all $j \in \N$ and $x \in [0,1]$, by
		\begin{equation}
			\lambda_\infty^{(j)}
			=
			\begin{cases}
				j^2 \pi^2 &\text{if $j$ is even}, \\
				(j-1)^2\pi^2 &\text{if $j$ is odd};
			\end{cases}
			\quad 
			\psi_\infty^{(j)}(x) 
			=
			\begin{cases}
				1 &\text{if } j = 1, \\
				\sqrt{2} \sin(j\pi x) &\text{if } j \text{ is even}, \\
				\sqrt{2} \cos((j-1)\pi x) &\text{if } j \text{ is odd}.
			\end{cases}
		\end{equation}
		That is, $0$ is an eigenvalue corresponding to the constant $1$ eigenfunction, and $(2k)^2\pi^2$ is an eigenvalue with eigenfunctions $x \mapsto \sin(2k\pi x), \cos(2k\pi x)$ for all $k \in \N$.
		
		The eigenvalues $(\lambda_{n}^{(j)})_{j=1}^n$ of the corresponding graph Laplacian, which in this case reduces to the finite difference approximation of the second derivative on the grid with $n \in \N$ points, are given by
		\begin{equation}
			\lambda_n^{(j)}
			=
			\begin{cases}
				4n^2 \sin^2 \Bigl(\frac{\pi j}{2n}\Bigr) &\text{if $j$ is even}, \\
				4n^2 \sin^2 \Bigl(\frac{\pi(j-1)}{2n}\Bigr) &\text{if $j$ is odd}.
			\end{cases}
		\end{equation}
		The corresponding $L^2(\cM_n)$-normalized eigenfunctions are 
		\begin{equation}
			\psi_n^{(j)}(x_n^{(i)}) 
			=
			\begin{cases}
				1 &\text{if } j = 1, \\
				(-1)^i &\text{if } j = n \text{ is even}, \\
				\sqrt{2} \sin\bigl(j\pi x_n^{(i)}\bigr) &\text{if } j \neq n \text{ and } j \text{ is even}, \\
				\sqrt{2} \cos\bigl((j-1)\pi x_n^{(i)}\bigr) &\text{if } j  \text{ is odd}.
			\end{cases}
		\end{equation}
		Let $j \in \{1,\dots,n\}$. 
		Supposing that $j$ is even (the odd case being analogous), we can write 
		\begin{equation}
			\lambda_\infty^{(j)} - \lambda_n^{(j)}
			=
			j^2\pi^2 \biggl(1 - \frac{4n^2}{j^2\pi^2}\sin^2 \Bigl(\frac{\pi j}{2n}\Bigr)\biggr)
			=
			j^2\pi^2 \biggl(1 - \biggl[\frac{2n}{j\pi}\sin \Bigl(\frac{\pi j}{2n}\Bigr)\biggr]^2\biggr),
		\end{equation}
		so that the estimate in~\eqref{eq:eigenval-conv-squaregrid} for $m = 1$ follows from the elementary inequality 
		$
		0 \le 1 - (\sin(x)/x)^2 \le \frac{1}{3}x^2,
		$
		which is valid for all $x \in \R$.
		Estimate~\eqref{eq:eigenfunc-conv-squaregrid} is a consequence of the fact that the sine and cosine functions are $1$-Lipschitz; note that we only consider $j \in \{1,\dots,n-1\}$ to avoid the case where $j = n$ for even $n$.
		
		The result for higher dimensions $m \in \N$ can be derived from the $m = 1$ case. 
		Indeed, by separation of variables in the continuum case, or by writing the discretized operator as a Kronecker sum of $m$ one-dimensional discretizations, one finds that the eigenvalues and eigenvectors of the $m$-dimensional operators are given by sums and products, respectively, of their $1$-dimensional counterparts.
		From this, one can deduce the desired result.
	\end{proof}
	
	Next we prove Theorem~\ref{thm:convergence-An} regarding the convergence of the fractional resolvent operators $\widetilde R_n^{\beta'}$ in various settings and norms.
	
	\begin{proof}[Proof of Theorem~\ref{thm:convergence-An}]
		Assertions~\ref{thm:convergence-An:a}--\ref{thm:convergence-An:c} can all be shown using analogous arguments. 
		Thus, we only provide a detailed proof for part~\ref{thm:convergence-An:b}, being the most involved case, and subsequently summarize the changes needed for~\ref{thm:convergence-An:a} and~\ref{thm:convergence-An:c}.
		
		\ref{thm:convergence-An:b} \textbf{Step 1 \textnormal{(Setup and notation)}.}
		The operator $\widetilde R_n^{\beta'}$ acts on functions $f \in L^2(\cM)$ in the following way:
		\begin{align}
			\widetilde R_n^{\beta'} f 
			&=
			\sum_{j=1}^n \bigl(1 + [\lambda^{(j)}_n]^{s}\bigr)^{-{\beta'}} \scalar{\Pi_n f, 
				\psi^{(j)}_n}{L^2(\cM_n)} \Lambda_n \psi^{(j)}_n
			\\&=
			\sum_{j=1}^n 
			\bigl(1 + [\lambda^{(j)}_n]^{s}\bigr)^{-{\beta'}} 
			\scalar{f, \Lambda_n \psi^{(j)}_n}{L^2(\cM)} 
			\Lambda_n \psi^{(j)}_n.
		\end{align}
		Here, we used the fact that $\Pi_n^* = \Lambda_n$ (see~\eqref{eq:Lambda-Pi-adjoint}) on the second line.
		Using the tensor notation from Section~\ref{sec:prelims:notation} and denoting $\widetilde \psi^{(j)}_n \deq \Lambda_n \psi^{(j)}_n$, we can write these operators more concisely as follows:
		\begin{equation}
			U_n 
			\deq 
			\widetilde R_n^{{\beta'}} 
			=
			\sum_{j=1}^n 
			\bigl(1 + [\lambda^{(j)}_n]^{s}\bigr)^{-{\beta'}} \,
			\widetilde \psi^{(j)}_n \otimes \widetilde \psi^{(j)}_n.
		\end{equation}
		By Assumption~\ref{ass:connectivity-regimes}\ref{ass:connectivity-regimes:suboptimal-range}, there exists a sequence of natural numbers $(k_n)_{n\in\N}$ which satisfies the conditions of Theorem~\ref{thm:eigenfunc-conv-randomgraph}\ref{thm:eigenfunc-conv-randomgraph:Linfty}, as well as the relation
		\begin{equation}\label{eq:kn-large-enough}
			k_n 
			\gg 
			n^{\frac{m}{4s\beta}}
			\gtrsim
			n^{\frac{m}{4s\beta'}}.
		\end{equation}
		We use the sequence $(k_n)_{n\in\N}$ to define the following approximations for $n \in \N$:
		\begin{align}
			U_n^1 
			&\deq 
			\sum_{j=1}^{k_n} 
			\bigl(1 + [\lambda^{(j)}_\infty]^s\bigr)^{-\beta'}
			\psi^{(j)}_\infty \otimes \psi^{(j)}_\infty, 
			\\
			U_n^2 
			&\deq 
			\sum_{j=1}^{k_n} 
			\bigl(1 + [\lambda^{(j)}_\infty]^s\bigr)^{-\beta'}
			\widetilde \psi^{(j)}_n \otimes \widetilde \psi^{(j)}_n, 
			\\
			U_n^3 
			&\deq 
			\sum_{j=1}^{k_n} 
			\bigl(1 + [\lambda^{(j)}_n]^s\bigr)^{-\beta'}
			\widetilde \psi^{(j)}_n \otimes \widetilde \psi^{(j)}_n.
		\end{align}
		For notational convenience, we will abbreviate the $\LO(L^2(\cM); L^\infty(\cM))$-norm by $\norm{\,\cdot\,}{2\to\infty}$ throughout this proof.
		We will make repeated use of the following estimate:
		Given an operator of the form $U = \sum_j \alpha_j e_j \otimes f_j$ for some scalars $(\alpha_j)_{j} \subseteq \R$, an orthonormal system $(e_j)_j \subseteq L^2(\cM)$ and some functions $(f_j)_j \subseteq L^\infty(\cM)$, we have by the Cauchy--Schwarz inequality:
		\begin{equation}\label{eq:L2-Linfty-estimate}
			\norm{U}{2\to\infty}
			\le 
			\sup\nolimits_j \norm{f_j}{L^\infty(\cM)}
			\biggl(\sum\nolimits_{j} \abs{\alpha_j}^2 \biggr)^{\frac{1}{2}}.
		\end{equation}
		Moreover, it is immediate from the definition of the $\norm{\,\cdot\,}{2\to\infty}$-norm that
		\begin{equation}\label{eq:L2Linfty-rank1tensor}
			\norm{h \otimes f}{2\to\infty} = \norm{h}{L^2(\cM)} \norm{f}{L^\infty(\cM)}
			\quad
			\text{for all } h \in L^2(\cM) \text{ and } f \in L^\infty(\cM).
		\end{equation}
		
		\textbf{Step 2 \textnormal{($U_\infty - U_n^1$ and $U_n^3 - U_n$)}.}
		For the difference between $U_\infty$ and $U_n^1$, we find using~\eqref{eq:L2-Linfty-estimate} and Assumption~\ref{ass:uniform-Linfty-bdd-eigenfuncs}:
		\begin{equation}\label{eq:truncation-infseries}
			\begin{aligned}
				\norm{U_\infty - U_n^1}{2\to\infty}^2
				\le 
				M^2_{\psi,\infty}
				\sum_{j=k_n+1}^\infty 
				\bigl(1 + [\lambda^{(j)}_\infty]^s\bigr)^{-2\beta'}.
			\end{aligned}
		\end{equation}
		Recalling Weyl's law~\eqref{eq:weyls-law}, which implies that
		$\bigl(1 + [\lambda^{(j)}_\infty]^s\bigr)^{-2\beta'} \eqsim_{\cM} j^{-4s\beta'/m}$, we observe that the series on the right-hand side is convergent precisely when $\beta' > \frac{m}{4s}$. 
		Since $k_n \to \infty$ as $n \to \infty$, this implies $U_\infty \to U_n^1$ in $\LO(L^2(\cM); L^\infty(\cM))$.
		
		Similarly, for the difference between $U_n^3$ and $U_n$, we have
		\begin{equation}
			\begin{aligned}
				\norm{U_n^3 - U_n}{2 \to \infty}^2
				&\le 
				M_{\psi,\infty}^2
				\sum_{j=k_n + 1}^n 
				\bigl( 1 + [\lambda^{(j)}_n]^s \bigr)^{-2\beta'}
				\le 
				M_{\psi,\infty}^2
				\bigl(n - k_n\bigr)
				\bigl( 1 + [\lambda^{(k_n)}_n]^s \bigr)^{-2\beta'},
			\end{aligned}
		\end{equation}
		where the second inequality is due to the non-decreasing order of $(\lambda_n^{(j)})_{j=1}^n$.
		Moreover, as a consequence of Theorem~\ref{thm:eigenval-conv-randomgraph}, there exists a constant $C' > 0$ such that, $\widetilde \bbP$-a.s., for all $n \in \N$ and $j \in \{1,\dots,k_n\}$, we have $\lambda_n^{(j)} \ge C'\lambda_\infty^{(j)}$.
		In particular,  $\lambda_n^{(k_n)} \gtrsim \lambda_\infty^{(k_n)}$.
		Together with Weyl's law, we find $1 + [\lambda^{(k_n)}_n]^s \gtrsim_{\cM} k_n^{{2s}/{m}}$, so that the convergence of this difference is due to~\eqref{eq:kn-large-enough}:
		\begin{equation}\label{eq:Un3-Un-converges}
			\begin{aligned}
				\norm{U_n^3 - U_n}{2 \to \infty}^2
				\lesssim_{\cM} 
				M_{\psi,\infty}^2 n k_n^{-{4s\beta'}/{m}}
				\to 0 
				\quad\text{as } n \to \infty.
			\end{aligned}
		\end{equation}
		
		\textbf{Step 3 \textnormal{($U_n^1 - U_n^2$)}.}
		In order to show that 
		\begin{equation}
			U_n^1 - U_n^2 \to 0 
			\quad 
			\text{in } L^0(\widetilde\Omega; \LO(L^2(\cM); L^\infty(\cM))) \quad \text{as } n \to \infty,
		\end{equation}
		we first fix an arbitrary $\varepsilon > 0$.
		Then, for all $\ell, n \in \N$ such that $k_n > \ell$, we split off the first $\ell$ terms and use the triangle inequality to obtain
		\begin{equation}\label{eq:step3}
			\begin{aligned}
				&\norm{U_n^1 - U_n^2}{2 \to \infty}
				\le
				\sum_{j=1}^{\ell} 
				\bigl(1 + [\lambda^{(j)}_\infty]^s\bigr)^{-\beta'}
				\norm{\psi^{(j)}_\infty \otimes \psi^{(j)}_\infty 
					-
					\widetilde \psi^{(j)}_n \otimes \widetilde \psi^{(j)}_n}{2 \to \infty}
				\\
				&+\Norm[\bigg]{\sum_{j=\ell + 1}^{k_n} 
					\bigl(1 + [\lambda^{(j)}_\infty]^s\bigr)^{-\beta'}
					\psi^{(j)}_\infty \otimes \psi^{(j)}_\infty}_{2 \to \infty}
				+
				\Norm[\bigg]{
					\sum_{j=\ell + 1}^{k_n} 
					\bigl(1 + [\lambda^{(j)}_\infty]^s\bigr)^{-\beta'}
					\widetilde \psi^{(j)}_n \otimes \widetilde \psi^{(j)}_n}_{2 \to \infty}.
			\end{aligned}
		\end{equation}
		Using the triangle inequality once more, followed by~\eqref{eq:L2Linfty-rank1tensor} and Assumption~\ref{ass:uniform-Linfty-bdd-eigenfuncs}, the
		norms in the summation over $j \in \{1,\dots,\ell\}$ can be bounded by
		\begin{align}
			&\norm{\psi^{(j)}_\infty \otimes \psi^{(j)}_\infty 
				-
				\widetilde \psi^{(j)}_n \otimes \widetilde \psi^{(j)}_n}{2 \to \infty}
			\\&\quad \le 
			\norm{\psi_\infty^{(j)} \otimes (\psi_\infty^{(j)} - \widetilde \psi_n^{(j)})}{2 \to \infty}
			+
			\norm{(\psi_\infty^{(j)} - \widetilde\psi_n^{(j)}) \otimes \widetilde \psi_n^{(j)}}{2 \to \infty}
			\\&\quad = 
			\norm{\psi_\infty^{(j)} - \widetilde \psi_n^{(j)}}{L^\infty(\cM)}
			+
			\norm{\psi_\infty^{(j)} - \widetilde\psi_n^{(j)}}{L^2(\cM)}
			\norm{\widetilde\psi_n^{(j)}}{L^\infty(\cM)}
			\\&\quad \le 
			(1 + M_{\psi,\infty}) \norm{\psi_\infty^{(j)} - \widetilde\psi_n^{(j)}}{L^\infty(\cM)},
		\end{align}
		whereas the remaining two summations can be treated by arguing as for~\eqref{eq:truncation-infseries}. 
		Together, this yields
		\begin{equation}
			\norm{U_n^1 - U_n^2}{2\to\infty}
			\le
			\begin{aligned}[t]
				(1 + M_{\psi,\infty})
				&\sum_{j=1}^{\ell} 
				\bigl(1 + [\lambda^{(j)}_\infty]^s\bigr)^{-\beta'}
				\norm{\psi_\infty^{(j)} - \widetilde\psi_n^{(j)}}{L^\infty(\cM)}
				\\
				&+2\biggl(\sum_{j=\ell + 1}^{\infty} \bigl(1 + [\lambda^{(j)}_\infty]^s\bigr)^{-2\beta'} \biggr)^{\frac{1}{2}}
				\label{eq:sum-tail}.
			\end{aligned}
		\end{equation}
		Since we have already seen in Step 2 that the latter series converges, we can fix $\ell \in \N$ so large that the second sum on the right-hand side is less than $\frac{1}{2}\varepsilon$. 
		Moreover, it follows from Theorem~\ref{thm:eigenfunc-conv-randomgraph}\ref{thm:eigenfunc-conv-randomgraph:Linfty} that $\norm{\psi_\infty^{(j)} - \widetilde\psi_n^{(j)}}{L^\infty(\cM)} \to 0$ in $L^0(\widetilde\Omega,\widetilde\bbP)$ as $n \to \infty$ for every $j \in \{1,\dots,\ell\}$. 
		In particular, there exists $N \in \N$ such that, for all $n \ge N$, the first sum on the right-hand side is less than $\frac{1}{2}\varepsilon$, and thus the whole right-hand side is less than $\varepsilon$, with probability $\widetilde \bbP \ge 1 - \varepsilon$.
		This shows $\norm{U_n^1 - U_n^2}{2\to\infty} \to 0$ in probability, as desired.
		
		\textbf{Step 4 \textnormal{($U_n^2 - U_n^3$)}.}
		Finally, the difference $U_n^2 - U_n^3$ can be treated in the same manner as Step 3, namely by writing, for all $\ell, n \in \N$ such that $k_n > \ell$,
		\begin{align}\label{eq:approximation-of-eigenvalues}
			\begin{aligned}
				\norm{U_n^2 - U_n^3}{2 \to \infty}^2
				&\lesssim_{M_{\psi,\infty}} 
				\sum_{j=1}^{k_n} 
				\abs[\big]{\bigl(1 + [\lambda^{(j)}_n]^{s}\bigr)^{-\beta'} - \bigl(1 + [\lambda^{(j)}_\infty]^{s}\bigr)^{-\beta'}}^2
				\\
				&\le 
				\begin{aligned}[t]
					&\sum_{j=1}^{\ell} 
					\abs[\big]{\bigl(1 + [\lambda^{(j)}_n]^{s}\bigr)^{-\beta'} - \bigl(1 + [\lambda^{(j)}_\infty]^{s}\bigr)^{-\beta'}}^2
					\\&+
					2 \sum_{j=\ell + 1}^{k_n}\bigl(1 + [\lambda^{(j)}_n]^{s}\bigr)^{-2\beta'}
					+
					2 \sum_{j=\ell + 1}^{k_n}\bigl(1 + [\lambda^{(j)}_\infty]^{s}\bigr)^{-2\beta'}.
				\end{aligned}
			\end{aligned}
		\end{align}
		Using the fact that, $\widetilde\bbP$-a.s., we have $\lambda_n^{(j)} \gtrsim \lambda_\infty^{(j)}$ for all $j \in \{1, \dots, k_n\}$ (see Step~2), the latter two summations can be bounded, up to a multiplicative constant, by the convergent series
		$
		\sum_{j=\ell + 1}^{\infty}\bigl(1 + [\lambda^{(j)}_\infty]^{s}\bigr)^{-2\beta'}.
		$
		Combined with the eigenvalue convergence asserted by Theorem~\ref{thm:eigenval-conv-randomgraph}, which can be applied to the remaining summation, we obtain $\norm{U_n^2 - U_n^3}{2\to\infty} \to 0$ as $n \to \infty$, $\widetilde \bbP$-a.s., by arguing as in Step~3. 
		Thus, we have shown part~\ref{thm:convergence-An:b}.
		
		\ref{thm:convergence-An:a}
		Replace~\eqref{eq:L2-Linfty-estimate} by the identity
		$
		\norm{U}{\LO_2(L^2(\cM))}^2
		=
		\sum\nolimits_{j} \abs{\alpha_j}^2 \norm{f_j}{L^2(\cM)}^2,
		$
		which follows directly from the definition of the Hilbert--Schmidt norm.
		Assumption~\ref{ass:uniform-Linfty-bdd-eigenfuncs} is not required since all the eigenfunctions are $L^2$-normalized.
		The sufficiency of Assumption~\ref{ass:connectivity-regimes}\ref{ass:connectivity-regimes:optimal-range} and the $\widetilde{\bbP}$-a.s.\ convergence in the conclusion are due to the use of Theorem~\ref{thm:eigenfunc-conv-randomgraph}\ref{thm:eigenfunc-conv-randomgraph:L2} instead of Theorem~\ref{thm:eigenfunc-conv-randomgraph}\ref{thm:eigenfunc-conv-randomgraph:Linfty}.
		
		\ref{thm:convergence-An:c}
		Recall from Setting~\ref{ex:square-grid} that $h_n \deq n^{-\frac{1}{m}}$ by definition.
		In view of Lemma~\ref{lem:spectr-conv-squaregrid}, we can take $k_n \deq n - 1$, hence neither of the bounds on $h_n$ from Assumption~\ref{ass:connectivity-regimes} is needed.
		Indeed, in the proof of~\ref{thm:convergence-An:b}, the lower bound~\eqref{eq:kn-large-enough} on $k_n$ was only used in~\eqref{eq:Un3-Un-converges}, which becomes $\norm{U_n^3 - U_n}{2 \to \infty}^2 \lesssim_{\cM} M_{\psi,\infty}^2 k_n^{-{4s\beta'}/{m}}$ in the current situation, and this tends to zero since, trivially, $k_n \to \infty$ as $n \to \infty$.
	\end{proof}
	Lastly, we prove Lemma~\ref{lem:uniform-ultracontractivity} asserting the uniform ultracontractivity of the semigroups $(S_n(t))_{t\ge0}$ associated to the (discretized) generalized Whittle--Mat\'ern operators $-(\cL_n^{\kappa,\tau})^s$.
	\begin{proof}[Proof of Lemma~\ref{lem:uniform-ultracontractivity}]
		\ref{lem:uniform-ultracontractivity:a}  
		For $p = \infty$, the statement holds by Assumption~\ref{ass:uniform-Linfty-bdd-semigroup}\ref{ass:uniform-Linfty-bdd-semigroup:bdd}.
		For $p = 2$, we note that, for all $n \in \clos\N$, $t > 0$, and $f \in L^2(\cM_n)$,
		\begin{equation}
			\begin{aligned}
				\norm{S_n(t) f}{L^\infty(\cM_n)}
				=
				\norm{R_n^\beta (\id_n + A_n)^{\beta} S_n(t) f}{L^\infty(\cM_n)}
				\le 
				t^{-\beta} \norm{R_n^\beta}{2 \to \infty}
				\norm{f}{L^2(\cM_n)},
			\end{aligned}
		\end{equation}
		where we used~\eqref{eq:app-analytic-semigroup-est}, as $(S_n(t))_{t\ge0}$ is a contractive analytic semigroup on $L^2(\cM_n)$.
		In the proof of Theorem~\ref{thm:convergence-An}\ref{thm:convergence-An:b}, we found 
		$
		\norm{R_n^\beta}{2 \to \infty}
		\le 
		M_{\psi,\infty}
		\sum_{j=1}^n (1 + [\lambda_n^{(j)}]^s)^{-2\beta}
		$
		under Assumption~\ref{ass:uniform-Linfty-bdd-eigenfuncs}.
		Since Assumption~\ref{ass:connectivity-regimes}\ref{ass:connectivity-regimes:suboptimal-range} implies~\ref{ass:connectivity-regimes}\ref{ass:connectivity-regimes:optimal-range} with the same $\beta$, and since the estimate of $\norm{R_n^\beta}{2 \to \infty}$ only involves the eigenvalues (and not the eigenfunctions), we can in both cases argue as in Theorem~\ref{thm:convergence-An}\ref{thm:convergence-An:a}, under Assumption~\ref{ass:kappa-tau}\ref{ass:kappa-tau:general}, to deduce that, $\widetilde \bbP$-a.s., the right-hand side can be bounded independently of $n$.
		This proves the statement for $p = 2$, hence by the Riesz--Thorin interpolation theorem~\cite[Theorem~1.3.4]{Grafakos2014ClassicalFourier}, the lemma holds for all $p \in [2,\infty]$, with $M_{S,p} \le M_{S,2}^{\frac{2}{p}} M_{S,\infty}^{1-\frac{2}{p}}$.
		
		\ref{lem:uniform-ultracontractivity:b}
		The differences with part~\ref{lem:uniform-ultracontractivity:a} are the use of Theorem~\ref{thm:convergence-An}\ref{thm:convergence-An:c} instead of~\ref{thm:convergence-An}\ref{thm:convergence-An:b}, and the fact that Assumption~\ref{ass:uniform-Linfty-bdd-eigenfuncs} is automatically satisfied.
	\end{proof}
	
	\section{Fractional parabolic integration}
	\label{app:frac-par-int}
	
	Let $(E, \norm{\,\cdot\,}{E})$ be a Banach space. Suppose that $-A \from \dom{A} \subseteq E \to E$ 
	generates a strongly continuous semigroup $(S(t))_{t\ge0}$. This implies the 
	existence of constants $M \in [1,\infty)$ and $w \in \R$ such that 
	\begin{equation}\label{eq:app-semigroup-est}
		\norm{S(t)}{\LO(E)}
		\le 
		M e^{wt} 
		\quad 
		\text{for all } t \in [0,\infty).
	\end{equation}
	Fixing $T \in (0,\infty)$, we define
	$k_s \from \R \to \LO(E)$ by $k_s(\tau) \deq 
	\frac{1}{\Gamma(s)} 
	\tau^{s-1}S(s)\indic{[0,T]}(\tau)$ for $\tau \in \R$.
	For any function $f \from (0,T) \to E$ such that the following Bochner integral converges in $E$ for $s \in (0,\infty)$ and a.e.\ $t \in (0,T)$, 
	we set
	\begin{equation}\label{eq:app-def-fracParab}
		\mathfrak I_{A}^s f(t)
		\deq 
		k_s * f(t)
		= 
		\frac{1}{\Gamma(s)} 
		\int_0^t (t-\tau)^{s - 1} S(t-\tau) f(\tau) \rd \tau.
	\end{equation}
	For $s = 0$ we set $\fI_{A}^0 \deq \id_E$. 
	The following properties of the \emph{fractional parabolic integration operator} $\fI_A^s$ (for a single operator $A$) are 
	well known, see~\cite[Proposition~5.9]{DaPrato2014}, and used 
	throughout the main text. We will state them here for the sake of 
	self-containedness.
	
	\begin{proposition}\label{prop:app-basicPropsIA}
		Suppose that $-A \from \dom{A} \subseteq E \to E$ generates a strongly 
		continuous semigroup $(S(t))_{t\ge0} \subseteq \LO(E)$ 
		satisfying~\eqref{eq:app-semigroup-est}.
		Then, for every $s \in [0,\infty)$, $p \in [1,\infty]$ and $T \in 
		(0,\infty)$, we have:
		\begin{enumerate}[(a), series=app1]
			\item \label{prop:app-basicPropsIA:Lp} $\fI_{A}^s$ is bounded 
			from $L^p(0,T; E)$ to itself, with an operator norm depending only on 
			$s$, $p$, $T$, $w$ and $M$.
		\end{enumerate}
		If $(F, \norm{\,\cdot\,}{F})$ is a Banach space for which there 
		exist $M' \in [1,\infty)$ and $\alpha \in [0,\infty)$ such that 
		\begin{equation}\label{eq:app-assumption-analyticEst}
			S(t) \in \LO(E; F) \quad \text{with} \quad
			\norm{S(t)}{\LO(E; F)} \le M' t^{-\alpha/2} 
			\quad 
			\text{for all } t \in [0,\infty),
		\end{equation}
		and in addition we have either $p = 1, \, s \ge 1 + \frac{\alpha}{2}$ or 
		$p > 1, \, s > \frac{1}{p} + \frac{\alpha}{2}$, then
		\begin{enumerate}[(a), resume=app1]
			\item\label{prop:app-basicPropsIA:Lp-CHdot} $\fI_{A}^s$ is bounded 
			from  
			$L^p(0,T; E)$ to $C([0,T]; F)$, with an operator norm depending only on 
			$s$, $p$, $T$ and $M'$.
		\end{enumerate}
	\end{proposition}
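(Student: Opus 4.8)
The plan is to treat both parts as convolution estimates against the scalar kernel controlling the operator norm of $k_s$, reducing the boundedness claims to Young's and Hölder's inequalities, and then to upgrade the boundedness in part~\ref{prop:app-basicPropsIA:Lp-CHdot} to continuity by a density argument.

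For part~\ref{prop:app-basicPropsIA:Lp} I would first dispose of the trivial case $s = 0$, where $\fI_A^0 = \id_E$. For $s > 0$, the growth bound~\eqref{eq:app-semigroup-est} yields the pointwise estimate $\norm{\fI_A^s f(t)}{E} \le (g_s * \norm{f(\,\cdot\,)}{E})(t)$, where $g_s(\tau) \deq \frac{M}{\Gamma(s)} \tau^{s-1} e^{w\tau} \indic{[0,T]}(\tau)$. Since $s > 0$, the singularity $\tau^{s-1}$ is integrable at the origin, so $g_s \in L^1(0,T)$ with $\norm{g_s}{L^1(0,T)}$ depending only on $s$, $T$, $w$ and $M$. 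Extending $f$ by zero and applying Young's convolution inequality then gives $\norm{\fI_A^s f}{L^p(0,T; E)} \le \norm{g_s}{L^1(0,T)} \norm{f}{L^p(0,T; E)}$ for every $p \in [1,\infty]$.

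For part~\ref{prop:app-basicPropsIA:Lp-CHdot} I would first establish the supremum bound. Using~\eqref{eq:app-assumption-analyticEst} in place of~\eqref{eq:app-semigroup-est},
\[
\norm{\fI_A^s f(t)}{F}
\le
\frac{M'}{\Gamma(s)} \int_0^t (t-\tau)^{s - 1 - \alpha/2} \norm{f(\tau)}{E} \rd \tau,
\]
and Hölder's inequality with conjugate exponent $p'$ bounds the right-hand side by a constant multiple of $\norm{f}{L^p(0,T; E)}$, as long as the kernel $\tau \mapsto (t-\tau)^{s-1-\alpha/2}$ lies in $L^{p'}(0,t)$ uniformly in $t \in [0,T]$. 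This integrability is precisely the stated dichotomy: for $p > 1$ it amounts to $(s - 1 - \tfrac{\alpha}{2})\,p' > -1$, i.e.\ $s > \frac{1}{p} + \frac{\alpha}{2}$, whereas for $p = 1$ it requires the exponent to be nonnegative, i.e.\ $s \ge 1 + \frac{\alpha}{2}$. Taking the supremum over $t \in [0,T]$ yields $\norm{\fI_A^s f}{C([0,T]; F)} \lesssim_{(s, p, T, M')} \norm{f}{L^p(0,T; E)}$, interpreting the left-hand side at this stage merely as a bounded $F$-valued function.

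It then remains to promote boundedness to continuity, which I expect to be the main obstacle. After the substitution $\sigma = t - \tau$ one has $\fI_A^s f(t) = \frac{1}{\Gamma(s)} \int_0^T \indic{[0,t]}(\sigma)\, \sigma^{s-1} S(\sigma) f(t - \sigma) \rd \sigma$. For a continuous integrand $f \in C([0,T]; E)$, letting $t \to t_0$ and invoking dominated convergence---with dominating function $\sigma \mapsto M' \sigma^{s - 1 - \alpha/2} \norm{f}{C([0,T]; E)}$, which is integrable since $s > \frac{\alpha}{2}$---shows that $\fI_A^s f(t) \to \fI_A^s f(t_0)$ in $F$, so $\fI_A^s f \in C([0,T]; F)$. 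For general $f \in L^p(0,T; E)$ with $p < \infty$, I would approximate $f$ in $L^p$ by continuous functions $f_k$; the supremum bound then forces $\fI_A^s f_k \to \fI_A^s f$ uniformly on $[0,T]$, and since $C([0,T]; F)$ is a closed subspace of the bounded $F$-valued functions, the limit $\fI_A^s f$ is continuous as well. The case $p = \infty$ reduces to a finite exponent: choosing $p_0 \in (1,\infty)$ with $\frac{1}{p_0} < s - \frac{\alpha}{2}$ and using the continuous embedding $L^\infty(0,T; E) \hookrightarrow L^{p_0}(0,T; E)$, valid because $(0,T)$ has finite measure, lets the already-proven case apply. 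The subtlety worth flagging is that one cannot argue continuity for arbitrary $L^p$ integrands via pointwise convergence of $f(t-\sigma)$, since this fails for merely integrable $f$; routing through density and the closedness of $C([0,T]; F)$ is what makes the argument go through.
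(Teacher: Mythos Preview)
The paper does not supply its own proof of this proposition; it records the statement for self-containedness and cites \cite[Proposition~5.9]{DaPrato2014}. Your argument is correct and follows the standard route: Young's inequality against the $L^1$ scalar kernel $g_s$ for part~\ref{prop:app-basicPropsIA:Lp}, and for part~\ref{prop:app-basicPropsIA:Lp-CHdot} the H\"older estimate giving the supremum bound, followed by continuity for $f \in C([0,T];E)$ via dominated convergence and then extension to general $f \in L^p$ by density and closedness of $C([0,T];F)$ in the sup norm. This is essentially the approach in the cited reference, so there is no meaningful comparison to draw.
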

	
	For a sequence of operators $(A_n)_{n\in\clos\N}$ on Banach spaces which satisfy the appropriate discrete-to-continuum assumptions from the main text, the proposition above implies the following corollary regarding uniform boundedness of the sequence $(\widetilde{\mathfrak I}_{A_n}^s)_{n\in\clos\N}$, where $\widetilde{\mathfrak I}_{A_n}^s \deq \Lambda_n \mathfrak I_{A_n}^s \Pi_n$ for all $n \in \clos\N$.
	From this, in turn, one can derive Proposition~\ref{prop:strong-continuity-IAntilde} below asserting the strong convergence of these operators.
	
	\begin{corollary}\label{cor:unifbb-IAtilde}
		Let the Banach spaces $(E_n, \norm{\,\cdot\,}{E_n})_{n\in\clos\N}$, $(\widetilde E, \norm{\,\cdot\,}{\widetilde E})$ and the linear operators $(A_n)_{n\in\clos\N}$ satisfy the assumptions of Theorem~\ref{thm:d2c-TKapprox}, and suppose that ${p \in [1, \infty]}$ and $s \in [0,\infty)$. 
		The following assertions hold:
		\begin{enumerate}[(a), series=app-cor]
			\item The sequence $(\widetilde{\fI}^s_{A_n})_{n\in\clos \N}$ is uniformly bounded in $\LO(L^p(0,T; \widetilde E))$.
			\item \label{cor:unifbb-IAtilde:b}
			The sequence $(\widetilde{\fI}^s_{A_n})_{n\in\clos \N}$ is uniformly bounded in $\LO(L^p(0,T; \widetilde E); C([0,T]; \widetilde E))$ if, either, $p = 1$ and $s \ge 1$, or $p > 1$ and $s > \frac{1}{p}$.
		\end{enumerate}
		If the spaces $(E_n)_{n\in\clos\N}$, $(B_n)_{n\in\clos\N}$ and $\widetilde B$ are as in Assumptions~\ref{ass:disc2}, \ref{ass:semigroup-B2} 
		and~\ref{ass:emb-Htheta-B-H2}, and we have $s > \frac{1}{p} + 
		\frac{\theta}{2}$, then
		\begin{enumerate}[(a), resume=app-cor]
			\item \label{cor:unifbb-IAtilde:c}
			the sequence $(\widetilde{\fI}^s_{A_n})_{n\in\clos \N}$ is 
			uniformly bounded 
			in $\LO(L^p(0,T; E_\infty); C([0,T]; \widetilde B))$.
		\end{enumerate}
	\end{corollary}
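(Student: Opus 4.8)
The plan is to deduce all three assertions from the single-operator bounds in Proposition~\ref{prop:app-basicPropsIA} by exploiting the factorization $\widetilde{\fI}^s_{A_n} = \Lambda_n \mathfrak I_{A_n}^s \Pi_n$. The crucial observation is that the operator-norm bounds furnished by Proposition~\ref{prop:app-basicPropsIA} depend on the semigroup only through the constants $M$ (resp.\ $M'$), $\alpha$ and $w$ in its growth estimate; under the standing hypotheses these are uniform in $n \in \clos\N$, so after sandwiching between the uniformly bounded maps $\Pi_n$ and $\Lambda_n$ one obtains $n$-independent bounds. Since $\Pi_n$ and $\Lambda_n$ act pointwise in the time variable, their norms as operators on the relevant Bochner and continuous-function spaces coincide with $M_\Pi$, $M_\Lambda$ (or $\widetilde M_\Lambda$), and the factorization respects the target spaces.

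For part~(a) I would apply Proposition~\ref{prop:app-basicPropsIA}\ref{prop:app-basicPropsIA:Lp} to each $A_n$, $n \in \clos\N$. Under the hypotheses of Theorem~\ref{thm:d2c-TKapprox}, Assumption~\ref{ass:operators3} provides the uniform growth bound $\norm{S_n(t)}{\LO(E_n)} \le M_S e^{-wt}$, so the norm of $\mathfrak I_{A_n}^s$ on $L^p(0,T; E_n)$ is bounded by a constant $C(s,p,T,w,M_S)$ independent of $n$. The estimate
\[
\norm{\widetilde{\fI}^s_{A_n} f}{L^p(0,T;\widetilde E)}
\le M_\Lambda \norm{\mathfrak I_{A_n}^s \Pi_n f}{L^p(0,T;E_n)}
\le M_\Lambda\, C\, M_\Pi \norm{f}{L^p(0,T;\widetilde E)}
\]
then yields the claim.

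For part~(b) I would instead invoke Proposition~\ref{prop:app-basicPropsIA}\ref{prop:app-basicPropsIA:Lp-CHdot} with $F \deq E_n$ and $\alpha \deq 0$: then the estimate~\eqref{eq:app-assumption-analyticEst} reduces to $\norm{S_n(t)}{\LO(E_n)} \le M'$, which holds on $[0,T]$ with $M'$ depending only on $M_S$, $w$, $T$, and the exponent hypothesis of the proposition becomes exactly the stated condition ($p=1$, $s \ge 1$ or $p>1$, $s > \tfrac{1}{p}$). The same sandwich argument, now landing in $C([0,T]; E_n)$ before applying $\Lambda_n$, gives the bound. Part~(c) is identical except that Assumption~\ref{ass:emb-Htheta-B-H2} supplies the smoothing estimate $\norm{S_n(t)}{\LO(E_n; B_n)} \le M_\theta t^{-\theta/2}$, so I would apply Proposition~\ref{prop:app-basicPropsIA}\ref{prop:app-basicPropsIA:Lp-CHdot} with $F \deq B_n$, $\alpha \deq \theta$ and $M' \deq M_\theta$; the hypothesis $s > \tfrac{1}{p} + \tfrac{\theta}{2}$ is precisely what the proposition requires. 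Here the domain is $L^p(0,T; E_\infty)$, so the initial bound remains $M_\Pi$ (via $\Pi_n \in \LO(E_\infty; E_n)$), while the final lifting bound becomes $\widetilde M_\Lambda$ from~\ref{ass:disc2}, since $\Lambda_n$ now maps $B_n$ into $\widetilde B$.

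There is no genuinely hard step; the corollary is essentially a bookkeeping exercise. The only point requiring care is to confirm that every constant extracted from Proposition~\ref{prop:app-basicPropsIA} is genuinely uniform over all $n \in \clos\N$, including $n = \infty$ (for which $\Lambda_\infty = \Pi_\infty = \id$). This follows because the uniform sectoriality underlying~\ref{ass:operators}/Assumption~\ref{ass:operators3} and the uniform ultracontractivity in~\ref{ass:emb-Htheta-B-H2} supply a common $M_S$, $w$ and $M_\theta$ valid for the entire sequence; one must also note that the pointwise-in-time action of the projection and lifting operators does not inflate their norms on the Bochner and continuous-function spaces, so that the composed constants stay independent of $n$.
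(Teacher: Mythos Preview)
Your proposal is correct and matches the paper's approach exactly: the paper does not give an explicit proof but simply declares that ``the proposition above implies the following corollary,'' i.e., it is intended as an immediate consequence of Proposition~\ref{prop:app-basicPropsIA} via the factorization $\widetilde{\fI}^s_{A_n} = \Lambda_n \mathfrak I_{A_n}^s \Pi_n$ together with the uniformity of the constants $M_S$, $w$, $M_\theta$, $M_\Pi$, $M_\Lambda$, $\widetilde M_\Lambda$. Your write-up supplies precisely the details the paper leaves implicit.
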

	
	\begin{proposition}\label{prop:strong-continuity-IAntilde}
		Let the Banach spaces $(E_n, \norm{\,\cdot\,}{E_n})_{n\in\clos\N}$, $(\widetilde E, \norm{\,\cdot\,}{\widetilde E})$ and the linear operators $(A_n)_{n\in\clos\N}$ satisfy the assumptions of Theorem~\ref{thm:d2c-TKapprox}.
		Let ${p \in [1, \infty]}$ and $s \in [0,\infty)$. 
		The	following assertions hold:
		\begin{enumerate}[(a), series=app-strong-conv]
			\item\label{prop:strong-continuity-IAntilde:a} 
			If either $p = 1$ and $s \ge 1$, or $p > 1$ and $s > \frac{1}{p}$, then we have $\widetilde{\fI}^s_{A_n} f \to \fI^s_{A_\infty} f$ in $C([0,T]; \widetilde E)$, as $n \to \infty$, for every $f \in L^p(0,T; E_\infty)$.
		\end{enumerate}
		Moreover, let Assumptions~\ref{ass:disc2},~\ref{ass:semigroup-B2},~\ref{ass:A-conv-B} and~\ref{ass:emb-Htheta-B-H2} hold.
		\begin{enumerate}[(a), resume=app-strong-conv]
			\item\label{prop:strong-continuity-IAntilde:b} 
			If $s > \frac{1}{p} + \frac{\theta}{2}$, then we have $\widetilde{\fI}^s_{A_n} f \to \fI^s_{A_\infty} f$ in $C([0,T]; \widetilde B)$ as $n \to \infty$ for every $f \in L^p(0,T; E_\infty)$.
		\end{enumerate}
		\begin{proof}
			We only present the details of the argument for part~\ref{prop:strong-continuity-IAntilde:b}, the proof of~\ref{prop:strong-continuity-IAntilde:a} being similar.
			
			Let $p \in [1,\infty)$, $s \in (\frac{1}{p} + \frac{\theta}{2}, \infty)$, $f \in L^p(0,T; E_\infty)$ and fix an arbitrary $\varepsilon > 0$. 
			By the density of $B_\infty$ in $E_\infty$ (see~\ref{ass:emb-Htheta-B-H2}), and that of $B_\infty$-valued simple functions in $L^p(0,T; B_\infty)$, there exists a function $g \from [0,T] \to B_\infty$ of the form 
			\begin{equation}
				g = \sum_{j=1}^K \indic{(a_j, b_j)} \otimes x_j, 
				\quad
				K \in \N; \: 
				0 \le a_j < b_j \le T, \; x_j \in B_\infty \text{ for all } j \in 
				\{1,\dots,K\}
			\end{equation}
			such that
			\begin{equation}
				\norm{f - g}{L^p(0,T; E_\infty)} < \frac{\varepsilon}{4} 
				\Bigl(\sup\nolimits_{n\in\clos\N} \norm{\widetilde 
					\fI_{A_n}^s}{\LO(L^p(0,T; E_\infty); C([0,T]; \widetilde 
					B))}\Bigr)^{-1}.
			\end{equation}
			Note that the expression between the parentheses is finite by Corollary~\ref{cor:unifbb-IAtilde}\ref{cor:unifbb-IAtilde:c} and can be assumed to be nonzero without loss of generality, as otherwise $\widetilde \fI_{A_n}^s = 0$ for all $n \in \clos\N$ and the asserted convergence would be trivial. 
			Thus, for every $n \in \N$,
			\begin{align}
				&\norm{\widetilde\fI_{A_n}^s f - \fI_{A}^s f}{C([0,T]; \widetilde 
					B)}
				\\&\quad \le 
				\norm{\widetilde\fI_{A_n}^s (f - g)}{C([0,T]; \widetilde B)}
				+
				\norm{\widetilde\fI_{A_n}^s g - \fI_{A}^s g}{C([0,T]; \widetilde B)}
				+
				\norm{\fI_{A}^s (g - f)}{C([0,T]; \widetilde B)}
				\\
				&\quad< 
				\tfrac{1}{2}\varepsilon
				+
				\norm{\widetilde \fI_{A_n}^s g - \fI_{A}^s g}{C([0,T]; \widetilde 
					B)}.
			\end{align}
			For any $j \in \{1,\dots,K\}$, by~\ref{ass:A-conv-B} and the 
			discrete-to-continuum Trotter--Kato approximation theorem, we can 
			choose $N_j \in \N$ so large that 
			\begin{equation}
				\norm{\widetilde S_n \otimes x_j - S \otimes x_j }{C([0,T]; 
					\widetilde B)} 
				< 
				\frac{s\Gamma(s)}{2T^sK}\varepsilon 
				\quad 
				\text{for all }
				n \ge N_j.
			\end{equation}
			Thus, setting $N \deq \max_{j=1}^K N_j$, we find for all $n \ge N$ and 
			$t \in [0,T]$:
			\begin{align}
				\norm{\widetilde \fI_{A_n}^s g(t) - \fI_{A}^s g(t)}{\widetilde B}
				&\le 
				\frac{1}{\Gamma(s)}
				\sum_{j=1}^K \int_{a_j}^{b_j} (t-r)^{s-1} \norm{\widetilde S_n(t-r) 
					x_j - S(t-r) x_j}{\widetilde B} \rd r
				\\&\le 
				\frac{1}{\Gamma(s)}
				\sum_{j=1}^K \int_{0}^{T} r^{s-1} \norm{\widetilde S_n(r) x_j - 
					S(r) x_j}{\widetilde B} \rd r
				\\&\le 
				\frac{T^s}{s \Gamma(s)}
				\sum_{j=1}^K \norm{\widetilde S_n \otimes x_j - S\otimes 
					x_j}{C([0,T];\widetilde B)} < \frac{\varepsilon}{2}.
			\end{align}
			Since $t \in [0,T]$ was arbitrary, we conclude that 
			$\norm{\widetilde \fI_{A_n}^s g - \fI_{A}^s g}{C([0,T]; \widetilde B)}  
			< \frac{1}{2}\varepsilon$, and therefore
			$\norm{\widetilde\fI_{A_n}^s f - \fI_{A}^s f}{C([0,T]; \widetilde 
				B)} < \varepsilon$ for all $n \ge N$.
		\end{proof}
	\end{proposition}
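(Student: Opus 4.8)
The plan is to combine the uniform boundedness of $(\widetilde{\fI}^s_{A_n})_{n\in\clos\N}$ furnished by Corollary~\ref{cor:unifbb-IAtilde} with the discrete-to-continuum Trotter--Kato theorem (Theorem~\ref{thm:d2c-TKapprox}), so that the strong convergence reduces to a density argument. Since these operators are uniformly bounded from $L^p(0,T; E_\infty)$ into the relevant space of continuous functions, a standard $3\varepsilon$-estimate lets one replace an arbitrary integrand by one drawn from a convenient dense class, with the approximation error absorbed by the $n$-independent operator norm.

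First I would fix $f \in L^p(0,T; E_\infty)$ and approximate it by a $B_\infty$-valued step function $g = \sum_{j=1}^K \indic{(a_j,b_j)} \otimes x_j$, using the density of $B_\infty$ in $E_\infty$ from~\ref{ass:emb-Htheta-B-H2} together with the density of simple functions in $L^p$. Splitting via the triangle inequality into
\[
\norm{\widetilde\fI_{A_n}^s f - \fI_{A_\infty}^s f}{C([0,T];\widetilde B)}
\le
\norm{\widetilde\fI_{A_n}^s(f-g)}{C([0,T];\widetilde B)}
+
\norm{\widetilde\fI_{A_n}^s g - \fI_{A_\infty}^s g}{C([0,T];\widetilde B)}
+
\norm{\fI_{A_\infty}^s(g-f)}{C([0,T];\widetilde B)},
\]
the outer two terms are made small by the choice of $g$ and the uniform bound, leaving the middle term to carry the genuine convergence.

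For the middle term, substituting the explicit kernel~\eqref{eq:app-def-fracParab} writes $\widetilde\fI_{A_n}^s g(t)$ as a finite sum of integrals $\tfrac{1}{\Gamma(s)}\int_{a_j}^{b_j} (t-r)^{s-1} \widetilde S_n(t-r) x_j \rd r$. The Trotter--Kato theorem yields $\widetilde S_n \otimes x_j \to S_\infty \otimes x_j$ uniformly on $[0,T]$ in $\widetilde B$ (here invoking~\ref{ass:A-conv-B} and the identification of the restricted generator as the part of $-A_n$ in $B_n$), so I would bound each integrand difference by $\sup_{r}\norm{\widetilde S_n(r)x_j - S_\infty(r)x_j}{\widetilde B}$ and integrate the weight $(t-r)^{s-1}$; since $\int_0^T r^{s-1}\rd r = T^s/s < \infty$, the middle term tends to zero. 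Part~\ref{prop:strong-continuity-IAntilde:a} follows by the identical scheme carried out in the $\widetilde E$-norm, replacing Corollary~\ref{cor:unifbb-IAtilde}\ref{cor:unifbb-IAtilde:c} by part~\ref{cor:unifbb-IAtilde:b} and using the $\widetilde E$-version of Trotter--Kato directly.

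The hard part will be checking that the density error is controlled \emph{uniformly in $n$} in the \emph{target} norm $C([0,T];\widetilde B)$, which is precisely what dictates the sharper hypothesis $s > \tfrac{1}{p} + \tfrac{\theta}{2}$: the extra $\tfrac{\theta}{2}$ absorbs the $t^{-\theta/2}$ smoothing singularity from~\ref{ass:emb-Htheta-B-H2} that is incurred when mapping $E_n$-valued integrands into $B_n$. One must confirm that under this threshold the operator-norm bound from Corollary~\ref{cor:unifbb-IAtilde}\ref{cor:unifbb-IAtilde:c} is finite and independent of $n$, so that the initial $3\varepsilon$-reduction is legitimate and the whole estimate closes.
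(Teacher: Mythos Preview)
Your proposal is correct and matches the paper's proof essentially step for step: the same density reduction to $B_\infty$-valued step functions, the same $3\varepsilon$ split with the outer terms controlled by the uniform bound of Corollary~\ref{cor:unifbb-IAtilde}\ref{cor:unifbb-IAtilde:c}, and the same treatment of the middle term via Trotter--Kato applied to each $x_j$ together with the elementary integral $\int_0^T r^{s-1}\rd r = T^s/s$. The only cosmetic difference is that the paper quantifies the $\varepsilon$-choices explicitly, whereas you describe them in words.
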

	
	\section{Uniformly sectorial sequences of operators}
	\label{app:uniform-sectoriality}
	
	We first recall the concept of sectorial operators.
	Given $\omega \in (0,\pi)$, we say that a linear operator $A \from \dom{A} \subseteq E \to E$ on a (real or complex) Banach space $E$, with \emph{spectrum} $\sigma(A) \deq \C\setminus\rho(A)$, is said to be \emph{$\omega$-sectorial} if
	\begin{equation}\label{eq:app-sectorial} 
		\sigma(A) 
		\subseteq 
		\clos \Sigma_\omega
		\quad 
		\text{ and } 
		\quad 
		M(\omega, A) \deq 
		\sup \{ 
		\norm{\lambda R(\lambda, A)}{\mathscr L(E)} 
		:
		\lambda \in \C\setminus\clos\Sigma_\omega
		\}
		< \infty, 
	\end{equation}
	where $\Sigma_\omega$ is as in~\eqref{eq:def-sector} and $M(\omega, A)$ is called the \emph{$\omega$-sectoriality constant}.
	Its \emph{angle of sectoriality} $\omega(A) \in [0,\pi)$ is defined as 
	the infimum of all $\omega$ for which \eqref{eq:app-sectorial} holds. 
	
	If $A$ is closed and densely defined, then by~\cite[Theorem~13.30]{Neerven2022}, there exists $\omega \in (0,\frac{1}{2}\pi)$ such that $A$ is $\omega$-sectorial if and only if there exists $\eta \in (0, \frac{1}{2}\pi)$ such that $-A$ generates a bounded analytic semigroup $(S(t))_{t\ge0}$ on $\Sigma_\eta$. 
	The latter means that the mapping $[0,\infty) \ni t \mapsto S(t) \in \LO(E)$ extends to a bounded holomorphic function $\Sigma_\eta \ni z \mapsto S(z) \in \LO(E)$.
	Inspecting the proof of the cited theorem reveals that, whenever these equivalent conditions hold, we have
	\begin{equation}\label{eq:equiv-sect-holoBound}
		\sup\nolimits_{z \in \Sigma_\eta} \norm{S(z)}{\LO(E)} \eqsim_{(\omega, \eta)} M(\omega, A).
	\end{equation}
	This theorem also asserts that the supremum of the set of $\eta \in (0, \frac{1}{2}\pi)$ for which $(S(t))_{t\ge0}$ extends to a bounded analytic semigroup on $\Sigma_\eta$ equals $\frac{1}{2}\pi - \omega(A)$.
	
	Moreover, by~\cite[Propositions~3.4.1 and~3.4.3]{Haase2006} we have
	\begin{equation}\label{eq:app-analytic-semigroup-est}
		\norm{A^\alpha S(t)}{\LO(E)}
		\lesssim_{(\omega, \alpha)} M(\omega, A) \, t^{-\alpha},
	\end{equation}
	for all $\omega \in (\omega(A), \frac{1}{2}\pi)$ and $t \in (0,\infty)$, where the implicit constant is non-decreasing in $\alpha$ for any fixed $\omega$. 
	
	We say that a sequence $(A_n)_{n\in\N}$ of linear operators $A_n \from \dom{A_n} \subseteq E_n \to E_n$ is \emph{uniformly sectorial of angle $\omega \in [0,\pi)$} if $A_n$ is sectorial of angle $\omega$ for all $n \in \N$ and 
	\begin{equation}\label{eq:unif-sect}
		M_{\text{Unif}}(\omega', A) \deq \sup\nolimits_{n\in\N} M(\omega', A_n) < \infty
		\quad 
		\text{for all } 
		\omega' \in (\omega, \pi).
	\end{equation}
	Lemma~\ref{lem:strong-conv-AnalphaS} complements Theorem~\ref{thm:d2c-TKapprox} in the situation where the semigroup generators are uniformly sectorial of angle less than $\frac{1}{2}\pi$, in which case we obtain the uniform convergence $\Lambda_n A_n^{\alpha}  S(\,\cdot\,) \Pi_n x \to A_\infty^\alpha S_\infty(\,\cdot\,)x$ on compact subsets of $(0,\infty)$.
	It is an analog to~\cite[Lemma~4.1(2)]{KvN2011} in the discrete-to-continuum setting and for general $\alpha \in (0,\infty)$ (instead of $\alpha = 1$).
	
	\begin{lemma}\label{lem:strong-conv-AnalphaS}
		Let the linear operators $A_n \from \dom{A_n} \subseteq E_n \to E_n$ on the Banach spaces $(E_n, \norm{\,\cdot\,}{E_n})_{n\in\clos\N}$ be uniformly sectorial of angle $\omega \in [0,\frac{1}{2}\pi)$, and denote by $(S_n(t))_{t\ge0}$ the bounded analytic $C_0$-semigroups generated by $-A_n$.
		Let Assumptions~\ref{ass:disc3} and~\ref{ass:operators3} be 
		satisfied with $w = 0$, and suppose that the equivalent statements~\ref{thm:d2c-TKapprox:a} and~\ref{thm:d2c-TKapprox:b} in Theorem~\ref{thm:d2c-TKapprox} hold.
		Then we have, for all $\alpha \in (0,\infty)$, $x \in E_\infty$ and $0 < a < b < \infty$,
		\begin{equation}
			\sup_{t\in[a,b]}
			\norm{\Lambda_n A_n^\alpha S_n(t) \Pi_n x - A_\infty^\alpha S_\infty(t)x}{\widetilde E} \to 0
			\quad 
			\text{as } n \to \infty.
		\end{equation}
	\end{lemma}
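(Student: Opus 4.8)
The plan is to represent $A_n^\alpha S_n(t)$ through the holomorphic functional calculus of the uniformly sectorial operators $A_n$ and then pass to the limit under the contour integral. Fix $\omega_0, \omega'$ with $\omega < \omega_0 < \omega' < \tfrac{1}{2}\pi$ and let $\Gamma \deq \partial\Sigma_{\omega'}$ be the boundary of the sector, oriented counterclockwise and consisting of the two rays $\{r e^{\pm i\omega'} : r>0\}$. Since the $A_n$ are uniformly sectorial of angle $\omega < \omega'$, the contour $\Gamma$ lies in $\bigcap_{n\in\clos\N}\rho(A_n)$, and by~\eqref{eq:app-sectorial}--\eqref{eq:unif-sect} we have the uniform bound $\norm{R(\lambda,A_n)}{\LO(E_n)} \le M_{\mathrm{Unif}}(\omega_0,A)\,\abs{\lambda}^{-1}$ for all $\lambda \in \Gamma$ and $n \in \clos\N$. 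For $\alpha \in (0,\infty)$ the function $\lambda \mapsto \lambda^\alpha e^{-t\lambda}$ decays like $\abs{\lambda}^\alpha$ at the origin (so that $0 \in \sigma(A_n)$ causes no difficulty) and is exponentially small as $\abs{\lambda}\to\infty$ along $\Gamma$ (because $\Re\lambda = \abs{\lambda}\cos\omega' > 0$); hence the Cauchy-type integral
\begin{equation}
A_n^\alpha S_n(t) = \frac{1}{2\pi i}\int_\Gamma \lambda^\alpha e^{-t\lambda}\, R(\lambda,A_n)\rd\lambda
\end{equation}
converges absolutely in $\LO(E_n)$ and represents $A_n^\alpha S_n(t)$, see~\cite[Chapter~3]{Haase2006}.

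I would then lift this identity. As $\Lambda_n,\Pi_n$ are bounded and the integrand, applied to a fixed $x \in E_\infty$, is dominated uniformly in $n$ by $C\norm{x}{E_\infty}\abs{\lambda}^{\alpha-1}e^{-t\Re\lambda}$, which is $\Gamma$-integrable for every $t>0$, I may move $\Lambda_n(\,\cdot\,)\Pi_n$ inside to get
\begin{equation}
\Lambda_n A_n^\alpha S_n(t)\Pi_n x - A_\infty^\alpha S_\infty(t)x = \frac{1}{2\pi i}\int_\Gamma \lambda^\alpha e^{-t\lambda}\bigl[\widetilde R(\lambda,A_n)x - R(\lambda,A_\infty)x\bigr]\rd\lambda,
\end{equation}
where $\widetilde R(\lambda,A_n) \deq \Lambda_n R(\lambda,A_n)\Pi_n$. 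Since $\Re\lambda > 0$ on $\Gamma$, for $t \in [a,b]$ we have $e^{-t\Re\lambda}\le e^{-a\Re\lambda}$, and combining this with $\norm{\widetilde R(\lambda,A_n)}{\LO(\widetilde E)} \le M_\Lambda M_\Pi \norm{R(\lambda,A_n)}{\LO(E_n)}$ yields the $t$-independent majorant
\begin{equation}
\sup_{t\in[a,b]}\norm{\Lambda_n A_n^\alpha S_n(t)\Pi_n x - A_\infty^\alpha S_\infty(t)x}{\widetilde E} \lesssim \int_\Gamma \abs{\lambda}^{\alpha} e^{-a\Re\lambda}\, g_n(\lambda)\,\abs{\rd\lambda},
\end{equation}
where $g_n(\lambda) \deq \norm{\widetilde R(\lambda,A_n)x - R(\lambda,A_\infty)x}{\widetilde E}$ is independent of $t$ and bounded by $C\norm{x}{\widetilde E}\abs{\lambda}^{-1}$. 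The plan is to apply dominated convergence to this single integral, so it remains only to establish the pointwise convergence $g_n(\lambda)\to 0$ for each $\lambda \in \Gamma$.

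The hard part will be exactly this resolvent convergence on $\Gamma$, which lies in the right half-plane, whereas Theorem~\ref{thm:d2c-TKapprox} only supplies $\widetilde R(\lambda,A_n)x \to R(\lambda,A_\infty)x$ for $\Re\lambda < w = 0$. I would bootstrap from the left half-plane to the open connected set $U \deq \C\setminus\clos\Sigma_{\omega_0}$, which contains a point $\lambda_0$ with $\Re\lambda_0 < 0$ (where convergence is known) as well as the contour $\Gamma$. The key algebraic observation is that Assumption~\ref{ass:disc3}\ref{ass:disc:id3} ($\Pi_n\Lambda_n = \id_{E_n}$) gives $\Lambda_n R(\lambda,A_n)^k \Pi_n = (\widetilde R(\lambda,A_n))^k$, so the resolvent power series $R(\mu,A_n) = \sum_{k\ge0}(\mu-\lambda)^k R(\lambda,A_n)^{k+1}$ lifts term by term to $\widetilde R(\mu,A_n) = \sum_{k\ge0}(\mu-\lambda)^k (\widetilde R(\lambda,A_n))^{k+1}$. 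Because $\norm{\widetilde R(\lambda,A_n)}{\LO(\widetilde E)} \le M_\Lambda M_\Pi M_{\mathrm{Unif}}(\omega_0,A)\abs{\lambda}^{-1}$ uniformly in $n$, the radius of convergence is bounded below uniformly in $n$ on compact subsets of $U$. Since uniform boundedness plus strong convergence of $\widetilde R(\lambda,A_n)$ forces strong convergence of all its powers, one may interchange $\lim_n$ with the (uniformly summable) series to propagate strong convergence from $\lambda$ to a disk of uniform radius around it.

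Chaining such disks along a path in the connected set $U$ then carries the convergence from $\lambda_0$ to every point of $U$, and in particular to all $\lambda \in \Gamma$, so that $g_n(\lambda)\to 0$ pointwise. With the uniform integrable majorant $C\norm{x}{\widetilde E}\abs{\lambda}^{\alpha-1}e^{-a\Re\lambda}$ for the integrand in the last display, the dominated convergence theorem shows that the right-hand side tends to $0$, which gives the claimed uniform-in-$t$ convergence on $[a,b]$ and completes the proof.
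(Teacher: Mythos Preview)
Your proof is correct and follows the same overall architecture as the paper's: represent $A_n^\alpha S_n(t)$ by the Dunford contour integral over $\partial\Sigma_{\omega'}$, bound uniformly in $t\in[a,b]$ via $e^{-a\Re\lambda}$, and conclude by dominated convergence once pointwise resolvent convergence on the contour is known. The only substantive difference is how you extend the resolvent convergence from the left half-plane (where Theorem~\ref{thm:d2c-TKapprox} gives it) to the contour $\Gamma$ in the right half-plane.

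The paper uses a rotation trick: for $\eta = \tfrac{1}{2}\pi - \vartheta$ with $\vartheta \in (\omega,\omega')$, the operators $-e^{\pm i\eta}A_n$ generate the uniformly bounded $C_0$-semigroups $t \mapsto S_n(te^{\pm i\eta})$ (by uniform sectoriality), and re-applying Theorem~\ref{thm:d2c-TKapprox} to these rotated sequences yields $\widetilde R(\lambda,A_n)x \to R(\lambda,A_\infty)x$ for all $\abs{\arg\lambda} > \vartheta$, in particular on $\Gamma$. You instead propagate by analytic continuation through the Neumann series for the resolvent, exploiting Assumption~\ref{ass:disc3}\ref{ass:disc:id3} to get $(\widetilde R(\lambda,A_n))^{k} = \Lambda_n R(\lambda,A_n)^{k}\Pi_n$ and hence a uniform-in-$n$ radius of convergence $\gtrsim \abs{\lambda}$ on $U = \C\setminus\clos\Sigma_{\omega_0}$; chaining finitely many disks along a compact path in the connected set $U$ (bounded away from $0$) carries the convergence from a point with $\Re\lambda_0<0$ to any $\lambda\in\Gamma$. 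The paper's argument is shorter and reuses the Trotter--Kato machinery directly; your route is more hands-on but is self-contained and makes transparent exactly where $\Pi_n\Lambda_n=\id_{E_n}$ and the uniform sectoriality bound enter. Either way the dominated-convergence step with majorant $\abs{\lambda}^{\alpha-1}e^{-a\abs{\lambda}\cos\omega'}$ is identical.
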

	\begin{proof}
		Fix $\omega' \in (\omega, \frac{1}{2}\pi)$, $n \in \clos\N$ and $t \in (0,\infty)$.
		We begin by sketching the functional calculus argument (see~\cite[Chapter~15]{HvNVWVolumeIII} or~\cite{Haase2006} for a more comprehensive overview of this topic) which shows that we have the following Cauchy integral representation:
		\begin{equation}\label{eq:integral-representation-AnalphaSn}
			A_n^\alpha S_n(t) 
			=
			\frac{1}{2\pi i}
			\int_{\partial \Sigma_{\omega'}} z^\alpha e^{-tz} R(z, A_n) \rd z.
		\end{equation}
		To see this, define the functions $f_{\alpha}, g_t \from \Sigma_{\omega'} \to \C$ by $f_\alpha(z) \deq z^\alpha$ and $g_t(z) \deq e^{-tz}$ for $z \in \Sigma_{\omega'}$ and $t \in (0,\infty)$.
		Denote by $f_\alpha(A_n)$ and $g_t(A_n)$ the operators obtained via the extended Dunford calculus for sectorial operators as defined in~\cite[Definition~15.1.8]{HvNVWVolumeIII}.
		Then $f_\alpha(A_n)$ is the fractional power $A_n^\alpha$ in the sense of~\cite[Definition~15.2.2]{HvNVWVolumeIII}, which satisfies $f_\alpha(A_n)x = f_\alpha(\lambda)x = \lambda^\alpha x$ if $A_n x = \lambda x$, hence this (more general) definition agrees with the spectrally defined fractional powers in the setting of~\eqref{eq:def-WM-operator}.
		Moreover, $g_t(A_n) = S_n(t)$ by~\cite[Theorem~15.1.7]{HvNVWVolumeIII}.
		Since $S_n(t)$ is bounded, we have $(f_\alpha g_t)(A_n) = f_\alpha(A_n) g_t(A_n)$ by~\cite[Proposition~15.1.12]{HvNVWVolumeIII}.
		Finally, the function $(f_\alpha g_t)(z) =  z^\alpha e^{-z}$ is holomorphic and has (super)polynomial decay at $0$ and $\infty$, and thus belongs to the domain of the primary Dunford calculus~\cite[Definition~15.1.1]{HvNVWVolumeIII}, so that $(f_\alpha g_t)(A_n) = \frac{1}{2\pi i} \int_{\partial\Sigma_{\omega'}} z^\alpha e^{-tz} R(z, A_n) \rd z$.
		Putting all these observations together yields~\eqref{eq:integral-representation-AnalphaSn}.
		Applying the projection and lifting operators and parametrizing the complex integral yields, for all $x \in E_\infty$,
		\begin{align}
			\Lambda_n A_n^\alpha S_n(t) \Pi_n x 
			\label{eq:integral-representation-AnalphaSn2}
			&=
			\begin{aligned}[t]
				&-\frac{e^{i (\alpha + 1) \omega'}}{2\pi i} \int_0^\infty r^\alpha \exp(-t e^{i \omega'} r) \widetilde R(e^{i \omega'} r, A_n)x \rd r 
				\\
				&+ \frac{e^{-i (\alpha + 1) \omega'}}{2\pi i} \int_0^\infty r^\alpha \exp(-t e^{-i \omega'} r) \widetilde R(e^{-i \omega'} r, A_n)x \rd r,
			\end{aligned}
		\end{align}
		where we recall that $\Pi_\infty = \Lambda_\infty = \id_{\widetilde E}$ for $n = \infty$.
		It follows that the above estimate implies the following uniform bound on the interval $[a,b]$:
		\begin{equation}
			\begin{aligned}
				&\sup_{t\in[a,b]} \norm{\Lambda_n A_n^\alpha S_n(t) \Pi_n x 
					-
					A_\infty^\alpha S_\infty(t) x}{\widetilde E}
				\\
				&\quad \le \begin{aligned}[t]\label{eq:unif-bd-diff-AnalphaS}
					\frac{1}{2\pi} \int_0^\infty r^\alpha e^{-a \cos(\omega') r}
					\Bigl[&\norm{\widetilde R(r e^{i\omega'}, A_n)x - R(r e^{i\omega'}, A_\infty)x}{\widetilde E}
					\\&+
					\norm{\widetilde R(r e^{-i\omega'}, A_n)x - R(r e^{-i\omega'}, A_\infty)x}{\widetilde E}\Bigr] \rd r.
				\end{aligned}
			\end{aligned}
		\end{equation}
		Setting $\eta \deq \frac{1}{2}\pi - \vartheta$ for some $\vartheta \in (\omega, \omega')$, the uniform sectoriality of $(A_n)_{n\in\clos\N}$ implies that the operators $(-A_n e^{\pm i \eta})_{n \in \clos\N}$ generate $C_0$-semigroups $(S_n(t e^{\pm i \eta}))_{t\ge0}$ which are uniformly bounded in $t$ and $n$. 
		Therefore, we can apply Theorem~\ref{thm:d2c-TKapprox} to these two sequences of semigroups to find that
		$\widetilde R(\lambda, A_n)x \to R(\lambda, A_\infty)x$ for all $\abs{\arg \lambda} > \vartheta$ if this convergence holds for one such $\lambda$. 
		We have in fact $\widetilde R(\lambda, A_n)x \to R(\lambda, A_\infty)x$ for \emph{all} $\lambda$ such that $\abs{\arg \lambda} > \frac{1}{2}\pi > \vartheta$ by our hypothesis that the operators $(A_n)_{n\in\clos\N}$ satisfy statements~\ref{thm:d2c-TKapprox:a} and~\ref{thm:d2c-TKapprox:b} in Theorem~\ref{thm:d2c-TKapprox}, so we conclude $\widetilde R(r e^{\pm i \omega'}, A_n)x \to R(r e^{\pm i \omega'}, A_\infty)x$ for all $r \in (0,\infty)$.
		On the other hand, by~\eqref{eq:unif-sect} and Assumption~\ref{ass:disc3} we have $\norm{\widetilde R(r e^{\pm i\omega'}, A_n)x}{\widetilde E} \le M_\Pi M_\Lambda M_{\text{Unif}}(\vartheta, A) \norm{x}{\widetilde E} / r$ for all $n \in \clos\N$.
		Hence, we can bound the integrand in~\eqref{eq:unif-bd-diff-AnalphaS},
		up to $n$-independent constants, by the integrable function $r \mapsto r^{\alpha-1} \exp(-a \cos(\omega') r)$, so that the integrals tend to zero by the dominated convergence theorem.
	\end{proof}
	
	\bibliographystyle{siam}
	\bibliography{discrete-to-continuum-spde.bib}
	
\end{document}